\renewcommand{\le}{\leqslant}
\renewcommand{\ge}{\geqslant}
\newcommand{\ptl}{\partial}
\newcommand{\rr}{{\mathbb{R}}}
\newcommand{\la}{\lambda}
\newcommand{\hh}{{\mathcal{H}}}
\newcommand{\h}{\mathcal{H}}
\newcommand{\esf}{\mathbb{S}}
\newcommand{\nn}{\mathbb{N}}
\newcommand{\pp}{P}
\newcommand{\escpr}[1]{\big<#1\big>}
\newcommand{\seq}[1]{\{#1_i\}_{i\in\nn}}
\newcommand{\sg}{\sigma}
\newcommand{\Om}{\Omega}
\newcommand{\eps}{\varepsilon}
\newcommand{\Ga}{\Gamma}
\newcommand{\de}{\delta}
\newcommand{\btheta}{{\bar{\theta}}}
\newcommand{\vol}[1]{|#1|}
\newcommand{\cl}[1]{\text{\rm cl}(#1)}
\newcommand{\clb}{\mkern2mu\overline{\mkern-2mu B\mkern-2mu}\mkern2mu}
\newcommand{\K}{\mathcal{K}}
\DeclareMathOperator{\divv}{div}
\DeclareMathOperator{\hd}{\delta}
\DeclareMathOperator{\intt}{int}
\DeclareMathOperator{\inr}{inr}
\DeclareMathOperator{\diam}{diam}
\newenvironment{enum}{\begin{enumerate}
}{\end{enumerate}}
\newtheorem{theorem}{Theorem}[chapter]
\newtheorem{lemma}[theorem]{Lemma}
\newtheorem{proposition}[theorem]{Proposition}
\newtheorem{corollary}[theorem]{Corollary}
\theoremstyle{definition}
\newtheorem{example}[theorem]{Example}
\theoremstyle{remark}
\newtheorem{remark}[theorem]{Remark}
\numberwithin{section}{chapter}
\numberwithin{equation}{chapter}
\numberwithin{figure}{chapter}
\begin{document}

\frontmatter

\title{Isoperimetric inequalities in unbounded convex bodies}

\author[G.~P.~Leonardi]{Gian Paolo Leonardi}
\address{Dipartimento di Scienze Fisiche, Informatiche e Matematiche\\ Universit\`a di Modena e Reggio Emilia \\ 41100 Modena \\ Italy}
\email{gianpaolo.leonardi@unimore.it}
\thanks{Gain Paolo Leonardi has been supported by GNAMPA (project name: \textit{Problemi isoperimetrici e teoria della misura in spazi metrici}, 2015) and by PRIN (project name: \textit{Calcolo delle Variazioni}, 2010-2011).}

\author[M.~Ritor\'e]{Manuel Ritor\'e} 
\address{Departamento de Geometr\'{\i}a y Topolog\'{\i}a \\ Universidad de Granada \\ E--18071 Granada \\ Espa\~na}
\email{ritore@ugr.es}
\thanks{Manuel Ritoré has been supported by MICINN-FEDER grant {MTM2013-48371-C2-1-P} and Junta de Andaluc\'{\i}a grants FQM-325 and P09-FQM-5088}

\author[S.~Vernadakis]{Efstratios Vernadakis}
\address{Departamento de
Geometr\'{\i}a y Topolog\'{\i}a \\
Universidad de Granada \\ E--18071 Granada \\ Espa\~na}
\email{stratos@ugr.es}
\thanks{Efstratios Vernadakis has been supported by Junta de Andaluc\'{\i}a grant P09-FQM-5088}

\date{June 23, 2016}

\subjclass[2010]{Primary: 49Q10, 52A40. Secondary: 49Q20}

\keywords{Isoperimetric inequalities; isoperimetric profile; isoperimetric regions; convex bodies; asymptotic cylinders; rigidity; isoperimetric dimension}

\maketitle

\tableofcontents

\begin{abstract}
We consider the problem of minimizing the relative perimeter under a volume constraint in an unbounded convex body $C\subset \rr^{n+1}$, without assuming any further regularity on the boundary of $C$. Motivated by an example of an unbounded convex body with null isoperimetric profile, we introduce the concept of unbounded convex body with \textit{uniform geometry}. We then provide a handy characterization of the uniform geometry property and, by exploiting the notion of \textit{asymptotic cylinder} of $C$, we prove existence of isoperimetric regions in a generalized sense. By an approximation argument we show the strict concavity of the isoperimetric profile and, consequently, the connectedness of generalized isoperimetric regions. We also focus on the cases of small as well as of large volumes; in particular we show existence of isoperimetric regions with sufficiently large volumes, for special classes of unbounded convex bodies. We finally address some questions about isoperimetric rigidity and analyze the asymptotic behavior of the isoperimetric profile in connection with the notion of \textit{isoperimetric dimension}.
\end{abstract}

\mainmatter

\chapter{Introduction}

Given a closed set $C\subset\rr^{n+1}$ with non-empty interior, the \emph{relative isoperimetric problem} on $C$ looks for sets $E\subset C$ of given finite volume $|E|$ minimizing the \emph{relative perimeter} $P_C(E)$ of $E$ in the interior of $C$. When the boundaries of $E$ and $C$ are regular hypersurfaces, it is known that $\ptl E\cap C$ is a constant mean curvature hypersurface and its closure meets $\ptl C$ orthogonally.

The \emph{isoperimetric profile function} $I_C$ assigns to each positive volume $0<v<|C|$ the infimum of the relative perimeter of sets $F\subset C$ of volume $v$. An \emph{isoperimetric region} is a set $E\subset C$ whose relative perimeter coincides with $I_{C}(|E|)$. The function $I_C$ provides an optimal isoperimetric inequality on $C$ since $P_C(F)\ge I_C(|F|)$ for any set $F\subset C$.

In this work, we consider the relative isoperimetric problem in \emph{unbounded convex bodies}, i.e. unbounded closed convex sets with non-empty interior in Euclidean space, without assuming any further regularity of their boundaries. We focus on existence of isoperimetric regions, concavity properties of the isoperimetric profile, questions related to isoperimetric rigidity (i.e. given an isoperimetric inequality valid for a convex body $C$, does equality implies a geometric characterization of $C$?), as well as asymptotic isoperimetric inequalities in connection with the problem of determining the isoperimetric dimension of an unbounded convex body.

\section{Historical background} 
Isoperimetric sets are at once a modern and classical topic: they arise in many fields, from physics of interfaces to optimal design of structures and shapes, and have fascinated scientists since antiquity. For instance, they appear in physical systems where surface tension is one of the main driving forces acting in the system. Surface tension was related to the mean curvature of a boundary interface by Young and Laplace in the equation $\Delta p=\sg H$, where $\Delta p$ is the difference of the internal and the external pressures, $H$ is the mean curvature of the interface and $\sg$ is the surface tension.

The capillarity phenomenon is one of the most relevant examples where relative isoperimetric problems come into play. There one observes a liquid and a gas constrained by a solid support whose shape is determined by surface tension of the liquid-gas interface and by the wetting properties of the support, see Michael \cite{michael}, Bostwick and Steen \cite{bostwick-steen}, and Finn \cite{MR816345}. Other examples related to the isoperimetric problem include: the Van der Waals-Cahn-Hilliard theory of phase transitions \cite{MR1803974} (see in particular the $\Ga$-convergence results by Modica \cite{MR866718} and Sternberg \cite{MR930124}, as well as the construction of solutions of the Allen-Cahn equation by Pacard and Ritor\'e \cite{MR2032110}); the shape of A/B block copolymers consisting of two macromolecules bonded together, which separate into distinct phases (Thomas et al. \cite{hoffman-nature}, Ohta-Kawasaki \cite{OhtaKawasaki1986}).

Moreover, isoperimetric problems are relevant for their close, and deep, connection with functional inequalities of paramount importance in analysis, mathematical physics, and probability (like for instance the Sobolev-Poincar\'e inequality, the Faber-Krahn inequality and the Cheeger inequality, see \cite{MR2229062}, \cite{MR1849187}, \cite{Talenti1976best}, \cite{krahn1925rayleigh}, \cite{Cheeger}.

Central questions for the relative isoperimetric problem are the existence, regularity and geometric properties of isoperimetric regions, as well as the properties of the isoperimetric profile function.

For \emph{bounded} convex bodies many results are known. When the boundary is \emph{smooth}, the concavity of the isoperimetric profile and the connectedness of the reduced boundary of isoperimetric regions was proved by Sternberg and Zumbrun \cite{MR1674097}, while the concavity of the function $I_C^{(n+1)/n}$ was proved by Kuwert \cite{MR2008339}. The behavior of the isoperimetric profile for small volumes was considered by B\'erard and Meyer \cite{be-me}, and the behavior of isoperimetric regions for small volumes by Fall \cite{fall}. Connectedness of isoperimetric regions and of their complements was obtained by Ritor\'e and Vernadakis \cite{MR3335407}. See also the works by Bayle \cite{bayle}, Bayle and Rosales \cite{bay-rosal} and Morgan and Johnson \cite{MR1803220}. The results in all these papers make a strong use of the regularity of the boundary. In particular, as shown in \cite{MR1674097} and \cite{MR2008339}, the $C^{2,\alpha}$ regularity of the boundary implies a strong regularity of the isoperimetric regions up to the boundary, except for a singular set of large Hausdorff codimension, that allows the authors to apply the classical first and second variation formulas for volume and perimeter. The convexity of the boundary then implies the concavity of the profile and the connectedness of the regular part of the free boundary.

Up to our knowledge, the only known results for non-smooth boundary are the ones by Bokowski and Sperner \cite{bo-sp} on isoperimetric inequalities for the Minkowski content in Euclidean convex bodies; the isoperimetric inequality for convex cones by Lions and Pacella \cite{lions-pacella} using the Brunn-Minkowski inequality, with the characterization of isoperimetric regions by Figalli and Indrei \cite{FI}; the extension of Levy-Gromov inequality, \cite[App.~C]{grom}, to arbitrary convex sets given by Morgan \cite{MR2438911}; the extension of the concavity of the $\big(\tfrac{n+1}{n}\big)$ power of the isoperimetric profile to arbitrary convex bodies by E.~Milman \cite[\S~6]{MR2507637}. In his work on the isoperimetric profile for small volumes in the \emph{boundary} of a polytope, Morgan mentions that his techniques can be adapted to handle the case of small volumes in a solid polytope, \cite[Remark~3.11]{morganpolytops}, without uniqueness, see Remark after Theorem~3.8 in \cite{morganpolytops}. Previous estimates on least perimeter in convex bodies have been obtained by Dyer and Frieze \cite{MR1141926}, Kannan, Lov\'asz and Simonovits \cite{MR1318794} and Bobkov \cite{MR2347041}. Outside convex bodies in Euclidean space, isoperimetric inequalities have been established by Choe, Ghomi and Ritor\'e \cite{MR2215458}, \cite{MR2329803}, and, in the case of $3$-dimensional Hadamard manifolds, by Choe and Ritor\'e \cite{MR2338131}.

In the case of unbounded convex bodies, several results on the isoperimetric profile of \emph{cylindrically bounded} convex bodies have been obtained in \cite{rv2} and for \emph{conically bounded} ones in \cite{MR3441524}. In convex cones, the results by Lions and Pacella \cite{lions-pacella} were recovered by Ritor\'e and Rosales \cite{r-r} using stability techniques.

It is important to mention that existence of isoperimetric regions in non-compact spaces is not always guaranteed. For instance, complete planes of revolution with (negative) increasing Gauss curvature are known to have no isoperimetric regions for any value of the two-dimensional volume as shown in \cite[Theorem~2.16]{MR1883725}. While general existence of solutions of variational problems in non-compact spaces is generally treated by means of concentration-compactness arguments (\cite{MR834360}, \cite{MR850686}), the use of geometric arguments in the theory of isoperimetric inequalities to study the behavior of minimizing sequences is quite old and can be traced back to Fiala's paper \cite{MR0006422}, where it was shown that in a complete surface with non-negative Gauss curvature, a sequence of discs escaping to infinity have worse isoperimetric ratio that some compact regions of the same area. This argument was exploited in \cite{MR1857855} to prove existence of isoperimetric regions in complete surfaces with non-negative Gauss curvature. An essential ingredient in this proof was the geometric description of the behavior of minimizing sequences given in \cite[Lemma~1.8]{MR1883725} and used in \cite[Theorem~2.8]{MR1883725} to show existence of isoperimetric regions in complete planes of revolution with non-decreasing Gauss curvature. Lemma~1.8 in \cite{MR1883725} was extended to Riemannian manifolds in \cite[Theorem~2.1]{r-r} and used to prove existence of isoperimetric regions in convex cones with smooth boundary in the same paper. More modern existence results can be traced back to Almgren \cite[Chapter~VI]{MR0420406}, who proved existence of solutions of the isoperimetric problem in $\rr^{n+1}$ for multiple volume constraints as a particular case of a more general theory for elliptic integrands. Morgan \cite{MR1286892}, based on Almgren's results, proved existence and regularity of clusters of prescribed volume in $\rr^3$ minimizing area plus length of singular curves. The same author obtained in \cite{MR2455580} existence of isoperimetric regions in a Riemannian manifold whose quotient by its isometry group is compact, see also \cite[\S~4.5]{MR1286892}. Eichmair and Metzger showed in \cite{MR0006422} that the leaves of the canonical foliation by stable constant mean curvature spheres in asymptotically flat manifolds asymptotic to a Schwarzschild space of positive mass are the only isoperimetric boundaries for the (large) volume they enclose, thus proving existence for large volumes. The same authors proved in \cite{MR0006422} that an asymptotically flat Riemannian 3-manifold with non-negative scalar curvature contains a sequence of isoperimetric regions whose volumes diverge to infinity. Mondino and Nardulli \cite{1210.0567} showed existence of isoperimetric regions of any volume in asymptotically flat manifolds with Ricci curvature uniformly bounded below.

As for the regularity of isoperimetric regions, the classical result on interior regularity was proved by Gonzalez, Massari and Tamanini \cite{MR684753}, after the pioneering work by De Giorgi on regularity of perimeter-minimizing sets without a volume constraint. The boundary regularity for perimeter minimizing sets under a volume constraint inside a set with smooth boundary follows by the work of Gr\"uter \cite{MR862549}, see also \cite{MR1674097}.

As for the geometric characterization of isoperimetric regions in convex sets, isoperimetric solutions in a half-space are easily shown to be half-balls by reflecting an isoperimetric set with respect to the boundary hyperplane and applying the classical isoperimetric inequality in Euclidean space. The characterization of spherical caps as isoperimetric boundaries in balls was given by Bokowski and Sperner \cite{bo-sp} (see also Burago and Zalgaller \cite{MR936419}) as an application of spherical symmetrization. Results on smooth second order minimizers of the perimeter in balls were given by Ros and Vergasta \cite{MR1338315}, see also \cite{MR2167260}. In a slab, the non-empty intersection of two half-spaces determined by two parallel hyperplanes, isoperimetric solutions were classified by Vogel \cite{MR889635} and Athanassenas \cite{MR887402} in the $3$-dimensional case. In both papers, the problem is reduced by symmetrization to axially symmetric sets. The only solutions of the problem are half-balls and tubes around segments connecting orthogonally the two boundary hyperplanes. Both results were later extended by Pedrosa and Ritoré \cite{pedri} to higher dimensional Euclidean spaces using stability techniques. The three-dimensional case also follows from the results in \cite{MR1161286}. In Theorem~4.2 in \cite{MR1483543} it was proved the existence of a constant $\eps>0$ such that isoperimetric regions in $[0,1]\times [0,\delta]\times \rr$, with $\delta>\eps$ are half-balls, tubes around closed segments connecting vertical walls and horizontal slabs, see also \cite{MR1472144} and \cite{MR1322955}. A similar result can be proved for cuboids. Ros proved estimates on the genus of isoperimetric surfaces in quotients of $\rr^3$ by crystallographic groups \cite{MR2051615}

Finally, we would like to remark that the relative isoperimetric problem is quite different from the minimization of the Euclidean perimeter under a volume constraint inside $C$, a problem considered by several authors, \cite{MR1669207}, \cite{MR2010323}, \cite{MR2178065}.

\section{Outline of contents} 

This work has been organized into several chapters.

In Chapter~\ref{sec:preliminaries} the notation used along the manuscript is fixed and basic definitions and facts about convex bodies and finite perimeter sets are presented. In particular, the notion of local convergence in Hausdorff distance is introduced at the beginning of \S~\ref{sub:localh}, followed by the proof of some useful properties of this notion of convergence. Given a convex body $C$, the concavity of the function $(x,r)\in C\times\rr^+\mapsto |\clb(x,r)\cap C|^1/(n+1)$ using the Brunn-Minkowski inequality is proved in Lemma~\ref{lemma:fxrconcave}.

In Chapter~\ref{sec:unbounded} we introduce and study some key concepts, in particular the notion of unbounded convex body of \textit{uniform geometry} and its close relationship with the non-triviality of the isoperimetric profile of $C$. We first realized the importance of uniform geometry after the discovery of an unbounded convex body whose isoperimetric profile is identically zero, see Example \ref{ex:ex}. The definition is as follows: we say that an unbounded convex body $C$ is of uniform geometry if the volume of a relative ball $B_{C}(x,r)$ of a fixed radius $r>0$ cannot be made arbitrarily small by letting $x\in C$ go off to infinity, see \eqref{eq:mainhyp}. We remark that this condition does not require any further regularity of $\partial C$. Proposition \ref{prop:maincond} shows a useful characterization of the uniform geometry assumption on an unbounded convex body and, in particular, its equivalence to the positivity of the isoperimetric profile $I_C$ for any given volume. Moreover, uniform geometry is proved to be equivalent to the fact that any \textit{asymptotic cylinder} of $C$ is a convex body (with nonempty interior). The concept of asymptotic cylinder is introduced at the beginning of Section \ref{sec:asymptoticcyl} as a local limit in Hausdorff distance of a sequence of translations $\{-x_{j}+C\}$, with $\{x_{j}\}_{j\in\nn}\subset C$ being a divergent sequence of points. As mentioned above, this concept turns out to be crucial also in exploiting the connection between uniform geometry and non-degeneracy of the isoperimetric profile $I_{C}$ (see again Proposition \ref{prop:maincond}). Moreover it can be shown by a slight modification of Example \ref{ex:ex} that the assumption of uniform geometry is stronger than simply requiring a uniform lower bound on the solid angles of the tangent cones to $C$. Various classes of unbounded convex bodies with uniform geometry are presented in Examples \ref{ex:cylindrically}, {\ref{ex:revolution}, \ref{ex:noregbdy} and \ref{ex:convexcones}}. 
A detailed account of asymptotic cylinders of a convex cone is done in Proposition \ref{prop:convexcone}. A consequence of this proposition is that whenever the boundary of a convex cone is $C^{1}$ out of a vertex, then all its asymptotic cylinders are either half-spaces or the whole Euclidean space. Moreover in Proposition \ref{prop:NDACsmooth} we prove that unbounded convex bodies with non degenerate asymptotic cone having a $C^{1}$ boundary out of a given vertex present the same type of asymptotic cylinders. Then in Section \ref{sec:density} we exploit some further consequences of uniform geometry, that will be of use in Chapters \ref{sec:min} and \ref{sec:cont} (in particular some uniform density and concentration estimates for sets of finite perimeter in $C$, see Lemma \ref{lem:lrlemme31} and Corollaries \ref{cor:lemme1} and \ref{cor:lemme2-b}, as well as the boundedness of isoperimetric regions, see Proposition \ref{prp:isopbound}).

In Chapter \ref{sec:min} we prove a generalized existence result, Theorem \ref{thm:genexist}, in the spirit of Nardulli's existence theorem \cite{nardulli2014generalized} for non-compact Riemannian manifolds (without boundary) satisfying a so-called \textit{smoothly bounded geometry} property. Theorem \ref{thm:genexist} says that the isoperimetric profile $I_{C}(v)$ of an unbounded convex body $C$ is attained for any fixed $v>0$ by a \textit{generalized isoperimetric region} consisting of an array of sets $(E_{0},\dots,E_{\ell})$, such that $E_{0}\subset C =K_{0}$ and $E_{i}\subset K_{i}$ for $i=1,\dots,\ell$ and for suitable asymptotic cylinders $K_{1},\dots, K_{\ell}$, which satisfy $\sum_{i=0}^{\ell}|E_{i}| = v$ and $\sum_{i=0}^{\ell}P_{K_{i}}(E_{i}) = I_{C}(v)$. As Theorem \ref{thm:conY} will later show, this result can be significantly improved as soon as the concavity of the isoperimetric profile is proved (which in turn requires a continuity result, Theorem \ref{thm:genexist}, as an essential intermediate step). For the proof of Theorem \ref{thm:genexist} we closely follow the scheme adopted by Galli and Ritor\'e \cite{gr}. Essentially, we combine the uniform Poincar\'e inequality stated in Lemma \ref{lem:inrad}, a doubling property on $C$ proved in Lemma~\ref{lem:doubling}, an upper bound on $I_{C}(v)$ stated in Remark \ref{rem:half-plane}, and a well-known volume fixing deformation where the perimeter change is controlled by the volume change, up to a multiplicative constant (see Lemma \ref{lem:niceapprox}). 

In Chapter \ref{sec:cont} we show the strict concavity of the isoperimetric profile of an unbounded convex body $C$ of uniform geometry. To this aim we first approximate $C$ by
a sequence $\{C_i\}_{i\in\nn}$ of unbounded convex bodies in the Hausdorff distance, so that all $C_i$ and all their asymptotic cylinders are of class $C^{2,\alpha}$, see Lemma \ref{lem:fund}. Then we prove that the renormalized isoperimetric profiles $Y_{C_i}:=I_{C_i}^{(n+1)/n}$ are concave, Lemma \ref{lem:imp} (the function $I_C$ is $(\tfrac{n+1}{n})$-concave in the terminology of Brascamp and Lieb \cite{MR0450480}), and then obtain $Y_C$ also concave by passing to the limit. An implication of the concavity of $Y_{C}$ is the strict subadditivity of the isoperimetric profile $I_{C}$, yielding a further refinement of the generalized existence Theorem \ref{thm:genexist}, i.e., that $I_{C}(v)$ is attained by a single, connected isoperimetric set of volume $v$ contained either in $C$ or in some asymptotic cylinder of $C$ (see Theorem \ref{thm:conY}). We stress that an essential ingredient in the proof of Lemma~\ref{lem:imp} is the continuity of the isoperimetric profile of $C$ proved in Theorem~\ref{thm:contprof}. We also remark that such a continuity may fail in a non-compact space, as shown by the recent example by Nardulli and Pansu \cite{1506.04892}. However, the existence of a Lipschitz continuous strictly convex exhaustion function on a manifold guarantees the continuity of the profile \cite{1503.07014}. Conditions on the sectional curvature of a complete manifold, such as non-negativity or non-positivity, imply the existence of such an exhaustion function.

Chapter~\ref{sec:rigid} contains several new isoperimetric inequalities and rigidity results for the equality cases. First it is proved in Theorem~\ref{thm:optnondeg} that for a convex body $C$ with non-degenerate asymptotic cone $C_\infty$, the inequality $I_C\ge I_{C_\infty}$ holds and that the quotient $I_C(v)/I_{C_\infty}(v)$ tends to $1$ as $v\to +\infty$. Existence of isoperimetric regions for large volumes, as well as convergence of rescalings to balls in $C_\infty$, are also shown. Apart from its own interest, Theorem~\ref{thm:optnondeg} is also used as a tool in Theorem~\ref{thm:limI} to prove that
\[
\lim_{v\to 0}\frac{I_C(v)}{I_{C_{\min}}(v)}=1,
\]
where $C_{\min}$ is a tangent cone to $C$ or to an asymptotic cylinder of $C$ with minimum solid angle. The existence of such a cone is established in Lemma~\ref{lem:losemcontangcon}. An interesting consequence of Theorem~\ref{thm:limI} is a new proof of the characterization of isoperimetric regions of small volume in polytopes or prisms given in Corollary~\ref{cor:pol}: they are relative balls in $C$ centered at vertices of $C$ with the smallest solid angle. Finally, some rigidity results are given. In Theorem~\ref{thm:novo}, given a convex body $C$ with non-degenerate asymptotic cone $C_\infty$, it is shown that, if the equality $I_C(v_0)=I_{C_\infty}(v_0)$ holds for some $v_0>0$ then $C$ is isometric to $C_\infty$. Then in Theorem~\ref{thm:rigid1-a} we prove that, whenever the equality holds for some $v_0\in (0,|C|]$ in any of the inequalities
\[
I_C\le I_{C_{\min}},\quad I_C\ge I_{\la C},
\]
or for $v<w$ in the inequality
\[
\frac{I_C(v)}{v^{n/(n+1)}}\ge \frac{I_C(w)}{w^{n/(n+1)}},
\]
then $I_C=I_{C_{\min}}$ in the interval $(0,v_0]$. Moreover, if $K$ is either $C$ or an asymptotic cylinder where the minimum of the solid angle is attained for some $p\in K$ then $K\cap B(p,r)=K_p\cap B(p,r)$ for any $r>0$ such that $|B_K(p,r)|\le v_0$, and in this case $B_{K}(p,r)$ is a generalized isoperimetric region. Then, Corollary~\ref{cor:rigid1-b} and Theorem \ref{thm:rigid1-c} show that, if equality holds in any of the inequalities
\[
I_C\le I_H,\quad I_C\le I_{\rr^{n+1}\setminus C},
\]
then $C$ is a closed half-space or a slab. A consequence of Corollary~\ref{cor:rigid1-b} is a proof of existence of isoperimetric regions in convex bodies whose asymptotic cylinders are either closed half-spaces or the entire space $\rr^{n+1}$, as it happens for convex bodies of revolution that are not cylindrically bounded, or for convex bodies with a non degenerate asymptotic cone that is of class $C^{1}$ outside a vertex. These results are proven in Theorem~\ref{thm:existence} and Corollary~\ref{cor:existforNDACsmooth}. We finally consider the case of cylindrically bounded convex bodies and we prove in Theorem 
\ref{thm:exiscyl} a generalization of the existence of isoperimetric regions of large volumes shown in \cite{rv2} and a rigidity result in the same spirit of Theorems \ref{thm:rigid1-a} and \ref{thm:rigid1-c}.

Finally, Chapter~\ref{sec:isopdim} is focused on the problem of estimating the isoperimetric dimension of $C$, which can be defined as the number $\alpha>0$ such that there exist $0<\lambda_{1}<\lambda_{2}$ and $v_{0}>0$ with the property
\begin{equation}\label{isopdim}
\lambda_{1}v^{(\alpha -1)/\alpha} \le I_{C}(v)\le \lambda_{2}v^{(\alpha -1)/\alpha}\qquad \forall\, v>v_{0}\,.
\end{equation}
In general the isoperimetric dimension is not well-defined, moreover the crucial estimate in \eqref{isopdim} is the first one, i.e., the lower bound on $I_{C}(v)$. Therefore it seems more convenient to formulate the problem in terms of an asymptotic isoperimetric inequality exploiting the growth rate of the volume of relative balls, as estimated by a non-decreasing function $V(r)$ depending only on the radius $r$. This is the approach followed by Coulhon and Saloff-Coste in \cite{MR1232845}, where isoperimetric-like inequalities are proved for graphs, groups, and manifolds in the large volume regime. Given a non-decreasing function $V(r)$, such that $V(r)\to+\infty$ as $v\to +\infty$ and $|B_{C}(x,r)|\ge V(r)$ for all $x\in C$, one introduces its \textit{reciprocal function} $\phi_{V}$ as
\[
\phi_{V}(v) = \inf \{r>0:\ V(r)\ge v\}\,.
\]
In Corollary \ref{thm:isopphi} the following result is proved: if $C$ is an unbounded convex body of uniform geometry, then for the optimal choice $V(r)= b(r) = \inf_{x\in C}|B_{C}(x,r)|$, and denoting by $\phi(v)$ the reciprocal function of $b(r)$, one has
\begin{equation}\label{eq:corisopphi}
(n+1)\frac{v}{\phi(v)}\ge I_{C}(v)\ge 24^{-(n+1)}\frac{v}{\phi(v)}\,.
\end{equation}
Clearly, in order to derive from \eqref{eq:corisopphi} an (asymptotic) estimate of $I_{C}(v)$ in terms of some explicit function of $v$ one would need to compute $b(r)$ with sufficient precision, which is not an easy task for a generic convex body $C$. We thus focus on a special class of three-dimensional convex bodies of revolution: we let
\[
C = \{(x,y) = (x_{1},x_{2},y)\in \rr^{3}:\ y\ge f(|x|)\}\,,
\]
where $f:[0,+\infty)\to[0,+\infty)$ is a convex function such that $f(0)=0$ and $\lim_{s\to +\infty}s^{-1}f(s) = +\infty$. Any such $C$ is an unbounded convex body whose asymptotic cone is a half-line, therefore one expects an isoperimetric dimension strictly smaller than $3$. If one further assumes $f$ strictly convex, of class $C^{3}(0,+\infty)$ and such that $f'''\le 0$ on $(0,+\infty)$, then Theorem \ref{thm:brorigin} proves that $b(r) = |B_{C}(0,r)|$, hence the explicit computation of $V(r) = b(r)$ becomes quite easy in this case. In particular, in Example \ref{ex:isopdimrev} we compute the isoperimetric dimension of 
\[
C_{a} = \{(x,y)\in \rr^{3}:\ y\ge |x|^{a}\}
\]
for $1<a\le 2$ and show that it is given by $\frac{a+2}{a}$, thus the isoperimetric dimension continuously changes from $3$ to $2$ as the parameter $a$ goes from $1$ to $2$.

Several interesting problems remain open. A first one is the existence of isoperimetric regions in an unbounded convex body $C$. Although we have given conditions on particular classes of convex bodies ensuring existence for any volume, like the one in Theorem~\ref{thm:existence}, it is not clear whether an example can be provided exhibiting an isoperimetric region in an asymptotic cylinder of $C$, but not in $C$. Another interesting open question is the range of possible isoperimetric dimensions for an unbounded convex body in $\rr^{n+1}$. It would be reasonable to expect that the range is exactly $[1,n+1]$. However, the results in Chapter~\ref{sec:isopdim} only show that the range includes the set $[1,2]\cup\{3\}$ in the three-dimensional case.

\chapter{Convex bodies and finite perimeter sets}
\label{sec:preliminaries}

\section{Convex bodies and local convergence in Hausdorff distance}
\label{sub:localh}

In this paper, a \emph{convex body} $C\subset\rr^{n+1}$ is defined as a closed convex set with non-empty interior. The interior of $C$ will be denoted by $\intt C$. In the following we shall distinguish between \emph{bounded} and \emph{unbounded} convex bodies. Given $x\in C$ and $r>0$, we define the \emph{intrinsic ball} $B_C(x,r)=B(x,r)\cap C$, and  the corresponding closed ball $\clb_C(x,r)=C\cap\clb(x,r)$. For $E \subset C$, the \emph{relative boundary} of $E$ in the interior of $C$ is $\ptl_{C}E=\ptl E\cap \intt {C}$. 

Given a convex set $C$, and $r>0$, we define $C_r=\{p\in\rr^{n+1}: d(p,C)\le r\}$. The set $C_r$ is the tubular neighborhood of radius $r$ of $C$ and is a closed convex set. Given two convex sets $C$, $C'$, we define their \emph{Hausdorff distance} $\hd(C,C')$ by
\begin{equation*}
\hd(C,C')=\inf\{r>0: C\subset (C')_r, C'\subset C_r\}.
\end{equation*}
We shall say that a sequence $\{C_i\}_{i\in\nn}$ of convex sets converges to a convex set $C$ in \emph{Hausdorff distance} if $\lim_{i\to\infty}\hd(C_i,C)=0$. 
Then we will say that a sequence $\{C_i\}_{i\in\nn}$ of convex bodies \emph{converges locally in Hausdorff distance} to a convex body $C$ if, for every open ball $B$ such that $C\cap B\neq\emptyset$, the sequence of bounded convex bodies $\{C_i\cap\clb\}_{i\in\nn}$ converges to $C\cap\clb$ in Hausdorff distance.

\begin{lemma}
\label{lem:localconv}
Let $\{C_i\}_{i\in\nn}$ be a sequence of convex bodies converging to a convex body $C$ in Hausdorff distance. Then $\{C_i\}_{i\in\nn}$ converges locally in Hausdorff distance to $C$.
\end{lemma}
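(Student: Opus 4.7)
The plan is to verify both inclusions in the definition of Hausdorff distance, restricted to an arbitrary closed ball. Fix an open ball $B=B(x_0,R)$ with $C\cap B\neq\emptyset$ and pick an interior point $z\in C\cap B$ with $|z-x_0|=R'<R$. Set $\eps_i:=\hd(C_i,C)$, so $\eps_i\to 0$. For $i$ large enough $\eps_i<R-R'$, and the global Hausdorff hypothesis yields $z_i\in C_i$ with $|z_i-z|\le\eps_i$; hence $z_i\in C_i\cap B$. In particular both $C_i\cap B$ and $C\cap B$ are non-empty for such~$i$, so both restrictions are bounded convex bodies and their Hausdorff distance is well defined.

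The core step is a convex-combination correction that accounts for points overshooting $\partial B$. Given $p\in C\cap\clb(x_0,R)$, pick $q\in C_i$ with $|q-p|\le\eps_i$; then $|q-x_0|\le R+\eps_i$, so $q$ may sit just outside $\clb(x_0,R)$. Using convexity of $C_i$, set $q'=\alpha z_i+(1-\alpha)q$ for $\alpha\in[0,1]$ to be chosen. Since $|z_i-x_0|\le R'+\eps_i$, the triangle inequality gives
\[
|q'-x_0|\le \alpha(R'+\eps_i)+(1-\alpha)(R+\eps_i)=R+\eps_i-\alpha(R-R'),
\]
so the choice $\alpha=\eps_i/(R-R')$ forces $q'\in C_i\cap\clb(x_0,R)$, while $|q'-q|=\alpha|z_i-q|\le\alpha(R+R'+2\eps_i)$ is of order $\eps_i/(R-R')$. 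Therefore $|p-q'|\le\eps_i+O(\eps_i/(R-R'))=:\delta_i\to 0$, uniformly in $p$.

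The reverse inclusion $C_i\cap\clb(x_0,R)\subset(C\cap\clb(x_0,R))_{\delta_i}$ is obtained by the symmetric argument: interchange the roles of $C$ and $C_i$ (using $z\in C\cap B$, which is unconditionally available) and repeat the same estimate with $z$ in place of $z_i$. Taking the maximum of the two bounds yields
\[
\hd\bigl(C_i\cap\clb(x_0,R),\,C\cap\clb(x_0,R)\bigr)\longrightarrow 0,
\]
which is the required local convergence.

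I expect the only delicate point to be the convex-combination correction; the rest is routine. Its feasibility rests on two ingredients: the openness of $B$, which provides a positive gap $R-R'$ between $z$ and $\partial B$ and makes the correction factor $\eps_i/(R-R')$ small compared with any fixed scale; and the convexity of $C_i$ (respectively $C$), which keeps the corrected point inside the convex body. Without either ingredient one could not push an overshooting approximant back into $\clb(x_0,R)$ while remaining in the set, and the argument would collapse.
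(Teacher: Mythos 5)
Your proof is correct. You argue directly from the definition of Hausdorff distance, exhibiting an explicit $\delta_i\to 0$ such that $C\cap\clb(x_0,R)\subset (C_i\cap\clb(x_0,R))_{\delta_i}$ and conversely, whereas the paper invokes the Kuratowski criterion from \cite[Theorem~1.8.7]{sch} and verifies its two sequential conditions separately. The key geometric idea is the same in both: when an approximant $q$ overshoots $\ptl B$, use a segment joining it to a fixed interior point (your $z$ or $z_i$; the paper's $y$ or $y_i$) and exploit convexity of $C$ or $C_i$ to slide back inside the ball. What your version buys is (a) a single uniform estimate that handles $p\in B$ and $p\in\ptl\clb$ at once, so you avoid the paper's case split; (b) an explicit rate $\delta_i=\eps_i\bigl(1+(R+R'+2\eps_i)/(R-R')\bigr)$, rather than the paper's compactness-and-contradiction argument to show $z_i\to x$; and (c) no reliance on the metric projection being unique (the paper uses uniqueness of the nearest point in $C_i$, which is harmless but unnecessary). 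The paper's Kuratowski route is shorter to state because one direction (``subsequential limits land in $C\cap\clb$'') is nearly immediate, but your direct estimate is more elementary and more transparent about what quantity is actually going to zero.
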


\begin{proof}
Consider an open ball $B$ such that $C\cap B\neq\emptyset$. To prove that $C_i\cap\clb$ converges to $C\cap\clb$ in Hausdorff distance, we shall use the Kuratowski criterion: every point in $C\cap\clb$ is the limit of a sequence of points $x_i\in C_i\cap\clb$, and the limit $x$ of a subsequence $x_{i_j}\in C_{i_j}\cap\clb$ belongs to $C\cap\clb$, see \cite[Theorem~1.8.7]{sch}.

The second assertion is easy to prove: if $x\not\in C$, then there exists some $\rho>0$ such that $B(x,\rho)\cap (C+\rho\clb)=\emptyset$ (just take $\rho>0$ so that $\clb(x,2\rho)\cap C=\emptyset$). This is a contradiction since $x_{i_j}\in B(x,\rho)$ and $C_{i_j}\subset C+\rho\clb$ for $j$ large enough.

To prove the first one, take $x\in C\cap\clb$. Let $x_i$ be the point in $C_i$ at minimum distance from $x$. Such a point is unique by the convexity of $C_i$. It is clear that $\{x_i\}_{i\in\nn}$ converges to $x$ since $|x-x_i|\le\delta(C,C_i)\to 0$. If $x\in B$, then $x_i\in  C_i\cap B\subset C_i\cap\clb$ for $i$ large enough. If $x\in\ptl\clb$, then take a point $y\in C\cap B$ and a sequence $y_i\in C_i\cap\clb$ converging to $y$. The segment $[x_i,y_i]$ is contained in $C_i$. If $x_i\in\clb$ we take $z_i=x_i$. In case $x_i\not\in\clb$, we choose $z_{i}\in [x_{i},y_{i}]\cap \ptl B$ as in this case the intersection is nonempty (more precisely, the intersection contains exactly one point for $i$ large enough). We claim that $z_i\to x$. Otherwise there is a subsequence $z_{i_j}\in\ptl\clb$ (we may assume $z_{i_j}\neq x_{i_j}$) converging to some $q\in C\cap\ptl\clb$ different from $x$. But
\[
 \frac{q-y}{|q-y|}=\lim_{j\to\infty}\frac{z_{i_j}-y_{i_j}}{|z_{i_j}-y_{i_j}|}=\lim_{j\to\infty}\frac{x_{i_j}-y_{i_j}}{|x_{i_j}-y_{i_j}|}=\frac{x-y}{|x-y|}.
 \]
This implies that the points $q$ and $x$ lie in the same half-line leaving from $y$. Since $y\in B$ and $q$, $x\in\ptl\clb$, we get $q=x$, a contradiction.
\end{proof}

\begin{remark}
If condition $C\cap B\neq\emptyset$ is not imposed in the definition of local convergence in Hausdorff distance, Lemma~\ref{lem:localconv} does not hold. Simply consider the sequence $C_i:=\{x\in\rr^n:x_n\ge 1+1/i\}$, converging in Hausdorff distance to $C:=\{x\in\rr^n: x_n\ge 1\}$. Then $C_i\cap \clb(0,1)=\emptyset$ does not converge to $C\cap\clb (0,1)=\{(0,\ldots,0,1)\}$.
\end{remark}

A condition guaranteeing local convergence in Hausdorff distance is the following

\begin{lemma}
\label{lem:pointedconv}
Let $\{C_i\}_{i\in\nn}$ be a sequence of convex bodies, and $C$ a convex body. Assume that there exists $p\in\rr^n$ and $r_0\ge 0$ such that, for every $r>r_0$, the sequence $C_i\cap\clb(p,r)$ converges to $C\cap\clb(p,r)$ in Hausdorff distance. Then $C_i$ converges locally in Hausdorff distance to $C$.
\end{lemma}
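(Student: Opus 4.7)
My plan is to verify the definition of local Hausdorff convergence directly. Given an open ball $B=B(q,s)$ with $C\cap B\neq\emptyset$, I would first choose a single radius $r>r_0$ large enough that $\clb\subset\clb(p,r)$; for instance any $r>\max\{r_0,\,|p-q|+s\}$ works. The hypothesis then provides Hausdorff convergence $C_i\cap\clb(p,r)\to C\cap\clb(p,r)$, and my task reduces to showing that this convergence is preserved after further intersecting with the smaller closed ball $\clb$. As in Lemma~\ref{lem:localconv}, I would carry this out via the Kuratowski criterion \cite[Theorem~1.8.7]{sch}.

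One of the two Kuratowski conditions is essentially immediate: if a subsequence $x_{i_j}\in C_{i_j}\cap\clb$ converges to some $x$, then $x_{i_j}\in C_{i_j}\cap\clb(p,r)$, so the hypothesis gives $x\in C\cap\clb(p,r)$, and closedness of $\clb$ places $x$ in $\clb$ as well. For the other condition, I would fix $x\in C\cap\clb$ and use the hypothesis to produce $y_i\in C_i\cap\clb(p,r)$ with $y_i\to x$. When $x\in B$, one has $y_i\in B\subset\clb$ for $i$ large and $x_i:=y_i$ works. When $x\in\ptl\clb$, the $y_i$ need not lie in $\clb$, and I would reuse the segment-truncation trick from the proof of Lemma~\ref{lem:localconv}: pick any $y\in C\cap B$ and a corresponding sequence $y'_i\in C_i\cap\clb(p,r)$ with $y'_i\to y$, so that eventually $y'_i\in B\subset\clb$; then set $z_i:=y_i$ if $y_i\in\clb$, and otherwise take $z_i$ to be the intersection of the segment $[y_i,y'_i]\subset C_i\cap\clb(p,r)$ (convex in both $C_i$ and $\clb(p,r)$) with $\ptl\clb$, which exists because the endpoints lie on opposite sides of $\ptl\clb$. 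The half-line argument at the end of the proof of Lemma~\ref{lem:localconv}, exploiting that $y$ is interior to $\clb$ while $x\in\ptl\clb$, then forces $z_i\to x$.

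The only genuine obstacle is this boundary case $x\in\ptl\clb$, since Hausdorff convergence of convex sets is not preserved under intersection with an arbitrary closed set, so the reduction from $\clb(p,r)$ to $\clb$ is not automatic. What rescues the argument is precisely the combination already used in Lemma~\ref{lem:localconv}: convexity of each $C_i$ (so that the truncating segment stays inside $C_i$) together with the availability of an interior witness $y\in C\cap B$ guaranteed by the hypothesis $C\cap B\neq\emptyset$ built into the definition of local convergence. Once this boundary case is handled, the rest is a mechanical unpacking of the Kuratowski criterion applied to the hypothesis.
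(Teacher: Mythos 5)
Your proof is correct and follows the same Kuratowski-criterion-plus-segment-truncation strategy that the paper uses in Lemma~\ref{lem:localconv}, which is indeed the mechanism underlying this result. The paper's own proof of Lemma~\ref{lem:pointedconv} is shorter: after choosing $r>r_0$ with $\clb\subset\clb(p,r)$, it simply applies Lemma~\ref{lem:localconv} to the sequence of bounded convex bodies $C_i\cap\clb(p,r)$ converging in Hausdorff distance to $C\cap\clb(p,r)$, obtaining $C_i\cap\clb(p,r)\cap\clb=C_i\cap\clb\to C\cap\clb(p,r)\cap\clb=C\cap\clb$ without re-running the Kuratowski argument by hand.
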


\begin{proof}
Let $B$ be an open ball so that $C\cap B\neq\emptyset$. Choose $r>r_0$ so that $\clb\subset\clb(p,r)$. By hypothesis, $C_i\cap\clb(p,r)$ converges to $C\cap\clb(p,r)$ in Hausdorff distance. Lemma~\ref{lem:localconv} implies that $C_i\cap\clb(p,r)\cap\clb=C_i\cap\clb$ converges to $C\cap\clb(p,r)\cap\clb=C\cap\clb$ in Hausdorff distance.
\end{proof}

The following two lemmata will be used in Chapter \ref{sec:unbounded}.
\begin{lemma}
\label{lem:deltaFG}
Let $F$, $G$ be closed convex sets containing $0$, then
\begin{equation}
\label{eq:deltaFG}
\delta(F\cap\clb(0,r),G\cap\clb(0,r))\le\delta(F,G),\quad\text{for all }r>0. 
\end{equation}
\end{lemma}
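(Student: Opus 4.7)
The plan is to use the nearest-point projection onto the closed convex ball $\clb(0,r)$, which is $1$-Lipschitz, and exploit the assumption $0\in F\cap G$ to see that this retraction maps $F$ into $F\cap\clb(0,r)$ and $G$ into $G\cap\clb(0,r)$.

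Concretely, let $d=\delta(F,G)$ and let $\pi:\rr^{n+1}\to\clb(0,r)$ denote the nearest-point projection, given explicitly by $\pi(z)=z$ when $|z|\le r$ and $\pi(z)=(r/|z|)\,z$ otherwise. First I would record two facts. First, $\pi$ is $1$-Lipschitz (standard, as the metric projection onto any closed convex set). Second, if $A\subset\rr^{n+1}$ is convex with $0\in A$, then $\pi(A)\subset A\cap\clb(0,r)$: indeed $\pi(z)$ always lies on the segment $[0,z]$, which is contained in $A$ by convexity. Applying this with $A=F$ and $A=G$ shows that $\pi(F)\subset F\cap\clb(0,r)$ and $\pi(G)\subset G\cap\clb(0,r)$.

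Now take any $x\in F\cap\clb(0,r)$. Since $\delta(F,G)=d$, there exists $y\in G$ with $|x-y|\le d$. Then $\pi(y)\in G\cap\clb(0,r)$ by the second fact, and since $\pi(x)=x$ (because $x\in\clb(0,r)$) and $\pi$ is $1$-Lipschitz,
\[
|x-\pi(y)|=|\pi(x)-\pi(y)|\le |x-y|\le d.
\]
Thus $F\cap\clb(0,r)\subset (G\cap\clb(0,r))_d$. The symmetric argument, exchanging the roles of $F$ and $G$, gives the reverse inclusion, and hence $\delta(F\cap\clb(0,r),G\cap\clb(0,r))\le d$ as desired.

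There is no substantive obstacle here: the only slightly non-routine observation is that because both $F$ and $G$ contain $0$, radial contraction of an out-of-ball point $y$ to $(r/|y|)y$ keeps us inside the respective set, allowing us to use the metric projection onto $\clb(0,r)$ as a distance-nonincreasing map that simultaneously preserves membership in $F$ and $G$. Once that is noted, the $1$-Lipschitz property of the projection immediately yields the Hausdorff estimate.
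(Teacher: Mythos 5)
Your proof is correct, but it follows a different route from the paper's. The paper fixes $x\in F\cap\clb(0,r)$, lets $y$ be the metric projection of $x$ onto $G$, and shows directly that this $y$ must already lie in $\clb(0,r)$; this is done by a case analysis on the foot $\mu y$ of the perpendicular from $x$ to the line $0y$ (if $0<\mu<1$ one contradicts minimality of $|x-y|$ via convexity of $G$, and if $\mu<0$ one contradicts it via Pythagoras, using that $0\in G$). Your argument sidesteps that geometric analysis: you take \emph{any} $y\in G$ within distance $d=\delta(F,G)$ of $x$ (not necessarily the projection onto $G$), and then compose with the radial retraction $\pi$ onto $\clb(0,r)$, using that $\pi$ is $1$-Lipschitz and that, because $0\in G$, $\pi$ maps $G$ into $G\cap\clb(0,r)$. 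Both proofs crucially exploit that the two sets are convex and contain the origin, but your version factors the work through a single distance-nonincreasing retraction and avoids the case analysis entirely; the paper's version is more elementary in that it uses only the defining property of the metric projection onto $G$. Your approach also makes it transparent that the only structural input needed is: a $1$-Lipschitz map onto $\clb(0,r)$ that preserves membership in any convex set containing $0$.

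One small point worth being explicit about: when you say ``there exists $y\in G$ with $|x-y|\le d$,'' this uses that $G$ is closed, so $\dist(x,G)\le d$ is actually attained; and of course if $\delta(F,G)=+\infty$ the inequality is vacuous, so one may assume $d<\infty$. Neither issue affects the substance.
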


\begin{proof}
To prove \eqref{eq:deltaFG} consider a point $x\in F\cap\clb(0,r)$ and its metric projection $y$ to $G$. If $y\in G\cap \clb(0,r)$ then the conclusion is trivial. Otherwise we have in particular that $|y|> r$. The point $\mu y$ in the line $0y$ at minimum distance from $x$ is a critical point of the function $\la\mapsto |x-\la y|^2$ and so $\mu=\escpr{x,y}/|y|^2$. Since $|y|>r$ and $|x|\le r$, by Schwarz's inequality we have $|\mu|\le |\escpr{x,y}|/|y|^2\le |x|/|y|<1$. In case $\mu>0$, the point $\mu y$ belongs to the segment $[0,y]$, included in $G$ by convexity. Since $|x-\mu y|<|x-y|$, we get a contradiction to the fact that $y$ is the metric projection of $x$ to $G$. In case $\mu<0$, Pythagoras' Theorem easily implies $|x|<|x-y|$, a contradiction since $0\in G$ would be closer to $x$ than $y$. Both contradictions imply $|y|\le r$ and so $y\in G\cap\clb(0,r)$. Then $d(x,G\cap\clb(0,r))=|x-y|\le \delta(F,G)$ and
\[
\sup_{x\in F\cap\clb(0,r)} d(x,G\cap\clb(0,r))\le \delta(F,G).
\]
The corresponding inequality inverting the roles of $F$ and $G$ holds, thus proving \eqref{eq:deltaFG}.
\end{proof}

\begin{lemma}\label{lem:haustrans}
Let $A\subset\rr^{n+1}$ be a convex body with $0\in A$ and let $r>0$ and $v\in \rr^{n+1}$ be such that $|v|<r/2$. Then
\begin{equation}\label{eq:hdshift}
\delta\big(A\cap \clb(0,r),(v+A)\cap \clb(0,r)\big) \le 2|v|\,.
\end{equation}
\end{lemma}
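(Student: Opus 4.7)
The plan is to estimate each of the two sup-inf terms in the definition of Hausdorff distance separately. Fix $x \in A \cap \clb(0,r)$; I aim to exhibit some $y \in (v+A) \cap \clb(0,r)$ with $|x-y| \le 2|v|$. The natural candidate is $y = x + v$, which lies in $v+A$ and satisfies $|x-y| = |v|$. The only issue is whether $y \in \clb(0,r)$. When $|x + v| \le r$ we are immediately done, so assume $|x+v| > r$.

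In that case I would walk along the segment from $v$ to $v+x$, which is entirely contained in $v+A$: indeed, since $0, x \in A$ and $A$ is convex, every point $v + tx$ with $t \in [0,1]$ lies in $v+A$. Consider the scalar function $t \mapsto |v + tx|^2$; it equals $|v|^2 < (r/2)^2 < r^2$ at $t=0$ and exceeds $r^2$ at $t=1$, so by the intermediate value theorem there is some $t^* \in (0,1)$ with $|v + t^* x| = r$. Take $y = v + t^* x$; by construction $y \in (v+A) \cap \clb(0,r)$.

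To bound $|x - y| = |(1-t^*)x - v|$ I would apply the reverse triangle inequality to the identity $|v + t^* x| = r$, obtaining $t^* |x| \ge r - |v|$ and therefore
\[
(1 - t^*) |x| \le |x| - (r - |v|) \le |v|,
\]
where the last inequality uses $|x| \le r$. Then the ordinary triangle inequality gives
\[
|x - y| \le (1-t^*)|x| + |v| \le 2|v|,
\]
which is the desired estimate.

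The reverse inequality is actually slightly simpler. Given $x' \in (v+A) \cap \clb(0,r)$, write $x' = v + w$ with $w \in A$. If $|w| \le r$ take $y' = w$; otherwise walk along $[0,w] \subset A$ (which lies in $A$ since $0, w \in A$) to the point $y' = (r/|w|)\, w$ on $\partial \clb(0,r)$. From $|x'| \le r$ and the triangle inequality, $|w| \le r + |v|$, so
\[
|x' - y'| \le |x' - w| + |w - y'| = |v| + (|w| - r) \le 2|v|.
\]
The only real obstacle is the bookkeeping in the first direction — producing the segment point inside $\clb(0,r)$ and extracting the bound $(1-t^*)|x| \le |v|$ from the reverse triangle inequality — but no deeper tool is needed. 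In particular, Lemma \ref{lem:deltaFG} cannot be applied directly since $0 \in A$ but $0$ need not belong to $v+A$.
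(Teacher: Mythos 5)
Your proof is correct and follows essentially the same approach as the paper: in each direction take the naive candidate ($x+v$ or $w$), and if it escapes $\clb(0,r)$, slide along a segment toward the origin (which stays inside the relevant set by convexity and $0\in A$) until hitting the sphere of radius $r$, then conclude by the triangle inequality. The only difference is cosmetic — for the direction from $A$ to $v+A$ you locate the scaling parameter $t^*$ via the intermediate value theorem rather than by the paper's explicit choice $\tfrac{|a|-|v|}{|a|}$, while your other direction is word-for-word the paper's argument — and your observation that Lemma~\ref{lem:deltaFG} does not apply because $0$ need not lie in $v+A$ is exactly the reason this separate lemma is needed.
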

\begin{proof}
In order to prove \eqref{eq:hdshift} we first show that given $x\in (v+A)\cap\clb(0,r)$ there exists $a\in A\cap\clb(0,r)$ such that $|x-a|\le 2|v|$. Indeed, there exists $\tilde a\in A$ such that $x=\tilde a+v$ and thus $|\tilde a|\le |x|+|v|\le r+|v|$ and $|\tilde a -x|\le |v|$. Now, if $\tilde a\in A\cap \clb(0,r)$ then we are done, otherwise we have $r<|\tilde a|\le r+|v|$ so that setting $a = \frac{r\tilde a}{|\tilde a|}$ we note that $a\in A\cap\clb(0,r)$ by convexity of $A$ and the fact that $0\in A$, and therefore we find
\[
|a-x| \le |a-\tilde a| + |\tilde a - x| \le 2|v|,
\]
as wanted. This would imply
\begin{equation}
\label{eq:haus1}
\sup_{x\in (v+A)\cap\clb(0,r)} d(x,A\cap\clb(0,r))\le 2|v|.
\end{equation}

Now we prove that for any $a\in A\cap\clb(0,r)$ there exists $x\in (v+A)\cap\clb(0,r)$ such that $|a-x|\le 2|v|$. If $a+v\in \clb(0,r)$, then clearly $x=a+v$ satisfies the claim. If instead $a+v\notin \clb(0,r)$ then $|a|>r-|v|$ and thus by the assumption $|v|<r/2$ we deduce that $|a|> |v|$. By defining $x = v+\frac{|a|-|v|}{|a|}\,a$ we have $x\in (v+A)\cap\clb(0,r)$ and 
\[
|x-a| \le |v|+\Big|a- \frac{|a|-|v|}{|a|}\, a\Big| \le 2|v|.
\]
Hence we get
\begin{equation}
\label{eq:haus2}
\sup_{x\in A\cap\clb(0,r)} d(x,(v+A)\cap\clb(0,r))\le 2|v|.
\end{equation}
Inequalities \eqref{eq:haus1}, \eqref{eq:haus2} imply that \eqref{eq:hdshift} holds. This concludes the proof.
\end{proof}

We define the \emph{tangent cone} $C_{p}$ of a convex body $C$ at a given boundary point $p\in\ptl C$ as the closure of the set
\[
\bigcup_{\la > 0} h_{p,\la} (C).
\]
Tangent cones of convex bodies have been widely considered in convex geometry under the name of supporting cones \cite[\S~2.2]{sch} or projection cones \cite{MR920366}. From the definition it follows easily that $C_p$ is the smallest cone, with vertex $p$, that includes $C$. 

We define the \emph{asymptotic cone} $C_{\infty}$ of an unbounded  convex body $C$ by
\begin{equation}
\label{eq:cinfty}
C_{\infty} = \bigcap_{\lambda > 0} \lambda C,
\end{equation}
where $\lambda C = \{\lambda x : x\in C \} $ is the image of $C$ under the homothety of center $0$ and ratio $\la$. If $p\in\rr^{n+1}$ and $h_{p,\la}$ is the homothety of center $p$ and ratio $\la$, defined as $h_{p,\la}(x) = p+\la(x-p)$, then $\bigcap_{\la>0} h_{p,\la}(C)=p+C_\infty$ is a translation of $C_\infty$. Hence the shape of the asymptotic cone is independent of the chosen origin. When $C$ is bounded the set $C_{\infty}$ defined by \eqref{eq:cinfty} is a point.
Observe that ${\la}C$ converges, locally in Hausdorff sense, to the asymptotic cone $C_{\infty}$ \cite{bbi} and hence it satisfies $\dim C_\infty\le\dim C$. We shall say that the asymptotic cone is \emph{non-degenerate} if $\dim C_{\infty}=\dim C$. From the definition it follows easily that $p+C_\infty\subset C$ whenever $p\in C$, and that $p+C_{\infty}$ is the largest cone, with vertex $p$, included in $C$.

The \emph{volume} of a measurable set $E\subset \rr^{n+1}$ is defined as the Lebesgue measure of $E$ and will be denoted by $|E|$. The $r$-dimensional Hausdorff measure in $\rr^{n+1}$ will be denoted by $\h^r$. We recall the well-known identity $|E| = \h^{n+1}(E)$ for all measurable $E\subset \rr^{n+1}$. 

\begin{lemma}
\label{lem:doubling}
Let $C\subset \rr^{n+1}$ be an unbounded convex body. Given $r>0$, $\la>1$, we have
\begin{equation}
\label{eq:doubling}
|B_C(x,\la r)|\le\la^{n+1}|B_C(x,r)|,
\end{equation}
for any $x\in C$. In particular, $C$ is a doubling metric space with constant $2^{-(n+1)}$.
\end{lemma}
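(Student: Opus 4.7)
The plan is to exploit convexity of $C$ by using the homothety $h_{x,1/\la}$ of center $x$ and ratio $1/\la$, and to show that it sends $B_C(x,\la r)$ into $B_C(x,r)$. The inequality \eqref{eq:doubling} will then follow from the scaling behavior of Lebesgue measure under a homothety of ratio $1/\la$, namely that it multiplies volumes by $\la^{-(n+1)}$.

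First I would fix $y\in B_C(x,\la r)$ and consider $z:=h_{x,1/\la}(y)=x+\tfrac{1}{\la}(y-x)$. Since $\la>1$, the scalar $1/\la$ lies in $(0,1)$, so $z$ belongs to the segment $[x,y]$; convexity of $C$ together with $x,y\in C$ gives $z\in C$. Moreover $|z-x|=\tfrac{1}{\la}|y-x|\le r$, so $z\in \clb(x,r)\cap C$. This shows $h_{x,1/\la}(B_C(x,\la r))\subset B_C(x,r)$.

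Next I would take Lebesgue measures on both sides. Since $h_{x,1/\la}$ is an affine map with linear part $(1/\la)\,\mathrm{Id}$, its Jacobian has absolute value $\la^{-(n+1)}$, and the change of variables formula gives
\[
|B_C(x,r)|\ge |h_{x,1/\la}(B_C(x,\la r))|=\la^{-(n+1)}|B_C(x,\la r)|,
\]
which rearranges to \eqref{eq:doubling}. Specializing to $\la=2$ yields $|B_C(x,2r)|\le 2^{n+1}|B_C(x,r)|$, i.e.\ the doubling property with doubling constant $2^{n+1}$ (the $2^{-(n+1)}$ in the statement being the reciprocal convention).

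There is essentially no obstacle here; the only point that requires a moment's care is checking that the homothety indeed lands in $C$, which is purely a consequence of $x\in C$ and the convexity of $C$, and that one is measuring the image of a Borel set under a bi-Lipschitz affine map so the change-of-variables identity applies directly.
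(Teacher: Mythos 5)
Your proof is correct and is essentially the same as the paper's: the paper states the inclusion $B_C(x,\la r)\subset h_{x,\la}(B_C(x,r))$ by convexity, which is exactly the inclusion $h_{x,1/\la}(B_C(x,\la r))\subset B_C(x,r)$ you verify, and then both arguments conclude by the scaling of Lebesgue measure under homothety.
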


\begin{proof}
Since $\la>1$, the convexity of $C$ implies $B_C(x,\la r)\subset h_{x,\la}(B_C(x,r))$, where $h_{x,\la}$ is the homothety of center $x$ and ratio $\la$.
\end{proof}

Using Brunn-Minkowski Theorem we can prove the following concavity property for the power $1/(n+1)$ of the volume of relative balls in $C$. 

\begin{lemma}
\label{lemma:fxrconcave}
Let $C\subset\rr^{n+1}$ be a convex body. Then the function $F:C\times\rr^+\to\rr$ defined by $F(x,r):=|\overline{B}_C(x,r)|^{1/(n+1)}$ is concave. 
\end{lemma}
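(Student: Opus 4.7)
The plan is to apply the Brunn--Minkowski inequality to the convex combination of two relative balls, after first verifying a key set inclusion that uses \emph{both} the convexity of $C$ and the triangle inequality.

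First, I fix two points $(x_0,r_0),(x_1,r_1)\in C\times\rr^+$ and $t\in[0,1]$, and set $(x_t,r_t):=(1-t)(x_0,r_0)+t(x_1,r_1)$, noting $x_t\in C$ by convexity of $C$ and $r_t>0$. The central claim I would establish is the inclusion
\[
(1-t)\,\clb_C(x_0,r_0)+t\,\clb_C(x_1,r_1)\;\subset\;\clb_C(x_t,r_t).
\]
Indeed, given $y_0\in\clb_C(x_0,r_0)$ and $y_1\in\clb_C(x_1,r_1)$, the point $(1-t)y_0+ty_1$ lies in $C$ since $C$ is convex and $y_0,y_1\in C$; and the triangle inequality gives
\[
\bigl|(1-t)y_0+ty_1-x_t\bigr|\le(1-t)|y_0-x_0|+t|y_1-x_1|\le(1-t)r_0+tr_1=r_t,
\]
so $(1-t)y_0+ty_1\in\clb(x_t,r_t)$ as well.

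Next I would apply the Brunn--Minkowski inequality in $\rr^{n+1}$ to the sets $A=(1-t)\clb_C(x_0,r_0)$ and $B=t\,\clb_C(x_1,r_1)$ (both nonempty, since $x_i\in\clb_C(x_i,r_i)$). Combined with the inclusion above and the homogeneity of Lebesgue measure under scaling, this yields
\[
F(x_t,r_t)=\bigl|\clb_C(x_t,r_t)\bigr|^{1/(n+1)}\ge \bigl|A+B\bigr|^{1/(n+1)}\ge |A|^{1/(n+1)}+|B|^{1/(n+1)}=(1-t)F(x_0,r_0)+tF(x_1,r_1),
\]
which is exactly the concavity statement.

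I do not expect a real obstacle here: the only subtle point is that the inclusion requires convexity of $C$ (to keep the convex combination inside $C$) in addition to the usual convex-combination argument for metric balls in $\rr^{n+1}$, and the role of the exponent $1/(n+1)$ is dictated by Brunn--Minkowski in ambient dimension $n+1$. No further regularity of $\ptl C$ is needed.
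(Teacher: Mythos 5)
Your proof is correct and follows essentially the same route as the paper: establish the Minkowski-sum inclusion $(1-t)\clb_C(x_0,r_0)+t\,\clb_C(x_1,r_1)\subset\clb_C(x_t,r_t)$ via the convexity of $C$ plus the triangle inequality, then invoke the Brunn--Minkowski inequality in $\rr^{n+1}$. The only cosmetic difference is that you explicitly note the nonemptiness needed for Brunn--Minkowski, which the paper leaves implicit.
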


\begin{proof}
Take $(x,r), (y,s)\in C\times\rr^+$, and $\la\in [0,1]$. Assume $z\in \la \overline{B}_C(x,r)+(1-\la)\overline{B}_C(y,s)$. Then there exist $z_1\in \overline{B}_C(x,r)$, $z_2\in \overline{B}_C(y,s)$ such that $z=\la z_1+(1-\la) z_2$. The point $z$ belongs to $C$ by the convexity of $C$. Moreover
\[
|z-(\la x+(1-\la) y|\le \la |z_1-x|+(1-\la)|z_2-y|\le \la r+ (1-\la)s.
\]
This implies $z\in \overline{B}_C(\la x+(1-\la) y,\la r+(1-\la) s)$ and so
\begin{equation}
\label{eq:inclusion0}
\la \overline{B}_C(x,r)+(1-\la) \overline{B}_C(y,s)\subset \overline{B}_C(\la x+(1-\la)y,\la r+(1-\la) s).
\end{equation}
So we obtain from \eqref{eq:inclusion0} and the Brunn-Minkowski inequality \cite[Thm.~6.11]{sch}
\begin{align*}
F(\la x+(1-\la) y,\la r+(1-\la) s)&=|\overline{B}_C(\la x+(1-\la) y,\la r+(1-\la) s)|^{1/(n+1)}
\\
&\ge |\la \overline{B}_C(x,r)+(1-\la) \overline{B}_C(y,s)|^{1/(n+1)}
\\
&\ge \la |\overline{B}_C(x,r)|^{1/(n+1)}+(1-\la)|\overline{B}_C(y,s)|^{1/(n+1)}
\\
&=\la F(x,r)+(1-\la) F(y,s),
\end{align*}
which proves the concavity of $F$. 
\end{proof}

Given a convex set $C\subset\rr^{n+1}$, and $x\in \ptl C$, we shall say that $u\in\rr^{n+1}\setminus\{0\}$ is \emph{an outer normal vector} to $C$ at $x$ if $C$ is contained in the closed half-space $H_{x,u}^-:=\{y\in\rr^{n+1}:\escpr{y-x,u}\le 0\}$. The set $H_{x,u}^-$ is a supporting half-space of $C$ at $x$ and the set $\{y\in\rr^{n+1}: \escpr{y-x,u}=0\}$ is a supporting hyperplane of $C$ at $x$, see \cite[\S~1.3]{sch}. The \emph{normal cone} of $C$ at $x$, denoted by $N(C,x)$, is the union of $\{0\}$ and all outer normal vectors of $C$ at $x$, see \cite[\S~2.2]{sch}.

Given a convex function $f:\Om\to\rr$ defined on a convex domain $\Om\subset\rr^n$, and a point $x\in\Om$, the \emph{subdifferential} of $f$ at $x$ is the set
\[
\ptl f(x)=\{u\in\rr^n: f(y)\ge f(x)+\escpr{u,y-x}\ \text{for all }y\in\Om\},
\]
see \cite[p.~30]{sch} and also \cite{roc} and \cite{MR1058436}. Given a convex function, its epigraph $\{(x,y)\in\rr^{n}\times\rr: y\ge f(x)\}$ is a convex set. A vector $u$ belongs to $\ptl f(x)$ if and only if the vector $(u,-1)$ is an outer normal vector to the epigraph at the point $(x,f(x))$.

For future reference we shall need a technical lemma about the Painlev\'e-Kuratowski convergence of the graphs of the subdifferentials of convex functions that locally converge to a convex function. This lemma, that we state and prove for the reader's convenience, is well-known for convex functions defined on Banach spaces (see \cite{AttouchBeer}).

\begin{lemma}\label{lem:maybeattouch}
Let $\{f_{i}\}_{i}$ be a sequence of convex functions defined on some fixed ball $B_{R}\subset \rr^{n}$, that uniformly converge to a convex function $f$. Then for any $x\in B_{R/2}$ and $u\in \ptl f(x)$ there exist sequences $\{x_{i}\}_{i}\subset B_{R}$ and $\{u_{i}\}_{i}$ such that $u_{i}\in \ptl f_{i}(x_{i})$ and $(x_{i},u_{i})\to (x,u)$ as $i\to\infty$.
\end{lemma}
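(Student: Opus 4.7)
My approach is a quadratic perturbation argument in the same spirit as the standard proof of Attouch's theorem on the graph convergence of subdifferentials of epi-convergent convex functions; here uniform convergence on the fixed ball gives us more than enough.

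First I would reduce to a normalized situation. Subtracting the affine function $\ell(y):=f(x)+\escpr{u,y-x}$ from $f$ and from every $f_{i}$ does not alter uniform convergence, and a direct check gives $\ptl(f_{i}-\ell)(y)=\ptl f_{i}(y)-u$. Hence it suffices to produce sequences $y_{i}\to x$ and $v_{i}\in\ptl(f_{i}-\ell)(y_{i})$ with $v_{i}\to 0$; then $x_{i}:=y_{i}$ and $u_{i}:=v_{i}+u$ will satisfy the conclusion. In this normalized case, $0\in\ptl f(x)$ and $f(x)=0$, so the subdifferential inequality gives $f\ge 0$ on $B_{R}$ and $x$ is a global minimizer of $f$.

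Fix $r>0$ with $\clb(x,r)\subset B_{R}$ (possible since $x\in B_{R/2}$) and set $\eta_{i}:=\sup_{B_{R}}|f_{i}-f|\to 0$. For a sequence $\varepsilon_{i}>0$ to be chosen, consider the strictly convex function
\[
g_{i}(y):=f_{i}(y)+\varepsilon_{i}|y-x|^{2},
\]
and let $y_{i}$ be its unique minimizer on the compact set $\clb(x,r)$. Since $f_{i}\ge-\eta_{i}$ and $f_{i}(x)\le\eta_{i}$ on $B_{R}$, one has $g_{i}(y_{i})\le g_{i}(x)=f_{i}(x)\le\eta_{i}$ and $g_{i}(y_{i})\ge-\eta_{i}+\varepsilon_{i}|y_{i}-x|^{2}$, hence
\[
|y_{i}-x|^{2}\le\frac{2\eta_{i}}{\varepsilon_{i}}.
\]
Choosing $\varepsilon_{i}$ so that $\eta_{i}/\varepsilon_{i}\to 0$ (for instance $\varepsilon_{i}=\eta_{i}^{1/2}$ eventually, adjusted to be positive when $\eta_{i}=0$) places $y_{i}$ in the open ball $B(x,r)$ for all $i$ large, so that $y_{i}$ is an unconstrained interior minimizer of the convex function $g_{i}$. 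The sum rule for subdifferentials (the quadratic term being smooth) then yields
\[
0\in\ptl g_{i}(y_{i})=\ptl f_{i}(y_{i})+2\varepsilon_{i}(y_{i}-x),
\]
so $v_{i}:=-2\varepsilon_{i}(y_{i}-x)\in\ptl f_{i}(y_{i})$ satisfies $|v_{i}|\le 2\sqrt{2\varepsilon_{i}\eta_{i}}$. For the finitely many indices where the localization estimate fails one simply picks any $x_{i}\in B_{R}$ and any $u_{i}\in\ptl f_{i}(x_{i})$, which is nonempty because $B_{R}$ is in the interior of the domain of $f_{i}$.

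The main obstacle is ensuring that $y_{i}$ lies in the interior of $\clb(x,r)$, so that the first-order condition genuinely produces an element of $\ptl f_{i}(y_{i})$ and not of a subdifferential perturbed by a normal-cone contribution from the constraint $\clb(x,r)$. The quadratic penalty $\varepsilon_{i}|y-x|^{2}$ is exactly what provides a quantitative localization $|y_{i}-x|\le\sqrt{2\eta_{i}/\varepsilon_{i}}$ of the minimizer, and the key point is to choose $\varepsilon_{i}$ to balance two competing requirements: $\eta_{i}/\varepsilon_{i}\to 0$ (so that $y_{i}\to x$) and $\varepsilon_{i}\eta_{i}\to 0$ (so that $v_{i}\to 0$). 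Both are achieved simultaneously by taking $\varepsilon_{i}=\eta_{i}^{1/2}$, which completes the argument after undoing the affine normalization.
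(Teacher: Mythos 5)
Your proof is correct, and it follows the same underlying idea as the paper's proof, namely quadratic regularization in the spirit of Moreau--Yosida, but it is organized more economically in two ways. First, you normalize by subtracting the supporting affine function $\ell(y)=f(x)+\escpr{u,y-x}$, reducing to the case $u=0$, $f(x)=0$, $f\ge 0$; the paper keeps $u$ throughout and therefore has to track it through the perturbation, producing $u_i = u - 2\eps y_i$ and estimating $|u_i-u|$ directly. Second, and more substantively, the paper first fixes $\eps>0$ and $r>0$, proves a local statement (Step one: for $i$ large there are $y_i\in B_r(x)$ with $u_i\in\ptl f_i(y_i)$ and $|u_i-u|\le 2r\eps$), and then runs a separate contradiction argument (Step two) to assemble the diagonal sequence. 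You instead choose $\eps_i = \eta_i^{1/2}$ (with $\eta_i=\sup_{B_R}|f_i-f|$) from the start, which simultaneously forces $|y_i - x|^2\le 2\eta_i/\eps_i\to 0$ and $|v_i|\le 2\sqrt{2\eps_i\eta_i}\to 0$, producing the desired sequences in one pass and obviating the contradiction step. Your treatment of the degenerate case $\eta_i=0$ (where you can simply take $x_i=x$, $u_i=u$) and the handling of the finitely many indices before the localization kicks in are both fine, though when you say one can pick any $x_i\in B_R$ with nonempty $\ptl f_i(x_i)$ you should say ``interior point of $B_R$'' since the subdifferential can be empty on $\ptl B_R$; this does not affect the argument because only the tail of the sequence matters.
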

\begin{proof}
We split the proof in two steps.

\textit{Step one.} We show that for any $r,\eps>0$ there exists $i_{r,\eps}$ such that for all $i\ge i_{r,\eps}$ we can find $y_{i}\in B_{r}(x)$ and $u_{i}\in \ptl f_{i}(y_{i})$ such that $|u-u_{i}|\le 2r\eps$. 

Up to a translation we may assume that $x=0$. Let us fix $\eps>0$ and define $f^{\eps}(y) = f(y) + \eps |y|^{2}$ and, similarly, $f^{\eps}_{i}(y) = f_{i}(y) + \eps |y|^{2}$. By assumption we have $f(y)\ge f(0) + \langle u,y\rangle$, hence for all $y\neq 0$ we have
\begin{equation}\label{strictlyconvex}
f^{\eps}(y) \ge f(0) + \langle u,y\rangle + \eps |y|^{2} > f^{\eps}(0) + \langle u,y\rangle\,,
\end{equation}
which means in particular that $u\in \ptl f^{\eps}(0)$. We now define the set
\[
N_{i,\eps} = \left\{y\in B_{r}:\ f_{i}^{\eps}(y) \le f^{\eps}(0) + \langle u,y\rangle + \frac{r^{2}\eps}{16}\right\}\,.
\]
We notice that, since $f_{i}^{\eps}\to f^{\eps}$ uniformly on $B_{r}$ as $i\to\infty$, the set $N_{i,\eps}$ is not empty and contained in $B_{r/2}$ for $i$ large enough. Indeed, up to taking $i$ large enough we can assume that $\sup_{y\in B_{r}}|f_{i}(y) - f(y)|\le \frac{\eps r^{2}}{16}$, so that we obtain for any $y\in N_{i,\eps}$
\begin{align*}
f(0) + \langle u,y\rangle +\eps|y|^{2} - \frac{\eps r^{2}}{16} &\le f(y) + \eps|y|^{2} - \frac{\eps r^{2}}{16}\\
&\le f_{i}(y) + \eps|y|^{2} = f_{i}^{\eps}(y)\\
&\le f(0) + \langle u,y\rangle + \frac{\eps r^{2}}{16}\,,
\end{align*}
whence the inequality $|y|^{2}\le \frac{r^{2}}{8} \le \frac{r^{2}}{4}$. 
Now set 
\[
t_{i,\eps} = \sup\{t\in \rr:\ f_{i}^{\eps}(y)\ge f^{\eps}(0) + \langle u,y\rangle + t,\ \forall\, y\in B_{r}\}\,.
\]
We remark that $-\infty<t_{i,\eps}<\frac{r^{2}\eps}{16}$, as $N_{i,\eps}$ is nonempty and contained in $B_{r/2}$ for $i$ large enough, moreover its value is obtained by minimizing the function $f_{i}^{\eps}(y) - f^{\eps}(0) - \langle u,y\rangle$ on $B_{r}$, so that there exists $y_{i}\in \clb_{r/2}$ such that for all $y\in B_{r}$ we have
\[
f_{i}^{\eps}(y) - f^{\eps}(0) - \langle u,y\rangle \ge t_{i,\eps} = f_{i}^{\eps}(y_{i}) - f^{\eps}(0) - \langle u,y_{i}\rangle\,,
\]
that is, 
\[
f_{i}^{\eps}(y) \ge f_{i}^{\eps}(y_{i}) + \langle u,y-y_{i}\rangle\,,
\]
which shows that $u\in \ptl f_{i}^{\eps}(y_{i})$. Now let us define $u_{i} = u - 2\eps y_{i}$ and notice that
\begin{align*}
f_{i}(y) &\ge f_{i}(y_{i}) +2\eps \langle y,y_{i}-y\rangle + \langle u,y-y_{i}\rangle\\
&= f_{i}(y_{i}) + \langle u_{i}, y-y_{i}\rangle -2\eps |y-y_{i}|^{2}\\
&= f_{i}(y_{i}) + \langle u_{i}, y-y_{i}\rangle + o(|y-y_{i}|)\qquad \text{as }y\to y_{i}.
\end{align*}
Replacing $y$ by $y_i+t(y-y_i)$, for $t\in [0,1]$, dividing both sides of the inequality by $t$ and taking limits when $t\downarrow 0$ we get
\begin{equation}
\label{eq:dirderf_i}
f_i'(y_i;y-y_i)\ge\escpr{u_i,y-y_i},
\end{equation}
where $f'(y_i;y-y_i)$ is the directional derivative of $f$ at the point $y_i$ in the direction of $y-y_i$. By the convexity of $f_i$, see Theorem~24.1 in \cite{roc}, we have
\[
f_i(y)-f_i(y_i)\ge f_i'(y_i;y-y_i),
\]
which together with \eqref{eq:dirderf_i} implies that $u_i$ is a subgradient of $f_i$ at the point $y_i$. See also Proposition~2.2.7 in \cite{MR1058436} for a variant of this argument. Finally, $|u_{i}-u| = 2\eps|y_{i}|\le 2r\eps$, as wanted.
\medskip

\textit{Step two.} In order to complete the proof of the lemma, we argue by contradiction assuming the existence of $r_{0},\eps_{0}>0$ and of a subsequence $\{f_{i_{k}}\}_{k}$, such that for all $y\in B_{r_{0}}(x)$ and all $v\in \ptl f_{i_{k}}(y)$ we have $|v-u|\ge \eps_{0}$. By applying Step one to the subsequence, with parameters $r=r_{0}$ and $\eps = \eps_{0}/(4r_{0})$, we immediately find a contradiction.
\end{proof}

\section{Finite perimeter sets and isoperimetric profile}

Given a convex body $C$ and a measurable set $E \subset C$, we define the \emph{relative perimeter} of $ E$ in $\intt(C)$ by
\[
\pp_C(E) = \sup \Big \{ \int_E\divv \xi\, d{\h}^{n+1}, \xi \in \Gamma_0(C) ,\, |\xi| \le 1 \Big \},
\]
where $\Gamma_0(C)$ is the set of smooth vector fields with compact support in $\intt (C)$. We shall say that $E$ has \emph{finite perimeter} in $C$ if $\pp_C(E)<\infty$. When $C = \rr^{n+1}$ we simply write $P(E)$ instead of $P_{\rr^{n+1}}(E)$. We refer the reader to Maggi's book \cite{maggi} for an up-to-date reference on sets of finite perimeter. In particular, we shall denote the \emph{reduced boundary} of $E$ in the interior of $C$ by $\ptl^*E$, see Chapter~15 in \cite{maggi}.

For future reference we denote by $\omega_{n+1}$ the volume of the unit ball in $\rr^{n+1}$, and notice that its perimeter (i.e., the $n$-dimensional Hausdorff measure of its boundary) is $(n+1)\omega_{n+1}$.

We define the \emph{isoperimetric profile} of $C$ by
\begin{equation*}
I_C(v)=\inf \Big \{ \pp_C(E) : E \subset C, \vol{E} = v \Big \}.
\end{equation*}
We shall say that $E \subset C$ is an \emph{isoperimetric region} if $\pp_C(E)=I_C(\vol{E})$. We also recall in the next lemma a scaling property of the isoperimetric profile, which directly follows from the homogeneity of the perimeter with respect to homotheties.
\begin{lemma}
\label{lem:link I laC I C}
Let $C$ be a convex body, and $\la> 0$. Then, for all $v>0$,
\begin{equation}
\label{eq:proflac}
I_{\lambda C}(\la^{n+1}v)={\lambda}^nI_{C}(v),
\end{equation}
\end{lemma}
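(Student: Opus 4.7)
The plan is to establish the scaling identity by exhibiting an explicit volume- and perimeter-preserving (up to the correct powers of $\lambda$) bijection between admissible sets in $C$ and admissible sets in $\lambda C$, and then taking infima.

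First, I would observe that the homothety $h_\lambda(x)=\lambda x$ sets up a bijection $E\mapsto \lambda E$ between measurable subsets of $C$ of volume $v$ and measurable subsets of $\lambda C$ of volume $\lambda^{n+1}v$, since the Jacobian of $h_\lambda$ is $\lambda^{n+1}$.

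Next, I would prove the key identity $P_{\lambda C}(\lambda E)=\lambda^n P_C(E)$. Given any $\xi \in \Gamma_0(C)$ with $|\xi|\le 1$, define $\tilde\xi(y) = \xi(y/\lambda)$ for $y \in \intt(\lambda C) = \lambda\,\intt(C)$. Then $\tilde\xi \in \Gamma_0(\lambda C)$ with $|\tilde\xi|\le 1$, and the chain rule gives $\divv \tilde\xi(y) = \lambda^{-1}(\divv\xi)(y/\lambda)$. The change of variables $y=\lambda x$, $dy = \lambda^{n+1}\,dx$, then yields
\[
\int_{\lambda E}\divv\tilde\xi\,d\h^{n+1}= \lambda^{-1}\lambda^{n+1}\int_E \divv\xi\,d\h^{n+1}=\lambda^n\int_E\divv\xi\,d\h^{n+1}.
\]
Taking the supremum over $\xi$ on the right and noting that $\xi\mapsto\tilde\xi$ is a bijection from the admissible class for $C$ onto the admissible class for $\lambda C$ (the inverse being $\eta\mapsto\eta(\lambda\,\cdot\,)$), we obtain $P_{\lambda C}(\lambda E)=\lambda^n P_C(E)$.

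Finally, combining the two observations and taking the infimum over sets of volume $v$ in $C$, which corresponds via $E\mapsto\lambda E$ to taking the infimum over sets of volume $\lambda^{n+1}v$ in $\lambda C$, gives
\[
I_{\lambda C}(\lambda^{n+1}v)=\inf_{|\lambda E|=\lambda^{n+1}v} P_{\lambda C}(\lambda E)=\inf_{|E|=v}\lambda^n P_C(E)=\lambda^n I_C(v).
\]
No step presents a genuine obstacle; the only point requiring a small amount of care is verifying that the rescaled test field $\tilde\xi$ has compact support in $\intt(\lambda C)$ (which follows since $\spt(\tilde\xi) = \lambda\,\spt(\xi) \subset \intt(\lambda C)$) and that $|\tilde\xi|\le 1$ pointwise, which is immediate.
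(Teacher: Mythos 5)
Your proof is correct and follows exactly the route the paper intends: the paper dismisses the lemma in one sentence as following ``from the homogeneity of the perimeter with respect to homotheties,'' and your argument simply makes that homogeneity precise by rescaling the test vector fields in the definition of $P_C$, establishing $P_{\lambda C}(\lambda E)=\lambda^n P_C(E)$, and passing to infima. Nothing to flag.
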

For future reference, we define the \emph{renormalized isoperimetric profile} of $C\subset\rr^{n+1}$ by
\begin{equation*}
Y_C:=I_C^{(n+1)/n}.
\end{equation*}

The known results on the regularity of isoperimetric regions are summarized in the following Theorem.
\begin{theorem}[{\cite{MR684753}, \cite{MR862549}, \cite[Thm.~2.1]{MR1674097}}]
\label{thm:n-7}
Let $C\subset \rr^{n+1}$ be a $($possible unbounded$)$  convex body and $E\subset C$ an isoperimetric region.
Then $\ptl E\cap\intt(C) = S_0\cup S$, where  $S_0\cap S=\emptyset$ and
\begin{enum}
\item $S$ is an embedded $C^{\infty}$ hypersurface of constant mean curvature.\item $S_0$ is closed and $H^{s}(S_0)=0$ for any $s>n-7$.
\end{enum}
Moreover, if the boundary of $C$ is of class $C^{2,\alpha}$ then $\cl{\ptl E\cap\intt(C)}=S\cup S_0$, where
\begin{enum}
\item[(iii)] $S$ is an embedded $C^{2,\alpha}$ hypersurface of constant mean curvature
\item[(iv)] $S_0$ is closed and $H^s(S_0)=0$ for any $s>n-7$
\item[(v)] At points of $ S \cap \ptl C$,  $ S$ meets $\ptl C$ orthogonally.
\end{enum}
\end{theorem}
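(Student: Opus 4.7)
The theorem is essentially a compilation of three classical regularity results, so my plan is to explain how an isoperimetric region falls into the framework of each cited theorem, and then assemble the conclusions.

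The plan is to first observe that any isoperimetric region $E\subset C$ is a $(\Lambda,r_0)$-perimeter minimizer in $\intt(C)$ for suitable constants $\Lambda,r_0>0$, in the sense that $P_C(E)\le P_C(F)+\Lambda|E\Delta F|$ whenever $E\Delta F\Subset B(x,r_0)\cap \intt(C)$. This is obtained by a standard volume-fixing deformation argument: given a compactly supported comparison $F$, one can restore the volume with a small perturbation far from the support of the deformation (say, near an interior point of $C$ where a smooth vector field with prescribed divergence is available), and the change in perimeter produced by this restoration is bounded linearly in the volume change. Once this almost-minimality holds, the interior regularity theory of Gonzalez--Massari--Tamanini~\cite{MR684753}, combined with De Giorgi's $\varepsilon$-regularity theorem as refined by Tamanini, gives that the reduced boundary $\ptl^*E\cap\intt(C)$ is an embedded $C^{1,\alpha}$ hypersurface; Schauder estimates applied to the constant mean curvature equation satisfied by almost-minimizers then bootstrap this to $C^\infty$. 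The singular set $S_0=(\partial E\cap\intt C)\setminus \partial^*E$ is closed and has Hausdorff dimension at most $n-7$ by Federer's dimension reduction argument. This gives (i) and (ii).

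For the constant mean curvature property, once the smooth part $S$ has been identified, one simply writes the first variation of perimeter under a volume-preserving compactly supported normal deformation: the volume constraint forces the mean curvature of $S$ (as a two-sided, possibly nonzero, $C^\infty$ hypersurface) to be constant, the constant being the Lagrange multiplier $\Lambda$ from the almost-minimality.

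If in addition $\ptl C$ is of class $C^{2,\alpha}$, the plan is to apply Gr\"uter's boundary regularity theorem~\cite{MR862549} (see also \cite[Thm.~2.1]{MR1674097}). One flattens $\ptl C$ locally by a $C^{2,\alpha}$ diffeomorphism and reflects $E$ across it; the reflected set is still an almost-minimizer in the flattened coordinates with respect to a metric of class $C^{1,\alpha}$, so the $\varepsilon$-regularity result up to the boundary yields that $S\cap\ptl C$ is a $C^{1,\alpha}$ hypersurface meeting $\ptl C$, and Schauder-type estimates for the Neumann problem associated with the constant mean curvature equation upgrade the regularity to $C^{2,\alpha}$. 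The Hausdorff dimension bound on the singular set $S_0$ at the boundary follows again by dimension reduction, now allowing tangent cones at boundary points. Finally, the orthogonality assertion (v) is obtained by computing the first variation of $P_C(E)$ with respect to a compactly supported vector field $\xi$ which is tangent to $\ptl C$: the resulting boundary integral $\int_{S\cap\ptl C}\escpr{\xi,\nu_S}$ (where $\nu_S$ is the outward conormal to $S$ along $S\cap\ptl C$) must vanish for all such $\xi$, which forces $\nu_S$ to be normal to $\ptl C$, i.e., $S\perp\ptl C$.

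The main obstacle is really the $\varepsilon$-regularity theorem up to the boundary in the $C^{2,\alpha}$ case, since one must handle the free-boundary contact set $S\cap\ptl C$ and control the behaviour of $E$ near non-smooth points of $\ptl E$ that cluster at the boundary; this is precisely the content of \cite{MR862549} and is the non-trivial ingredient one must invoke. The interior regularity is standard De Giorgi--Almgren theory, and once smoothness is known the variational identities for mean curvature and orthogonality are routine.
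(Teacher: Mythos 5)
Your outline is correct and matches the standard route: establish quasi-minimality via a volume-fixing deformation, invoke Gonzalez--Massari--Tamanini and De Giorgi--Tamanini $\varepsilon$-regularity plus dimension reduction for the interior, and then invoke Gr\"uter (after flattening/reflection) for the free boundary when $\ptl C\in C^{2,\alpha}$, with first-variation identities giving constant mean curvature and orthogonality. Note, however, that the paper does not give a proof of this statement at all --- it is explicitly presented as a summary of known results and the entire content is delegated to the three cited references \cite{MR684753}, \cite{MR862549}, \cite[Thm.~2.1]{MR1674097}; so there is no ``paper's own proof'' to compare against, and your reconstruction is a faithful description of how those sources combine to give the stated conclusions. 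One small point worth making explicit in your step~1: the $(\Lambda,r_0)$-minimality requires the volume-restoring deformation to be available with uniform constants, which follows since $0<|E|<|C|$ guarantees an interior point of $C$ where a smooth compactly supported vector field with non-zero net flux across $\ptl^*E$ can be fixed once and for all (cf.\ \cite[IV.1.5]{maggi}); this is automatic and does not depend on whether $C$ (or $E$) is bounded.
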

We remark that the existence of isoperimetric regions in a compact convex body $C$ is a straightforward consequence of the lower semicontinuity of the relative perimeter and of the compactness properties of sequences of sets with equibounded (relative) perimeter. On the other hand, the existence of isoperimetric regions in an unbounded convex body $C$ is a quite delicate issue, that will be discussed later on. For future reference we recall in the following proposition a useful property related to minimizing sequences for the relative perimeter.
\begin{proposition}
\label{prp:seqbound}{\cite[Remark 3.2]{MR3385175}}
Let $C\subset \rr^{n+1}$ be an unbounded convex body and $v>0$. Then there exists a minimizing sequence, for volume $v$, consisting of bounded sets.
\end{proposition}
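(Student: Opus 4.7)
The plan is to start from an arbitrary minimizing sequence $\{E_k\}$ with $|E_k|=v$ and $P_C(E_k)\to I_C(v)$, and then modify it to a sequence of bounded competitors with the same volume and perimeter changed only by an infinitesimal quantity, so that the modified sequence is still minimizing. Two ingredients will be combined: a spherical truncation that removes the ``mass at infinity'', and a small ball placed far away inside $C$ to restore the lost volume.

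First I would use a standard slicing estimate: for each $k$ and a.e.\ $R>0$,
\[
P_C(E_k\cap B(0,R))\le P_C(E_k)+\mathcal{H}^n(E_k\cap\ptl B(0,R)\cap\intt C),
\]
while the layer-cake formula gives
\[
\int_0^\infty \mathcal{H}^n(E_k\cap\ptl B(0,R)\cap \intt C)\,dR=|E_k|=v.
\]
By Chebyshev's inequality, for each $k$ there exists $R_k\ge k$ with $\mathcal{H}^n(E_k\cap\ptl B(0,R_k)\cap\intt C)\le 1/k$. Setting $E_k':=E_k\cap B(0,R_k)$, one has $|E_k'|=v-\eps_k$ with $\eps_k\to 0$ and $P_C(E_k')\le P_C(E_k)+1/k$.

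To restore the missing volume $\eps_k$ I would add a small ball placed inside $C$ and disjoint from $E_k'$. Since $C$ is an unbounded convex body, its asymptotic cone $C_\infty$ contains a unit vector $w$, so for any $p\in\intt C$ with $\clb(p,r_0)\subset C$ and any $t\ge 0$, convexity gives $\clb(p+tw,r_0)\subset C$. Choose $t_k$ so large that $\clb(p+t_kw,r_0)\cap \clb(0,R_k)=\emptyset$, set $\rho_k:=(\eps_k/\omega_{n+1})^{1/(n+1)}$ (which tends to $0$, hence eventually $\rho_k\le r_0$), and define $D_k:=\clb(p+t_kw,\rho_k)\subset \intt C$. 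Then $|D_k|=\eps_k$, $P_C(D_k)=(n+1)\omega_{n+1}\rho_k^{\,n}=O(\eps_k^{n/(n+1)})$, and $D_k\cap E_k'=\emptyset$.

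Finally I would put $\tilde E_k:=E_k'\cup D_k$. By disjointness and additivity of the perimeter on well-separated pieces,
\[
|\tilde E_k|=v,\qquad P_C(\tilde E_k)\le P_C(E_k)+\frac{1}{k}+O(\eps_k^{n/(n+1)}),
\]
so $\{\tilde E_k\}$ is a bounded minimizing sequence, as required. The only delicate points in this plan are the choice of a ``good'' truncation radius via slicing (to ensure the spherical contribution to the perimeter vanishes) and the use of a recession direction of $C_\infty$ to place the small volume-restoring ball both inside $C$ and far from the truncated set; neither step is really hard, which is consistent with the fact that the statement appears as a short remark in \cite{MR3385175}.
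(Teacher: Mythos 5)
The overall strategy (truncation at a good radius to make the competitor bounded, then restoring the lost volume with a small ball placed far away along a recession direction) is sound, and since the paper only cites \cite[Remark 3.2]{MR3385175} without reproducing an argument, there is no in-text proof to compare against. However there is a genuine gap in your proof: the claim that $\eps_k=|E_k\setminus B(0,R_k)|\to 0$ does not follow from the way you chose $R_k$. Chebyshev only produces \emph{some} $R_k\ge k$ with a small spherical trace, and such an $R_k$ can be as small as $k$; meanwhile the sets $E_k$ themselves vary with $k$. If, for example, $E_k$ were a ball of volume $v$ centered at distance $2k$ from the origin, then the spherical trace vanishes for all radii in $[k,2k-\text{radius}]$, so $R_k=k$ is admissible, yet $\eps_k=v$ for every $k$ and your volume-fixing set $D_k$ is no longer small.

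The fix is easy and preserves everything else you wrote. For each $k$, first pick $S_k$ with $|E_k\setminus B(0,S_k)|<1/k$, which is possible since $|E_k|=v<\infty$; then run the layer-cake/Chebyshev argument on the half-line $[\max(k,S_k),\infty)$: the set of radii there where $\mathcal{H}^n(E_k\cap\ptl B(0,R)\cap\intt C)>1/k$ has Lebesgue measure at most $vk$, while the half-line has infinite measure, so there exists $R_k\ge\max(k,S_k)$ satisfying simultaneously $\mathcal{H}^n(E_k\cap\ptl B(0,R_k)\cap\intt C)\le 1/k$ and $\eps_k<1/k$. With this single modification the remaining steps (translating $\clb(p,r_0)$ along a unit vector $w\in C_\infty$ to stay inside $C$ while escaping $\clb(0,R_k)$, the estimate $P_C(D_k)\le (n+1)\omega_{n+1}\rho_k^n=O(\eps_k^{n/(n+1)})$, and additivity of the relative perimeter on the well-separated union $E_k'\cup D_k$) are all correct, and the constructed sequence is indeed a bounded minimizing sequence.
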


Let now $K\subset \rr^{n+1}$ be a closed solid cone with vertex $p$. Let $\alpha(K)=\hh^n(\ptl{B}(p,1)\cap \intt(K))$ be the \emph{solid angle} of $K$. If $K$ is also convex then it is known that the intrinsic balls centered at the vertex are isoperimetric regions in $K$, \cite{lions-pacella}, \cite{r-r}, and that they are the only ones \cite{FI} for general convex cones, without any regularity assumption on the boundary. The invariance of $K$ by dilations centered at the  vertex $p$ yields
\begin{equation}
\label{eq:solangle}
P_{K}(\clb_{K}(p,r))={\alpha(K)}^{1/(n+1)}\,(n+1)^{n/(n+1)}\,|\clb_{K}(p,r)|^{n/(n+1)}
\end{equation}

And if $K$ is also convex, then by the above equality and the fact that intrinsic balls centered at $p$ are isoperimetric, we obtain,
\begin{equation}
\label{eq:isopsolang}
I_K(v)={\alpha(K)}^{1/(n+1)}\,(n+1)^{n/(n+1)}v^{n/(n+1)}=I_K(1)\,v^{n/(n+1)}.
\end{equation}
Consequently the isoperimetric profile of a convex cone is completely determined by its solid angle.

We end this section with some definitions. We say that $C$ satisfies an \emph{$m$-dimensional isoperimetric inequality} if there are positive constants $\la$, $v_0$ such that
\[
I_C(v)\ge \la v^{(m-1)/m}\quad\text{for all }v\ge v_0.
\]
The \emph{isoperimetric dimension of $C$} is the supremum of the real $m>0$ such that $C$ satisfies an $m$-dimensional isoperimetric inequality.

These definitions have sense in any metric measure space, see Gromov \cite[Chap.~6.B, p.~322]{grom}, Coulhon and Saloff-Coste \cite{MR1232845}, and Chavel \cite{MR1849187}. It is immediate to show that if $C$ satisfies an $m$-dimensional isoperimetric inequality then, for any intrinsic ball $B_C(x,r)$, we have
\[
\la_n r^{n}\ge P(B_C(x,r))\ge \la |B_C(x,r)|^{(m-1)/m},
\] 
where $\la_n=H^n(\esf^n(1))$. Hence the growth of the volume of intrinsic balls is uniformly controlled (i.e., it does not depend on the center of the ball) in terms of $r^{nm/(m-1)}$.

\chapter{Unbounded convex bodies of uniform geometry}
\label{sec:unbounded}

In this chapter we collect various key definitions and results concerning the asymptotic properties of unbounded convex bodies. In particular we will show the crucial role played by property \eqref{eq:mainhyp} in order to ensure the non-triviality of the isoperimetric profile function $I_{C}$ of an unbounded convex body $C$.

\section{Asymptotic cylinders}
\label{sec:asymptoticcyl}
We start by introducing the notion of \textit{asymptotic cylinder} of $C$, which requires some preliminaries. Following Schneider \cite[\S~1.4]{sch}, we shall say that a subset $A\subset\rr^{n+1}$ is \emph{line-free} if it does not contain a line. According to Lemma~1.4.2 in \cite{sch}, every closed convex set $A\subset\rr^{n+1}$ can be written as $B\oplus V$, where $V$ is a linear subspace of $\rr^{n+1}$ and $B$ is a line-free closed convex set contained in a linear subspace orthogonal to $V$. Throughout this work, a \emph{convex cylinder} will be a set of this type, i.e., the direct sum of a \emph{non-trivial} linear subspace $V\subset\rr^{n+1}$ and a closed convex set $B$ contained in a linear subspace orthogonal to $V$.

The following two lemmas will play an important role in the sequel. Given $v\in\rr^{n+1}\setminus\{0\}$, we shall denote by $L(v)$ the vector space generated by $v$. Thus $L(v):=\{\la v:\la\in\rr\}$.

\begin{lemma}
\label{lem:gencyl}
Let $C\subset\rr^{n+1}$ be an unbounded convex body. Consider a bounded sequence $\{\la_i\}_{i\in\nn}$ of positive real numbers and an unbounded sequence of points $x_i\in\la_iC$. Further assume that
\begin{enumerate}
\item[(i)] $\lim_{i\to\infty}\frac{x_i}{|x_i|}=v$,
\item[(ii)] $-x_i+\la_iC\to K$ locally in Hausdorff distance.
\end{enumerate}
Then $L(v)\subset K$ and thus $K$ is a convex cylinder.
\end{lemma}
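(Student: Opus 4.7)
The plan is to show that $v$ lies in the asymptotic cone $C_\infty$, use this to deposit the forward ray $\{tv:t\ge 0\}$ inside $K$, handle the backward ray by an independent segment argument, and finally invoke Schneider's structure theorem for closed convex sets containing a line.

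First I would observe that since $\{\la_i\}$ is bounded, the points $y_i:=x_i/\la_i\in C$ satisfy $|y_i|\to\infty$ and $y_i/|y_i|=x_i/|x_i|\to v$. Fixing $p_0\in C$ and applying convexity to the segments $[p_0,y_i]\subset C$, for any $T\ge 0$ the point $p_0+T(y_i-p_0)/|y_i-p_0|$ eventually lies on $[p_0,y_i]$ and converges to $p_0+Tv$; closedness of $C$ then gives $p_0+Tv\in C$. This yields $v\in C_\infty$, and since $(\la_i C)_\infty=C_\infty$, the identity $p+C_\infty\subset C$ recalled in the preliminaries gives $x_i+tv\in\la_i C$ for every $t\ge 0$; translating by $-x_i$, we get $tv\in -x_i+\la_i C$ for all $i$ and all $t\ge 0$.

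For the forward ray I would pass to the limit: since $0\in -x_i+\la_i C$ for every $i$, local Hausdorff convergence forces $0\in K$ (the distance from $0$ to $K\cap\clb$ is controlled by the Hausdorff distance between $(-x_i+\la_i C)\cap\clb$ and $K\cap\clb$ on any ball $\clb$ that meets $K$ and contains $0$, which exists since $K$ has non-empty interior). The same Kuratowski-type argument applied to the constant sequence $tv\in -x_i+\la_i C$ then yields $tv\in K$ for every $t\ge 0$. For the backward ray I would use the segments $[x_i,\la_i p_0]\subset \la_i C$; translated by $-x_i$ they become segments from $0$ to $\la_i p_0-x_i$ inside $-x_i+\la_i C$. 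Since $|\la_i p_0|$ stays bounded while $|x_i|\to\infty$ and $x_i/|x_i|\to v$, the unit vector $(\la_i p_0-x_i)/|\la_i p_0-x_i|$ converges to $-v$ and the segment length diverges, so the point at fixed distance $T\ge 0$ from $0$ along this segment eventually exists, converges to $-Tv$, and by the same Kuratowski argument lies in $K$.

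Combining both rays gives $L(v)\subset K$. Since $K$ is closed and convex and contains the line $L(v)$, the structure theorem for closed convex sets (Lemma~1.4.2 in \cite{sch}) decomposes $K=B\oplus V$ with $V\supset L(v)$ a non-trivial linear subspace and $B\subset V^\perp$ line-free closed convex, which is exactly the paper's definition of a convex cylinder. The main subtlety in this approach is the transfer of pointwise inclusions in $-x_i+\la_i C$ to the limit $K$; once $0\in K$ is established, everything reduces to a Kuratowski-type argument over balls large enough to contain the relevant points, with no further difficulty.
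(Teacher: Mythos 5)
Your proof is correct and essentially identical to the paper's: you derive the forward ray by showing $v\in C_\infty$ and using $(\la_iC)_\infty=C_\infty$ (the paper establishes the same inclusion $x_i+\{tv:t\ge 0\}\subset\la_iC$ directly from convexity and scaling, but the content is the same), and the backward ray by passing the segments $[0,\la_ip_0-x_i]\subset -x_i+\la_iC$ to the limit exactly as in the paper. One minor inaccuracy: you justify finding a ball that meets $K$ and contains $0$ by asserting $K$ has non-empty interior, which need not hold (asymptotic cylinders can be degenerate, as Example~\ref{ex:ex} shows); all you actually need is $K\neq\emptyset$, implicit in hypothesis~(ii), after which any sufficiently large ball centered at a point of $K$ works.
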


\begin{proof}
Since $\{\la_i\}_{i\in\nn}$ is bounded and $\{x_i\}_{i\in\nn}$ is unbounded, the sequence $\{\la_i^{-1}x_i\}_{i\in\nn}$ of points in $C$ is unbounded. For any point $x_0\in C$ we have
\[
v_i:=\frac{\la_i^{-1}x_i-x_0}{|\la_i^{-1}x_i-x_0|}=\frac{\frac{x_i}{|x_i|}-\frac{x_0}{|\la_i^{-1}x_i|}}{|1-\frac{x_0}{|\la_i^{-1}x_i|}|}\to v.
\]
The convexity of $C$ then implies that the half-line $x_0+\{\la v:\la\ge 0\}$ is contained in $C$ for any $x_0\in C$. Scaling by $\la_i$ we get that $z_i+\{\la v:\la \ge 0\}$ is contained in $\la_iC$ for any $z_i\in \la_i C$. Taking $z_i=x_i$, the local convergence in Hausdorff distance of $-x_i+\la_iC$ to $K$ implies that $\{\la v:\la \ge0\}$ is contained in $K$.

On the other hand, given $x_0\in C$ fixed, the convexity of $C$ implies that $[x_0,\la_i^{-1}x_i]\subset C$ and so $[\la_ix_0-x_i,0]\subset -x_i+\la_iC$. The segment $[\la_ix_0-x_i,0]$ is contained in the half-line $\{-\la v_i:\la\ge 0\}$ and their lengths $|\la_ix_0-x_i|\to +\infty$. Hence its pointwise limit is the half-line $\{-\la v:\la\ge 0\}$, which is contained in $K$ because of the local convergence in Hausdorff distance of $-x_i+\la_iC$ to $K$.

Summing up, we conclude that $L(v)\subset K$. By \cite[Lemma~1.4.2]{sch}, $K$ is a convex cylinder.
\end{proof}

\begin{lemma}
\label{lem:cyl}
Let $C\subset\rr^{n+1}$ be an unbounded convex body, and let $\{x_i\}_{i\in\nn}\subset C$ be a divergent sequence. Then $\{-x_i+C\}_{i\in\nn}$ subconverges locally in Hausdorff distance to an unbounded closed convex set $K$. Moreover, $K$ is a convex cylinder.
\end{lemma}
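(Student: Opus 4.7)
The plan is to first extract a subsequence along which $-x_i+C$ converges locally in Hausdorff distance to some closed convex set $K$ containing the origin, and then to invoke Lemma~\ref{lem:gencyl} to promote $K$ to a convex cylinder (in particular to an unbounded set).

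For the compactness step I would argue as follows. Since $x_i\in C$, each translate $-x_i+C$ contains $0$, and each translate is a closed convex set. For each integer $k\ge 1$ the sets $(-x_i+C)\cap\clb(0,k)$ form a sequence of non-empty closed convex subsets of the compact ball $\clb(0,k)$. By the Blaschke selection theorem (see \cite[Thm.~1.8.7]{sch}), up to extracting a subsequence, these sets converge in Hausdorff distance to a non-empty closed convex set $K^{(k)}\subset\clb(0,k)$. Carrying out this extraction for $k=1$, then for $k=2$ inside the previous subsequence, and so on, a standard diagonal argument produces a single subsequence (which I still denote $\{x_i\}$) such that $(-x_i+C)\cap\clb(0,k)\to K^{(k)}$ in Hausdorff distance for every $k\in\nn$. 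By the Kuratowski characterisation of Hausdorff convergence, the sets $K^{(k)}$ are nested, $K^{(k)}\subset K^{(k+1)}$, and setting $K:=\overline{\bigcup_k K^{(k)}}$ one checks that $K\cap\clb(0,k)=K^{(k)}$ for every $k$. Then Lemma~\ref{lem:pointedconv} (applied with $p=0$ and $r_0=0$) gives the desired local convergence $-x_i+C\to K$, and $K$ is closed, convex, and contains $0$.

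To obtain the cylinder structure I would apply Lemma~\ref{lem:gencyl} with $\la_i\equiv 1$ and the same sequence $\{x_i\}$. The hypotheses of that lemma are verified immediately: $\{\la_i\}$ is bounded, $\{x_i\}\subset\la_i C=C$ is unbounded because the sequence is divergent, and the translates $-x_i+\la_i C$ converge locally in Hausdorff distance to $K$ by the previous step. Moreover, since $x_i/|x_i|$ lies in the unit sphere, up to a further subsequence we have $x_i/|x_i|\to v$ for some unit vector $v\in\rr^{n+1}$. Lemma~\ref{lem:gencyl} then yields $L(v)\subset K$, so $K$ contains a full line; in particular $K$ is unbounded, and by Lemma~1.4.2 of \cite{sch} (already invoked in the proof of Lemma~\ref{lem:gencyl}) it splits as the direct sum of a non-trivial linear subspace and a line-free closed convex set, i.e.\ it is a convex cylinder.

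The main technical point is the first step, namely that the local Hausdorff limit of $-x_i+C$ is well defined on the whole of $\rr^{n+1}$ rather than only on some fixed ball. The key observation that makes this work is that each translate passes through $0$, so the intersections with balls around the origin are automatically non-empty and the Blaschke selection theorem applies uniformly in the radius; the diagonal extraction together with Lemma~\ref{lem:pointedconv} then upgrades Hausdorff convergence on each ball to local Hausdorff convergence in the sense defined in \S\ref{sub:localh}. Once this is in place, the cylinder conclusion is an essentially immediate corollary of Lemma~\ref{lem:gencyl}.
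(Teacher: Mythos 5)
Your proof is correct and follows essentially the same route as the paper: Blaschke selection on each ball $\clb(0,k)$, a diagonal extraction together with Lemma~\ref{lem:pointedconv} to obtain local Hausdorff subconvergence to a closed convex set $K$, and then Lemma~\ref{lem:gencyl} with $\la_i\equiv 1$ and $x_i/|x_i|\to v$ to conclude that $K$ is an unbounded convex cylinder. (One small slip: in Schneider's book Blaschke's selection theorem is Theorem~1.8.4, while Theorem~1.8.7 is the Kuratowski convergence criterion you invoke later in the same paragraph.)
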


\begin{proof}
First we set $C_{i} = -x_{i}+C$. Then we observe that, for every $j\in\nn$, the sequence of convex bodies $\{C_{i}\cap \clb(0,j)\}_{i\in\nn}$ is bounded in Hausdorff distance. By Blaschke's Selection Theorem \cite[Thm.~1.8.4]{sch}, there exists a convergent subsequence. By a diagonal argument and Lemma~\ref{lem:pointedconv}, we obtain that $C_i$ subconverges locally in Hausdorff distance to a limit convex set $K$.

Passing again to a subsequence we may assume that $x_i/|x_i|$ subconverges to some $v\in\esf^n$. Now we apply Lemma~\ref{lem:gencyl} to the selected subsequence (taking $\la_i=1$ for all $i\in\nn$) to conclude that $K$ is a convex cylinder.
\end{proof}

Given an unbounded convex body $C$, we shall denote by $\mathcal{K}(C)$ the set of convex cylinders that can be obtained as local Hausdorff limits of sequences $\{C_i\}_{i\in\nn}$, where $C_i=-x_i+C$, and $\{x_i\}_{i\in\nn}\subset C$ is a divergent sequence. Any element of $\mathcal{K}(C)$ will be called an \emph{asymptotic cylinder} of $C$.

\begin{remark}
\label{rem:bdycylinder}
In a certain sense, it is enough to consider diverging sequences contained in the boundary $\ptl C$ to obtain asymptotic cylinders. Let $\{x_i\}_{i\in\nn}$ be a diverging sequence of points in $C$, and assume that $-x_i+C$ locally converges in Hausdorff distance to an asymptotic cylinder $K$ of $C$. Take a sequence of points $y_i\in\ptl C$ such that $r_i:=d(x_i,\ptl C)=|x_i-y_i|$.

If $\limsup_{i\to\infty} r_i=+\infty$ then, after passing to a subsequence, we may assume that $r_i$ is increasing and $\lim_{i\to\infty} r_i=+\infty$. As $\clb(x_i,r_i)\subset C$ we have $\clb(0,r_i)\subset -x_i+C$ and, since $r_i$ has been taken increasing we have $\clb(0,r_i)\subset -x_j+C$ for all $j\ge i$. Taking limits in $j$ we get $\clb(0,r_i)\subset K$ for all $i\in\nn$. Since $\lim_{i\to\infty}r_i=+\infty$ we have $K=\rr^{n+1}$.

If $\limsup_{i\to\infty}r_i<+\infty$ then the sequence $x_i-y_i$ is bounded. Passing to a subsequence we may assume that $y_i-x_i$ converges to $z$, and we get $K=z+K'$, where $K'$ is the local Hausdorff limit of a subsequence of $-y_i+C$.
\end{remark}

The following lemma is a refinement of Lemma \ref{lem:cyl} above.

\begin{lemma}
\label{lem:ciave}
Let $\{C_i\}_{i\in\nn}$ be a sequence of unbounded convex bodies converging in Hausdorff distance to an unbounded convex body $C$.
\begin{enumerate}
\item[(i)] Let $\{x_i\}_{i\in\nn}$ be a divergent sequence with $x_i\in C_i$ for $i\in\nn$. Then there exists an~asymptotic cylinder $K\in\mathcal{K}(C)$ such that $\{-x_i+C_i\}_{i\in\nn}$ subconverges locally in Hausdorff distance to $K$.
\item[(ii)] Let $K_i\in \K(C_i)$ for $i\in \nn$. Then there exists $K\in \K(C)$ such that $\{K_i\}_{i\in\nn}$ subconverges locally in Hausdorff distance to $K$.
\end{enumerate}
\end{lemma}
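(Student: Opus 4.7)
The strategy for both parts is to transfer the asymptotic translation happening in $C_i$ to a corresponding translation in $C$ by replacing base points with nearby ones in $C$, and then to exploit that every translated set contains the origin so that Lemma~\ref{lem:deltaFG} converts global Hausdorff estimates into local ones. Once this is done, Lemma~\ref{lem:cyl} supplies the asymptotic cylinder of $C$.

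For part~(i), since $x_i \in C_i$ and $\hd(C_i,C)\to 0$, I would pick $y_i \in C$ with $|y_i-x_i|\le \hd(C_i,C)+1/i \to 0$. Then $\{y_i\} \subset C$ is still divergent, so Lemma~\ref{lem:cyl} yields a subsequence along which $-y_i+C$ converges locally in Hausdorff distance to some $K\in\K(C)$. Setting $A_i := -x_i+C_i$ and $B_i := -y_i+C$, both are closed convex sets containing $0$, and by the triangle inequality together with the translation invariance of $\hd$,
\[
\hd(A_i,B_i) \le \hd(-x_i+C_i,-x_i+C)+\hd(-x_i+C,-y_i+C) \le \hd(C_i,C)+|x_i-y_i|\to 0.
\]
Lemma~\ref{lem:deltaFG}, applied with any fixed radius $R>0$, upgrades this to $\hd(A_i\cap\clb(0,R),B_i\cap\clb(0,R))\to 0$, so $A_i$ inherits the local Hausdorff limit $K$ of $B_i$ along the same subsequence.

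For part~(ii), each $K_i$ is the local Hausdorff limit, as $j\to\infty$, of $-x_j^i+C_i$ for some divergent $\{x_j^i\}_{j\in\nn}\subset C_i$. A diagonal extraction produces indices $j(i)$ such that, setting $\tilde x_i := x_{j(i)}^i$, one has $|\tilde x_i|>i$ and $\hd((-\tilde x_i+C_i)\cap\clb(0,i),K_i\cap\clb(0,i))<1/i$. In particular $\{\tilde x_i\}$ is divergent with $\tilde x_i\in C_i$, so part~(i) supplies $K\in\K(C)$ and a subsequence along which $-\tilde x_i+C_i\to K$ locally in Hausdorff. For any fixed $R>0$ and all $i\ge R$, the sets $K_i\cap\clb(0,i)$ and $(-\tilde x_i+C_i)\cap\clb(0,i)$ are closed convex and each contains $0$, so Lemma~\ref{lem:deltaFG} localises the diagonal bound to radius $R$, giving $\hd(K_i\cap\clb(0,R),(-\tilde x_i+C_i)\cap\clb(0,R))<1/i$. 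Combining with the convergence $-\tilde x_i+C_i\to K$ yields $K_i\to K$ locally in Hausdorff along the same subsequence. The only point requiring care is the clean separation in part~(i) of the error $\hd(C_i,C)$ from the shift error $|x_i-y_i|$; once this is done, the diagonal extraction in part~(ii) is routine and everything else follows mechanically from Lemma~\ref{lem:deltaFG} and Lemma~\ref{lem:cyl}.
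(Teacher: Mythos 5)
Your proof is correct and takes essentially the same route as the paper: pick nearby points in $C$ (the paper uses metric projections, you use an approximate choice with a $1/i$ slop), invoke Lemma~\ref{lem:cyl} to extract a convergent subsequence, and use Lemma~\ref{lem:deltaFG} to localise the global Hausdorff estimate. For part~(ii) you save a few lines by invoking part~(i) directly rather than repeating the metric-projection step, and you are careful to build $|\tilde x_i|>i$ into the diagonal extraction so that the resulting sequence is genuinely divergent — a detail the paper leaves implicit — but the structure of the argument is the same.
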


\begin{proof}
For each $i\in\nn$, let $z_i$ be the metric projection of $x_i$ to $C$. Since $|x_i - z_i|\le\delta(C_i,C)\to 0$, the sequence $\{z_i\}_{i\in\nn}$ is also divergent. By Lemma~\ref{lem:cyl}, the sequence $\{-z_i+C\}_{i\in\nn}$ subconverges locally in Hausdorff distance to some $K\in\mathcal{K}(C)$. Let us prove the first claim, i.e., that $\{-x_i+C_i\}_{i\in\nn}$ subconverges locally in Hausdorff distance to $K$. By Lemma~\ref{lem:pointedconv} it is enough to show that, for any $r>0$, the sequence $\{(-x_i+C_i)\cap\clb(0,r)\}_{i\in\nn}$ subconverges in Hausdorff distance to $K\cap\clb(0,r)$.

First, one can observe that the sequence $\{(-z_i+C)\cap \clb(0,r)\}_{i\in\nn}$ subconverges in Hausdorff distance to $K\cap\clb(0,r)$, since for some natural $j>r$ we know that $\{(-z_i+C)\cap\clb(0,j)\}_{i\in\nn}$ subconverges in Hausdorff distance to $K\cap\clb(0,j))$, and we can apply Lemma~\ref{lem:localconv}.

Second, the Hausdorff distance between $(-x_i+C_i)\cap\clb(0,r)$ and $(-z_i+C)\cap\clb(0,r)$ converges to $0$ by Lemma~\ref{lem:deltaFG} since
\[
\delta((-x_i+C_i)\cap\clb(0,r),(-z_i+C)\cap\clb(0,r))\le\delta(-x_i+C_i,-z_i+C),
\]
and the term on the right converges to $0$ when $i\to\infty$ because of the Hausdorff convergence of $C_i$ to $C$ and the convergence of $x_i-z_i$ to $0$. From these two observations, (i) follows.

Let us now prove (ii). For every $i\in\nn$, Lemma~\ref{lem:cyl} implies the existence of a divergent sequence $\{x_j^i\}_{j\in\nn}\subset C_i$ so that $\{-x_j^i+C_i\}_{j\in\nn}$ converges locally in Hausdorff distance to $K_i$. For every $i\in\nn$, we choose increasing $j(i)$ so that the Hausdorff distance between $K_i\cap \clb(0,i)$ and $(-x_{j(i)}^i+C_i)\cap \clb(0,i)$ is less than $1/i$. If we fix some positive $r>0$, Lemma~\ref{lem:deltaFG} implies
\begin{multline*}
\limsup_{i\to\infty}\delta(K_i\cap\clb(0,r),(-x_{j(i)}^i+C_i)\cap\clb(0,r))
\\
\le\lim_{i\to\infty}\delta(K_i\cap\clb(0,i),(-x_{j(i)}^i+C_i)\cap\clb(0,i))=0.
\end{multline*}

Let $z_{i}$ be the metric projection of $x_{j(i)}^i$ onto $C$. By assumption we have $|z_{i} - x_{j(i)}^i|\le\delta (C,C_i)\to 0$. Hence, for any $r>0$
\[
\lim_{i\to\infty}\delta((-x_{j(i)}^i+C_i)\cap\clb(0,r),(-z_i+C)\cap\clb(0,r))=0.
\]

Finally, Lemma~\ref{lem:cyl} implies that the sequence $\{-z_i+C\}_{i\in\nn}$ subconverges locally in Hausdorff distance to some $K\in\mathcal{K}(C)$.

An application of the triangle inequality to the sets $K_i$, $-x_{j(i)}^i+C_i$, $-z_i+C$ and $K$, intersected with $\clb(0,r)$, yields (ii).
\end{proof}

The following result implies that certain translations of asymptotic cylinders are also asymptotic cylinders. This is not true in general, as shown by horizontal translations by large vectors of asymptotic cylinders of cylindrically bounded convex bodies.

\begin{lemma}
\label{lem:zK}
Let $C$ be an unbounded convex body. If $K\in \K(C)$ and $z\in K$, then $-z+K\in \K(C)$.
\end{lemma}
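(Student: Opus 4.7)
The plan is to construct a divergent sequence $\{y_i\}_{i\in\nn}\subset C$ such that $\{-y_i+C\}_{i\in\nn}$ converges locally in Hausdorff distance to $-z+K$. Since $K\in\K(C)$, fix a divergent sequence $\{x_i\}_{i\in\nn}\subset C$ with $A_i:=-x_i+C$ converging locally in Hausdorff distance to $K$. The open ball $B(z,1)$ meets $K$ (it contains $z$), so the local convergence together with the Kuratowski criterion recalled in the proof of Lemma~\ref{lem:localconv} produces $z_i\in A_i$ with $z_i\to z$. I would then set $y_i:=x_i+z_i$: since $z_i\in -x_i+C$ this places $y_i\in C$, and $\{y_i\}_{i\in\nn}$ is divergent because $\{x_i\}$ is divergent while $\{z_i\}$ is bounded. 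The identity $-y_i+C=-z_i+A_i$ reduces the conclusion $-z+K\in\K(C)$ to showing that $\{-z_i+A_i\}_{i\in\nn}$ converges locally in Hausdorff distance to $-z+K$.

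To verify this on an arbitrary closed ball $\clb(0,r)$ I would split via the triangle inequality
\begin{equation*}
\delta\big((-z_i+A_i)\cap\clb(0,r),(-z+K)\cap\clb(0,r)\big)\le D_i^{(1)}+D_i^{(2)},
\end{equation*}
where $D_i^{(1)}=\delta((-z_i+A_i)\cap\clb(0,r),(-z+A_i)\cap\clb(0,r))$ and $D_i^{(2)}=\delta((-z+A_i)\cap\clb(0,r),(-z+K)\cap\clb(0,r))$. For $D_i^{(1)}$, since $z_i\in A_i$ one has $0\in -z_i+A_i$, and $-z+A_i=(z_i-z)+(-z_i+A_i)$; so for $i$ large enough that $|z_i-z|<r/2$, Lemma~\ref{lem:haustrans} yields $D_i^{(1)}\le 2|z_i-z|\to 0$. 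For $D_i^{(2)}$, translation-invariance of the Hausdorff distance gives $D_i^{(2)}=\delta(A_i\cap\clb(z,r),K\cap\clb(z,r))$, which tends to zero because the open ball $B(z,r)$ meets $K$ at $z$, so the local Hausdorff convergence $A_i\to K$ supplies convergence of $A_i\cap\clb(z,r)$ to $K\cap\clb(z,r)$.

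The main delicacy is matching the hypothesis of Lemma~\ref{lem:haustrans}, which requires the origin to lie inside the translated set: the choice $y_i=x_i+z_i$ (rather than, say, $y_i=x_i+z$) is dictated precisely by the need to place $0$ inside $-z_i+A_i=-y_i+C$, thereby enabling the quantitative shift estimate. Everything else reduces to the triangle inequality and translation-invariance of the Hausdorff distance.
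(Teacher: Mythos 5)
Your proposal is correct and follows essentially the same route as the paper's proof: use the Kuratowski criterion to extract $z_i\in -x_i+C$ converging to $z$, set the divergent sequence $x_i+z_i\in C$, and split the Hausdorff distance on $\clb(0,r)$ into a shift term controlled by Lemma~\ref{lem:haustrans} (which needs $0\in -z_i+A_i$, exactly as you note) plus a translated version of the original convergence. The only difference is notational.
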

\begin{proof}
As $K\in\mathcal{K}(C)$, there exists an unbounded sequence $\{x_i\}_{i\in\nn}$ of points in $C$ such that $-x_i+C$ converges locally in Hausdorff distance to $K$. We can use the criterion in Lemma~\ref{lem:pointedconv} (taking balls centered at the point $-z$) to infer that $-(x_i+z)+C$ converges locally in Hausdorff distance to $-z+K$. 

Let us apply Kuratowski criterion and find a sequence $\{y_{i}\}_{i\in \nn}$ such that $y_{i}\in -x_{i} +C$ and $y_{i}\to z$ as $i\to\infty$. Since $x_{i}$ diverges and $y_{i}$ converges, the sequence $x_{i}+y_{i}\in C$ is divergent, thus we let $C_{i} = -(x_{i}+y_{i}) +C$ and show that $C_{i}$ converges locally in Hausdorff distance to $-z+K$, as $i\to\infty$. Let us fix $r>0$ and notice that $0\in C_{i}$ for all $i$, then setting $\delta_{r}(F,G) = \delta(F\cap\clb(0,r),G\cap\clb(0,r))$ we have
\begin{align*}
\delta_{r}(C_{i},-z+K) \le \delta_{r}(C_{i},-(z -y_{i}) + C_{i}) + \delta_{r}(-z +(-x_{i} + C),-z+K)
\end{align*}
so that by Lemmata \ref{lem:pointedconv} and \ref{lem:haustrans} we find that the left-hand side of the above inequality is infinitesimal as $i\to\infty$, which concludes the proof.
\end{proof}

The next Lemma generalizes \cite[Lemma 6.1]{MR3335407}%
\begin{lemma}
\label{lem:losemcontangcon}
Let $C\subset \rr^{n+1}$ be an unbounded convex body. Then there exists~$K\in\{C\}\cup\mathcal{K}(C)$ and $p\in K$ such that 
\[
\alpha(K_p)=\min\{\alpha(L_q) : L \in \{C\} \cup \mathcal{K}(C), q\in L\}.
\]
\end{lemma}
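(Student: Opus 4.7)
The plan is to take a minimizing sequence $(L_{k},q_{k})$ with $L_{k}\in \{C\}\cup\mathcal{K}(C)$ and $q_{k}\in L_{k}$, translate so the distinguished point becomes the origin, extract a local Hausdorff limit, and invoke a lower semicontinuity property of the solid angle to conclude that the limit realizes the infimum. Set $\alpha_{0}=\inf\{\alpha(L_{q}):L\in\{C\}\cup\mathcal{K}(C),\ q\in L\}$; since at interior points the tangent cone is $\rr^{n+1}$ (hence with maximal solid angle), I may assume $q_{k}\in\ptl L_{k}$. Let $M_{k}=-q_{k}+L_{k}$, so $0\in\ptl M_{k}$ and $\alpha((M_{k})_{0})=\alpha((L_{k})_{q_{k}})\to\alpha_{0}$. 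By Lemma~\ref{lem:zK}, $M_{k}\in \mathcal{K}(C)$ whenever $L_{k}\in \mathcal{K}(C)$. Up to a subsequence, one of three cases holds: (A) $L_{k}=C$ and $\{q_{k}\}$ bounded; (B) $L_{k}=C$ and $|q_{k}|\to\infty$; (C) $L_{k}\in \mathcal{K}(C)$ for every $k$.

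The key technical step is the following lower semicontinuity: if $\{A_{k}\}$ is a sequence of convex bodies converging locally in Hausdorff distance to a convex body $A$, and $p_{k}\in A_{k}$ converges to $p\in A$, then $\alpha(A_{p})\le \liminf_{k}\alpha((A_{k})_{p_{k}})$. I would establish the pointwise inclusion $\intt(A_{p}-p)\subset \liminf_{k}\intt((A_{k})_{p_{k}}-p_{k})$ on $\ptl B(0,1)$ and then apply Fatou's lemma. Given $v\in \intt(A_{p}-p)$, a short convex-analysis argument (using a Carath\'eodory-type decomposition of $v$ in the positive hull of $A-p$) produces $t_{0},\delta>0$ with $\clb(p+t_{0}v,\delta)\subset A$. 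A convex-hull approximation combined with the local Hausdorff convergence of $A_{k}$ to $A$ then yields $\clb(p+t_{0}v,\delta/2)\subset A_{k}$ for $k$ large, so that $p_{k}+t_{0}v\in \intt A_{k}$; applying the homothety $h_{p_{k},1/t_{0}}$ to this ball places a ball around $p_{k}+v$ inside the tangent cone $(A_{k})_{p_{k}}$, whence $v\in\intt((A_{k})_{p_{k}}-p_{k})$.

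With this tool available, the three cases are handled uniformly. In case (A), closedness of $\ptl C$ gives $q_{k}\to q\in\ptl C$, and $M_{k}=-q_{k}+C\to -q+C$ locally in Hausdorff distance (one can use Lemma~\ref{lem:haustrans}); semicontinuity yields $\alpha(C_{q})=\alpha((-q+C)_{0})\le \alpha_{0}$, so $(C,q)$ attains the minimum. In case (B), Lemma~\ref{lem:cyl} provides $K\in \mathcal{K}(C)$ as a subsequential local Hausdorff limit of $\{M_{k}\}$, and $0\in K$ since $0\in M_{k}$ for every $k$; semicontinuity gives $\alpha(K_{0})\le \alpha_{0}$. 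In case (C), Lemma~\ref{lem:ciave}(ii) applied with the constant sequence $C_{i}\equiv C$ produces $K\in \mathcal{K}(C)$ as a subsequential local Hausdorff limit of $\{M_{k}\}\subset \mathcal{K}(C)$, again with $0\in K$ and $\alpha(K_{0})\le \alpha_{0}$. The reverse inequality is immediate from the definition of $\alpha_{0}$, so equality holds in each case.

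The main obstacle is the semicontinuity step: the closure in the definition of $C_{p}$ could in principle introduce directions inaccessible from tangent cones at nearby base points, so relating $\intt(A_{p}-p)$ to feasible directions entering $\intt A$ from $p$ requires some care, and the transfer from $A$ to the approximating bodies $A_{k}$ relies crucially on convexity. Once this is in place, the proof is a fairly direct combination of Lemmata~\ref{lem:cyl}, \ref{lem:zK}, and \ref{lem:ciave}(ii) together with the Blaschke-type compactness already encoded in those lemmata.
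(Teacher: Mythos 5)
Your proposal is correct, and the overall skeleton (minimizing sequence, translate to the origin, case split according to whether $L_{k}=C$ with bounded or unbounded base points or $L_{k}\in\mathcal{K}(C)$, extract a local Hausdorff limit via Lemmata~\ref{lem:cyl}, \ref{lem:zK}, \ref{lem:ciave}) matches the paper's. Where you diverge is in the key lower-semicontinuity step for the solid angle. The paper, after obtaining the subsequential limit $L$ of $L_{i}:=-p_{i}+K_{i}$, passes to a \emph{further} subsequence along which the tangent cones $(L_{i})_{0}$ themselves converge locally in Hausdorff distance to a convex cone $L'$ (Blaschke plus a diagonal argument). Since $L_{i}\subset(L_{i})_{0}$, one gets $L\subset L'$, hence $L_{0}\subset L'$ because $L_{0}$ is the smallest cone containing $L$; then continuity of Lebesgue measure of bounded convex bodies under Hausdorff convergence gives $\vol{L_{0}\cap\clb(0,1)}\le\vol{L'\cap\clb(0,1)}\le\lim_{i}\vol{(L_{i})_{0}\cap\clb(0,1)}$ and the conclusion. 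You instead prove a pointwise semicontinuity: $\intt(A_{p}-p)\subset\liminf_{k}\intt((A_{k})_{p_{k}}-p_{k})$, and apply Fatou to the characteristic functions on $\ptl B(0,1)$. Both routes are valid. Yours does not require extracting a second convergent subsequence of cones, and it rests on the elementary identity $\intt(A_{p}-p)=\bigcup_{t>0}t(\intt A-p)$ for convex $A$ with $p\in A$, which follows directly from convexity (no Carath\'eodory decomposition is actually needed: given $v\in\intt(A_{p}-p)$, the increasing union $\bigcup_{t>0}t(A-p)$ together with a compactness argument produces $t_{0},\delta$ with $\clb(p+t_{0}v,\delta)\subset A$, and the transfer to $A_{k}$ then goes exactly as you say). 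The paper's route is slicker in that it reuses the Blaschke machinery already set up for Lemma~\ref{lem:cyl} and avoids Fatou, dealing with volumes rather than pointwise indicator convergence; yours is more self-contained at the level of the tangent cones and a bit more explicit about \emph{why} the solid angle cannot drop in the limit.
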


\begin{proof}
For every solid cone $V\subset \rr^{n+1}$ with vertex $p$ the co-area formula implies that $|V\cap\clb(p,1)|=(n+1)^{-1}\alpha(V)$. Our problem is then equivalent to minimizing $|L_p\cap\clb(p,1)|$ when $L\in\{C\}\cup\mathcal{K}(C)$ and $p\in L$.

Consider a sequence $K_i\in \{C\}\cup \mathcal{K}(C)$ and a sequence of points $p_i\in K_i$ such that
\[\lim_{i\to\infty}\alpha((K_i)_{p_i})=\inf\{\alpha(L_q) : L \in \{C\} \cup \mathcal{K}(C), q\in L\}.
\]
Let us see that $L_i:=-p_i+K_i$ subconverges locally in Hausdorff distance either to a translation of $C$ or to an asymptotic cylinder of $C$. Assume first that there is a subsequence so that $K_i=C$. If the corresponding subsequence $p_i$ is bounded, it subconverges to some $p\in C$ and then $L_i$ subconverges to $-p+C$. In case $p_i$ is unbounded then $L_i$ subconverges to an asymptotic cylinder $L$. So we can suppose that $K_i\neq C$ for all $i$. By Lemma~\ref{lem:zK}, $L_i\in\mathcal{K}(C)$ for all $i$. By Lemma~\ref{lem:ciave}(ii), $L_i$ subconverges to an asymptotic cylinder $L$.

Let us denote by $L$ the local limit in Hausdorff distance of a subsequence of $L_i$. The set $L$ is either $-p+C$, for some $p\in C$, or an asymptotic cylinder of $C$.  Passing again to a subsequence, the tangent cone of $L_i$ at the origin, $(L_i)_0$, locally converges in Hausdorff distance to a convex cone $L'\subset\rr^{n+1}$ with vertex $0$. Because of this convergence and the inclusion $L_i\subset(L_i)_0$, we get $L\subset L'$. Hence $L_0\subset L'$ since $L_0$ is the smallest cone including $L$. By the continuity of the volume with respect to Hausdorff convergence we have
\[
\vol{L_0\cap\clb(0,1)}\le\vol{L'\cap\clb(0,1)}\le\lim_{i\to\infty} \vol{(L_i)_0\cap\clb(0,1)}=\lim_{i\to\infty}\alpha((K_i)_{p_i}).
\]
Thus $\alpha(L_0)$ is a minimum for the solid angle.
\end{proof}

\begin{remark}
\label{rem:ICmin}
By \eqref{eq:isopsolang} the isoperimetric profiles of tangent cones which are minima of the solid angle function coincide. The common profile will be denoted by $I_{C_{\min}}$. By the above proof  we get that
\begin{equation*}
I_{C_{\min}}\le I_{K_p},
\end{equation*}
for every $K\in \{C\} \cup \mathcal{K}(C)$ and $p\in K$.
\end{remark}

Now we proceed to build an example of unbounded convex body $C$ for which the isoperimetric profile $I_C$ is identically zero. The following result is essential for the construction.

\begin{proposition}[{\cite[Prop.~6.2]{MR3441524}}]
\label{prp:ICleICp}
Let $C\subset\rr^{n+1}$ be a convex body and $p\in \ptl C$. Then every intrinsic ball in $C$ centered at $p$ has no more perimeter than an intrinsic ball of the same volume in $C_p$. Consequently
\begin{equation}
\label{eq:ICleICp}
I_C(v)\le I_{C_{p}}(v),
\end{equation}
for all $0< v<\vol{C}$.
\end{proposition}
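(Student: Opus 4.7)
The plan is to reduce the comparison between the perimeter of $B_C(p,r)$ and that of an equivolume ball in $C_p$ to a one-dimensional monotonicity argument on the spherical cross-sections of $C$ centered at $p$. For each $r>0$ I introduce $\Omega(r):=\{v\in\esf^n:p+rv\in\intt C\}\subset\esf^n$, so that a direct computation in spherical coordinates yields
\[
g(r):=P_C(B_C(p,r))=\hh^n(\ptl B(p,r)\cap\intt C)=r^n\hh^n(\Omega(r)),\qquad |B_C(p,r)|=\int_0^r g(\rho)\,d\rho.
\]

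The key observation is that $\Omega(r)$ is non-increasing in $r$: if $v\in\Omega(r)$ and $0<s\le r$, then $p+sv=(1-s/r)p+(s/r)(p+rv)$ is a convex combination of $p\in C$ and $p+rv\in\intt C$, so $p+sv\in\intt C$ and $v\in\Omega(s)$. As $r\downarrow 0$, $\Omega(r)$ increases to the set of unit directions of open rays from $p$ meeting $\intt C$, which by the very definition of $C_p$ differs from $\intt(C_p-p)\cap\esf^n$ only by a subset of $\ptl C_p\cap\esf^n$, and the latter has $\hh^n$-measure zero on the sphere because $\ptl C_p$ is the boundary of a convex body. Therefore $\hh^n(\Omega(r))\le\lim_{r\downarrow 0}\hh^n(\Omega(r))=\alpha(C_p)$ for every $r>0$, which gives the first estimate $g(r)\le\alpha(C_p)\,r^n$.

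The same monotonicity yields a complementary lower bound on $V:=|B_C(p,r)|$. Since $g(\rho)/\rho^n=\hh^n(\Omega(\rho))$ is non-increasing, $g(\rho)\ge(g(r)/r^n)\,\rho^n$ for $\rho\le r$, and hence
\[
V=\int_0^r g(\rho)\,d\rho\ge \frac{g(r)}{r^n}\int_0^r\rho^n\,d\rho=\frac{g(r)\,r}{n+1},
\]
that is, $g(r)\le(n+1)V/r$. Multiplying $g(r)\le\alpha(C_p)r^n$ and $g(r)\le(n+1)V/r$ raised respectively to the powers $1/(n+1)$ and $n/(n+1)$, the variable $r$ cancels and I conclude
\[
g(r)\le\alpha(C_p)^{1/(n+1)}(n+1)^{n/(n+1)}V^{n/(n+1)}.
\]
By \eqref{eq:solangle}, the right-hand side is precisely $P_{C_p}(\clb_{C_p}(p,s))$ where $s$ is chosen so that $|\clb_{C_p}(p,s)|=V$, which is the first assertion.

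For the consequence \eqref{eq:ICleICp}, given $v\in(0,|C|)$ I pick $r>0$ with $|B_C(p,r)|=v$, which is possible because $r\mapsto|B_C(p,r)|$ is continuous and varies from $0$ to $|C|$ on $(0,\infty)$. Then
\[
I_C(v)\le P_C(B_C(p,r))\le P_{C_p}(\clb_{C_p}(p,s))=I_{C_p}(v),
\]
the last equality being the known characterization \eqref{eq:isopsolang} of intrinsic balls at the vertex as isoperimetric regions in a convex cone. The only technical step demanding care is the limit $\lim_{r\downarrow 0}\hh^n(\Omega(r))=\alpha(C_p)$; everything else is elementary calculus exploiting the radial monotonicity coming from convexity of $C$ at $p$.
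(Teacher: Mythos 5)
Your argument is correct and is, at bottom, the same comparison the paper makes: your $\hh^n(\Omega(r))$ is the paper's $\alpha(L_p)$, where $L_p$ is the cone over $\ptl B(p,r)\cap C$, your inequality $V\ge g(r)r/(n+1)$ is exactly $|\clb_{L_p}(p,r)|\le |\clb_C(p,r)|$, and your ``AM--GM'' combination of the two bounds on $g(r)$ reproduces the chain that runs through the cone isoperimetric formula \eqref{eq:solangle}--\eqref{eq:isopsolang}. The one place you do more work than necessary is the proof that $\hh^n(\Omega(r))\le\alpha(C_p)$: you pass through the monotone limit $r\downarrow 0$, whereas the containment $\Omega(r)\subset\intt(C_p-p)\cap\esf^n$ (equivalently $L_p\subset C_p$) gives it in one line; the monotonicity of $\Omega(r)$ is still needed, but only for the volume lower bound. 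Either way the proof is sound.
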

\begin{proof}
Let $p\in\ptl C$, and $0<v<|C|$. Take $r>0$ so that $|\clb_C(p,r)|=v$. Let $L_p$ be the closed cone centered at $p$ subtended by $\clb(p,r)\cap C$. Then $L_p\subset C_p$ and, by convexity, $\clb_{L_p}(p,r)\subset \clb_C(p,r)$. By \eqref{eq:solangle} and \eqref{eq:isopsolang} we have
\begin{align*}
I_C(v)\le P_C(\clb_C(p,r))&=P_{L_p}(\clb_{L_p}(p,r))
\\
&=(n+1)^{n/(n+1)}\,\alpha(L_p)^{1/(n+1)}\,|\clb_{L_p}(p,r)|^{n/(n+1)}
\\
&\le (n+1)^{n/(n+1)}\,\alpha(C_p)^{1/(n+1)}\,|\clb_{L_p}(p,r)|^{n/(n+1)}
\\
&=I_{C_p}(|\clb_{L_p}(p,r)|)\le I_{C_p}(v).
\end{align*}
\end{proof}

\begin{remark}
\label{rem:half-plane}
A closed half-space $H\subset\rr^{n+1}$ is a convex cone with the largest possible solid angle. Hence, for any convex body $C\subset\rr^{n+1}$, we have
\begin{equation*}
I_C(v)\le I_H(v),
\end{equation*}
for all $0< v<\vol{C}$. 
\end{remark}

\begin{remark}
Proposition~\ref{prp:ICleICp} implies that $E\cap\ptl C\neq\emptyset$ when $E\subset C$ is isoperimetric. Since  in case $E\cap\ptl C$ is empty, then $E$ is an Euclidean ball. Moreover, as the isoperimetric profile of Euclidean space is strictly larger than that of the half-space, a set whose perimeter is close to the the value of the isoperimetric profile of $C$ must touch the boundary of $C$.
\end{remark}

\section{Convex bodies of uniform geometry}

The isoperimetric profile of an unbounded convex body can be identically zero, as shown by the following example. Note that some asymptotic cylinder has no interior points in this case. Such ``bad'' asymptotic cylinders are obtained by sequences $x_{i}$ such that $\min(|x_{i}|,d(x_{i},Q)) \to \infty$, where $Q$ is the half-cylinder.

\begin{figure}[h]
\begin{tikzpicture}
\draw (0,0,0) -- (6,0,0);
\draw (0,0,0) -- (0,5.4,0);
\draw (0,0,0) -- (0,0,4);
\draw[line width=1.5pt] (2,0) -- (2,4);
\draw[line width=1.5pt] (-2,0) -- (-2,4);
\draw[line width=1.5pt] (2,0) parabola (6,5);
\draw[line width=1.5pt] (0,0,0) ellipse (2 and 0.5);
\draw[line width=1.5pt] (0,4,0) ellipse (2 and 0.5);
\draw (-2.4,4) node {$Q$};
\draw (5.9,4) node {$P$};
\draw[fill=black] (4,0) circle (2pt);
\draw (4.2,-0.3) node {$x$};
\draw[fill=black] (4,1.25) circle (2pt);
\draw (4.2,0.9) node {$p_x$};
\draw (2.88,0) -- (5.6,3);
\draw[fill=black] (2.88,0) circle (2pt);
\draw (3.2,-0.3) node {$q_x$};
\draw(2.88,0) -- (-1,0.95);
\draw(2.88,0) -- (-1.4,-1.05);
\draw[fill=black] (1.4,0.37) circle (2pt);
\draw (1.6,0.7) node {$t_x^1$};
\draw[fill=black] (1.4,-0.37) circle (2pt);
\draw (1.6,-0.68) node {$t_x^2$};
\draw (-1,0.95) -- (1.72,3.95);
\draw (1.72,3.95) -- (5.6,3);
\draw (-1.4,-1.05) -- (1.32,2.05);
\draw (1.32,2.05) -- (5.6,3);
\draw (3.2,4) node {$A_x^1$};
\draw (3.2,2) node {$A_x^2$};
\end{tikzpicture}
\caption{Example~\ref{ex:ex}\label{fig:exex}}
\end{figure}

\begin{example}
\label{ex:ex}
We consider in $\rr^3$ the half-cylinder
\[
Q=\{(x,y,z)\in\rr^3: x^2+y^2\le 1, z\ge 0\},
\]
and the parabolic curve
\[
P=\{(x,y,z)\in\rr^3: z=(x-1)^2, y=0, x\ge 1\}
\]
(see Figure \ref{fig:exex}). 
Let $C$ be the closed convex envelope of $Q\cup P$. For a given coordinate $x>1$ the corresponding point on the parabola $P$ is denoted by $p_{x}=(x,0,(x-1)^2)$. The tangent line to $P$ at $p_{x}$ contained in the $y=0$ plane intersects the $x$-axis at the point $q_{x} = ((1+x)/2,0,0)$, which is of course in the $z=0$ plane and outside the unit disk $D = \{(x,y,z):\ x^{2}+y^{2}\le 1,\ z=0\}$. Therefore, one can consider the two tangent lines from $q_{x}$ to the boundary circle $\partial D$, meeting the circle at the two tangency points $t_{x}^{1} = (2/(1+x),\sqrt{1-4/(1+x)^2},0)$ and $t_{x}^{2}= (2/(1+x),-\sqrt{1-4/(1+x)^2},0)$. The (unique) affine planes $A_{x}^{1},A_{x}^{2}$ containing, respectively, the points $p_{x},q_{x},t_{x}^{1}$ and $p_{x},q_{x},t_{x}^{2}$ are supporting planes for $C$. The corresponding half-spaces bounded by $A_{x}^{1},A_{x}^{2}$ and containing $C$ are denoted by $H_{x}^{1},H_{x}^{2}$. It follows in particular that $p_{x}$ is a boundary point of $C$. The solid angle of the tangent cone $C_{p_{x}}$ of $C$ at $p_{x}$ is smaller than or equal to the solid angle of the wedge $W_{x}=H_{x}^{1}\cap H_{x}^{2}$, which trivially goes to $0$ as $x\to\infty$. By Proposition \ref{prp:ICleICp} we get for any $v>0$
\[
0\le I_C(v)\le\inf_{x>1} I_{C_{p_{x}}}(v)=0.
\]
Note that with some extra work it is possible to prove that $C_{p_{x}}=W_{x}$.
\end{example}

In the following proposition we give some conditions equivalent to the non-triviality of the isoperimetric profile.

We shall say that $C$ is a \emph{convex body of uniform geometry} if it is unbounded and for some $r_{0}>0$ there holds
\begin{equation}
\label{eq:mainhyp}
b(r_{0}):=\inf_{x\in C}|\clb_C(x,r_0)|>0\,.
\end{equation}

\begin{remark}
\label{rem:bconv}
By Lemma \ref{lemma:fxrconcave} one immediately deduce that $b(r)$ is a concave function (indeed it is the infimum of a family of concave functions).
\end{remark}

\begin{proposition}
\label{prop:maincond}
Let $C$ be an unbounded convex body. The following assertions are equivalent:
\begin{enumerate}
\item[(i)] for all $r>0$, $\inf_{x\in C} |\clb_C(x,r)|=b(r)>0$;
\item[(ii)] $C$ is a convex body of uniform geometry;
\item[(iii)] all asymptotic cylinders of $C$ are convex bodies;
\item[(iv)] for all $v>0$, $I_{C}(v) >0$;
\item[(v)] there exists $v_{0}>0$ such that $I_{C}(v_{0})>0$.
\end{enumerate}
Moreover, any of these conditions imply $\inf_{x\in\ptl C}\alpha(C_x)=\alpha_0>0$.
\end{proposition}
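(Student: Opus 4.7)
The plan is to establish (i)~$\Leftrightarrow$~(ii)~$\Leftrightarrow$~(iii) and then close the loop (ii)~$\Rightarrow$~(iv)~$\Rightarrow$~(v)~$\Rightarrow$~(ii), with (iv)~$\Rightarrow$~(v) being trivial; the final claim on solid angles will follow from Lemma~\ref{lem:losemcontangcon} together with (iii). For (i)~$\Leftrightarrow$~(ii), one direction is definitional. For the converse, I would use that $b$ is non-decreasing in $r$ (from $\clb_C(x,r)\subset \clb_C(x,r')$ when $r\le r'$) together with the concavity from Remark~\ref{rem:bconv} and the fact that $b(0)=0$: if $b(r_0)>0$ then $b(r)\ge (r/r_0)\,b(r_0)>0$ for $0<r<r_0$ by concavity, and $b(r)\ge b(r_0)>0$ for $r>r_0$ by monotonicity.

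For (ii)~$\Leftrightarrow$~(iii) the main tool is continuity of Lebesgue measure under Hausdorff convergence of convex bodies. Given $K\in\mathcal{K}(C)$ realized as the local Hausdorff limit of $C_i:=-x_i+C$, we have $0\in C_i$ for all $i$ and $|C_i\cap \clb(0,r)|=|\clb_C(x_i,r)|\ge b(r)$; passing to the limit gives $|K\cap \clb(0,r)|\ge b(r)>0$, so $K$ has nonempty interior. Conversely, if $b(r_0)=0$ for some $r_0>0$, pick $x_i\in C$ with $|\clb_C(x_i,r_0)|\to 0$; since $x\mapsto|\clb_C(x,r_0)|$ is strictly positive and continuous on $C$, the sequence $\{x_i\}$ must diverge, and Lemma~\ref{lem:cyl} produces a subsequential asymptotic cylinder $K$ with $|K\cap\clb(0,r_0)|=0$, contradicting~(iii).

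The main obstacle is (ii)~$\Rightarrow$~(iv): extracting a uniform isoperimetric inequality from the uniform ball lower bound. My plan is to observe that by Lemma~\ref{lemma:fxrconcave} and $b(r_0)>0$, every $\clb_C(x,r_0)$ is a convex body of diameter $\le 2r_0$ and volume $\ge b(r_0)$, hence has in-radius bounded below by some $\rho_0>0$ depending only on $n$, $r_0$, and $b(r_0)$; in particular, every $x\in C$ lies within distance $r_0$ of a Euclidean ball of radius $\rho_0$ contained in $C$. Covering the relevant portion of $C$ by a bounded-overlap family of such inscribed balls and applying the classical relative isoperimetric inequality inside each gives $P_C(E)\ge c(n,v)>0$ for any $E\subset C$ with $|E|=v$. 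For (v)~$\Rightarrow$~(ii) I would argue by contrapositive: if (ii) fails then by (iii) some asymptotic cylinder $K$ lies in a hyperplane, and transporting appropriate thin slabs back near the divergent base points $x_i$ one constructs sets in $C$ of any prescribed volume $v_0$ whose relative perimeter tends to zero, contradicting $I_C(v_0)>0$.

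Finally, Lemma~\ref{lem:losemcontangcon} delivers $K\in\{C\}\cup\mathcal{K}(C)$ and $p\in K$ realizing
\[
\alpha_0:=\inf\{\alpha(L_q):L\in\{C\}\cup\mathcal{K}(C),\,q\in L\}=\alpha(K_p).
\]
Under~(iii), $K$ is a convex body with nonempty interior, so its tangent cone $K_p$ has nonempty interior and hence $\alpha_0=\alpha(K_p)>0$; since $\inf_{x\in\ptl C}\alpha(C_x)\ge \alpha_0$, the last assertion of the proposition follows.
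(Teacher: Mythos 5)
Your overall architecture is sound, and two ingredients are genuinely different from (and as valid as) the paper's: using the concavity from Remark~\ref{rem:bconv} together with monotonicity to pass from $b(r_0)>0$ to $b(r)>0$ for all $r$ (the paper instead closes the cycle (i)$\Rightarrow$(ii)$\Rightarrow$(iii)$\Rightarrow$(i)); and invoking Lemma~\ref{lem:losemcontangcon} for the solid-angle conclusion, where the paper just compares $|\clb_C(x,r)|$ with $|\clb_{C_x}(x,r)|=\alpha(C_x)r^{n+1}/(n+1)$ directly. A small caveat on the first point: Lemma~\ref{lemma:fxrconcave} gives concavity of $|\clb_C(x,r)|^{1/(n+1)}$, hence of $b(r)^{1/(n+1)}$; this yields $b(r)\ge(r/r_0)^{n+1}b(r_0)$ rather than the linear bound you wrote, but the conclusion $b(r)>0$ is unaffected.

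The genuine gap is in (ii)$\Rightarrow$(iv). Your plan --- ``cover by a bounded-overlap family of inscribed balls and apply the classical relative isoperimetric inequality in each'' --- misses the crucial dichotomy. The relative isoperimetric inequality in a ball $B$ gives a lower bound in terms of $\min\{|E\cap B|,|B\setminus E|\}^{n/(n+1)}$; summing over a cover only yields a bound proportional to $|E|^{n/(n+1)}$ if the minimum equals $|E\cap B|$ on every ball of the cover, i.e., if the density $|E\cap B|/|B|$ is uniformly $\le 1/2$. If instead $E$ nearly fills some ball, the corresponding term can be arbitrarily small and the summation fails. The paper handles this by first observing (via Fubini) that $\inf_x|E\cap B_C(x,r_0)|=0$ and splitting into two cases: if the density is $\ge 1/2$ somewhere, continuity produces a ball with density exactly $1/3$, where a single application of the Poincar\'e inequality already gives a lower bound independent of $E$; otherwise the covering argument works. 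You also gloss over which balls to cover by --- the inscribed Euclidean balls $B(y,\rho_0)\subset C$ do not cover $C$, so one must use the relative balls $B_C(x,r_0)$, for which the right tool is the relative isoperimetric inequality in convex bodies with controlled inradius/circumradius ratio (the paper cites Theorem~4.11 of \cite{MR3335407}), not the isoperimetric inequality in a Euclidean ball.

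There is also a weakness in (v)$\Rightarrow$(ii). ``Transporting appropriate thin slabs'' is not the right construction: a slab of fixed volume $v_0$ and shrinking thickness has \emph{growing}, not vanishing, relative perimeter. The mechanism that actually works is the one in the paper: near the diverging points $x_j$ the relative balls have vanishing volume, so by continuity one can find centers $z_\eps$ and increasingly large radii $r_\eps$ with $|B_C(z_\eps,r_\eps)|=v_0$; comparison with the cone over $\ptl B_C(z_\eps,r_\eps)\cap\intt(C)$ gives $P_C(B_C(z_\eps,r_\eps))\le(n+1)v_0/r_\eps\to 0$. Your intuition (the domain becomes degenerate at infinity) is correct, but the competitor sets must be balls at large scales, not thin slabs, for the perimeter estimate to go the right way.
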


\begin{proof}
We shall first prove that (i), (ii) and (iii) are equivalent, then prove the implications (iv) $\Rightarrow$ (v), (v) $\Rightarrow$ (iii), and (ii) $\Rightarrow$(iv).

The fact that (i) implies (ii) is obvious. Let us prove that (ii) implies (iii). Let $K\in\mathcal{K}(C)$ be an asymptotic cylinder obtained as the limit of the sequence $\{-x_i+C\}_{i\in\nn}$ under local convergence in Hausdorff distance. Then
\[
|K\cap\clb(0,r)|=\lim_{i\to\infty}|(-x_i+C)\cap\clb(0,r)|=\lim_{i\to\infty}|C\cap\clb(x_i,r)|\ge b(r)>0.
\]
This implies that $K$ has interior points, i.e., that it is a convex body.

Assume now (iii) holds. Let us prove (i) reasoning by contradiction. Take $r>0$. In case $b(r)=0$, we take a sequence $\{x_i\}_{i\in\nn}$ so that $\lim_{i\to\infty}|\clb_C(x_i,r)|=0$. This sequence is divergent since otherwise we could extract a subsequence converging to some $x\in C$ with $|\clb_C(x_i,r)|$ subconverging to $|\clb_C(x,r)|>0$. Consider the asymptotic cylinder $K\in\mathcal{K}(C)$ obtained as the limit of a subsequence $\{-x_{i_j}+C\}_{j\in\nn}$. Since
\[
|K\cap\clb(0,r)|=\lim_{j\to\infty}|C\cap\clb(x_{i_j},r)|=0,
\]
the cylinder $K$ would have not interior points, contradicting assumption (ii). This completes the proof of the equivalences (i)$\Leftrightarrow$(ii)$\Leftrightarrow$(iii).

The fact that (iv) implies (v) is obvious. Now, we show that (v) implies (iii). To this aim, we argue by contradiction, i.e., we assume the existence of an asymptotic cylinder $K$ of $C$ with empty interior. Therefore, there exists a sequence $x_{j}\in C$ going off to infinity, such that $C_{j} = -x_{j}+C$ converges locally Hausdorff to $K$, as $j\to\infty$. 
Now, for $\eps>0$ small enough, we construct a set $E_{\eps}\subset C$ such that $|E_{\eps}|=v_{0}$ but $P_{C}(E_{\eps})\le \eps$, thus implying $I_{C}(v_{0})=0$, a contradiction. %
To this aim we fix $z_{0}\in C$ and define $r_{\eps} = \frac{(n+1)v_{0}}{\eps}$, then we assume $\eps$ small enough, so that $|B_{C}(z_{0},r_{\eps})| > v_{0}$. Since $\lim_{j\to\infty}|B_C(x_j,r_\eps)|=0$, by continuity of the volume of intrinsic balls we can choose $z_{\eps}\in C$ such that $E_{\eps} = B_{C}(z_{\eps},r_{\eps})$ satisfies $|E_{\eps}| = v_{0}$. 
By comparison with the cone ${\mathcal C}_{\eps}$ over $\partial E_{\eps}\cap \intt(C)$ with vertex $z_{\eps}$, taking into account $P_C(E_{\eps})=P_{\mathcal{C}_\eps}(B_{\mathcal{C}_\eps}(z_\eps,r_\eps))$ and $|E_{\eps}|\ge |B_{\mathcal{C}_\eps}(z_\eps,r_\eps)|$ we get
\begin{equation*}
v_{0} = |E_{\eps}| \ge |B_{\mathcal{C}_\eps}(z_\eps,r_\eps)| = \frac{r_{\eps}}{n+1} P_{\mathcal{C}_\eps}(B_{\mathcal{C}_\eps}(z_\eps,r_\eps)) = \frac{r_{\eps}}{n+1} P_{C}(E_{\eps}),
\end{equation*}
whence
\[
P_{C}(E_{\eps}) \le \frac{n+1}{r_{\eps}} v_{0} = \eps.
\]

This shows that $I_{C}(v_{0}) \le \eps$ for all $\eps>0$, thus $I_{C}(v_{0}) = 0$, i.e., a contradiction with (ii).

Let us finally prove that (ii) implies (iv). Fix some volume $v>0$ and consider a set $E\subset C$ of finite relative perimeter and volume $|E|=v$. We shall show that there exists a constant $\Lambda(C,v)>0$, only depending on the geometry of $C$ and $v$, such that $P_C(E)\ge\Lambda(C,v)$. This would imply $I_C(v)\ge\Lambda(C,v)>0$, as desired. Recall first that (ii) implies the existence of a positive radius $r_0>0$ satisfying \eqref{eq:mainhyp}. An application of Fubini's Theorem, \cite[Lemme~6.2]{gallot} yields
\begin{equation}
\label{eq:fubini}
\int_C |E\cap B_C(y,r_0)|\,dH^{n+1}(y)=\int_E |B_C(x,r_0)|\,dH^{n+1}(x).
\end{equation}
Since $E$ has finite volume and $|B_C(x,r_0)|\le\ell_2 r_0^{n+1}$ by \eqref{eq:isnqgdbl1a}, the function $f(y):=|E\cap B_C(y,r_0)|$ is in $L^1(C)$. Hence, for any $\eps>0$ the set $f^{-1}([\eps,+\infty))=\{y\in C: |E\cap B_C(y,r_0)|\ge \eps\}$ has finite volume and we get
\begin{equation}
\label{eq:claim1}
\inf_{x\in C} |E\cap B_{C}(x,r_{0})| = 0\,.
\end{equation}

Let us assume first that there exists $x_{0}\in C$ such that
\[
\frac{|E\cap B_C(x,r_0)|}{|B_C(x_{0},r_0)|}\ge \frac{1}{2}.
\]
By \eqref{eq:claim1}, \eqref{eq:isnqgdbl1a} and a continuity argument we get a point $z_{0}\in C$ so that
\[
\frac{|E\cap B_C(z_{0},r_0)|}{|B_C(z_{0},r_0)|}=\frac{1}{3}.
\]
By Lemma \ref{lem:inrad} below, we obtain
\begin{equation}
\label{eq:1stlowerbound}
\begin{split}
P_C(E)\ge P(E,B_C(z_{0},r_0))&\ge M\,\bigg(\frac{|B_C(z_{0},r_0)|}{3}\bigg)^{\frac{n}{n+1}}
\\
&\ge M\bigg(\frac{\ell_1r_0^{n+1}}{3}\bigg)^{\frac{n}{n+1}}>0.
\end{split}
\end{equation}
Therefore the perimeter of $P_C(E)$ is bounded from below by a constant only depending on the geometry of $C$. 
Now assume that
\[
\frac{|E\cap B_C(x,r_0)|}{|B_C(x,r_0)|}< \frac{1}{2}
\]
holds for all $x\in C$. Let $\{B_C(x_i,r_0/2)\}_{i\in I}$ be a maximal family of disjoint intrinsic open balls centered at points of $C$. Then the family $\{B_C(x_i,r_0)\}_{i\in I}$ is an open covering of $C$. The overlapping of sets in this family can be estimated in the following way. For $x\in C$, define
\[
A(x)=\{i\in I: x\in B_C(x_i,r_0)\}.
\]
When $i\in A(x)$, it is immediate to check that $B_C(x_{i},r_0/2)\subset B_C(x,2r_0)$ (if $y\in B_C(x_i,r_0/2)$, then $d(y,x)\le d(y,x_i)+d(x_i,x)<2r_0$). Hence $\{B_C(x_i,r_0/2)\}_{i\in A(x)}$ is a disjoint family of balls contained in $B_C(x,2r_0)$. Coupling the estimate \eqref{eq:isnqgdbl1a} in Lemma \ref{lem:inrad} with \eqref{eq:doubling}, we get
\[
\# A(x)\, \ell_1\bigg(\frac{r_0}{2}\bigg)^{n+1}\le \sum_{i\in A(x)} |B_C(x_i,r_0/2)|\le |B_C(x,2r_0)|\le 2^{n+1}\ell_2\, r_0^{n+1},
\]
which implies the uniform bound $\# A(x)\le K(C,n):=4^{n+1}\ell_2\ell_1^{-1}$. Finally, the overlapping estimate and the relative isoperimetric inequality in $B_C(x_i,r_0)$ (see Theorem~4.11 in \cite{MR3335407}) imply
\begin{equation}
\label{eq:2ndlowerbound}
\begin{split}
K(C,n)\,P_C(E)&\ge \sum_{i\in I} P_C(E,B_C(x_i,r_0))
\\
&\ge M\sum_{i\in I} |E\cap B_C(x_i,r_0)|^{\frac{n}{n+1}}\ge M\,|E|^{\frac{n}{n+1}}.
\end{split}
\end{equation}

From \eqref{eq:1stlowerbound} and \eqref{eq:2ndlowerbound} we obtain
\begin{equation}
\label{stimaisopsmall}
P_C(E)\ge \Lambda(C,v):=\min\bigg\{\bigg(\frac{\ell_1r_0^{n+1}}{4}\bigg)^{\frac{n}{n+1}},\frac{Mv^{\frac{n}{n+1}}}{K(C,n)}\bigg\}>0.
\end{equation}
This completes the proof of (ii)$\Rightarrow$(iv) and thus we conclude the proof of the proposition.

Finally, assume (i) holds and consider a point $x\in\ptl C$. Inequalities
\[
0<b(r)\le |\clb_C(x,r)|\le |\clb_{C_x}(x,r)|=\int_0^r\alpha(C_x)\,s^n\,ds=\frac{\alpha(C_x)\,r^{n+1}}{n+1}
\]
imply that $\alpha(C_x)$ is estimated uniformly from below by the positive constant $\alpha_0=(n+1)\,b(r)\,r^{-(n+1)}$.
\end{proof}

\begin{remark}
By a slight variant of Example~\ref{ex:ex}, one sees that only assuming condition $\inf_{p\in\ptl C} \alpha(C_p)>0$ is not enough to ensure $I_C>0$: indeed it is sufficient to intersect the unbounded convex body $C$ constructed in Example~\ref{ex:ex} with the one-parameter family of half-spaces $A_{x}$ having the point $m_x:=p_{x} - (1,0,0)$ on their boundary and inner normal vector $N_{x} = (2-2x,0,1)$, for $x>1$. The resulting set $\hat C$ satisfies $\inf_{p\in\ptl \hat C}\alpha(\hat C_{p})>0$ but still has a null isoperimetric profile. This is easy to check  using (iii) in Proposition~\ref{prop:maincond} since the asymptotic cylinder of $\hat C$ obtained as a limit of a convergent subsequence of $-m_x+\hat C$ (when $x$ diverges) has empty interior as it is contained in a sequence of wedges with solid angles going to zero.
\end{remark}

\section{Density estimates and a concentration lemma}
\label{sec:density}

Next we show that whenever $C$ is a convex body of uniform geometry, we can obtain uniform lower (and upper) density estimates for the volume of $B_{C}(x,r_0)$, as well as uniform relative isoperimetric inequalities on $B_{C}(x,r_0)$.

\begin{lemma}
\label{lem:inrad}
Let $C$ be a convex body of uniform geometry satisfying \eqref{eq:mainhyp}. Then
\begin{itemize}
\item[(i)] $\displaystyle\inf_{x\in C} \inr(\clb_C(x,r_0))\ge \frac{b(r_{0})}{(n+1)\omega_{n+1}r_0^n}$;

\item[(ii)] there exists $M>0$ only depending on $n$, $r_0/b(r_0)$, such that for all $x\in C$, $0<r\le r_0$, and $0<v<|B_C(x,r)|$, one finds
\begin{equation}
\label{eq:isnqgdbl1}
I_{\clb_C(x,r)}(v)\ge M\, \min \{v,|\clb_C(x,r)|-v\}^{n/(n+1)}\,;
\end{equation}

\item[(iii)] there exist $\ell_1>0$ only depending on $n,r_{0}, b(r_{0})$ and $\ell_2>0$ only depending on $n$, such that for all $x\in C$ and $0<r\le r_0$ one has
\begin{equation}
\label{eq:isnqgdbl1a}
\ell_1 r^{n+1} \le \vol{\clb_C(x,r)} \le \ell_2 r^{n+1}\,.
\end{equation}
\end{itemize}
\end{lemma}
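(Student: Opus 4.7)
I would establish the three items in the order (iii), (i), (ii), since the later estimates reuse the earlier ones.

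For (iii), the upper bound $|\clb_C(x,r)|\le\omega_{n+1}r^{n+1}$ is just monotonicity of Lebesgue measure, so $\ell_2=\omega_{n+1}$ depends on $n$ alone. For the lower bound I would use Lemma~\ref{lemma:fxrconcave}: the function $s\mapsto F(x,s)=|\clb_C(x,s)|^{1/(n+1)}$ is concave on $[0,+\infty)$ and vanishes at the origin, so $s\mapsto F(x,s)/s$ is non-increasing. Evaluating at $s=r\le r_0$ and $s=r_0$ gives $F(x,r)\ge(r/r_0)F(x,r_0)\ge(r/r_0)\,b(r_0)^{1/(n+1)}$ by \eqref{eq:mainhyp}, i.e., $|\clb_C(x,r)|\ge(b(r_0)/r_0^{n+1})\,r^{n+1}$, so one may take $\ell_1=b(r_0)/r_0^{n+1}$.

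For (i), set $K=\clb_C(x,r_0)$, which is a convex body contained in $\clb(x,r_0)$. I would apply the coarea formula to the $1$-Lipschitz function $d(y)=\dist(y,\ptl K)$, noting that $|\nabla d|=1$ a.e.\ on $K$. The superlevel sets $K_t=\{d\ge t\}$ are the inner parallel bodies of $K$, each convex, and for $0\le t\le\inr(K)$ the classical monotonicity of surface area for nested convex sets (realized by the $1$-Lipschitz nearest-point projection $\ptl\clb(x,r_0)\to\ptl K_t$, which is surjective by convexity) yields $\h^n(\ptl K_t)\le\h^n(\ptl\clb(x,r_0))=(n+1)\omega_{n+1}r_0^n$. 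Integrating,
\[
b(r_0)\le|K|=\int_0^{\inr(K)}\h^n(\ptl K_t)\,dt\le(n+1)\omega_{n+1}r_0^n\,\inr(K),
\]
which rearranges to the desired bound on $\inr(\clb_C(x,r_0))$.

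For (ii), the same coarea/monotonicity argument applied to $\clb_C(x,r)\subset\clb(x,r)$ and combined with (iii) gives $\inr(\clb_C(x,r))\ge\ell_1\,r/((n+1)\omega_{n+1})$. Rescaling by $r^{-1}$ via Lemma~\ref{lem:link I laC I C}, the body $r^{-1}\clb_C(x,r)$ is a convex body contained in $\clb(0,1)$ whose inradius is bounded below by a constant $\rho_0>0$ depending only on $n$, $r_0$, and $b(r_0)$. For convex bodies of such uniformly controlled shape, the relative isoperimetric inequality recorded in Theorem~4.11 of \cite{MR3335407} delivers a constant $M=M(n,\rho_0)$ for which the scale-invariant estimate $I_{\mathcal{K}}(w)\ge M\min\{w,|\mathcal{K}|-w\}^{n/(n+1)}$ holds. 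Undoing the scaling (again Lemma~\ref{lem:link I laC I C}) returns exactly \eqref{eq:isnqgdbl1}.

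The main obstacle is (ii), whose only nontrivial input is the quantitative relative isoperimetric inequality on convex bodies of uniformly bounded shape ratio; everything else reduces to standard convex-geometry tools, namely the Brunn--Minkowski-based concavity of Lemma~\ref{lemma:fxrconcave}, the coarea formula, and the surface-area monotonicity for nested convex sets.
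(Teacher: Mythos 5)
Your proof is correct, and for parts (i) and (ii) it matches the paper's argument essentially step for step: coarea applied to the distance function from $\ptl\clb_C(x,r_0)$ together with the monotonicity of surface area for nested convex bodies gives (i), and the quantitative relative isoperimetric inequality of Theorem~4.11 in \cite{MR3335407}, applied after normalizing so that the inradius-to-circumradius ratio of $\clb_C(x,r)$ is uniformly controlled, gives (ii). The genuinely different step is (iii): you deduce the lower volume bound directly from the Brunn--Minkowski concavity of $r\mapsto|\clb_C(x,r)|^{1/(n+1)}$ established in Lemma~\ref{lemma:fxrconcave}. Since this function is concave and vanishes at $r=0$, the ratio $F(x,r)/r$ is non-increasing, which immediately yields $|\clb_C(x,r)|\ge (r/r_0)^{n+1}b(r_0)$ and hence $\ell_1=b(r_0)/r_0^{n+1}$. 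The paper instead proves (i) first, writes $|B_C(x,r_0)|\ge\omega_{n+1}\de^{n+1}$ with $\de=\inr(B_C(x,r_0))$, and then transfers this to radius $r$ via the homothety $h_{x,r/r_0}$, obtaining $\ell_1=\omega_{n+1}(\de/r_0)^{n+1}$. Your route decouples (iii) from (i), is a little shorter, and produces a cleaner (indeed slightly sharper) constant; both are valid, but invoking the concavity lemma is arguably the more natural derivation given that the paper has already established it.
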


\begin{proof}

To prove (i) we let $D_{t}$ be the set of points in $D:= \clb_{C}(x,r_{0})$ whose distance from $\partial D$ is at least $t$. Then $D_{t}$ is convex and nonempty for any $t\in [0,\inr(D)]$, while it is empty as soon as $t>\inr(D)$. The coarea formula applied to the distance function from $\partial D$ yields
\begin{equation}
\label{eq:inrestimate}
b(r_{0})\le |D| = \int_{0}^{\inr(D)}P(D_{t})\, dt \leq P(D) \inr(D)\le P(\clb(0,r_0))\inr(D),
\end{equation}
since $D\subset \clb(0,r_0)$ implies $P(D)\le P(\clb(0,r_0))=(n+1)\omega_{n+1}r_0^n$. Therefore we find
\[
\inr(\clb_C(x,r_0)) \ge \frac{b(r_{0})}{(n+1)\omega_{n+1}r_0^n}\,,
\]
thus proving (i). 

In order to prove (ii) we shall use Theorem~4.11 in \cite{MR3335407}: if $K\subset\rr^{n+1}$ is a bounded convex body, $x,y\in K$, and $0<\rho_1<\rho_2$ satisfy $\clb(y,\rho_1)\subset K\subset\clb(x,\rho_2)$, then there exists a constant $M>0$ given as a explicit function of $n$ and $\rho_2/\rho_1$ such that
\[
I_K(v)\ge M\min\{v,|K|-v\}^{n/(n+1)},
\]
for all $0\le v\le |K|$. The proof of this result makes use of the bilipschitz map $f:K \to\clb(y,\rho_2)$ defined in \cite[(3.9)]{MR3335407} (with $r=\rho_1/2$) and the estimates on the lipschitz constants in Corollary~3.9 of \cite{MR3335407}, that depend on $\rho_2/\rho_1$. The dependence of the constant $M$ on the dimension $(n+1)$ of the ambient Euclidean space is due to the dependence of $M$ on the optimal constant $M_0$ in the isoperimetric inequality
\[
I_{\clb(y,\rho_2)}(v)\ge M_0\min\{v,|\clb(y,\rho_2)|-v\}^{n/(n+1)}, \quad v\in (0,|\clb(y,\rho_2)|).
\]
The constant $M_0$ is invariant by translations and dilations in $\rr^{n+1}$ and hence valid for any closed ball.

Using this result, we choose $\rho_1=b(r_0)$, $K=\clb_C(x,r_0)$, $\rho_2=r_0$ so that the constant $M>0$ in the inequality
\[
I_{\clb_C(x,r_0)}(v)\ge M \min \{v,|\clb_C(x,r_0)|-v\}^{n/(n+1)},\quad v\in (0,|\clb_C(x,r_0)|,
\]
is given explicitly as a function of $n$ and $r_0/b(r_0)$ for any $x\in C$. Fix now some $0<r\le r_0$ and some $x\in C$, and take $y\in \clb_C(x,r_0)$ such that $\clb(y,b(r_0))\subset \clb_C(x,r_0)$. Take $\la\in (0,1]$ such that $r=\la r_0$. Denoting by $h_{x,\la}$ the homothety of center $x$ and ratio $\la$ we have 
\[
\clb(h_{x,\la}(y),\la b(r_0))=h_{x,\la}(\clb(y,b(r_0)))\subset h_{x,\la}(\clb_C(x,r_0))\subset\clb_C(x,r),
\]
the latter inclusion following from the concavity of $C$. We conclude that a relative isoperimetric inequality holds in $\clb_C(x,r)$ with a constant given explicitly as a function of $n$ and of $r/\la b(r_0)=\la r_0/\la b(r_0)=r_0/b(r_0)$. This means that $M>0$ can be taken uniformly for any $x\in C$ and $r\in (0,r_0]$.

We now prove (iii). Since $|B_C(x,r)|\le |B(x,r)|$, taking $\ell_2=\omega_{n+1}$ immediately gives the upper bound in \eqref{eq:isnqgdbl1a}. Then setting $\la = r/r_{0}$ and $\de = \inr(B_{C}(x,r_{0}))$ we have
\begin{align*}
|B(x,r)\cap C| &= |B(x, \la r_0)\cap C| = |B(0, \la r_0)\cap (-x+C)|\\ 
&\ge |B(0,\la r_0) \cap \la(-x+C)| = \la^{n+1}|B_{C}(x,r_0)|\\ 
&\ge \omega_{n+1}(\de\la)^{n+1} = \ell_{1} r^{n+1}\,,
\end{align*}
where $\ell_1=\omega_{n+1}(\de/r_0)^{n+1}$ only depends on $n,r_{0},b(r_{0})$. This completes the proofs of (iii) and of the lemma.
\end{proof}

An immediate consequence of Lemma~\ref{lem:inrad} and the argument leading to equation \eqref{stimaisopsmall} is the following corollary:
\begin{corollary}
\label{cor:isopinesm}
Let $C\subset \rr^{n+1}$ be a convex body of uniform geometry. Then there exist $v_0$, $c_0>0$, depending only on $n$, on the Ahlfors constant $\ell_1$ in \eqref{eq:isnqgdbl1a}, and on the Poincar\'e inequality \eqref{eq:isnqgdbl1}, such that
\begin{equation}
\label{eq:exicyl1}
I_C(v)\ge c_0\, v^{n/(n+1)}\quad\text{for any }\quad v\le v_0.
\end{equation}
\end{corollary}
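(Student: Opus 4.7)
The plan is to read off the desired inequality directly from the estimate \eqref{stimaisopsmall} obtained inside the proof of the implication (ii)$\Rightarrow$(iv) in Proposition \ref{prop:maincond}. That estimate gives, for any set $E\subset C$ with $|E|=v>0$,
\[
P_C(E) \ge \min\bigg\{\bigg(\frac{\ell_1 r_0^{n+1}}{4}\bigg)^{n/(n+1)},\ \frac{M v^{n/(n+1)}}{K(C,n)}\bigg\},
\]
where $K(C,n) = 4^{n+1}\ell_2\ell_1^{-1}$ and $\ell_2=\omega_{n+1}$ is purely dimensional. The first term on the right is a strictly positive constant independent of $v$, while the second scales like $v^{n/(n+1)}$, so for all sufficiently small $v$ the minimum must be realized by the second term.

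The only computation needed is to locate the crossover. Solving
\[
\frac{M v^{n/(n+1)}}{K(C,n)} \le \bigg(\frac{\ell_1 r_0^{n+1}}{4}\bigg)^{n/(n+1)}
\]
gives the explicit range $v\le v_0 := \frac{\ell_1 r_0^{n+1}}{4}\Big(\frac{K(C,n)}{M}\Big)^{(n+1)/n}$. Setting $c_0 := M/K(C,n)$, the estimate above reads $P_C(E)\ge c_0 v^{n/(n+1)}$ for every admissible $E$ with $|E|=v\le v_0$, and taking the infimum over such $E$ yields $I_C(v)\ge c_0 v^{n/(n+1)}$ as claimed. Both $v_0$ and $c_0$ are explicit functions of $n$, of the Ahlfors constant $\ell_1$ (which itself encodes the dependence on $r_0$ and $b(r_0)$ through Lemma~\ref{lem:inrad}(iii)), and of the Poincar\'e constant $M$ appearing in \eqref{eq:isnqgdbl1}, matching the dependencies required by the statement. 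There is no substantive obstacle to overcome: the hard work was carried out inside the dichotomy argument of Proposition \ref{prop:maincond}, where a minimizing sequence is split according to whether the relative volume of $E$ inside some ball $B_C(x,r_0)$ exceeds $1/2$ or remains below $1/2$ uniformly; the corollary merely selects the volume range in which the second (isoperimetric-type) branch of the bound is the binding one.
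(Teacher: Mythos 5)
Your proposal is correct and coincides with the paper's own argument: the paper explicitly introduces this corollary as ``an immediate consequence of Lemma~\ref{lem:inrad} and the argument leading to equation \eqref{stimaisopsmall},'' which is exactly the reading-off you carry out, including the crossover calculation identifying the range of $v$ on which the isoperimetric branch of the minimum is binding.
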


\begin{remark}
An alternative proof of \eqref{eq:inrestimate} could be given using Steinhagen's Theorem \cite{48.0837.03}. The width $w$ of $\clb_C(x,r_0)$ satisfies
\[
w\le A_n \inr(\clb_C(x,r_0)),
\]
where $A_n>0$ is a constant only depending on $n$. On the other hand,
\[
|B_C(x,r_0)|\le \omega_n w r_0^n,
\]
where $\omega_n>0$ is the $\hh^n$-measure of the $n$-dimensional unit disc. Hence we obtain from \eqref{eq:mainhyp}
\[
\inr(\clb_C(x,r_0))\ge  (A_nB_n r_0^n)^{-1} b(r_{0})>0.
\]
\end{remark}

\begin{remark}\label{rmk:convinradius}
Assume that $\{C_j\}_{j\in\nn}$ converge in (global) Hausdorff distance to an unbounded convex body $C$ as $j\to\infty$. If \eqref{eq:mainhyp} holds for $C$,  then, for $j\in\nn$ large enough, one can show that $C_{j}$ satisfies the thesis of Lemma \ref{lem:inrad} with constants $M,\ell_{1},\ell_{2}$ that do not depend on $j$. Viceversa, if $\{C_j\}_{j\in\nn}$ is a sequence of convex bodies satisfying \eqref{eq:mainhyp} uniformly on $j\in \nn$, and converging locally in Hausdorff distance to a convex body $C$, then $C$ necessarily satisfies the thesis of Lemma \ref{lem:inrad} with constants $M,\ell_{1},\ell_{2}$ that only depend on $n,r_{0}$ and $b(r_{0})$. This follows from the $1$-Lipschitz continuity of the inradius as a function defined on compact convex bodies endowed with the Hausdorff distance, as shown in Lemma \ref{lem:inradius} below.
\end{remark}

Now we proceed to prove that the inradius of a bounded convex body is a $1$-Lipschitz function with respect to the Hausdorff distance. 

Let $C\subset\rr^{n+1}$ be a bounded convex body. For any $t>0$, we define the \emph{inner parallel} at distance $t$ by
\[
C_{-t}=\{p\in C: d(p,\ptl C)\ge t\}.
\]
It is well-known that $C_{-t}$ is a convex set whenever it is non-empty. 

\begin{lemma}
\label{lem:inradius}
Let $C$, $K\subset\rr^{n+1}$ be bounded convex bodies. Then
\begin{equation}
\label{eq:inr}
|\inr(C)-\inr(K)|\le \hd(C,K).
\end{equation}
This implies that $\inr$ is a $1$-Lipschitz function in the space of bounded convex bodies endowed with the Hausdorff metric.
\end{lemma}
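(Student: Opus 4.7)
The plan is to prove the one-sided inequality $\inr(K) \ge \inr(C) - \delta(C,K)$; by the symmetric role of $C$ and $K$, the full Lipschitz bound \eqref{eq:inr} then follows. Set $d := \delta(C,K)$ and $t := \inr(C)$. If $t \le d$ there is nothing to prove, so assume $t > d$ and choose $p \in C$ with $\clb(p,t) \subset C$ (such a $p$ exists since $C_{-t}$ is nonempty, being the intersection of a nested family of nonempty closed bounded convex sets for values $s \uparrow t$). The goal is to verify that the concentric ball $\clb(p, t-d)$ lies entirely in $K$, which at once gives $\inr(K) \ge t - d$.

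The key ingredient is the following separation fact: if $A \subset \rr^{n+1}$ is a closed convex set and $y \in \rr^{n+1}$ satisfies $y + \clb(0,d) \subset A + \clb(0,d)$, then $y \in A$. Indeed, assume $y \notin A$, let $q$ be the metric projection of $y$ onto $A$, and set $u := (y-q)/|y-q|$. The hyperplane through $q$ orthogonal to $u$ supports $A$ at $q$, so $A$ lies in the half-space $\{x : \escpr{x-q, u} \le 0\}$. The point $z := y + d\,u$ then satisfies $\escpr{z-q, u} = |y-q| + d$, and hence $|z - a| \ge \escpr{z-a, u} \ge |y-q| + d > d$ for every $a \in A$. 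This gives $\dist(z,A) > d$, contradicting $z \in y + \clb(0,d) \subset A + \clb(0,d)$.

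With this fact in hand, the inclusion follows immediately. By definition of Hausdorff distance, $C \subset K + \clb(0,d)$. For any $y \in \clb(p, t-d)$, the triangle inequality gives
\[
y + \clb(0,d) \subset \clb(p,t) \subset C \subset K + \clb(0,d),
\]
and so the separation fact yields $y \in K$. Therefore $\clb(p, t-d) \subset K$, whence $\inr(K) \ge t - d = \inr(C) - \delta(C,K)$.

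The only genuinely non-trivial ingredient is the separation argument, and even that is quite standard; the rest is bookkeeping with Minkowski sums. No use is made of the boundedness of $C$ and $K$ beyond ensuring the inradii are finite, so the argument in fact shows that $\inr$ is $1$-Lipschitz on any family of convex bodies on which $\delta$ takes finite values.
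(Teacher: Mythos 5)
Your proof is correct and rests on the same geometric fact as the paper's: both reduce the Lipschitz bound for $\inr$ to a cancellation property of Minkowski addition by balls for closed convex sets (the paper packages it as the inner/outer parallel identity $(C_t)_{-t}=C$, you state it directly as ``$y+\clb(0,d)\subset A+\clb(0,d)\Rightarrow y\in A$''), and both establish that cancellation by projecting the putative bad point onto the convex set and walking outward along the normal direction to reach a contradiction. Your supporting-hyperplane phrasing is arguably a little cleaner than the paper's route through $\ptl C_t$, but the idea is the same. One small caveat in your closing remark: the step where you pick $p$ with $\clb(p,t)\subset C$ for $t=\inr(C)$ does use boundedness of $C$ (via compactness of the nested family $C_{-s}$), and for unbounded convex bodies the inradius need not be attained, so the claimed extension would require the easy fix of taking $s<\inr(C)$ and letting $s\uparrow\inr(C)$ rather than working at $t=\inr(C)$ directly.
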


\begin{proof}
We split the proof in two steps.

\textit{Step one.} We show that $(C_t)_{-t}=C$ for any $t>0$, whenever $C\subset\rr^{n+1}$ is a bounded convex body. Let us start proving that $C\subset (C_t)_{-t}$. Arguing by contradiction, we assume that $d(p,\ptl C_t)<t$ for some $p\in C$. Then there exists $q\in\ptl C_t$ so that $|p-q|=d(p,\ptl C_t)$. Choose $r>0$ small enough so that $|p-q|+r<t$. If $z\in B(q,r)$, then
\[
d(z,C)\le |z-p|\le |z-q|+|q-p|<|p-q|+r<t.
\]
This implies that $B(q,r)\subset C_t$, a contradiction to the fact that $q\in\ptl C_t$. So we have $C\subset (C_t)_{-t}$.

To prove the reverse inequality, we take $p\in (C_t)_{-t}$. If $p\not\in C$, then $d(p,C)=d>0$. Let $q$ be the metric projection of $p$ to $C$. Being $C$ convex, it turns out that $q$ is also the metric projection of every point in the half-line $\{q+\la\,(p-q):\la\ge 0\}$. Let $z$ be the point in this half-line at distance $t$ from $C$. Then $z\in\ptl C_t$, and $d(p,\ptl C_t)\le |p-z|=t-|p-q|<t$, a contradiction since $p$ was taken in $(C_t)_{-t}$. So we get $(C_t)_{-t}\subset C$.

\textit{Step two.} Let $\eps=\hd(C,K)$ and observe that $K\subset C_\eps$. If both $\inr(C)$, $\inr(K)$ are smaller than or equal to $\eps$, then inequality \eqref{eq:inr} is trivial. So let us assume that $\inr(K)$ is the largest inradius and that $r=\inr(K)>\eps$. Take $B(x,r)\subset K\subset C_\eps$. By Step one we find
\[
B(x,r-\eps)=B(x,r)_{-\eps}\subset (C_\eps)_{-\eps}=C.
\]
So we have $\inr(K)\ge \inr(C)\ge \inr(K)-\eps$. This implies \eqref{eq:inr}.
\end{proof}

\begin{proposition}
\label{prp:isopbound}
Let $C\subset \rr^{n+1}$ be a convex body of uniform geometry. Then any isoperimetric region in $C$ is bounded.
\end{proposition}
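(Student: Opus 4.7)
The plan is to argue by contradiction: assume an isoperimetric region $E\subset C$ with $|E|=v$ is unbounded, and derive a differential inequality for the "tail mass" function that forces it to vanish in finite radius, which contradicts unboundedness. This follows the classical truncation-plus-replacement scheme.

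First, I would fix a point $x_{0}$ in the regular part of $\partial E\cap\intt(C)$ (which is nonempty by the interior regularity of Theorem~\ref{thm:n-7} together with $|E|>0$ and the small dimension of the singular set), and invoke the volume-fixing deformation of Lemma~\ref{lem:niceapprox}: there exist $\eps_{0}>0$ and a constant $K>0$ such that for every $t\in(-\eps_{0},\eps_{0})$ one can modify $E$ inside a small ball around $x_{0}$ to produce a set $E^{t}$ with $|E^{t}|=|E|+t$ and $|P_{C}(E^{t})-P_{C}(E)|\le K|t|$. Next, define
\[
m(R)=|E\setminus B(0,R)|,\qquad q(R)=\h^{n}(E\cap\ptl B(0,R)),
\]
so that $m$ is nonincreasing with $m'(R)=-q(R)$ for a.e.\ $R$, and $m(R)\to 0$ as $R\to\infty$ by dominated convergence (this is where I use the assumption that $E$ has finite volume).

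For $R$ so large that $m(R)<\eps_{0}$ and the deformation ball lies inside $B(0,R)$, set $E_{R}=E\cap B(0,R)$ and let $\widetilde{E}_{R}$ be the set obtained by truncation and then by restoring volume $m(R)$ via the volume-fixing deformation. Then
\[
P_{C}(\widetilde{E}_{R})\le P_{C}(E_{R})+Km(R)\le P(E,\intt(C)\cap B(0,R))+q(R)+Km(R),
\]
while minimality of $E$ gives $P_{C}(E)\le P_{C}(\widetilde{E}_{R})$, hence
\[
P(E,\intt(C)\setminus \clb(0,R))\le q(R)+Km(R).
\]
On the other hand, $E\setminus B(0,R)$ has volume $m(R)$, so, for $R$ large enough that $m(R)$ is below the threshold $v_{0}$ of Corollary~\ref{cor:isopinesm}, we have
\[
c_{0}\,m(R)^{n/(n+1)}\le I_{C}(m(R))\le P_{C}(E\setminus B(0,R))\le P(E,\intt(C)\setminus\clb(0,R))+q(R).
\]
Combining both inequalities yields $c_{0}\,m(R)^{n/(n+1)}\le 2q(R)+Km(R)$, and since $m(R)^{n/(n+1)}$ dominates $m(R)$ for $m(R)$ small, one obtains
\[
-m'(R)=q(R)\ge \tfrac{c_{0}}{4}\,m(R)^{n/(n+1)}
\]
for all $R$ beyond some $R_{0}$ on which $m$ is strictly positive.

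Integrating the differential inequality shows that $m(R)^{1/(n+1)}$ decreases at a strictly positive rate bounded below by $c_{0}/(4(n+1))$, so $m$ must vanish at some finite radius, contradicting the assumption that $E$ is unbounded. The main obstacle is the careful justification of the replacement step, namely: (i) ensuring the volume-fixing deformation is available, which uses the regularity result Theorem~\ref{thm:n-7} to locate a smooth boundary point of $E$ and the standard construction of a compactly supported normal vector field; and (ii) making sure that for large $R$ the deformation ball is disjoint from the truncation region, which is automatic since the former is fixed while $R\to\infty$.
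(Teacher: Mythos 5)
Your argument is correct and follows essentially the same route as the paper: truncate the tail, restore the lost volume by a fixed compactly supported volume-adjusting deformation (the paper cites \cite[IV.1.5]{maggi} directly rather than Lemma~\ref{lem:niceapprox}, which is about approximation by a sequence of convex bodies, though the deformation does appear inside its proof), bound the tail perimeter from below by Corollary~\ref{cor:isopinesm}, and integrate the resulting differential inequality for $m(R)$ to force the tail mass to vanish at finite radius. The only cosmetic differences are your contradiction framing versus the paper's direct finite-radius conclusion, and your explicit use of Theorem~\ref{thm:n-7} to locate a smooth boundary point for the deformation.
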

\begin{proof}
Let $v>0$ and $E\subset C$ be such that $|E|=v$ and $P_{C}(E) = I_{C}(v)$. Arguing as in \cite[IV.1.5]{maggi} we can find $\eps>0$ and a one-parameter family $\{\phi_{t}\}_{t\in (-\eps,\eps)}$ of diffeomorphisms, such that setting $E_{t}=\phi_{t}(E)$ one has $E_{t}\subset C$, $|E_{t}| = v+t$, and $P_{C}(E_{t}) \le P_{C}(E) + c|t|$ for all $t\in (-\eps,\eps)$ and for some constant $c>0$ depending on $E$. Let $x_{0}\in C$ be fixed, then we set $m(r) = |E\setminus B_{C}(x_{0},r)|$. If $r$ is large enough, we can entail at the same time that $m(r)<\min(\eps,v_{0})$ (where $v_{0}$ is as in Corollary \ref{cor:isopinesm}) and that the support of $\phi_{t}$ is compactly contained in $B_{C}(x_{0},r)$. Therefore we can define $F_{r} = \phi_{m(r)}(E) \setminus B_{C}(x_{0},r)$ and get for almost all $r>0$
\begin{align*}
P_{C}(F_{r}) &= P_{C}(E_{m(r)}) - 2m'(r) -P_{C}(E\setminus B_{C}(x_{0},r)) 
\\
&\le 
P_{C}(E) + c\, m(r) -2m'(r) - c_{0} m(r)^{n/(n+1)}\,,
\end{align*}
where $c_{0}$ is as in Corollary \ref{cor:isopinesm}. Since of course $|F_{r}| = |E|$, by minimality, and up to choosing $r$ large enough, we can find a constant $c_{1}>0$ such that
\[
c_{1}m(r)^{n/(n+1)} \le c_{0}m(r)^{n/(n+1)} - c\, m(r) \le -2m'(r)\,.
\]
Let us fix $r$ and take any $R>r$ such that $m(R)>0$. Then one rewrites the above inequality as 
\[
c_{2} \le -\frac{m'(r)}{m(r)^{n/(n+1)}}
\]
for some $c_{2}>0$. By integrating this last inequality between $r$ and $R$ one gets
\[
c_{3}(R-r) \le m(r)^{1/(n+1)} - m(R)^{1/(n+1)}
\]
for some $c_{3}>0$, whence the boundedness of $R$ follows. In conclusion, there exists a largest $R>0$ such that $E\subset B_{C}(x_{0},R)$, as wanted.
\end{proof}


We conclude this section with a concentration lemma, that is well-known in $\rr^{n+1}$ as well as in Carnot groups (see \cite{le-ri}). Since the ambient domain here is a convex body $C$ of uniform geometry, it seems worth giving a full proof of the result (notice the use of Tonelli's theorem instead of the covering argument used in the proof of \cite[Lemma~3.1]{le-ri}).

\begin{lemma}[Concentration]
\label{lem:lrlemme31}
Let $C\subset\rr^{n+1}$ be a convex body of uniform geometry, and $E\subset C$ a set with finite relative perimeter. Choose $0<r\le 1$ and $m\in (0,\tfrac{1}{2}]$, and assume
\begin{equation}
\label{eq:m}
|E\cap B_C(x,r)|\le m\,|B_C(x,r)|, \qquad \forall x\in C.
\end{equation}
Then there exists $\Lambda>0$, only depending on $C$, such that we have
\begin{equation}
\label{eq:lr}
\Lambda\,|E|\le m^{1/(n+1)}r\,P_C(E).
\end{equation}
\end{lemma}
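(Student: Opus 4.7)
The plan is to combine the pointwise relative isoperimetric inequality on intrinsic balls, provided by Lemma~\ref{lem:inrad}(ii), with a Fubini-type averaging over the centres $x\in C$. Without loss of generality one may take the radius $r_{0}$ in \eqref{eq:mainhyp} to be $1$, since Proposition~\ref{prop:maincond}(i) ensures $b(r)>0$ for every $r>0$; the constants $M,\ell_{1},\ell_{2}$ appearing in Lemma~\ref{lem:inrad} then depend only on $n$ and on $C$, so $r\le 1$ is an admissible radius throughout.

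\emph{Pointwise step.} Fix $x\in C$ and set $K_{x}:=\clb_{C}(x,r)$ and $F_{x}:=E\cap K_{x}$. The concentration hypothesis \eqref{eq:m} together with $m\le\tfrac{1}{2}$ gives $|F_{x}|\le\tfrac{1}{2}|K_{x}|$, so Lemma~\ref{lem:inrad}(ii) yields
\[
M\,|F_{x}|^{n/(n+1)}\le \h^{n}\big(\ptl^{*}E\cap B(x,r)\cap\intt C\big).
\]
Writing $|F_{x}|=|F_{x}|^{n/(n+1)}\cdot|F_{x}|^{1/(n+1)}$ and bounding the second factor by $(m|K_{x}|)^{1/(n+1)}\le m^{1/(n+1)}\ell_{2}^{1/(n+1)}\,r$ via \eqref{eq:m} and the upper estimate in Lemma~\ref{lem:inrad}(iii), one obtains the pointwise bound
\[
|E\cap B_{C}(x,r)|\le M^{-1}\ell_{2}^{1/(n+1)}\,m^{1/(n+1)}\,r\cdot\h^{n}\big(\ptl^{*}E\cap B(x,r)\cap\intt C\big).
\]

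\emph{Averaging step.} Integrating in $x$ over $C$ and applying Tonelli's theorem to each side, the lower estimate in Lemma~\ref{lem:inrad}(iii) gives
\[
\int_{C}|E\cap B_{C}(x,r)|\,dx=\int_{E}|B_{C}(x,r)|\,dx\ge \ell_{1}\,r^{n+1}\,|E|,
\]
while swapping the order of integration on the right-hand side yields
\[
\int_{C}\h^{n}\big(\ptl^{*}E\cap B(x,r)\cap\intt C\big)\,dx=\int_{\ptl^{*}E\cap\intt C}|C\cap B(y,r)|\,d\h^{n}(y)\le\omega_{n+1}\,r^{n+1}\,P_{C}(E).
\]
Combining the two inequalities and dividing by $r^{n+1}$ produces \eqref{eq:lr} with explicit constant $\Lambda:=\ell_{1}M/(\ell_{2}^{1/(n+1)}\omega_{n+1})$, which depends only on $C$.

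The step requiring the most care is the identification used in the pointwise inequality: Lemma~\ref{lem:inrad}(ii) controls the intrinsic relative perimeter of $F_{x}$ inside the convex body $K_{x}$, and one must recognise this quantity as $\h^{n}(\ptl^{*}E\cap\intt K_{x})=\h^{n}(\ptl^{*}E\cap B(x,r)\cap\intt C)$. This follows from the observation that $\ptl^{*}F_{x}$ differs from $\ptl^{*}E$ only on the spherical portion $\ptl B(x,r)\cap\intt C$, which is not contained in $\intt K_{x}$; hence the two measures coincide when restricted to $\intt K_{x}$. Once this identification is in place, the two Fubini computations in the averaging step are entirely routine.
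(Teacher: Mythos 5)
Your proof is correct and takes essentially the same approach as the paper: the relative isoperimetric inequality of Lemma~\ref{lem:inrad}(ii) on intrinsic balls supplies the pointwise estimate, the concentration hypothesis together with Lemma~\ref{lem:inrad}(iii) converts the exponent $n/(n+1)$ into a linear bound at the cost of the factor $m^{1/(n+1)}r$, and two applications of Tonelli close the argument, yielding a constant of the same form. The only cosmetic difference is that you separate the argument into a pointwise and an averaging step, whereas the paper presents it as a single chain of inequalities.
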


\begin{remark}
The constant $\Lambda$ is defined by
\begin{equation}
\label{eq:defLambda}
\Lambda:=\frac{c_1b(1)}{\omega_{n+1}^{(n+2)/(n+1)}},
\end{equation}
where $\omega_{n+1}=|B(0,1)|$, $b(1)=\inf_{x\in C} |\clb_C(x,1)|$, and $c_1$ is the Poincar\'e constant for the relative isoperimetric inequality in balls of radius $0<r\le 1$.
\end{remark}

\begin{proof}[Proof of Lemma~\ref{lem:lrlemme31}]
Since $C$ is of uniform geometry, $|B_C(x,1)|\ge b(1)>0$ for all $x\in C$. By Lemma~\ref{eq:doubling}, inequality $|B_C(x,r)|\ge b(1)\,r^{n+1}$ holds for any $r\in (0,1]$. By \eqref{eq:isnqgdbl1}, the relative isoperimetric inequality
\[
P_C(E,B_C(x,r))\ge c_1\,\min\big\{|E\cap B_C(x,r)|,|B_C(x,r)\setminus E|\big\}^\frac{n}{n+1}
\]
holds for any $r\in (0,1]$ with a uniform Poincar\'e constant $c_1$. Tonelli's Theorem implies
\begin{align*}
\int_{C} P(E,B_{C}(x,r))\, dH^{n+1}(x) &= \int_{C}\bigg\{\int_{\intt(C)} \chi_{B(x,r)}(y)\, d|D\chi_{E}|(y)\bigg\}\, dH^{n+1}(x)\\ 
&= \int_{C}\bigg\{\int_{\intt(C)} \chi_{B(y,r)}(x)\, d|D\chi_{E}|(y)\,\bigg\} dH^{n+1}(x)\\ 
&= \int_{\intt(C)}\bigg\{ \int_{C} \chi_{B(y,r)}(x)\, dH^{n+1}(x)\bigg\}\, d|D\chi_{E}|(y)\\ 
&= \int_{\intt(C)} |B_{C}(y,r)|\, d|D\chi_{E}|(y)
\\ &\le \omega_{n+1}r^{n+1} P_{C}(E).
\end{align*}
And so we get
\begin{align*}
\omega_{n+1}r^{n+1}P_C(E)&\ge\int_C P(E,B_C(x,r))\,dH^{n+1}(x)
\\
&\ge c_1\,\int_C\frac{|E\cap B_C(x,r)|}{|E\cap B_C(x,r)|^{\frac{1}{n+1}}}\,dH^{n+1}(x)
\\
&\ge c_1\int_C\bigg(\frac{1}{m\,|B_C(x,r)|}\bigg)^\frac{1}{n+1}|E\cap B_C(x,r)|\,dH^{n+1}(x)
\\
&\ge \frac{m^{-1/(n+1)}c_1}{\omega_{n+1}^{1/(n+1)}r}\int_C |E\cap B_C(x,r)|\,dH^{n+1}(x)
\\
&=\frac{m^{-1/(n+1)}c_1}{\omega_{n+1}^{1/(n+1)}r}\int_E |B_C(y,r)|\,dH^{n+1}(y)
\\
&\ge \frac{m^{-1/(n+1)}c_1b(1)}{\omega_{n+1}^{1/(n+1)}r}\,r^{n+1}|E|,
\end{align*}
where we have used \eqref{eq:m} to obtain the inequality relating the second and third lines, and equation \eqref{eq:fubini} to get the equality in the fifth line. The above chain of inequalities, together with the definition \eqref{eq:defLambda} of $\Lambda$, imply \eqref{eq:lr}.
\end{proof}

The following two corollaries will be used in the sequel. The first one will play an important role in the proof of Theorem \ref{thm:contprof}. The second one will be used in Chapter~\ref{sec:min}.

\begin{corollary}
\label{cor:lemme1}
Let $C\subset\rr^{n+1}$ be a convex body of uniform geometry, and $E\subset C$ a set with positive relative perimeter. Then there exists $\Lambda>0$, only depending on $C$, such that, for every $r>0$ satisfying
\begin{equation*}
r<\min\bigg\{2^{1/(n+1)}\Lambda\,\frac{|E|}{P_C(E)},1\bigg\},
\end{equation*}
there exists a point $x\in C$, only depending on $r$ and $E$, with
\[
|E\cap B_C(x,r)|> \frac{|B_C(x,r)|}{2}.
\]
\end{corollary}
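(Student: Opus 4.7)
The plan is to derive Corollary~\ref{cor:lemme1} as a direct contrapositive reading of the concentration Lemma~\ref{lem:lrlemme31}, with the same constant $\Lambda$ defined in \eqref{eq:defLambda}. The hypothesis on $r$ in the corollary is designed precisely so that Lemma~\ref{lem:lrlemme31} with $m=1/2$ cannot hold.

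More concretely, I would argue by contradiction. Fix $r$ satisfying $0<r<\min\{2^{1/(n+1)}\Lambda\,|E|/P_C(E),\,1\}$ and suppose, towards a contradiction, that for every $x\in C$ one has
\[
|E\cap B_C(x,r)|\le \tfrac{1}{2}\,|B_C(x,r)|.
\]
Then condition \eqref{eq:m} is satisfied with $m=1/2\in(0,1/2]$, and since also $r\in(0,1]$, Lemma~\ref{lem:lrlemme31} applies and yields
\[
\Lambda\,|E|\le (1/2)^{1/(n+1)}\,r\,P_C(E),
\]
i.e.\ $r\ge 2^{1/(n+1)}\Lambda\,|E|/P_C(E)$, which contradicts the upper bound on $r$. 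Hence some $x\in C$ must satisfy $|E\cap B_C(x,r)|>\tfrac{1}{2}|B_C(x,r)|$, as claimed; note that the positivity of $P_C(E)$ ensures that the threshold $2^{1/(n+1)}\Lambda\,|E|/P_C(E)$ is finite and the statement is nonvacuous, and the point $x$ of course depends on both $r$ and $E$.

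There is essentially no obstacle here: the only care needed is to check that the two hypotheses required by Lemma~\ref{lem:lrlemme31}, namely $r\le 1$ and $m\le 1/2$, are both present, and to verify the arithmetic that rearranges the concentration inequality into the form that contradicts the assumed upper bound on $r$. The choice $m=1/2$ is the largest permitted by the lemma and it is exactly this choice that converts ``$|E\cap B_C(x,r)|$ never exceeds half the ball'' into the sharpest possible lower bound on $r$, which is why the threshold in the corollary features the factor $2^{1/(n+1)}$.
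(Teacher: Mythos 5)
Your argument is correct and is exactly the paper's proof, which also proceeds by contradiction applying Lemma~\ref{lem:lrlemme31} with $m=1/2$ and rearranging the inequality \eqref{eq:lr}. The only difference is that the paper states this in a single sentence while you have spelled out the rearrangement explicitly; both yield the same conclusion.
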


\begin{proof}
We simply argue by contradiction using Lemma~\ref{lem:lrlemme31} and \eqref{eq:lr} for $m=1/2$.
\end{proof}

\begin{corollary}
\label{cor:lemme2-b}
Let $C\subset\rr^{n+1}$ be a convex body of uniform geometry, $v_0>0$, and $\{E_i\}_{i\in\nn}\subset C$ a sequence such that, denoting by $H$ a generic half-space,
\[
\vol{E_i}\le v_0 \text{ for all }i\in \nn\,,\quad \lim_{i\to\infty}\vol{E_i}=v \in (0,v_{0}]\,,
\quad\liminf_{i\to\infty} P_C(E_i)\le I_H(v)\,. 
\]
Take $m_0\in (0,\tfrac{1}{2}]$ such that
\[
m_0<\min\bigg\{\frac{1}{2v_0}, \frac{\Lambda^{n+1}}{I_H(1)^{n+1}}\bigg\}\,.
\]
Then there exists a sequence $\{x_i\}_{i\in\nn}\subset C$ such that
\[
|E_i\cap B_C(x_i,1)|\ge m_0v\qquad\text{for $i$ large enough.}
\]
\end{corollary}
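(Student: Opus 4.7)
The plan is to argue by contradiction, using the concentration Lemma~\ref{lem:lrlemme31}. Suppose the conclusion fails; after extracting a subsequence we may assume
\[
\sup_{x\in C}|E_i\cap B_C(x,1)|<m_0 v\qquad\text{for every }i\in\nn.
\]
I would then invoke the uniform geometry of $C$ (Proposition~\ref{prop:maincond}(i)) to obtain the lower bound $|B_C(x,1)|\ge b(1)>0$ for all $x\in C$, which turns the above into the density bound
\[
\frac{|E_i\cap B_C(x,1)|}{|B_C(x,1)|}<\frac{m_0 v}{b(1)}\quad\text{for every }x\in C.
\]

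Setting $m:=m_0 v/b(1)$, I will verify that the first condition $m_0<1/(2v_0)$ (together with $v\le v_0$ and the explicit form $\Lambda=c_1 b(1)/\omega_{n+1}^{(n+2)/(n+1)}$, which absorbs a factor $b(1)$) keeps $m$ in the admissible range $(0,\tfrac{1}{2}]$. Lemma~\ref{lem:lrlemme31} applied with $r=1$ will then yield
\[
\Lambda\,|E_i|\le m^{1/(n+1)} P_C(E_i)=\bigg(\frac{m_0 v}{b(1)}\bigg)^{1/(n+1)} P_C(E_i).
\]

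Finally I would pass to a further subsequence along which $P_C(E_i)\to L\le I_H(v)$ (using the $\liminf$ hypothesis) and let $|E_i|\to v$. Since a half-space is a convex cone, \eqref{eq:isopsolang} gives $I_H(v)=I_H(1)v^{n/(n+1)}$. The previous inequality then becomes $\Lambda v\le (m_0 v/b(1))^{1/(n+1)} I_H(1) v^{n/(n+1)}$; cancelling $v^{n/(n+1)}$ and rearranging yields $m_0\ge b(1)\Lambda^{n+1}/I_H(1)^{n+1}$, which, once the $b(1)$-factor is absorbed into $\Lambda^{n+1}$, will contradict the hypothesis $m_0<\Lambda^{n+1}/I_H(1)^{n+1}$.

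The main obstacle will be bookkeeping rather than any conceptual difficulty: the two upper bounds on $m_0$ play distinct roles—$m_0<1/(2v_0)$ ensures admissibility of $m$ in Lemma~\ref{lem:lrlemme31}, while $m_0<\Lambda^{n+1}/I_H(1)^{n+1}$ produces the clash with the half-space profile—and one must carefully track how the factor $b(1)$ arising from the density conversion reconciles with the explicit formula for $\Lambda$.
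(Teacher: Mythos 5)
The overall strategy is the right one and matches the paper's: argue by contradiction and invoke the concentration Lemma~\ref{lem:lrlemme31}. However, the bookkeeping you flag as the ``main obstacle'' does not close, and the gesture toward ``absorbing the $b(1)$-factor into $\Lambda^{n+1}$'' cannot be made rigorous. You convert the negated volume bound $|E_i\cap B_C(x,1)|<m_0 v$ into the density bound required by \eqref{eq:m} by dividing through by $|B_C(x,1)|\ge b(1)$, and then apply the lemma with $m=m_0 v/b(1)$. Carrying this through \eqref{eq:lr} and the half-space scaling yields $m_0\ge b(1)\,\Lambda^{n+1}/I_H(1)^{n+1}$. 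This contradicts $m_0<\Lambda^{n+1}/I_H(1)^{n+1}$ only when $b(1)\ge 1$; but $b(1)=\inf_{x\in C}|\clb_C(x,1)|\le\omega_{n+1}$ (which is itself smaller than $1$ once $n\ge 12$) and can be arbitrarily small for narrow convex bodies, so nothing in the hypotheses guarantees $b(1)\ge 1$. The constant $\Lambda$ is fixed once and for all by \eqref{eq:defLambda}, so there is no freedom to renormalize it; substituting the explicit formula $\Lambda=c_1b(1)/\omega_{n+1}^{(n+2)/(n+1)}$ into the final inequality merely reproduces the same condition $b(1)\ge 1$.

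The admissibility check $m\le\tfrac12$ has the identical defect: from $m_0<1/(2v_0)$ and $v\le v_0$ you get $m_0v<\tfrac12$ and hence $m=m_0v/b(1)<1/(2b(1))$, which lands in $(0,\tfrac12]$ only if $b(1)\ge 1$. For comparison, the paper's own proof applies \eqref{eq:lr} directly with $m=m_0v$ — in effect treating the negated volume bound as the density bound \eqref{eq:m} without the division by $|B_C(x,1)|$ — which makes the constants cancel cleanly and produces the contradiction $m_0\ge\Lambda^{n+1}/I_H(1)^{n+1}$; but it does not spell out the verification of \eqref{eq:m} that you correctly noticed is needed. So your more careful conversion to a density bound exposes the difficulty rather than resolving it, and as written your argument only goes through under the extra, unstated assumption $b(1)\ge 1$.
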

\begin{proof}
By contradiction, and up to subsequences, we may assume  $|E_i\cap B_C(x,1)|<m_0|E_i|$ for all $x\in C$. We apply Lemma~\ref{lem:lrlemme31}, and in particular \eqref{eq:lr} with $m=m_0 v\le m_0v_0\le\tfrac{1}{2}$, in order to get
\[
\Lambda^{n+1}\,|E_i|^n\le m_0P_C(E_i)^{n+1},
\]
From this inequality and our hypotheses, by taking limits we obtain
\[
\Lambda^{n+1}\,v^n\le m_0 I_H(v)^{n+1}
\]
and thus
\[
m_0\ge \Lambda^{n+1}\,\frac{v^n}{I_H(v)^{n+1}}=\frac{\Lambda^{n+1}}{I_H(1)^{n+1}},
\]
that is, a contradiction to the choice of $m_0$.
\end{proof}

\begin{remark}
Corollary~\ref{cor:lemme2-b} holds in particular when $\seq{E}$ is a minimizing sequence for volume $v$, since $\liminf_{i\to\infty} P_C(E_i)\le  I_C(v)\le I_H(v)$.
\end{remark}

\section{Examples}

Let us give now some examples of unbounded convex bodies of uniform geometry.

\begin{example}[Cylindrically bounded convex bodies are of uniform geometry and their asymptotic cylinders are unique up to horizontal translations]
\label{ex:cylindrically}
Assume that $C\subset\rr^{n+1}$ is a cylindrically bounded convex body as defined in \cite{MR3441524}: the set $C$ is the epigraph of a convex function defined on the interior of a bounded convex body $K\subset \rr^n\equiv \rr^n\times\{0\}\subset \rr^{n+1}$, and the intersections of $C$ with the horizontal hyperplanes $\Pi_c:=\{x_{n+1}=c\}$, for $c\in\rr$, projected to $\Pi_0$ form an increasing (w.r.t. $c$) family converging in Hausdorff distance to $K$.

Let $\{x_i\}_{i\in\nn}$ be a diverging sequence in $C$ such that $\{-x_i+C\}_{i\in\nn}$ converges locally in Hausdorff distance to an asymptotic cylinder $C_\infty$. Write $x_i=(z_i,t_i)\in\rr^n\times \rr$. The coordinates $t_i$ are unbounded and the vectors $z_i$ are uniformly bounded. This implies that the sequence $\{x_i/|x_i|\}_{i\in\nn}$ converges to the unit vector $v=(0,1)\in\rr^n\times\rr$. By construction, the half-lines $\{x+\la v:\la\ge 0\}$ are contained in $C$ for all $x\in C$. It is easy to check that
\[
(-x_i+C)\cap\Pi_0=-z_i+\pi(C\cap\Pi_{t_i}),
\]
where $\pi$ is the orthogonal projection onto the hyperplane $\Pi_0$. Since $\pi(C\cap\Pi_{t_i})$ converges to $K$ in Hausdorff distance and $\{z_i\}_{i\in\nn}$ is a bounded sequence, we immediately conclude that $(-x_i+C)\cap\Pi_0$ subconverges to a horizontal translation $K'$ of $K$. Obviously $K'\subset C_\infty$. Since vertical lines passing through a point in $C_\infty$ are contained in $C_\infty$ we immediately obtain that $K'\times\rr\subset C_\infty$.

Let us check that $C_\infty=K'\times\rr$. If $x\in C_\infty$ then $\pi(x)\in C_\infty$. By Kuratowski criterion, there exists a sequence $\{c_i\}_{i\in\nn}\subset C$ such that $\{-x_i+c_i\}_{i\in\nn}$ converges to $\pi(x)$. If we write $c_i$ as $(c_i',s_i)\in\rr^n\times\rr$, then $-z_i+c_i'$ converges to $\pi(x)$ and $-t_i+s_i$ converges to $0$. For each $i$, choose $j(i)>j(i-1)$ such that $t_{j(i)}>s_i$. Then $d_i:=(c_i',t_{j(i)})\in C$ and $-x_{j(i)}+d_i=(-z_{j(i)}+c_i',0)$ converges to $\pi(x)$. This implies that $\pi(x)\in K'\times \rr$ and so $x\in K'\times\rr$. Hence $C_\infty\subset K'\times\rr$.

The above arguments imply that any asymptotic cylinder of $C$ is a horizontal translation of $K\times\rr$. Since $K\times\rr$ has non-empty interior, Proposition~\ref{prop:maincond} implies that $C$ is of uniform geometry.

\end{example}

\begin{example}[Unbounded convex bodies of revolution are of uniform geometry and the asymptotic cylinders of non-cylindrically bounded convex bodies of revolution are either half-spaces or $\rr^{n+1}$]
\label{ex:revolution}
Let $\psi:[0,\infty)\to [0,\infty)$ be a continuous concave function satisfying $\psi(0)=0$ and $\psi(x)>0$ for all $x>0$. Consider the convex body of revolution
\[
C=C_\psi:=\{(z,t)\in\rr^n\times [0,\infty)\subset\rr^{n+1}: |z|\le \psi(t)\}.
\]
We shall assume that $\psi$ is unbounded since otherwise $C_\psi$ would be a cylindrically bounded convex body. We remark that we are not assuming any smoothness condition on $\psi$.

Take a diverging sequence $\{x_i\}_{i\in\nn}\subset C$ and assume that $\{-x_i+C\}_{i\in\nn}$ converges locally in Hausdorff distance to some asymptotic cylinder $C_\infty$.

Write $x_i=(z_i,t_i)\in\rr^n\times [0,\infty)$. The sequence $\{t_i\}_{i\in\nn}$ cannot be bounded since otherwise inequality $|z_i|\le\psi(t_i)$ would imply that $z_i$ is also uniformly bounded (and $\{x_i\}_{i\in\nn}$ would be bounded). On the other hand, since the function $\psi(t)/t$ is non-increasing by the concavity of $\psi$, the sequence $|z_i|/t_i\le\psi(t_i)/t_i$ is uniformly bounded. Hence $z_i/t_i$ subconverges to some vector $c\in\rr^n$ and so
\[
\frac{x_i}{|x_i|}=\frac{(z_i/t_i,1)}{\sqrt{(|z_i|/t_i)^2+1}}
\]
subconverges to the vector $v:=(c,1)/\sqrt{c^2+1}$, whose last coordinate is different from $0$. We conclude that any straight line parallel to $v$ containing a point in $C_\infty$ is entirely contained in $C_\infty$.

The sets $(-x_i+C)\cap\Pi_0$ are closed disks $\overline{D}(w_i,\psi(t_i))\subset\rr^n$ of center $w_i=-z_i\in\rr^n$ and radius $\psi(t_i)$. We define $r_i:=\psi(t_i)-|w_i|=\psi(t_i)-|z_i|\ge 0$.

In case $\{r_i\}_{i\in\nn}$ is an unbounded sequence the inclusion $\overline{D}(0,r_i)\subset \overline{D}(w_i,\psi(t_i))$ holds and shows that any point in $\rr^n\times\{0\}$ belongs to $C_\infty$. Hence $C_\infty=\rr^{n+1}$.

If the sequence $\{r_i\}_{i\in\nn}$ is bounded, passing to a subsequence we may assume  $\lim_{i\to\infty} r_i=c\ge 0$ and $\lim_{i\to\infty} w_i/|w_i|=e$. Let us prove first that the set
\[
K:=\{(z,0):\escpr{z,e}\ge -c\}
\]
is contained in $C_\infty$. Pick some $(z,0)\in K$ and choose $r>0$ large enough so that $z\in\intt(\overline{B}(0,r))$. In case $\escpr{z,e}>-c$ we have
\begin{align*}
\lim_{i\to\infty}\frac{|z-w_i|^2-\psi(t_i)^2}{|w_i|}&=\lim_{i\to\infty}\bigg(\frac{|z|}{|w_i|}-2\escpr{z,\frac{w_i}{|w_i|}}+\frac{|w_i|^2-\psi(t_i)^2}{|w_i|}\bigg)
\\
&=-2\escpr{z,e}-2c<0,
\end{align*}
since $|w_i|$ is unbounded, $\lim_{i\to\infty}w_i/|w_i|=e$, and
\[
\lim_{i\to\infty}\frac{\psi(t_i)^2-|w_i|^2}{|w_i|}=\lim_{i\to\infty}\frac{\psi(t_i)+|w_i|}{|w_i|}\,r_i=2c.
\]
This implies that, if $\escpr{z,e}>-c$, there exists $i_0\in\nn$ so that $z\in\overline{D}(w_i,\psi(t_i))$ for all $i\ge i_0$. Hence $(z,0)\in C_\infty$.

In case $\escpr{z,e}=-c$, we consider a sequence $\{\eps_i\}_{i\in\nn}$ of positive real numbers decreasing to $0$. For each $j$, the point $z+\eps_je$ satisfies $\escpr{z+\eps_je,e}=-c+\eps_j>-c$. Hence we can choose $i(j)$ (increasing in $j$) such that $z+\eps_je\in \overline{D}(w_i,\psi(t_j))$ for $i\ge i(j)$. We construct a sequence $\{m_k\}_{k\in\nn}$ of points in $\rr^n\times\{0\}$ so that $m_k$ is chosen arbitrarily in $\overline{D}(w_k,\psi(t_k))\cap\intt(\overline{B}(0,r))$ for $1\le k< i(1)$, and $m_k:=z+\eps_j e$ when $k\in [i(j),i(j+1))$. The point $m_k$ lies in $\overline{D}(w_k,\psi(t_k))\cap\intt(\overline{B}(0,r))$ for all $k\in\nn$, and the sequence $\{m_k\}_{k\in\nn}$ converges to $z$. By the Kuratowski criterion, $(z,0)\in C_\infty$. Hence $K\subset C_\infty$ as claimed.

Since $K\subset C_\infty$, and lines parallel to $L(v)$ intersecting $C_\infty$ are contained in $C_\infty$, we conclude that the half-space $K+L(v)$ is contained in $C_\infty$. But then $C_\infty$ itself must be either $\rr^{n+1}$ or a half-space. This is easy to prove since, in case $C\neq\rr^{n+1}$, we have $C=\bigcap_{i\in I}H_i$, where $\{H_i\}_{i\in I}$ is the family of all supporting hyperplanes to $C$. For any $i$, we would have $H\subset C\subset H_i$, and this would imply that $H_i$ is a half-space parallel to $H$ containing $H$ and so it would be $\bigcap_{i\in I} H_i$.
\end{example}

\begin{example}[An unbounded convex body of uniform geometry with $C^\infty$ boundary, all of whose asymptotic cylinders have non-smooth boundary]
\label{ex:noregbdy}
Let $g:\rr\to\rr$ be a $C^\infty$ function such that $g>0$, $g'<0$, $g''>0$, $g(0)=1/2$ and $\lim_{t\to +\infty} g(t)=0$. We consider the $C^\infty$ function in $\rr^3$ defined by
\[
f(x,y,z):=\sqrt{x^2+g(z)^2}+\sqrt{y^2+g(z)^2}.
\]

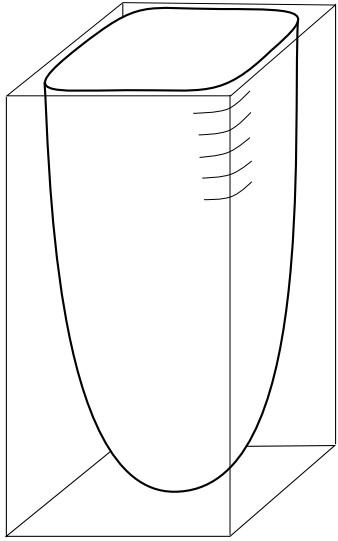
\begin{figure}[h]
\begin{tikzpicture}[y=0.80pt, x=0.8pt,yscale=-1, inner sep=0pt, outer sep=0pt]
  \path[draw=black,line join=miter,line cap=butt,miter limit=4.00,even odd
    rule,line width=0.400pt] (226.6939,70.2820) -- (226.6939,64.2862);
  \path[draw=black,line join=miter,line cap=butt,miter limit=4.00,even odd
    rule,line width=0.400pt] (221.2909,275.8951) -- (171.5698,316.3757) --
    (277.3980,316.3757) -- (326.8715,273.4169) -- (285.1463,273.7866);
  \path[draw=black,line join=miter,line cap=butt,miter limit=4.00,even odd
    rule,line width=0.400pt] (327.2966,272.7967) -- (327.2966,64.4195);
  \path[draw=black,line join=miter,line cap=butt,miter limit=4.00,even odd
    rule,line width=0.240pt] (277.7206,107.9291) --
    (327.1941,64.9703)(327.1941,64.9703) -- (226.7339,63.8341) --
    (171.8924,107.9291) -- (277.7206,107.9291);
  \path[draw=black,line join=miter,line cap=butt,miter limit=4.00,even odd
    rule,line width=0.400pt] (171.6389,316.8007) -- (171.6389,108.4234);
  \path[draw=black,line join=miter,line cap=butt,miter limit=4.00,even odd
    rule,line width=0.240pt] (277.3845,315.8329) -- (277.3845,107.4557);
  \path[draw=black,line join=miter,line cap=butt,even odd rule,line width=0.800pt]
    (273.2258,66.8783) .. controls (314.6168,66.8132) and (318.0818,68.0018) ..
    (296.7742,86.5557) .. controls (272.9790,109.4593) and (266.0945,105.0317) ..
    (228.7097,105.2654) .. controls (191.3249,105.4991) and (175.8003,108.6539) ..
    (205.4839,85.2654) .. controls (235.1675,61.8770) and (235.8818,66.6641) ..
    (273.2258,66.8783) -- cycle;
  \path[draw=black,line join=miter,line cap=butt,even odd rule,line width=0.800pt]
    (189.6774,101.3944) .. controls (191.9162,132.6462) and (192.5181,295.2847) ..
    (250.9533,295.2847) .. controls (318.3438,295.2847) and (306.8963,129.0449) ..
    (309.4678,72.3640);
  \path[draw=black,line join=miter,line cap=butt,miter limit=4.00,even odd
    rule,line width=0.160pt] (262.5532,126.4047) .. controls (262.5532,126.4047)
    and (272.3404,126.4047) .. (277.0213,124.2771) .. controls (281.7021,122.1494)
    and (287.2340,115.7665) .. (287.2340,115.7665);
  \path[draw=black,line join=miter,line cap=butt,miter limit=4.00,even odd
    rule,line width=0.240pt] (262.9787,137.0430) .. controls (262.9787,137.0430)
    and (272.3404,136.6175) .. (277.0213,134.4898) .. controls (281.7021,132.3622)
    and (286.8085,127.6813) .. (286.8085,127.6813);
  \path[draw=black,line join=miter,line cap=butt,miter limit=4.00,even odd
    rule,line width=0.240pt] (264.2553,146.8303) .. controls (264.2553,146.8303)
    and (274.0426,146.8303) .. (278.7234,144.7026) .. controls (283.4043,142.5749)
    and (287.6596,138.7452) .. (287.6596,138.7452);
  \path[draw=black,line join=miter,line cap=butt,miter limit=4.00,even odd
    rule,line width=0.240pt] (265.1064,157.0430) .. controls (265.1064,157.0430)
    and (274.0426,157.4686) .. (278.7234,155.3409) .. controls (283.4043,153.2132)
    and (287.6596,148.5324) .. (287.6596,148.5324);
  \path[draw=black,line join=miter,line cap=butt,miter limit=4.00,even odd
    rule,line width=0.160pt] (260.0000,116.1920) .. controls (260.0000,116.1920)
    and (271.9149,116.1920) .. (276.5958,114.0643) .. controls (281.2766,111.9366)
    and (286.8085,105.5537) .. (286.8085,105.5537);

\end{tikzpicture}
\caption{The convex body in Example~\ref{ex:noregbdy}}
\end{figure}

Since
\[
\frac{\ptl^2 f}{\ptl x\,\ptl y}=0,\qquad \frac{\ptl^2 f}{\ptl x^2}=\frac{g(z)}{\sqrt{x^2+g(z)^2}},\quad \frac{\ptl^2 f}{\ptl y^2}=\frac{g(z)}{\sqrt{y^2+g(z)^2}},
\]
and
\[
\det(\text{Hess}(f))=\frac{g(z)^5\big(\sqrt{x^2+g(z)^2}+\sqrt{y^2+g(z)^2}\,\big)g''(z)}{(x^2+g(z)^2)^2(y^2+g(z)^2)^2},
\]
the function $f$ is strictly convex. Hence the sublevel set $C:=\{(x,y,z):f(x,y,z)\le 1\}$ is a convex set. As
\[
\frac{\ptl f}{\ptl z}=\bigg(\frac{1}{\sqrt{x^2+g(z)^2}}+\frac{1}{\sqrt{y^2+g(z)^2}}\bigg)\,g(z)g'(z)\neq 0,
\]
the boundary $\ptl C$ is locally the graph of a $C^\infty$ function defined in the $xy$-plane by the Implicit Function Theorem. Since $g$ is a decreasing function, $\ptl C$ is a global graph. Observe that $C\cap\{z=z_0\}$ is empty for $z_0<1/2$ and that $C\cap\{z=0\}=\{(0,0,0)\}$. For $z_0>0$ the convex bodies $C_{z_0}:=\{(x,y,0):(x^2+g(z_0)^2)^{1/2}+(y^2+g(z_0)^2)^{1/2}\le 1\}$ satisfy $C_{z_0}\subset C_{z_1}$ when $z_0<z_1$ and the family $\{C_{z_0}\}_{z_0>0}$ converges in Hausdorff distance to $\{(x,y,0):|x|+|y|\le 1\}$ when $z_0\to +\infty$.

We conclude that $C$ is a cylindrically bounded convex body with $C^\infty$ boundary and, by Example~\ref{ex:cylindrically}, all its asymptotic cylinders are horizontal translations of $\{(x,y,z):|x|+|y|\le 1\}$.
\end{example}

\begin{example}[Closed $(n+1)$-dimensional convex cones are of uniform geometry]
\label{ex:convexcones}
Let $L\subset\rr^{n+1}$ be a closed convex cone with non-empty interior and vertex 0. Since $p+L\subset L$ for any $p\in L$, we have $p+\clb_L(0,r)\subset \clb_L(p,r)$ for any $r>0$. This implies
\[
\vol{\clb_L(p,r)}\ge\vol{\clb_L(0,r)}=\vol{\clb_L(0,1)}\,r^{n+1},
\]
for any $p\in L$ and $r>0$. Hence $L$ is of uniform geometry.
\end{example}

For convex cones we can prove that tangent cones out of a vertex are always asymptotic cylinders. From Lemma~\ref{lem:ciave}(ii) we may conclude that the limits of these tangent cones are also asymptotic cylinders. In general, we are able to prove that any asymptotic cylinder contains a tangent cone.

\begin{proposition}
\label{prop:convexcone}
Let $C\subset\rr^{n+1}$ be a closed $(n+1)$-dimensional convex cone such that $0\in \ptl C$ is a vertex of $C$. Then
\begin{enum}
\item For any $x\in C\setminus\{0\}$ and $\mu>0$, we have $-x+C_x=-\mu x+C_{\mu x}$. Moreover, the set $-x+C_x$ is a closed convex cylinder.
\item For any $x\in C\setminus\{0\}$, $-x+C_x$ is an asymptotic cylinder of $C$.
\item Let $\{x_i\}_{i\in\nn}$ be a divergent sequence in $\ptl C$ so that $-x_i+C$ converges locally in Hausdorff distance to $K\in\mathcal{K}(C)$. Assume that $z_i:=|x_i|^{-1}x_i$ converges to $z$ and that $-x_i+C_{x_i}$ converges to $K'$. Then $-z+C_z\subset K\subset K'$.
\end{enum}
\end{proposition}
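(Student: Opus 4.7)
The approach hinges on an explicit description of the tangent cone at any $x\in C\setminus\{0\}$, after which all three parts follow by short Kuratowski-style arguments. Since $C$ is a convex cone with vertex $0$, we have $\la C=C$ for every $\la>0$ and, by convexity together with the cone property, $C+C\subset C$. A direct computation of the homothety then yields $h_{x,\la}(C)=x+\la(C-x)=(1-\la)x+C$, so
\[
C_x=\overline{\bigcup_{\la>0}\bigl((1-\la)x+C\bigr)}=\overline{\bigcup_{t<1}(tx+C)}.
\]
For $t\ge 1$, the identity $tx+c=0\cdot x+((tx)+c)$ together with $tx\in C$ and $C+C\subset C$ shows $tx+C\subset C=0\cdot x+C$, so the union extends to all $t\in\rr$, yielding $-x+C_x=\overline{L(x)+C}$. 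This expression depends only on $L(x)$, which is invariant under $x\mapsto\mu x$ for $\mu>0$, so $-x+C_x=-\mu x+C_{\mu x}$, proving the first identity of (i). Since $\overline{L(x)+C}$ contains the full line $L(x)$ (take $0\in C$), Lemma~1.4.2 of \cite{sch} identifies it as a convex cylinder.

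For (ii) I plan to exhibit the divergent sequence $y_i=ix\in C$ and verify $-y_i+C\to\overline{L(x)+C}=-x+C_x$ locally in Hausdorff distance using the Kuratowski criterion (together with Lemma~\ref{lem:pointedconv}). Given $y=tx+c\in L(x)+C$, for $i\ge -t$ one writes $y=-ix+((i+t)x+c)$ with $(i+t)x+c\in C$, so $y\in -ix+C$ eventually; a diagonal argument extends this to limit points of $L(x)+C$. Conversely, $-ix+C\subset L(x)+C$, so any subsequential limit already lies in $\overline{L(x)+C}$.

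Part (iii) splits into two inclusions. Since $C\subset C_{x_i}$ gives $-x_i+C\subset -x_i+C_{x_i}$, the Kuratowski stability of inclusions under local Hausdorff limits yields $K\subset K'$ at once. For $-z+C_z\subset K$, note first that $z_i=x_i/|x_i|\in C$ for each $i$ (as $C$ is a cone and $x_i\in\ptl C\subset C$) so $z\in C$ by closedness; the formula from the first paragraph therefore applies with $z$ in place of $x$, giving $-z+C_z=\overline{L(z)+C}$. For $y=tz+c\in L(z)+C$, define the approximating sequence $y_i=tz_i+c$. For $i$ large enough that $|x_i|+t>0$, the vector $x_i+y_i=(|x_i|+t)z_i+c$ lies in $C$ by the cone property (applied to $z_i\in C$) together with $C+C\subset C$, so $y_i\in -x_i+C$; since $y_i\to y$, the Kuratowski criterion for $-x_i+C\to K$ yields $y\in K$. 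As $K$ is closed, $\overline{L(z)+C}\subset K$ follows.

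The whole argument is elementary once the identity $-x+C_x=\overline{L(x)+C}$ is in place; the only mildly delicate point is the approximation in part (iii), where one must rescale the direction using the unit vectors $z_i$ (rather than the original $x_i$, which diverge) in order to invoke the cone structure of $C$ at large scales. Everything else reduces to verifying the one-line inclusion $(|x_i|+t)z_i+c\in C$ for $i$ large.
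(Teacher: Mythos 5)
Your proof is correct, and it takes a slightly different organizational route from the paper's. The paper works directly with the defining union $C_x=\overline{\bigcup_{\la>0}h_{x,\la}(C)}$ and verifies each of the three statements by hand: for (i) it checks the mutual inclusion $-x+h_{x,\la}(C)\subset-\mu x+C_{\mu x}$ on elements $c\in C$; for (ii) it shows both $K\subset -x+C_x$ (using the translation invariance of $C_x$ along $L(x)$) and $-x+h_{x,\la}(C)\subset K$; and for (iii) it parametrizes $-z+h_{z,\mu}(C)$ as $\{\mu(c-z):c\in C\}$ and approximates via the convex combination $d_i=x_i+\tfrac{\mu}{\la_i}(\la_ic-x_i)$. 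You instead isolate, up front, the single identity $-x+C_x=\overline{L(x)+C}$ (valid for any $x\in C\setminus\{0\}$, via $\la C=C$, $C+C\subset C$, and the absorption $tx+C\subset C$ for $t\ge 1$), and then all three parts reduce to short Kuratowski checks over the parametrization $y=tx+c$, $t\in\rr$, $c\in C$. The two parametrizations are equivalent — the paper's $\mu(c-z)$ ranges, as $\mu>0$ varies, over $\bigcup_{t<0}(tz+C)$, and your $t\ge 1$ absorption argument is precisely what shows the two unions have the same closure — so the arguments are close in substance. The advantage of your version is economy: one algebraic identity followed by three applications, with the scaling trick $y_i=tz_i+c$ in (iii) doing the work of the paper's $d_i$. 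The paper's version is marginally more self-contained at each individual step. All the small points you need (e.g.\ $0\in C$ so $L(x)$ is contained in the cylinder, $|z|=1$ so $z\neq 0$, $z\in\ptl C$ by closedness of $\ptl C$ under positive scaling and limits) hold and are either stated or implicit in what you wrote.
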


\begin{proof}
To prove (i) we fix $\la>0$ and $c\in C$, so that
\[
-x+h_{x,\la}(c)=-\mu x+h_{\mu x,\mu^{-1}\la}(\mu c)\in -\mu x+C_{\mu x},
\]
since $\mu c\in C$. This implies that $-x+h_{x,\la}(C)\subset -\mu x+C_{\mu x}$. As $\la>0$ is arbitrary and $-\mu x+C_{\mu x}$ is closed we get $-x+C_x\subset -\mu x+C_{\mu x}$. The reverse inclusion is obtained the same way. The set $-x+C_x$ is trivially closed and convex. It is a cylinder since it contains the line $L(x)=\{tx:t\in\rr\}$.

For the proof of (ii) take $x\in C\setminus\{0\}$ and an increasing diverging sequence of positive real numbers $\la_i$. Let $x_i:=\la_i x$. Taking a subsequence if needed we may assume that $-x_i+C$ converges locally in Hausdorff distance to an asymptotic cylinder $K\in\mathcal{K}(C)$. We shall check that $K=-x+C_x$. Take first $z\in K$ so that there exists a sequence of points $c_i\in C$ such that $z=\lim_{i\to\infty} -x_i+c_i$. Since $C\subset C_{x}$, we get $z\in -x+C_x$. This implies $K\subset -x+C_x$. For the reverse inclusion fix some $\la>0$ and take $z\in -x+h_{x,\la}(C)$. Then there exists $c\in C$ such that $z=\la(c-x)$ and so
\[
z=-\la_i x+\la_i\big(x+\frac{\la}{\la_i}(c-x)\big).
\]
This implies that $z\in -x_i+C$ for $i$ large since $x+\tfrac{\la}{\la_i}(c-x)$ belongs to $C$ for $i$ large enough so that $\la/\la_i\le 1$. In particular, $z\in K$ and so we obtain $-x+h_{x,\la}(C)\subset K$. As $\la>0$ is arbitrary and $K$ is closed, from the definition of the tangent cone $C_x$ we obtain $-x+C_x\subset K$.

Let us check that (iii) holds. Take $\mu>0$ and a point $x\in -z+h_{z,\mu}(C)$. Then there exists $c\in C$ such that $x=\mu(c-z)$ and, setting $\la_{i}=|x_{i}|$, we have 
\begin{equation*}
x=\mu(c-z)=-x_i+\big(x_i+\frac{\mu}{\la_i}(\la_i(c-z+z_i)-x_i)\big).
\end{equation*}
We define
\[
d_i:=x_i+\frac{\mu}{\la_i}(\la_i c - x_i)
\]
and observe that $d_i\in C$ for large $i$ since $\la_i c\in C$ and $\mu/\la_i\le 1$. We have
\[
x-(-x_i+d_i)=\mu(-z+z_i)\to 0
\]
when $i\to\infty$. This implies that $x\in K$ and so we get $-z+h_{z,\mu}(C)\subset K$ for all $\mu>0$. As $K$ is closed we obtain $-z+C_z\subset K$.

Finally, take $x\in K$ and choose a sequence of points $c_i\in C$ such that $x=\lim_{i\to\infty} -x_i+c_i$. Since $C\subset C_{x_i}$, it follows that $x$ can be written as the limit of a sequence of points in $-x_i+C_{x_i}$. This implies that $x\in K'$.
\end{proof}

Let $C\subset\rr^{n+1}$ be a closed $(n+1)$-dimensional convex cone so that $0\in\ptl C$ is a vertex of $C$, and assume that $\ptl C\setminus\{0\}$ is of class $C^1$. Then any asymptotic cylinder of $C$ is either $\rr^{n+1}$ or a half-space. This is easy to check since, by Remark~\ref{rem:bdycylinder} we may obtain $\rr^{n+1}$ as an asymptotic cone by taking $x\in\intt(C)$ and a diverging sequence $\la_ix$, with $\lim_{i\to\infty}\la_i=+\infty$. On the other hand, if we take a diverging sequence in the boundary of $C$, Proposition~\ref{prop:convexcone}(iii) implies that the limit asymptotic cylinder $K$ contains a set of the form $-z+C_z$, with $z\in\ptl C\setminus\{0\}$. Since $\ptl C\setminus\{0\}$ is of class $C^1$, the set $-z+C_z$ is a half-space. Hence $K$ is a half-space.

In Proposition \ref{prop:NDACsmooth} below we show that the smoothness of the asymptotic cone $C_{\infty}$ implies that all asymptotic cylinders of $C$ are either $\rr^{n+1}$ or half-spaces. This fact will be of use in Section \ref{sec:isoprigid}: indeed we shall prove in Corollary \ref{cor:existforNDACsmooth} that unbounded convex bodies with a non-degenerate asymptotic cone, that is smooth except that in a vertex, admit isoperimetric solutions for any prescribed volume. The idea of the proof is that all asymptotic cylinders of $C_{\infty}$ are either $\rr^{n+1}$ or half-spaces, , as we have proved above, thus one has to show that this property can be transferred to $C$. 

\begin{proposition}\label{prop:NDACsmooth}
Let $C$ be an unbounded convex body with non degenerate asymptotic cone $C_{\infty}$. Assume that $0$ is a vertex of $C_{\infty}$ and that $\ptl C\setminus \{0\}$ is of class $C^{1}$. Then any asymptotic cylinder $K\in {\mathcal K}(C)$ is either $\rr^{n+1}$ or a half-space.
\end{proposition}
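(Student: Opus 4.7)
The plan is to describe $K$ explicitly: unless $K=\rr^{n+1}$, I claim $K$ is a closed half-space through the origin whose outer unit normal is that of $C_{\infty}$ at a suitable rescaled limit of the generating sequence. By Remark~\ref{rem:bdycylinder}, up to a translation (which preserves both the class of half-spaces and $\rr^{n+1}$) we may assume $K$ is the local Hausdorff limit of $\{-x_{i}+C\}$ for a divergent sequence $\{x_{i}\}\subset\ptl C$. Set $\la_{i}=1/|x_{i}|$ and $z_{i}=\la_{i}x_{i}\in\ptl(\la_{i}C)$; after extraction $z_{i}\to z$ with $|z|=1$. Since $\la_{i}C\to C_{\infty}$ locally in Hausdorff distance and $z_{i}\in\ptl(\la_{i}C)$, a standard convex-body argument (for instance comparing support functions on a compact neighborhood of $z$) forces $z\in\ptl C_{\infty}$; since $z\ne 0$, the $C^{1}$ hypothesis yields a unique outer unit normal $N$ to $C_{\infty}$ at $z$, and the conical structure of $C_{\infty}$ gives $\escpr{z,N}=0$.

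For the upper inclusion, choose an outer unit normal $N_{i}$ to $\la_{i}C$ at $z_{i}$, so that $\la_{i}C\subset\{w:\escpr{w-z_{i},N_{i}}\le 0\}$ and equivalently $-x_{i}+C\subset\{w:\escpr{w,N_{i}}\le 0\}$. Extracting $N_{i}\to N^{*}$ and passing to the local Hausdorff limit in both inclusions gives $C_{\infty}\subset\{w:\escpr{w-z,N^{*}}\le 0\}$ and $K\subset\{w:\escpr{w,N^{*}}\le 0\}$. The first displays $N^{*}$ as an outer normal to $C_{\infty}$ at $z$, so the uniqueness from the $C^{1}$ hypothesis forces $N^{*}=N$, and hence $K\subset\{w:\escpr{w,N}\le 0\}$.

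For the matching lower inclusion, the property $x_{i}\in C\Rightarrow x_{i}+C_{\infty}\subset C$ gives $C_{\infty}\subset -x_{i}+C$, and passing to the limit yields $C_{\infty}\subset K$; Lemma~\ref{lem:gencyl} also gives $L(z)\subset K$. Convexity and closedness of $K$ then yield $K\supset\overline{C_{\infty}+L(z)}$, and the crucial identity is
\[
\overline{C_{\infty}+L(z)}=\{w:\escpr{w,N}\le 0\}.
\]
The ``$\subset$'' inclusion is immediate from $C_{\infty}\subset\{\escpr{w,N}\le 0\}$ and $L(z)\subset\{\escpr{w,N}=0\}$. For ``$\supset$'', take $y$ with $\escpr{y,N}<0$. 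The $C^{1}$ hypothesis makes $\ptl C_{\infty}$ locally near $z$ the graph of a $C^{1}$ function $\phi$ over its tangent hyperplane at $z$, with $\phi(0)=0$ and $\nabla\phi(0)=0$; writing $z+\eps y$ in the associated normal--tangential coordinates and using $\phi(u)=o(|u|)$ shows that the normal coordinate $\eps\escpr{y,N}$ is eventually strictly smaller than $\phi$ applied to the tangential part, hence $z+\eps y\in C_{\infty}$ for all sufficiently small $\eps>0$. Since $C_{\infty}$ is a cone with vertex $0$, $\eps^{-1}z+y=\eps^{-1}(z+\eps y)\in C_{\infty}$, so $y\in C_{\infty}+L(z)$; passing to the closure supplies the rest. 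Combining the two inclusions, $K=\{w:\escpr{w,N}\le 0\}$, a closed half-space.

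The chief obstacle, and the step where the $C^{1}$ (rather than merely continuous) regularity hypothesis is genuinely used, is the first-order expansion yielding $z+\eps y\in C_{\infty}$ whenever $\escpr{y,N}<0$: this converts the infinitesimal tangency at $z$ into a concrete local inclusion, which combined with the conical nature of $C_{\infty}$ is what forces the lower bound $K\supset\{w:\escpr{w,N}\le 0\}$.
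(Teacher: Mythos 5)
Your proof is correct, but it takes a genuinely different route from the paper's. The paper first establishes the auxiliary claim \eqref{eq:closeplanes} (unit outer normals to $C$ at nearby divergent boundary points become asymptotically equal, via a rescaling to $C_\infty$) and then invokes Lemma~\ref{lem:maybeattouch}, the Attouch--Beer type result on Painlev\'e--Kuratowski convergence of subdifferentials of uniformly convergent convex functions, to transfer normal cones from the $C_i=-x_i+C$ to $K$; from this it concludes that the unit outer normal to $K$ is the same at every boundary point, hence $K$ is a half-space. You instead sandwich $K$ directly: the supporting hyperplanes to $\la_iC$ at $z_i=x_i/|x_i|$ produce, after extraction, a single outer unit normal $N\in N(C_\infty,z)$ with $K\subset\{\escpr{\cdot,N}\le 0\}$, while the inclusions $C_\infty\subset K$ and $L(z)\subset K$ (Lemma~\ref{lem:gencyl}) and convexity give $K\supset\overline{C_\infty+L(z)}$; the first-order $C^1$ expansion of $\ptl C_\infty$ at $z\ne 0$, together with the conical structure and $\escpr{z,N}=0$, proves the identity $\overline{C_\infty+L(z)}=\{\escpr{\cdot,N}\le 0\}$, which closes the sandwich. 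Your route is more elementary and self-contained --- it avoids Lemma~\ref{lem:maybeattouch} entirely and uses the $C^1$ hypothesis only through the local graph representation of $\ptl C_\infty$ and uniqueness of the normal at $z$ --- and as a bonus it yields the explicit description $K=\overline{C_\infty+L(z)}$, which the paper's argument does not make visible. Both approaches rely on the same reduction via Remark~\ref{rem:bdycylinder} to boundary sequences, and both use the non-degeneracy of $C_\infty$ to make $\ptl C_\infty$ a full hypersurface near $z$.
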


\begin{proof}
We assume $C\neq \rr^{n+1}$. By Remark~\ref{rem:bdycylinder} it is enough to consider asymptotic cylinders obtained from diverging sequences contained in the boundary of $C$.

We first prove the following fact: for every $\delta>0$, for any divergent sequence of points $x_{i}\in \ptl C$, and for any sequence of unit outer normal vectors $u_i\in N(C,x_{i})$, whenever $y_{i}\in \ptl C$ satisfies $|y_{i}-x_{i}|\le \delta$ then for every sequence of unit vectors $v_i\in N(C,y_i)$ one has
\begin{equation}\label{eq:closeplanes}
\lim_{i\to\infty} |u_i-v_i| = 0.
\end{equation}
To see this we argue by contradiction assuming the existence of positive constants $\delta,\eps>0$ and of sequences $y_{i}\in \ptl C$ and $v_i\in N(C,y_i)$, $|v_i|=1$, such that 
\[
|y_{i}-x_{i}|\le \delta\qquad \text{and}\qquad |u_i-v_i|\ge \eps,\qquad \text{for all }i\,.
\]
Let $t_{i} = |x_{i}|$ and define $z_{i} = t_{i}^{-1}x_{i}$, $w_{i} = t_{i}^{-1}y_{i}$. Clearly, as $i\to\infty$ we have $t_{i}\to +\infty$, $|z_{i}|=1$, $|w_{i}-z_{i}|\le t_{i}^{-1}\delta\to 0$, and $t_{i}^{-1}C\to C_{\infty}$ locally in Hausdorff distance, owing to the properties of the asymptotic cone $C_{\infty}$. At the same time, $u_i\in N(t_i^{-1}C,z_i)$ and $v_i\in N(t_i^{-1}C,w_i)$. By compactness, up to extracting a subsequence, we have that $z_{i}\to z\in \ptl C_{\infty}\cap \ptl B(0,1)$, $w_{i}\to z$, $u_i\to u_\infty$ and $v_i\to v_\infty$ with $u_\infty$, $v_\infty$ unit vectors in $N(C_\infty,z)$. However we have $|u_\infty-v_\infty| = \lim_{i\to\infty}|u_i-v_i|\ge \eps$, which contradicts the regularity of $\ptl C_{\infty}$ at $z$.

Now we prove that any asymptotic cylinder of $C$ is a half-space, so that we can conclude by Theorem \ref{thm:existence}. Let $K\in \mathcal K(C)$ and let $x_{i}\in \ptl C$ be a diverging sequence such that, setting $C_{i}=-x_{i}+C$, we have $C_{i}\to K$ locally in Hausdorff distance. Fix $\delta>0$ and choose $z\in \ptl K$ such that $|z|\le \delta/2$. Choose $u_\infty\in N(K,0)$ with $|u|=1$ such that $u_\infty$ is the limit of unit outer normal vectors $u_i\in N(K,x_i)$; then choose $v_\infty\in N(K,z)$ such that $|v|=1$.

Up to an isometry, and for $i$ large enough, the convex bodies $K$ and $C_{i}$ locally coincide with the epigraphs of convex functions defined on a relative neighborhood of $z$ in some supporting hyperplane for $K$ at $z$. Hence by Lemma \ref{lem:maybeattouch} we infer that there exist sequences $y_{i} \in \ptl C_{i}$, $v_i\in N(K,y_i)$ with $|v_i|=1$, such that $y_{i}-x_{i}\to z$, $|y_{i}-x_{i}|\le \delta$ and $\lim_{i\to\infty} v_i=v_\infty$. Now we recall \eqref{eq:closeplanes} and obtain 
\[
|u_\infty-v_\infty| = \lim_{i\to\infty} |u_i-v_i| = 0,
\]
so that $u_\infty=v_\infty$. This means that $K$ is necessarily a half-space and  the proof is completed.
\end{proof}

\chapter{A generalized existence result}
\label{sec:min}

\section{Preliminary results}

Let $C\subset \rr^{n+1} $ be an unbounded convex body of uniform geometry. The main result of this section is Theorem \ref{thm:genexist}, which shows existence of minimizers of the relative isoperimetric problem in $C$ in a generalized sense, for any prescribed volume $v>0$. More precisely we will show that there exists a finite family of sets $(E^{0},E^{1},\dots,E^{\ell})$, which satisfy  $E^{0}\subset C$, $E^j\subset K^j$, where $K^j$ is an asymptotic cylinder of $C$ for all $j\ge 1$, and
\begin{equation*}
\sum_{j=0}^{\ell}|E^{j}| = v,\qquad 
P_{C}(E^{0}) + \sum_{j=1}^{\ell} P_{K^{j}}(E^{j}) = I_{C}(v)\,.
\end{equation*}
Before stating and proving Theorem \ref{thm:genexist}, we need some preparatory results.

\begin{lemma}
\mbox{}
\label{lem:antier}
Let $\{C_i\}_{i\in\nn}$ be a sequence of bounded convex bodies in $\rr^{n+1}$ that converges in Hausdorff distance to a bounded convex body $C$. Let $E_i\subset C_i$ be finite perimeter sets so that $\{\pp_{C_i}(E_i)\}_{i\in\nn}$ is bounded. Then, possibly passing to a subsequence, $E_i\to E$ in $L^1(\rr^{n+1})$, where $E$ is a finite perimeter set in $C$, and
\begin{equation}
\label{eq:limper}
\pp_C(E)\le \liminf_{i\to\infty} \pp_{C_i}(E_i).
\end{equation}
\end{lemma}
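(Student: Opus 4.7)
The argument has two steps: first I will obtain $L^1$ precompactness of $\{\chi_{E_i}\}$ to extract a subsequential limit $E\subset C$, then derive the lower semicontinuity \eqref{eq:limper} via the duality definition of $P_C$.

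For compactness, the subtle point is that $P_{C_i}(E_i)$ only controls the distributional boundary of $E_i$ inside $\intt(C_i)$, while BV compactness requires a bound on the full Euclidean perimeter $P(E_i)$. Since $C_i\to C$ in Hausdorff distance and $C$ is bounded, there exists $R>0$ with $C_i\subset \clb(0,R)$ for $i$ large; hence $E_i\subset\clb(0,R)$. By monotonicity of surface area for convex bodies contained in $\clb(0,R)$, the quantities $\h^n(\ptl C_i)$ are uniformly bounded. Because $\chi_{E_i}$ vanishes outside $C_i$, its total variation in $\rr^{n+1}$ splits as the mass on $\intt(C_i)$ (equal to $P_{C_i}(E_i)$) plus a jump across $\ptl C_i$ of mass at most $\h^n(\ptl C_i)$. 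Thus $P(E_i)\le P_{C_i}(E_i)+\h^n(\ptl C_i)$ is uniformly bounded, and the standard BV compactness theorem yields a subsequence with $\chi_{E_i}\to\chi_E$ in $L^1(\rr^{n+1})$. The inclusion $E\subset C$ (up to a null set) holds because any $x\notin C$ has positive distance from $C$, and hence, by Hausdorff convergence, lies outside $C_i$ for $i$ large, so $x\notin E_i$ eventually.

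For the lower semicontinuity, fix $\xi\in\Gamma_0(C)$ with $|\xi|\le 1$ and set $K:=\spt(\xi)$, a compact subset of $\intt(C)$ with $d:=\dist(K,\ptl C)>0$. Since $\hd(C_i,C)\to 0$, for $i$ large we have $\hd(C_i,C)<d/2$, forcing $K\subset\intt(C_i)$ and therefore $\xi\in\Gamma_0(C_i)$. Consequently $\int_{E_i}\divv\xi\,d\h^{n+1}\le P_{C_i}(E_i)$ for all such $i$, and the $L^1$ convergence of $E_i$ to $E$ gives
\[
\int_E \divv\xi\,d\h^{n+1} = \lim_{i\to\infty}\int_{E_i}\divv\xi\,d\h^{n+1}\le \liminf_{i\to\infty} P_{C_i}(E_i).
\]
Taking the supremum over all admissible $\xi$ yields \eqref{eq:limper}.

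The main obstacle is the compactness step: the relative-perimeter bound must be upgraded to a bound on the full Euclidean perimeter by carefully accounting for the missing boundary piece of $E_i$ along $\ptl C_i$; this is handled via the uniform surface area bound for convex bodies lying in a common ball. Once this BV bound is in place, the semicontinuity part is a routine adaptation of the classical argument to the moving domains $C_i$, made possible by the fact that any test field with compact support in $\intt(C)$ is eventually admissible for every $C_i$.
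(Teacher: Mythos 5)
Your proof is correct and follows essentially the same strategy as the paper: obtain a uniform Euclidean perimeter bound $P(E_i)\le P_{C_i}(E_i)+\h^n(\ptl C_i)$ from monotonicity of surface area for convex bodies in a common ball, then apply BV compactness, and finally exploit the fact that Hausdorff convergence makes any test object compactly supported in $\intt(C)$ eventually admissible for $C_i$. The only cosmetic difference is in the semicontinuity step: the paper fixes an open set $A\subset\subset\intt(C)$, invokes lower semicontinuity of $P(\cdot,A)$, and concludes via inner regularity of the Radon measure $P(E,\cdot)$, while you work directly with test vector fields $\xi\in\Gamma_0(C)$ and the duality definition; these are equivalent formulations of the same observation.
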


\begin{proof}
Since $C_i$ converges to $C$ in Hausdorff distance and $C$ is bounded, there exist a Euclidean ball $B\subset\rr^{n+1}$ so that $C_i\subset B$ for all $i\in\nn$. By \cite[Cor.~1.3.6]{sch}, $\h^n(\ptl C_i)\le \h^n(\ptl B)$ for all $i\in\nn$. Hence
\[
\pp_{\rr^{n+1}}(E_i)\le \pp_{C_i}(E_i)+\h^n(\ptl C_i)\le \pp_{C_i}(E_i)+\h^n(\ptl B).
\]
Since the sequence $\{P_{C_i}(E_i)\}_{i\in\nn}$ is bounded by hypotheses, the previous inequality shows that the sequence $\{\pp_{\rr^{n+1}}(E_i)\}_{i\in\nn}$ is bounded. As $\{E_i\}_{i\in\nn}$ is uniformly bounded, a standard compactness result for finite perimeter sets, \cite[Thm.~12.26]{maggi}, implies the existence of a set $E\subset \rr^{n+1}$ and a non-relabeled subsequence $E_i$ such that $E_i\to E$ in $L^1(\rr^{n+1})$ and $\chi_{E_i}$ converges almost everywhere to $\chi_E$. By Kuratowski criterion, \cite[Thm.~1.8.7]{sch}, we get $E\subset C$.

Fix an open set $A\subset\subset \intt(C)$, then $A\subset\subset \intt(C_{i})$ for $i$ large enough and thus by the lower-semicontinuity of the perimeter we infer
\[
P(E,A) \le \liminf_{i\to \infty} P(E_{i},A) \le \liminf_{i\to\infty} P_{C_{i}}(E_{i})\,.
\]
Then, by recalling that $P(E,\cdot)$ is a Radon measure and by taking the supremum over $A\subset\subset \intt(C)$ we obtain
\[
P_{C}(E) =\sup_{A\subset\subset \intt(C)} P(E,A)\le \liminf_{i\to\infty} P_{C_{i}}(E_{i})\,,
\]
which proves \eqref{eq:limper}.
\end{proof}

\begin{lemma}
\mbox{}
\label{lem:niceapprox}
Let $\{C_i\}_{i\in\nn}$ be a sequence of convex bodies in $\rr^{n+1}$  locally converging in Hausdorff distance to a convex body $C$. Let $E\subset C$ be a bounded set of finite perimeter and volume $v$. Assume $v_i\to v$. Then there exists a sequence of bounded sets $E_i\subset C_i$ of finite perimeter such that $E_i\to E$ in $L^1(\rr^{n+1})$ and
\begin{enumerate}
\item[(i)] $E_i\to E$  in $L^1(\rr^{n+1})$,
\item[(ii)] $\vol{E_i} =v_i$ for all $i\in\nn$, and
\item[(iii)] $\pp_C(E)=\lim_{i\to\infty} \pp_{ C_i}(E_i)$.
\end{enumerate}
\end{lemma}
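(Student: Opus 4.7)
The plan is to construct $E_i$ in two stages. First I would produce bounded auxiliary sets $\tilde E_i\subset C_i$ with $\tilde E_i\to E$ in $L^1(\rr^{n+1})$, $|\tilde E_i|\to v$, and $P_{C_i}(\tilde E_i)\to P_C(E)$; then I would correct the volumes via a volume-fixing deformation to hit $|E_i|=v_i$ exactly without disturbing the perimeter limit.

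For the first stage, fix $p\in\intt(C)$ with $\clb(p,2r_0)\subset C$, and choose $R>0$ large enough that $E\subset B(0,R-1)$ and $p\in B(0,R-1)$. Local convergence in Hausdorff distance gives $C_i\cap\clb(0,R+1)\to C\cap\clb(0,R+1)$ in Hausdorff distance and $\clb(p,r_0)\subset C_i$ for $i$ large. I would then build bi-Lipschitz homeomorphisms $\psi_i\colon\rr^{n+1}\to\rr^{n+1}$, equal to the identity outside a fixed bounded neighborhood of $\clb(0,R)$, satisfying $\psi_i(C\cap\clb(0,R))=C_i\cap\clb(0,R)$ and $\psi_i(\ptl C\cap\clb(0,R))=\ptl C_i\cap\clb(0,R)$, with $\psi_i\to\mathrm{id}$ uniformly and bi-Lipschitz constants tending to $1$. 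The natural construction rescales radially from $p$ via the ratio of the truncated Minkowski functionals
\[
R_C(\nu)=\sup\{t\ge 0:p+t\nu\in C\cap\clb(0,R)\},\quad R_{C_i}(\nu)=\sup\{t\ge 0:p+t\nu\in C_i\cap\clb(0,R)\},
\]
for $\nu\in S^n$; uniform convergence $R_{C_i}\to R_C$ on $S^n$ follows from Hausdorff convergence on $\clb(0,R+1)$ together with the lower bound $R_{C_i}\ge r_0$. Setting $\tilde E_i:=\psi_i(E)$, the area formula, combined with the fact that $\psi_i$ maps $\intt(C)$ to $\intt(C_i)$ and $\ptl C$ to $\ptl C_i$ inside $\clb(0,R)$, gives $\tilde E_i\to E$ in $L^1$, $|\tilde E_i|\to v$, and $P_{C_i}(\tilde E_i)\to P_C(E)$.

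For the second stage, since $P_C(E)<\infty$ one can choose an open ball $B'\Subset\intt(C)$ with $\hh^n(\ptl^*E\cap B')>0$, together with a smooth vector field $X$ compactly supported in $B'$ whose normal flux $\int_{\ptl^*E}X\cdot\nu_E\,d\hh^n$ equals $1$. For $i$ large, $B'\Subset\intt(C_i)$, and the BV-convergence $\tilde E_i\to E$ forces the corresponding normal fluxes to converge to $1$. The classical volume-fixing argument (cf.\ \cite[\S\,IV.1]{maggi}) then yields a family of compactly supported diffeomorphisms $\Phi_t$ (the flow of a suitably rescaled $X$), with $|\Phi_t(\tilde E_i)|-|\tilde E_i|=t$ for $|t|<\eps$ and $|P_{C_i}(\Phi_t(\tilde E_i))-P_{C_i}(\tilde E_i)|\le c\,|t|$, where $\eps,c$ are uniform in $i$ for $i$ large. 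Taking $t_i:=v_i-|\tilde E_i|\to 0$ and $E_i:=\Phi_{t_i}(\tilde E_i)$ yields $|E_i|=v_i$, $E_i\to E$ in $L^1$, and $\limsup_i P_{C_i}(E_i)\le P_C(E)$; the matching lower bound follows from Lemma~\ref{lem:antier} applied inside a fixed ball containing all $E_i$.

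The main obstacle will be in the first stage, specifically checking that the bi-Lipschitz constants of $\psi_i$ tend to $1$. This relies on uniform convergence $R_{C_i}\to R_C$ on $S^n$ (from Hausdorff convergence of the truncated bodies on $\clb(0,R+1)$) and on a uniform Lipschitz bound for $R_{C_i}$ on $S^n$ with constant depending only on $R/r_0$; the latter exploits the interior condition $\clb(p,r_0)\subset C_i$, which in turn hinges on $p$ being a uniform interior point of both $C$ and of $C_i$ for $i$ large. Once this estimate is in hand, the change of variables, the volume-fixing deformation, and the semicontinuity from Lemma~\ref{lem:antier} assemble into the statement in a routine way.
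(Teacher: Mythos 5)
Your construction is correct, but it follows a genuinely different route from the paper's. The paper avoids building any bi-Lipschitz map: it applies the volume-fixing flow $\phi_t$ entirely inside $C$ (producing $E(t)\subset C$ of perimeter $\le P_C(E)+c|t|$), then uses a Schneider-type dilation to find $\lambda_i\to 1$ with $\lambda_i(C_i\cap B)\subset C\cap B$ converging in Hausdorff distance, chooses $t(i)$ so that $|E(t(i))\cap\lambda_i(C_i\cap B)|=\lambda_i^{n+1}v_i$, and finally sets $E_i=\lambda_i^{-1}E(t(i))\cap(C_i\cap B)$. The key advantage is that intersecting with the smaller convex set $\lambda_i(C_i\cap B)$ creates no new relative boundary in $\intt(\lambda_i C_i)$, so the upper bound $\limsup_i P_{C_i}(E_i)\le P_C(E)$ comes for free, and the volume is matched exactly by an intermediate-value argument in $t$, with no need to control Jacobians. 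Your route instead pushes $E$ forward by an explicit radial bi-Lipschitz map $\psi_i$, which is more geometric and makes the correspondence $E\leftrightarrow E_i$ transparent, but requires you to verify several quantitative facts: uniform Lipschitz bounds and convergence of the tangential Jacobian, the $L^1$ convergence $\psi_i(E)\to E$ for a mere finite-perimeter set (which needs a smooth-approximation argument, since $\|\psi_i-\mathrm{id}\|_\infty\to 0$ alone does not imply $|\psi_i(E)\triangle E|\to 0$ when $\partial E$ may have positive measure), and a careful statement of the boundary correspondence — the radial map sends $\partial(C\cap\clb(0,R))$ to $\partial(C_i\cap\clb(0,R))$, which is not literally $\psi_i(\partial C\cap\clb(0,R))=\partial C_i\cap\clb(0,R)$ since near the corner $\partial C\cap\partial\clb(0,R)$ the lateral and spherical parts can be interchanged; this is harmless because $E\subset B(0,R-1)$ stays away from the corner and $\psi_i\to\mathrm{id}$, but it should be stated. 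Both approaches finish identically with the lower-semicontinuity bound from Lemma~\ref{lem:antier} applied to the truncated bodies.
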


\begin{proof}
Let $B\subset \rr^{n+1}$ be a closed Euclidean ball so that $E\subset \intt(B)$. Since $C_i\cap B\to C\cap B$ in Hausdorff distance, then arguing as in the proof of \cite[Thm.~1.8.16]{sch} we can find $\la_i\to 1$ so that $\la_i (C_i\cap B)\subset C\cap B$ and $\la_i (C_i\cap B)\to C\cap B$ in Hausdorff distance.

Arguing as in \cite[IV.1.5]{maggi} we can find $\eps>0$ and a one-parameter family $\{\phi_{t}\}_{t\in (-\eps,\eps)}$ of diffeomorphisms with compact support in the interior of $C\cap B$ such that, setting $E(t)=\phi_{t}(E)$ one has $E(t)\subset C$, $|E(t)| = v+t$, and $P_{C}(E(t)) \le P_{C}(E) + c|t|$ for all $t\in (-\eps,\eps)$ and for some constant $c>0$ only depending on $E$. 

For fixed $t\in (-\eps,\eps)$,
\[
\lim_{i\to\infty}\vol{E(t)\cap \la_i(C_i\cap B)}=\vol{E(t)\cap (C\cap B)},
\]
since
\begin{multline*}
0<\vol{E(t)\cap (C\cap B)}-\vol{E(t)\cap \la_i(C_i\cap B)}
\\
\le \vol{E\cap (C\cap B\setminus \la_i(C_i\cap B))}
\le \vol{C\cap B\setminus \la_i(C_i\cap B)}\to 0,
\end{multline*}
as $\la_i(C_i\cap B)$ converges to $C\cap B$ in Hausdorff distance. Since $\vol{E(-\eps/2)}<v< \vol{E(\eps/2)}$, the above argument implies that, for large $i$, the inequalities
\[
 \vol{E(-\eps/2)\cap \la_i C_i}<\la_i^{n+1} v_i< \vol{E(\eps/2)\cap \la_i C_i}
\]
hold because $\la_i\to 1$. The continuity of the functions $t\mapsto \vol{E(t)\cap\la_i(C_i\cap B)}$ implies, for large $i$, the existence of $t(i)$ such that $\vol{E(t(i))\cap \la_i(C_i\cap B)}=\la_i^{n+1}v_i$.

Set $E_i=\la_i^{-1}(E(t(i))\cap\la_i(C_i\cap B))=\la_i^{-1}E(t(i))\cap (C_i\cap B)$. Clearly $\vol{E_i}=v_i$, hence (ii) is verified.
Moreover
\[
\pp_{\la_iC_i}(E(t(i))\cap \la_i(C_i\cap B))\le \pp_C(E(t(i)))\le\pp _C(E)+c\,t(i)
\]
and taking $i\to\infty$ we get
\begin{align*}
\limsup_{i\to\infty}\pp_{C_i}(E_i)&=\limsup_{i\to\infty}\pp_{\la_iC_i}(E(t(i))\cap \la_i (C_i\cap B))
\\
&\le \limsup_{i\to\infty}\pp_C(E(t(i)))\le\pp _C(E).
\end{align*}

On the other hand, by construction $E_i\to E$  in $L^1(\rr^{n+1})$ (which proves (i)) and $\{\pp_{C_i}(E_i)\}_{i\in\nn}$ is bounded. Therefore by Lemma~\ref{lem:antier} we get 
\[
\pp_{C}(E)\le \liminf_{i\to\infty} \pp_{ C_i}(E_i).
\]
Finally, combining the two above inequalities, we obtain (iii).
\end{proof}

The following two propositions are easily proved by means of Lemma \ref{lem:niceapprox}.

\begin{proposition}
\label{cor:niceapproxK}
Let $C\subset\rr^{n+1}$ be  an unbounded convex body, let $K\in \mathcal{K}(C)$ be an asymptotic cylinder of $C$, and let $E\subset K$ be a bounded set of finite perimeter and volume $v\ge 0$. Let $\{v_i\}_{i\in\nn}$ be any sequence of positive numbers converging to $v$. Then there exists a sequence $E_i\subset C$ of bounded sets of finite perimeter with $\vol{E_i}=v_i$ and $\lim_{i\to \infty} \pp_{C}(E_i)=\pp_{K}(E)$. In particular, this implies that $I_{C}(v)\le I_{K}(v)$.
\end{proposition}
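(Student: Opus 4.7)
The plan is to derive this statement as an almost immediate consequence of Lemma~\ref{lem:niceapprox} combined with the very definition of an asymptotic cylinder. First, I would invoke the definition of $\mathcal{K}(C)$ to select a divergent sequence $\{x_i\}_{i\in\nn}\subset C$ such that the translated convex bodies $C_i:=-x_i+C$ converge to $K$ locally in Hausdorff distance. In particular $C_i$ is a sequence of convex bodies converging locally in Hausdorff distance to the convex body $K$ (note that $K$ has non-empty interior whenever $v>0$, since the closed convex set $K$ contains the set $E$ of positive Lebesgue measure; the case $v=0$ being trivial by taking $E_i=\emptyset$ for all $i$ large enough).

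Next, I would apply Lemma~\ref{lem:niceapprox} with ambient sequence $\{C_i\}_{i\in\nn}$ converging to $K$, reference bounded set $E\subset K$ of volume $v$, and prescribed sequence of volumes $v_i\to v$. This produces bounded sets of finite perimeter $\widetilde{E}_i\subset C_i$ such that $\widetilde{E}_i\to E$ in $L^1(\rr^{n+1})$, $|\widetilde{E}_i|=v_i$ and $P_{C_i}(\widetilde{E}_i)\to P_K(E)$ as $i\to\infty$. I would then set $E_i:=x_i+\widetilde{E}_i$, which is a bounded subset of $x_i+C_i=C$. Since the map $\xi\mapsto\xi(\,\cdot\,+x_i)$ is a bijection between $\Gamma_0(C)$ and $\Gamma_0(C_i)$ that preserves both norm and divergence, the definition of relative perimeter yields $P_C(E_i)=P_{C_i}(\widetilde{E}_i)$, while translation invariance of Lebesgue measure gives $|E_i|=v_i$. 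Consequently $\lim_{i\to\infty}P_C(E_i)=P_K(E)$, which proves the main claim.

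For the inequality $I_C(v)\le I_K(v)$, I would fix $\varepsilon>0$ and, using Proposition~\ref{prp:seqbound} applied inside the unbounded convex body $K$, select a \emph{bounded} set $E\subset K$ of volume $v$ with $P_K(E)\le I_K(v)+\varepsilon$. Then applying the first part with the constant sequence $v_i\equiv v$ yields bounded competitors $E_i\subset C$ with $|E_i|=v$ and $P_C(E_i)\to P_K(E)$. Hence $I_C(v)\le\lim_{i\to\infty}P_C(E_i)=P_K(E)\le I_K(v)+\varepsilon$, and letting $\varepsilon\downarrow 0$ gives the desired inequality.

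I do not anticipate any serious obstacle: the whole argument is mechanical once one recognizes that local Hausdorff convergence $C_i\to K$ is precisely the hypothesis needed to feed Lemma~\ref{lem:niceapprox}. The only minor subtlety is the invariance of the relative perimeter under the translation $y\mapsto y+x_i$ identifying $\intt(C_i)$ with $\intt(C)$, which however is immediate from the distributional definition of $P_C$.
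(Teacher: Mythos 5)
Your proof is correct and follows essentially the same route as the paper: select a divergent sequence $\{x_i\}$ with $C_i:=-x_i+C\to K$ locally in Hausdorff distance, apply Lemma~\ref{lem:niceapprox} in the translated bodies, and move the resulting sets back by $x_i$ using translation invariance of volume and relative perimeter. Your observation that $K$ has nonempty interior whenever $v>0$ (because it contains a set of positive measure) is actually a cleaner and more self-contained justification than the paper's appeal to Proposition~\ref{prop:maincond}, which would tacitly require uniform geometry.

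One small shared imprecision: in the case $v=0$ you cannot take $E_i=\emptyset$, since the statement requires $|E_i|=v_i>0$; the correct fix is to take, e.g., $E_i=B_C(p,\rho_i)$ for a boundary point $p\in\ptl C$ with $|B_C(p,\rho_i)|=v_i$, so that $P_C(E_i)\le I_H(v_i)\to 0=P_K(E)$. The paper's own proof has the same lapse, so this does not distinguish your argument from the intended one.
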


\begin{proof}
By hypothesis, there exists a divergent sequence $\{x_i\}_{i\in\nn}\subset \rr^{n+1}$ so that $C_{i} = -x_i+C$ converges locally in Hausdorff distance to $K$, which is a convex set by Proposition~\ref{prop:maincond}. If $K$ has empty interior, then the claim trivially follows by setting $E_{i}:= \emptyset$. Otherwise if $K$ is an unbounded convex body, we obtain from Lemma~\ref{lem:niceapprox} the existence of a sequence $F_i\subset C_{i}$ of sets of finite perimeter and $\vol{F_i}=v_i$ satisfying $\lim_{i\to \infty} \pp_{C_{i}}(F_i)=\pp_{K}(E)$. Setting $E_i:=x_i+F_i\subset C$ we immediately prove the claim.
\end{proof}

\begin{proposition}
\label{prp:niceapproxI}
Let $\{C_i\}_{i\in\nn}$ be a sequence of unbounded convex bodies locally converging in Hausdorff distance to a convex body $C$. Let $v, v_{i}>0$ be such that $v_i\to v$ as $i\to\infty$.  Then
\begin{equation}
\label{eq:niceapproxI0}
\limsup_{i\to\infty}I_{C_i}(v_i)\le I_C(v).
\end{equation}
\end{proposition}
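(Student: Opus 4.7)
\medskip
\noindent\textbf{Plan of proof.}
The plan is to reduce the inequality \eqref{eq:niceapproxI0} to an application of Lemma~\ref{lem:niceapprox}. Fix $\eps>0$. By the definition of the isoperimetric profile, there exists a set $E\subset C$ of finite relative perimeter with $|E|=v$ and
\[
\pp_C(E)\le I_C(v)+\eps.
\]
The first step will be to arrange that $E$ can additionally be taken bounded. This is standard: intersecting $E$ with a large Euclidean ball $B(0,R)$ yields a set $E_R\subset C$ with $|E_R|\to v$ and $\pp_C(E_R)\le \pp_C(E)+\h^n(E\cap\ptl B(0,R))$, and the extra boundary term goes to $0$ for a.e.\ sufficiently large $R$ by the coarea-type identity for $|E|<\infty$. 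A small volume-fixing adjustment (obtained by adding or removing a ball of the correct size in the interior of $C$, whose perimeter contribution is infinitesimal as $R\to\infty$ since the volume correction is infinitesimal) lets us produce a bounded $E'\subset C$ with $|E'|=v$ and $\pp_C(E')\le I_C(v)+2\eps$. (Alternatively, one may invoke Proposition~\ref{prp:seqbound} applied to a minimizing sequence for volume $v$.)

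Next, I apply Lemma~\ref{lem:niceapprox} to the bounded finite-perimeter set $E'\subset C$ and the sequences $C_i\to C$ locally in Hausdorff distance and $v_i\to v$. This produces a sequence of bounded finite-perimeter sets $F_i\subset C_i$ with $|F_i|=v_i$ and
\[
\lim_{i\to\infty}\pp_{C_i}(F_i)=\pp_C(E').
\]
By the very definition of the isoperimetric profile of $C_i$, we have $I_{C_i}(v_i)\le \pp_{C_i}(F_i)$ for every $i\in\nn$, whence
\[
\limsup_{i\to\infty} I_{C_i}(v_i)\le \lim_{i\to\infty}\pp_{C_i}(F_i)=\pp_C(E')\le I_C(v)+2\eps.
\]
Since $\eps>0$ is arbitrary, letting $\eps\downarrow 0$ yields \eqref{eq:niceapproxI0}.

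The only nontrivial point is the boundedness reduction for the competitor $E$ in $C$; once this is handled, the rest is a direct quotation of Lemma~\ref{lem:niceapprox} and the definition of $I_{C_i}$. Note that the proposition holds without any uniform geometry assumption on the $C_i$, since we are only producing an upper bound via a single test competitor; no density estimate or compactness for minimizing sequences is needed here.
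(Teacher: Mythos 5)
Your proof is correct and follows essentially the same route as the paper: reduce to a bounded, almost-optimal competitor $E'\subset C$, transport it to the $C_i$ via Lemma~\ref{lem:niceapprox}, and conclude from the definition of $I_{C_i}$ and the arbitrariness of $\eps$. The paper obtains the bounded competitor directly by citing Proposition~\ref{prp:seqbound}, which is the alternative you yourself mention; your explicit cut-and-fix sketch is a minor reworking of that standard reduction (note the cutting radius must be chosen along a subsequence, not for a.e.\ large $R$), but it does not change the argument in any essential way.
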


\begin{proof}
Owing to Proposition~\ref{prp:seqbound}, for a given $\eps>0$ we can find a bounded set $E\subset C$, of finite perimeter and volume $v$, such that
\begin{equation}
\label{eq:niceapproxI1}
\pp_C(E)\le I_C(v)+\eps.
\end{equation}
Lemma \ref{lem:niceapprox} implies the existence of a sequence of sets of finite perimeter $E_i\subset C_i$ with $\vol{E_i}=v_i$ and
\begin{equation}
\label{eq:niceapproxI2}
\lim_{i\to\infty}\pp_{C_i}(E_i)=\pp_C(E)
\end{equation}
Combining \eqref{eq:niceapproxI1} and \eqref{eq:niceapproxI2} we get $\limsup_{i\to\infty}I_{C_i}(v_i)\le I_C(v)+\eps$. Since $\eps>0$ is arbitrary, we obtain \eqref{eq:niceapproxI0}.
\end{proof}

The next lemma is a key tool for the proof of Theorem \ref{thm:genexist}.
\begin{lemma}
\label{lemma:splitseq}
Let $\{C_i\}_{i\in\nn}$ a sequence of unbounded convex bodies converging locally in Hausdorff distance to an unbounded convex body $C$ containing the origin. Let $E_i\subset C_i$ be a sequence of measurable sets with volumes $v_i$ converging to $v>0$ and uniformly bounded~perimeter. Assume $E\subset C$ is the $L_{loc}^1(\rr^{n+1})$ limit of $\{E_i\}_{i\in\nn}$. Then, passing to a (non relabeled) subsequence, there exist diverging radii $r_i>0$ such that
\[
E_i^d:=E_i\setminus B(0,r_i)
\] 
satisfies
\begin{enumerate}
\item[(i)] $|E|+\lim_{i\to\infty} |E_i^d|=v$.
\item[(ii)] $P_C(E)+\liminf_{i\to\infty} P_{C_i}(E_i^d)\le\liminf_{i\to\infty} P_{C_i}(E_i)$.
\end{enumerate}
\end{lemma}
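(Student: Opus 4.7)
The plan is to choose cut-off radii $r_i\to\infty$ by a diagonal argument that simultaneously ensures $|E_i\cap B(0,r_i)|\to|E|$ and $\h^n(E_i\cap \ptl B(0,r_i))\to 0$. Once such $r_i$ are in hand, conclusion (i) amounts to volume bookkeeping, and (ii) follows from the standard spherical truncation estimate for relative perimeter combined with a local lower semicontinuity applied to $E_i\cap B(0,r_i)\to E$.

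For the key selection I set $\psi_i(R):=|E_i\cap B(0,R)|$ and $\psi(R):=|E\cap B(0,R)|$. The $L^1_{loc}$ convergence yields $\psi_i(R)\to\psi(R)$ pointwise, while $\psi(R)\nearrow |E|$ by monotone convergence. For each $k\in\nn$ I pick $R_k$ with $|E|-\psi(R_k)<1/k$, then an index $i_k$ such that, for all $i\ge i_k$, both $\psi_i(R_k)$ and $\psi_i(R_k+k)$ lie within $1/k$ of their limits; since $\psi(R_k+k)-\psi(R_k)\le |E|-\psi(R_k)<1/k$, this forces $\psi_i(R_k+k)-\psi_i(R_k)\le 3/k$. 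The coarea formula applied to the distance function from the origin gives
\[
\int_{R_k}^{R_k+k}\h^n(E_i\cap \ptl B(0,r))\,dr=\psi_i(R_k+k)-\psi_i(R_k)\le 3/k,
\]
so by the mean value theorem I select $r_i^k\in[R_k,R_k+k]$ satisfying $\h^n(E_i\cap \ptl B(0,r_i^k))\le 3/k^2$ (and may additionally require $r_i^k$ to avoid the at most countable set of bad radii where the slicing identity below fails). Taking $i_k$ strictly increasing and setting $k(i):=\max\{k\le i:\,i\ge i_k\}$, $r_i:=r_i^{k(i)}$, I get $r_i\ge R_{k(i)}\to\infty$, $|\psi_i(r_i)-|E||\le 2/k(i)\to 0$, and $\h^n(E_i\cap \ptl B(0,r_i))\to 0$.

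Assertion (i) is then immediate from $|E_i^d|=v_i-\psi_i(r_i)\to v-|E|$. For (ii), at a.e.\ $r>0$ the standard truncation estimate gives
\[
P_{C_i}(E_i\cap B(0,r))+P_{C_i}(E_i\setminus B(0,r))\le P_{C_i}(E_i)+2\h^n(E_i\cap \ptl B(0,r)),
\]
which by construction applies at $r=r_i$. Fixing an open set $A\subset\subset \intt(C)$, local Hausdorff convergence $C_i\to C$ implies $A\subset \intt(C_i)$ for large $i$, while $E_i\cap B(0,r_i)\to E$ in $L^1_{loc}(\rr^{n+1})$ because $r_i\to\infty$; the usual lower semicontinuity of the Euclidean perimeter then yields $P(E,A)\le \liminf_i P_{C_i}(E_i\cap B(0,r_i))$, and taking the supremum over such $A$ produces $P_C(E)\le \liminf_i P_{C_i}(E_i\cap B(0,r_i))$. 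Coupling this with the truncation estimate and the elementary superadditivity $\liminf(a_i+b_i)\ge \liminf a_i+\liminf b_i$, and using that $\h^n(E_i\cap \ptl B(0,r_i))\to 0$, delivers
\[
P_C(E)+\liminf_{i\to\infty} P_{C_i}(E_i^d)\le \liminf_{i\to\infty} P_{C_i}(E_i).
\]

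The main obstacle I anticipate is the joint selection of $r_i$: the diagonal procedure must balance three competing requirements—divergence of the radii, volume matching the target $|E|$, and spherical area on $\ptl B(0,r_i)$ vanishing. The slicing annuli $[R_k,R_k+k]$ must be long enough for the mean-value bound on $\h^n$ to decay, yet the choice of $R_k$ with $|E|-\psi(R_k)<1/k$ is essential to prevent any ``escaping mass'' of size $v-|E|$ from leaking back into $B(0,r_i)$ and destroying the volume identity in (i).
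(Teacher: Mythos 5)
Your proposal is correct and follows essentially the same route as the paper: use the coarea formula over growing annuli to select radii $r_i\to\infty$ at which both $\mathcal{H}^n(E_i\cap\partial B(0,r_i))$ vanishes and the slicing identity holds, then split the perimeter, pass to $\liminf$ by superadditivity, and apply lower semicontinuity on compact subsets of $\intt(C)$. (One cosmetic point: to guarantee $R_{k(i)}\to\infty$ you should additionally impose, say, $R_k\ge k$ when choosing $R_k$ with $|E|-\psi(R_k)<1/k$; this is needed when $E$ is bounded, where the volume condition alone does not force $R_k$ to diverge.)
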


\begin{proof}
The set $E$ clearly has finite volume less than or equal to $v$. Take a sequence of increasing radii $\{s_i\}_i$ such that $s_{i+1}-s_i\ge i$ for all $i\in\nn$. Since $E_i$ converges to $E$ in $L^1_{loc}(\rr^{n+1})$, taking a non relabeled subsequence of $E_i$ we may assume
\[
\int_{B(0,s_{i+1})} |\chi_{E_i}-\chi_{E}|\le\frac{1}{i}
\]
for all $i$. By the coarea formula
\[
\int_{s_i}^{s_{i+1}}{\h}^{n}(E_i \cap \ptl B(0,t))\, dt\le \int_{\rr}{\h}^{n}(E_i \cap \ptl B(0,t))\,dt= \vol{E_i}=v_i.
\]
Hence the set of $t\in [s_i,s_{i+1}]$ such that $H^n(E_i\cap\ptl B(0,t))\le \frac{2v_i}{i}$ has positive measure. By \cite[Chap.~28, p.~216]{maggi}, we can choose $ r_i\in [s_i,s_{i+1}]$ in this set so that
\begin{align*}
P_{C_i}(E_i\cap B(0,r_i))&=P(E_i,\intt{(C_i\cap B(0,r_i))})+H^{n}(E_i\cap \ptl B(0,r_i)),
\\
P_{C_i}(E_i\setminus B(0,r_i))&=P(E_i,\intt{(C_i\setminus B(0,r_i))})+H^{n}(E_i\cap \ptl B(0,r_i)).
\end{align*}
We define
\[
E_i^d:=E_i\setminus B(0,r_{i}).
\]
Then $\vol{E_i}=\vol{E_i\cap B(0,r_i)}+\vol{E_i^d}$. Since $E_i$ converges to $E$ in $L^1_{loc}(\rr^{n+1})$ and $E$ has finite volume, we have $\lim_{i\to\infty}\vol{E_i\cap B(0,r_i)}=\vol{E}$. This proves (i).

On the other hand,
\begin{align*}
P_{C_i}(E_i)&\ge P(E_i,\intt(C_i\cap B(0,r_i)))+P(E_i,\intt(C_i\setminus B(0,r_i)))
\\
&=P_{C_i}(E_i\cap B(0,r_i))+P_{C_i}(E_i^d)-\frac{4v_i}{i}.
\end{align*}
Taking inferior limits we have
\[
\liminf_{i\to\infty} P_{C_i}(E_i)\ge \liminf_{i\to\infty} P_{C_i}(E_i\cap B(0,r_i))+\liminf_{i\to\infty} P_{C_i}(E_i^d).
\]
By Lemma \ref{lem:antier} we finally get (ii).
\end{proof}

\section{Proof of the main result}

Before stating (and proving) the main result of this section, i.e. Theorem \ref{thm:genexist} below, we introduce some extra notation. 

We say that a finite family $E^{0},E^{1},\ldots,E^{k}$ of sets of finite perimeter is a \emph{generalized isoperimetric region in $C$ if} $E^0\subset C=K^0$, $E^i\subset K^i\in\mathcal{K}(C)$ for $i\ge 2$ and, for any family of sets $F^0,F^1,\ldots,F^k$ such that $\sum_{i=0}^k \vol{E^i}=\sum_{i=0}^k\vol{F^i}$, we have
\[
\sum_{i=0}^{k}P_{K^{i}}(E^{i}) \le \sum_{i=0}^{k}P_{K^{i}}(F^{i}).
\]
Obviously, each $E^i$ is an isoperimetric region in $K^i$ with volume $\vol{E^i}$.

\begin{theorem}\label{thm:genexist}
Let $C\subset \rr^{n+1}$ be a convex body of uniform geometry and fix $v_0>0$. Then there exists $\ell\in\nn$ with the following property: for any minimizing sequence $\{F_{i}\}_{i}$ for volume $v_0$, one can find a (not relabeled) subsequence $\{F_{i}\}_{i}$ such that, for every $j\in \{0,\dots,\ell\}$, there exist
\begin{itemize}
\item a divergent sequence ${\{x_i^j\}}_i$,
\item a sequence of sets ${\{F_i^j\}}_i$,
\item an asymptotic cylinder $K^j\in\mathcal{K}(C)$,
\item an isoperimetric region $E^j\subset K^j$ (possibly empty),
\end{itemize}
with in particular $x_i^0=0$ for all $i\in\nn$ and $K^0=C$, such that 
\begin{enumerate}
\item[(i)] $F_i^{j+1}\subset F_i^j\subset F_i$ for all $i\in \nn$ and $j\in \{0,\dots,\ell-1\}$;
\item[(ii)] $-x_i^j+C$ converges to $K^j$ locally in Hausdorff distance for all $j\in\{1,\ldots,\ell\}$;
\item[(iii)] $-x_i^j+F_i^j$ converges to $E^j\subset K^j$ in $L^1_{loc}(\rr^{n+1})$ for all $j\in\{0,\ldots,\ell\}$;
\item[(iv)] for any $0\le q\le\ell$, $E^0,E^1,\ldots,E^q$ is a generalized isoperimetric region of volume $\sum_{j=0}^q\vol{E^j}$.
\item[(v)] $\sum_{j=0}^\ell |E^j|=v_0$;
\item[(vi)] $I_C(v_0)=\sum_{j=0}^\ell I_{K^j}(|E^j|)$.
\end{enumerate}
\end{theorem}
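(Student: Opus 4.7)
The plan is a concentration--compactness argument in the spirit of Galli--Ritor\'e \cite{gr} and Nardulli \cite{nardulli2014generalized}. By Proposition \ref{prp:seqbound}, I may assume the minimizing sequence $\{F_i\}$ consists of bounded sets with $|F_i|=v_0$ and $P_C(F_i)\to I_C(v_0)$. Standard lower semicontinuity and $L^1_{loc}$ compactness (Lemma \ref{lem:antier} applied on an increasing sequence of balls together with a diagonal extraction) yield, up to subsequence, some $E^0\subset C$ of finite relative perimeter such that $F_i\to E^0$ in $L^1_{loc}(\rr^{n+1})$. Setting $x_i^0=0$ and $K^0=C$, Lemma \ref{lemma:splitseq} then produces diverging radii $\rho_i^0$ so that $F_i^1:=F_i\setminus B(0,\rho_i^0)\subset F_i$ satisfies $|E^0|+\lim_i|F_i^1|=v_0$ and $P_C(E^0)+\liminf_i P_C(F_i^1)\le I_C(v_0)$.

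The heart of the argument is the inductive extraction of pieces escaping to infinity. Assume $F_i^j$ has been constructed with $v_j:=\lim_i|F_i^j|>0$ and $\liminf_i P_C(F_i^j)\le I_C(v_0)\le I_H(v_0)$. Corollary \ref{cor:lemme2-b} (with the constant $m_0$ depending only on $v_0$) then provides points $x_i^j\in C$ with $|F_i^j\cap B_C(x_i^j,1)|\ge m_0 v_j$ for $i$ large. Since $F_i^j\cap B(0,\rho_i^{j-1})=\emptyset$ and $\rho_i^{j-1}\to\infty$, the sequence $\{x_i^j\}$ is divergent, so by Lemma \ref{lem:cyl}, a further subsequence satisfies $-x_i^j+C\to K^j$ locally in Hausdorff distance for some asymptotic cylinder $K^j\in\mathcal{K}(C)$; Proposition \ref{prop:maincond} ensures $K^j$ is a convex body. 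A further application of Lemma \ref{lem:antier} yields $-x_i^j+F_i^j\to E^j\subset K^j$ in $L^1_{loc}$ with $|E^j|\ge m_0 v_j>0$. A second application of Lemma \ref{lemma:splitseq}, centered at $x_i^j$, defines $F_i^{j+1}\subset F_i^j$ such that $|E^j|+\lim_i|F_i^{j+1}|=v_j$ and
\[
P_C(E^0)+\sum_{k=1}^{j}P_{K^k}(E^k)+\liminf_{i\to\infty}P_C(F_i^{j+1})\le I_C(v_0).
\]

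The main obstacle is showing that this iteration halts at some finite $\ell$ with $v_\ell=0$. The plan is to first establish that each extracted $E^j$ is in fact isoperimetric in $K^j$ for its own volume: given any competitor $\tilde E^j\subset K^j$ of the same volume, Proposition \ref{cor:niceapproxK} yields bounded approximants in $C$ near divergent base points which, combined by far-apart translations with the other pieces similarly transplanted back to $C$, give an admissible competitor for $I_C(v_0)$; minimality then forces $P_{K^j}(E^j)=I_{K^j}(|E^j|)$. Combining the cumulative perimeter estimate with the uniform lower bound $I_{K^j}(v)\ge c_0 v^{n/(n+1)}$ (Corollary \ref{cor:isopinesm}, valid uniformly in $K^j$ because asymptotic cylinders inherit the uniform geometry constants of $C$ by Remark \ref{rmk:convinradius}) gives $\sum_j|E^j|^{n/(n+1)}<\infty$. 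The concentration bound $|E^j|\ge m_0 v_j$ and the resulting geometric decay $v_{j+1}\le(1-m_0)v_j$ then bound the number of non-trivial pieces by a constant depending only on $v_0$ and the uniform geometry constants of $C$, and the tiny residual left after this many steps can be absorbed into the last extracted piece via the volume-fixing deformation of Lemma \ref{lem:niceapprox} without increasing the total perimeter beyond $I_C(v_0)$.

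Once $\ell$ is finite with $v_\ell=0$, properties (i)--(iii) and (v) are immediate from the construction. For (vi), the cumulative estimate gives $\sum_{j=0}^\ell P_{K^j}(E^j)\le I_C(v_0)$, and the reverse inequality $I_C(v_0)\le\sum_{j=0}^\ell I_{K^j}(|E^j|)$ follows from Proposition \ref{cor:niceapproxK}: each $E^j$ is approximated by sets $\tilde E_i^j\subset C$ placed near well-separated diverging points, whose disjoint union has volume $v_0$ and perimeter converging to $\sum_j I_{K^j}(|E^j|)$. The resulting chain of equalities forces each $P_{K^j}(E^j)=I_{K^j}(|E^j|)$, so each $E^j$ is isoperimetric in $K^j$; the same argument applied to any prefix $(E^0,\ldots,E^q)$ gives (iv), since improving such a prefix would, upon reattaching $E^{q+1},\ldots,E^\ell$, yield a strictly better competitor for $I_C(v_0)$.
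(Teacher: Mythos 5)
Your overall scheme --- inductive $L^1_{loc}$ extraction via Corollary~\ref{cor:lemme2-b} and Lemma~\ref{lemma:splitseq}, combined with showing each extracted $E^j$ is isoperimetric in its cylinder --- matches the paper's Steps one through three. The gap is in the finiteness argument, which is precisely what the paper's Step four is designed to supply. You claim that the geometric decay $v_{j+1}\le(1-m_0)v_j$ ``bounds the number of non-trivial pieces.'' It does not: $v_j$ can be strictly positive at \emph{every} stage (e.g.\ $v_j\approx(1-m_0)^{j}v_0$), in which case the induction never halts while each extracted volume $|E^j|\approx m_0(1-m_0)^{j}v_0>0$. The bound $\sum_j|E^j|^{n/(n+1)}\le I_C(v_0)/c_0$ from Corollary~\ref{cor:isopinesm} likewise cannot cap the number of terms because you have no uniform \emph{lower} bound on the summands. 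Such a lower bound is exactly the content of the paper's Step four: one shows $|E^j|\ge\beta$ for every $j\ge 2$ with $|E^j|>0$, for a single $\beta>0$, whence $\ell\le 2+\lfloor v_0/\beta\rfloor$. This is done by observing $\max\{|E^0|,|E^1|\}\ge m_0 v_0/2$ (cf.~\eqref{max01}), deforming that large piece with a fixed constant $M$, and noting that absorbing a hypothetical $E^j$ of volume $t<\min\{\delta,c_0^{n+1}/M^{n+1}\}$ would strictly decrease the total perimeter (since $c_0 t^{n/(n+1)}>Mt$), contradicting~(iv). Your ``absorb the tiny residual into the last extracted piece'' is close in spirit but misplaces the target: the last extracted piece may itself have arbitrarily small volume, so its deformation constant is uncontrolled and the contradiction threshold is not uniform in $j$. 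The absorption must go into a fixed, uniformly large piece, and it must be phrased as a uniform lower bound on the extracted $|E^j|$ (equivalently as a dichotomy forcing $v_j=0$ at a definite scale), not as an a posteriori patch of a small remainder.

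A secondary point: to invoke Corollary~\ref{cor:lemme2-b} at step $j$ you need $\liminf_i P_C(F_i^j)\le I_H(v_j)$ with $v_j=\lim_i|F_i^j|$, whereas you only assert the weaker $\le I_H(v_0)$. Since $I_H$ is strictly increasing and $v_j<v_0$, this does not meet the hypothesis. The stronger inequality is true, but requires the comparison argument in the paper's Step two: if it failed, replacing the escaping tail by a disjoint boundary ball of volume $v_j$ would produce a competitor strictly beating $I_C(v_0)$.
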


\begin{proof}
We shall split the proof into several steps.

\emph{Step one}. Here we define the set $E^{0}$ as the $L^{1}_{loc}$ limit of ${\{F_{i}\}}_{i}$ up to subsequences, then show that it is isoperimetric for its volume. We henceforth assume that $C$ contains the origin. Let $E^0\subset C$ be the (possibly empty) limit in $L^1_{loc}(\rr^{n+1})$ of the sequence $\seq{F}$. By Lemma~\ref{lemma:splitseq} there exists a sequence of diverging radii $r_i^0>0$ so that the set $F_i^1:=F_i\setminus B(0,r_i^0)$ satisfies
\begin{equation}
\label{eq:volE^0}
\vol{E^0}+\lim_{i\to\infty}\vol{F_i^1}=v_0,
\end{equation}
and
\begin{equation*}
P_C(E^0)+\liminf_{i\to\infty} P_C(F_i^1)\le\liminf_{i\to\infty} P_C(F_i).
\end{equation*}
In case $\vol{E^0}>0$ the set $E^0$ is isoperimetric for its volume: otherwise there would exist a bounded measurable set $G^0\subset C$ satisfying $\vol{G^0}=\vol{E^0}$ and $P_C(G^0)<P_C(E^0)$. This set can be approximated by a sequence $\seq{G}$ of uniformly bounded sets of finite perimeter satisfying $\vol{G_i}+\vol{F_i^1}=v_0$ and $\lim_{i\to\infty} P_C(G_i)=P_C(G^0)$. For large $i$, $G_i$ and $F_i^1$ are disjoint, $|G_i\cup F_i^1|=v_0$ and it holds that
\begin{align*}
I_{C}(v_{0}) &\le \liminf_{i\to\infty} P_C(G_i\cup F_i^1)
\\
& =\liminf_{i\to\infty} (P_C(G_i)+P_C(F_i^1))=P_C(G^0)+\liminf_{i\to\infty} P_C(F_i^1)
\\
&<P_C(E^0)+\liminf_{i\to\infty} P_C(F_i^1)\le\liminf_{i\to\infty} P_C(F_i)=I_C(v_0),
\end{align*}
yielding a contradiction. A similar argument proves the equality
\begin{equation}
\label{eq:perE^0}
P_C(E^0)+\liminf_{i\to\infty} P_C(F_i^1)=I_C(v_0).
\end{equation}
In particular, (iv) is trivially satisfied for $q=0$.

\emph{Step two}. We have the following alternative: either $\vol{E^0}=v_0$ (which means that $E^0$ is an isoperimetric region of volume $v_0$ and thus the theorem is verified for $\ell = 1$, $x^{1}_{i}$ diverging such that $-x^{1}_{i}+C \to K^{1}$ in local Hausdorff distance, $E^{1} = \emptyset$, and $F^{1}_{i}$ defined as in step one) or $\vol{E^0}<v_0$. The latter case corresponds to a ``volume loss at infinity''. We observe that in this case the sequence ${\{F_i^1\}}_{i\in\nn}$ defined in step one satisfies the hypotheses of Corollary~\ref{cor:lemme2-b} since $\lim_{i\to\infty}|F_i^1|=v_0-|E^0|<v_0$ and $\liminf_{i\to\infty} P_C(F_i^1)\le I_H(v_0-|E^0|)$, where $I_H$ is the isoperimetric profile of a half-space in $\rr^{n+1}$. The last inequality follows by contradiction: in case $\liminf_{i\to\infty} P_C(F_i^1)> I_H(v_0-|E^0|)$, we could consider the union of the bounded isoperimetric set $E^0$ with a disjoint ball $B_C(x,r)$ centered at a boundary point $x\in\ptl C$ with volume $|B_C(x,r)|=v_0-|E^0|$. Since $P_C(B_C(x,r))\le I_H(|B_C(x,r)|)$, we would obtain
\begin{align*}
P_C(E^0\cup B_C(x,r))&\le P_C(E^0)+I_H(|B_C(x,r)|)\\ &< P_C(E^0)+\liminf_{i\to\infty} P_C(F_i^1)\le\liminf_{i\to\infty} P_C(F_i)=I_C(v_0),
\end{align*}
yielding again a contradiction. Since $\{F_i^1\}_{i\in\nn}$ satisfies the hypotheses of Corollary~\ref{cor:lemme2-b} we can find a sequence of points $x_i^1\in C$ so that
\[
|F_i^1\cap B_C(x_i^1,1)|\ge m_0|F_i^1|
\]
for all $i$. The sequence $\{x_i^1\}_{i\in\nn}$ is divergent since the sequence $\{F_i^1\}_{i\in\nn}$ is divergent. Then, possibly passing again to a subsequence, the convex sets $-x_i^1+C$ converge to an asymptotic cylinder $K^1$, and the sets $-x_i^1+F_i^1$ converge in $L^1_{loc}(\rr^{n+1})$ to a set $E^1\subset K^1$ of volume $v_0-|E^0|\ge |E^1|\ge m_0(v_0-|E^0|)$. We can apply Lemma~\ref{lemma:splitseq} to find a sequence of diverging radii $r_i^1$ so that the set $F_i^2\subset F_i^1\subset C$ defined by
\[
-x^{1}_{i} + F_i^2 =(-x^{1}_{i}+F_i^1)\setminus B(0,r_i^1)
\]
satisfies
\begin{align*}
|E^1|+\lim_{i\to\infty} |F_i^2|&=\lim_{i\to\infty}|F_i^1|,
\\
P_{K^1}(E^1)+\liminf_{i\to\infty} P_{C}(F_i^2)&\le\liminf_{i\to\infty} P_C(F_i^1),
\end{align*}
where in the last inequality we have used $P_{-x_i^1+C}(-x_i^1+F_i^2)=P_C(F_i^2)$. Equation \eqref{eq:volE^0} then implies
\begin{equation*}
|E^0|+|E^1|+\lim_{i\to\infty}|F_i^2|=v_0,
\end{equation*}
and \eqref{eq:perE^0} yields
\[
P_C(E^0)+P_{K^1}(E^1)+\liminf_{i\to\infty} P_C(F_i^2)\le I_C(v_0).
\]
Arguing in a similar way as in step one, we show that $E^1$ is isoperimetric for its volume in $K^1$. We argue by contradiction assuming the existence of a bounded measurable set $G^1\subset K^1$ with $P_{K^1}(G^1)<P_{K^1}(E^1)$. By Proposition~\ref{cor:niceapproxK} we would find a sequence of uniformly bounded subsets $G_i^1\subset -x_i^1+C$ with volumes $v_0-|E^0|-|F_i^2|$ such that $\lim_{i\to\infty} P_{-x_i^1+C}(G_i^1)=P_{K^1}(G^1)$. Since the sets $G_i^1$ are uniformly bounded and the sequence $F_i^2$ is divergent, we have $G_i^1\cap F_i^1=\emptyset$ for large $i$. Being the sequence $x_i^1$ divergent, the sets $x_i^1+G_i^1$, $x_i^1+F_i^2$ are disjoint from $E^0$. So the sets $E^0\cup(x_i^1+G_i^1)\cup (x_i^1+F_i^2)$ have volume $v_0$ and their perimeters have inferior limit less than or equal to
\[
P_C(E_0)+P_{K^1}(G^1)+\liminf_{i\to\infty} P_C(x_i^1+F_i^2),
\]
that, by the choice of $G^{1}$, is strictly less than
\[
P_C(E_0)+P_{K^1}(E^1)+\liminf_{i\to\infty} P_C(x_i^1+F_i^2)\le I_C(v_0),
\]
again yielding a contradiction. The same argument shows that
\begin{equation*}
P_C(E_0)+P_{K^1}(E^1)+\liminf_{i\to\infty} P_C(x_i^1+F_i^2)=I_C(v_0)\,.
\end{equation*}
Arguing similarly, we can then show that (iv) is satisfied for $q=1$.

\emph{Step three (induction)}. Assume that after repeating step two $q$ times we have found $q$ asymptotic cylinders $K^j$, $q$ isoperimetric regions $E^j\subset K^j$ of positive volume, $j=1,\ldots,q$, and a chain of (non relabeled) subsequences
\[
F_i^{q+1}\subset F_i^{q}\subset\cdots\subset F_i^1\subset F^1\,,
\]
so that
\begin{align}
\label{eq:induction-1}
\vol{E^0}+\vol{E^1}+\cdots+\vol{E^q}+\lim_{i\to\infty}\vol{F_i^{q+1}}&=v_0
\\
\label{eq:induction-2}
P_C(E^0)+P_{K^1}(E^1)+\cdots +P_{K^q}(E^q)+\liminf_{i\to\infty} P_C(F_i^{q+1})&=I_C(v_0).
\end{align}
If $\lim_{i\to\infty}\vol{F_i^{q+1}}=0$ we are done. Otherwise, if $\lim_{i\to\infty} |F_i^{q+1}|>0$ then we claim that the inequality
\begin{equation}
\label{eq:claim2}
\liminf_{i\to\infty} P_C(F_i^{q+1})\le I_H(v_0-\sum_{j=0}^q\vol{E^j})
\end{equation}
must be satisfied. In order to prove \eqref{eq:claim2} we reason by contradiction assuming that
\[
\liminf_{i\to\infty} P_C(F_i^{q+1})>I_H(v_0-\sum_{j=1}^q\vol{E^j})+\eps
\]
holds for some positive $\eps>0$. Recall that each isoperimetric set $E^j$ is bounded and that each asymptotic cylinder $K^j$ is the local limit in Hausdorff distance of a sequence $-x_i^j+C$, where $\{x_i^j\}_i$ is a diverging sequence in $C$. For each  $j\in 1,\ldots,q$, consider a sequence of uniformly bounded sets $G_i^j\subset -x_i^j+C$ of finite perimeter such that
\[
\lim_{i\to\infty} P_{-x_i^j+C}(G_i^j)=P_{K^j}(E^j),\text{ and }\vol{G_i^j}=\vol{E^j}\ \text{ for all } i.
\]
Since the sequences $\{x_i^j+G_i^j\}_i$ of subsets of $C$ are divergent, for every $j\in 1,\ldots,q$, we can find $i(j)$ so that the sets $G_j:=x_{i(j)}^{j}+G_{i(j)}^j$ are pairwise disjoint, do not intersect $E^0$, and
\[
P_C(G_j)<P_{K^j}(E^j)+\frac{\eps}{q}.
\]
Now choose some $x\in\ptl C$ so that the intrinsic ball $B$ centered at $x$ of volume $v_0-\sum_{j=0}^q\vol{E^j}$ is disjoint from $E^0\cup\bigcup_{j=1}^q G_j$. We know that $P_C(B)\le I_H(|B|)=I_H(v_0-\sum_{j=0}^{q}\vol{E^j})$. So we finally obtain that $E^0\cup B\cup \bigcup_{j=1}^q G_j$ has volume $v_0$, and
\begin{align*}
P_C(E^0\cup B\cup \bigcup_{j=1}^q G_j)&\le P_C(E^0)+\sum_{j=1}^q P_{K^j}(E^j)+I_H(|B|)+\eps.
\\
&<P_C(E^0)+\sum_{j=1}^q P_{K^j}(E^j)+\liminf_{i\to\infty} P(F_i^{q+1})=I_C(v_0),
\end{align*}
providing a contradiction and thus proving \eqref{eq:claim2}.

We can then apply Corollary~\ref{cor:lemme2-b} to obtain a divergent sequence of points $x_i^{q+1}$ so that
\[
|F_i^{q+1}\cap B_C(x_i^{q+1},1)|\ge m_0|F_i^{q+1}|
\]
for all $i$. Possibly passing to a subsequence, the sets $-x_i^{q+1}+C$ converge to an asymptotic cylinder $K^{q+1}$ and the sets $x_i^{q+1}+F_i^{q+1}$ to a set $E^{q+1}\subset K^{q+1}$ satisfying $v_0-\sum_{j=0}^q\vol{E^j}\ge \vol{E^{q+1}}\ge m_0\,(v_0-\sum_{j=0}^q\vol{E^j})$. We can use again Lemma~\ref{lemma:splitseq} to obtain a sequence of diverging radii $r_i^{q+1}$ such that
\[
-x_i^{q+1}+F_i^{q+2}:=(-x_i^{q+1}+F_i^{q+1})\setminus B(0,r_i^{q+1})
\]
satisfies 
\begin{align*}
|E^{q+1}|+\liminf_{i\to\infty} |F_i^{q+2}|&=|F_i^{q+1}|,
\\
P_{K^{q+1}}(E^{q+1})+\liminf_{i\to\infty} P_C(F_i^{q+2})&\le \liminf_{i\to\infty} |F_i^{q+1}|.
\end{align*}
From \eqref{eq:induction-1} we get
\[
\sum_{j=0}^{q+1}\vol{E^j}+\liminf_{i\to\infty} |F_i^{q+2}|=v_0,
\]
and, from \eqref{eq:induction-2} we obtain
\[
P_C(E^0)+\sum_{j=1}^{q+1} P_{K^j}(E^j)+\liminf_{i\to\infty} P_C(F_i^{q+2})\le I_C(v_0).
\]
Reasoning as above we conclude that $E^{q+1}$ is isoperimetric in $K^{q+1}$ and that equality holds in the above inequality, thus yielding
\[
P_C(E^0)+\sum_{j=1}^{q+1} P_{K^j}(E^j)+\liminf_{i\to\infty} P_C(F_i^{q+2})= I_C(v_0).
\]
Arguing as in step two, we obtain (iv) with $q+1$ in place of $q$. Moreover it is clear from the procedure illustrated above that (i)--(iii) will be granted at the end of the inductive process.

\emph{Step four (finiteness)}. 
Let us finally prove that the induction step needs to be repeated only a finite number $\ell-2$ of times. The key observation leading to this conclusion is the existence of a constant $\beta>0$, ultimately depending only on the domain $C$ and on an upper bound for the prescribed volume $v$, such that any $E^{j}$ with $j\ge 2$ obtained as in step three (with $|E^{j}|>0$) necessarily satisfies 
\begin{equation}\label{Ejbeta}
|E^{j}|\ge \beta\,.
\end{equation}
As an immediate consequence of \eqref{Ejbeta}, one obtains
\begin{equation}\label{ellbound}
\ell \le 2+\lfloor v/\beta\rfloor\,,
\end{equation}
where $\lfloor x\rfloor$ denotes the largest integer $\le x$. 
The property expressed by \eqref{ellbound} is actually stronger than a generic finiteness of $\ell$, as the right-hand side of \eqref{ellbound} does not depend upon the specific choices made during each application of step three.

In order to prove \eqref{Ejbeta} we first notice that
\begin{equation}\label{max01}
\max \{|E^{0}|,|E^{1}|\} \ge m_{0}v/2\,.
\end{equation}
Indeed, \eqref{max01} follows by arguing as in step two with the additional observation that, owing to Corollary \ref{cor:lemme2-b}, the following more precise alternative holds: either $|E^{0}|\ge v/2$, or $\lim_{i\to\infty}|F^{1}_{i}| =v-|E^{0}| \ge v/2$ and therefore $|E^{1}| \ge m_{0}v/2$. Thus, \eqref{max01} is proved by recalling that $m_{0}\le 1$. Let now assume without loss of generality that $|E^{1}|\ge |E^{0}|$ (indeed, the argument is even simpler in the opposite case). 

By \cite[Lemma II.6.21]{maggi} we can find a deformation $E^1_t$, parameterized by $t\in [-\delta,\delta]$ and obtained as a one-parameter flow associated with a vector field with compact support in the interior of $K^{1}$, such that $E^1_0=E^1$, $\vol{E^1_t}=\vol{E^1}+t$ and
\begin{equation}\label{gentlypush}
P_{K^1}(E^1_t)\le P_{K^1}(E^1)+M|t|,\quad t\in [-\delta,\delta]
\end{equation}
for some positive constant $M$ depending on $E^1$. 

Let now $\{E^{j}\subset K^{j}\}$ for $2\le j\le q$ be the sets obtained after applying step three $q-1$ times. Assuming that $v>\sum_{j=0}^{q-1}|E^{j}|$ (which is clearly the case in order to justify the application of step three $q-1$ times) is equivalent to require that $|E^{j}|>0$ for all $j=2,\dots,q$. Let now $j\in \{2,\dots,q\}$ be fixed and assume by contradiction that 
\begin{equation}\label{pushti}
t_{j}:=|E^{j}|< \min\{v_{0},\delta,\frac{c_{0}^{n+1}}{M^{n+1}}\}\,,
\end{equation}
where $c_{0}$ and $v_{0}$ are as in Corollary \ref{cor:isopinesm}. Owing to  Remark \ref{rmk:convinradius} one easily deduces that the isoperimetric inequality for small volumes stated in Corollary \ref{cor:isopinesm} is also valid for any $K\in {\mathcal K}(C)$ with the same constants $c_{0}$ and $v_{0}$, thus by \eqref{pushti} we get
\begin{equation}\label{isoKunif}
P_{K^{j}}(E^{j})\ge c_{0}|E^{j}|^{n/(n+1)}\,.
\end{equation}
Next we set $F^1 = E^{1}_{t_{j}}$, $F^{j}=\emptyset$ and $F^{i} = E^{i}$ for all $i\in\{0,\dots,q\}\setminus \{1,j\}$. Then we observe that $\sum_{i=0}^{q}|F^{i}| = \sum_{i=0}^{q}|E^{i}|$. On the other hand, by step three we know that 
$E^{0},\ldots,E^{q}$ is a generalized isoperimetric region and, at the same time, by \eqref{gentlypush}, \eqref{pushti} and \eqref{isoKunif}, we have that 
\begin{align*}
\sum_{i} P_{K^{i}}(F^{i}) = \sum_{i\neq j} P_{K^{i}}(E^{i}) + Mt_{i} < \sum_{i\neq j} P_{K^{i}}(E^{i}) + c_{0}t_{j}^{n/(n+1)} \le  \sum_{i} P_{K^{i}}(E^{i})\,,
\end{align*}
that is, a contradiction. Setting $\beta = \min\{\delta,\frac{c_{0}^{n+1}}{M^{n+1}}\}$, we have thus proved \eqref{Ejbeta}. Consequently, (v) and (vi) are now satisfied together with (i)--(iv), which concludes the proof of the theorem.
\end{proof}

We finally state the following result for future reference.

\begin{corollary}
\label{cor:antepol}
Let $C=K\times\rr^{k}$, where $K$ is an $(n+1-k)$-dimensional bounded convex body. Then isoperimetric regions exist in $C$  for all volumes.
\end{corollary}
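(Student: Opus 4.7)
The plan is to invoke the generalized existence Theorem~\ref{thm:genexist} and then exploit the fact that, for $C=K\times\rr^{k}$, every asymptotic cylinder is congruent to $C$ itself by a translation, so that the ``pieces'' escaping to infinity can be translated back into $C$ without changing volume or relative perimeter, and then separated from each other using the free $\rr^{k}$ directions.

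First I would check that $C=K\times\rr^{k}$ is a convex body of uniform geometry, which is immediate from the product structure (and is essentially the content of Example~\ref{ex:cylindrically} when $k=1$; for $k\ge 2$ the same argument applies verbatim). Next I would describe $\mathcal{K}(C)$: given a divergent sequence $\{x_{i}\}_{i\in\nn}\subset C$, write $x_{i}=(y_{i},z_{i})\in K\times\rr^{k}$; since $K$ is bounded, $\{y_{i}\}$ is bounded and $\{z_{i}\}$ must diverge. Passing to a subsequence we may assume $y_{i}\to y^{\ast}\in K$, and then
\[
-x_{i}+C=(-y_{i}+K)\times\rr^{k}\ \longrightarrow\ (-y^{\ast}+K)\times\rr^{k}
\]
locally in Hausdorff distance. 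Thus every $K^{j}\in\mathcal{K}(C)$ is a pure translation of $C$.

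Now apply Theorem~\ref{thm:genexist} at volume $v>0$ to obtain a generalized isoperimetric region $(E^{0},E^{1},\dots,E^{\ell})$ with $E^{0}\subset C$ and $E^{j}\subset K^{j}$ for $j\ge 1$, realizing $I_{C}(v)=\sum_{j=0}^{\ell}P_{K^{j}}(E^{j})$ and $\sum_{j=0}^{\ell}|E^{j}|=v$. By Proposition~\ref{prp:isopbound}, each $E^{j}$ is bounded (applied in $K^{j}$, which is itself an unbounded convex body of uniform geometry congruent to $C$). For each $j\ge 1$ let $t_{j}\in\rr^{n+1}$ be a translation vector with $K^{j}=t_{j}+C$, and set $F^{j}:=-t_{j}+E^{j}\subset C$, $F^{0}:=E^{0}$. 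Translation invariance of volume and relative perimeter yields $|F^{j}|=|E^{j}|$ and $P_{C}(F^{j})=P_{K^{j}}(E^{j})$.

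Finally, since $C=K\times\rr^{k}$ is invariant under translations by vectors in $\{0\}\times\rr^{k}$, and each $F^{j}$ is bounded, I choose vectors $w_{j}\in\{0\}\times\rr^{k}$, with $w_{0}=0$, so that the translated sets $w_{j}+F^{j}$ are pairwise disjoint (and contained in $C$). Setting $E:=\bigcup_{j=0}^{\ell}(w_{j}+F^{j})\subset C$ we obtain a measurable set with
\[
|E|=\sum_{j=0}^{\ell}|F^{j}|=v,\qquad
P_{C}(E)=\sum_{j=0}^{\ell}P_{C}(w_{j}+F^{j})=\sum_{j=0}^{\ell}P_{K^{j}}(E^{j})=I_{C}(v),
\]
so $E$ is an isoperimetric region of volume $v$ in $C$. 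The only step requiring any care is the identification of the asymptotic cylinders, but the explicit product structure makes it essentially automatic; everything else is bookkeeping.
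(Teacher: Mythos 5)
Your proof is correct and takes exactly the approach the paper's one-line proof has in mind: identify every asymptotic cylinder of $K\times\rr^{k}$ as a translate of $C$, pull the pieces of the generalized isoperimetric region from Theorem~\ref{thm:genexist} back into $C$, and use the free $\rr^{k}$ directions together with boundedness of isoperimetric regions (Proposition~\ref{prp:isopbound}) to separate them before taking the union. You have simply supplied the bookkeeping the paper leaves unstated.
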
 

\begin{proof}
The proof follows from Theorem \ref{thm:genexist} and the trivial fact that any asymptotic cylinder of $C$ coincides with $C$ up to translation.
 \end{proof}

\chapter{Concavity of the isoperimetric profile}
\label{sec:cont}
Here we prove the continuity and then the concavity of the isoperimetric profile $I_{C}$ of an unbounded convex body of uniform geometry. In Theorem \ref{thm:conY}, the main result of the section, we also prove the connectedness of isoperimetric regions.

\section{Continuity of the isoperimetric profile}

\begin{theorem}[Continuity of the isoperimetric profile]
\label{thm:contprof}
Let $C\subset \rr^{n+1}$ be an unbounded  convex body. Then its isoperimetric profile $I_C$ is a continuous function.
\end{theorem}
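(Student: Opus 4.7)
The plan is to prove upper and lower semicontinuity of $I_C$ at each $v>0$ separately. If $I_C\equiv 0$, continuity is automatic, so by Proposition~\ref{prop:maincond} we may assume $C$ has uniform geometry; this ensures all the machinery from Chapters~\ref{sec:unbounded} and~\ref{sec:min} (concentration, generalized existence, uniform density estimates) is at our disposal. Continuity at $v=0$ follows from $I_C(w)\le I_H(w)\le c\,w^{n/(n+1)}\to 0$, where $H$ is a half-space, so we fix $v>0$.

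For upper semicontinuity, I would fix $\epsilon>0$ and, by Proposition~\ref{prp:seqbound}, choose a bounded set $E\subset C$ of finite perimeter with $|E|=v$ and $P_C(E)<I_C(v)+\epsilon$. Since $E$ is bounded, the standard volume-fixing flow (generated by a smooth vector field compactly supported in $\intt(C)$, exactly as invoked in the proof of Proposition~\ref{prp:isopbound}) yields $\delta>0$ and a family $\{E_t\}_{|t|<\delta}\subset C$ with $|E_t|=v+t$ and $P_C(E_t)\le P_C(E)+c_E\,|t|$. Hence $I_C(w)\le I_C(v)+\epsilon+c_E|v-w|$ for $|v-w|<\delta$, giving $\limsup_{w\to v}I_C(w)\le I_C(v)$.

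For lower semicontinuity, I would argue by contradiction: suppose (along a subsequence) $v_i\to v$ and $L:=\lim_i I_C(v_i)<I_C(v)$, and pick $E_i\subset C$ with $|E_i|=v_i$ and $P_C(E_i)\le I_C(v_i)+1/i$, so $P_C(E_i)\to L$. The idea is to reproduce the concentration-compactness decomposition from the proof of Theorem~\ref{thm:genexist} on this merely approximately minimizing sequence. At each step one applies Lemma~\ref{lemma:splitseq} to split off a bounded piece and a residual $F^j_i$ of volume $\to w_j$, and either invokes Corollary~\ref{cor:lemme2-b} to capture a fraction $m_0>0$ of the residual mass at some divergent $x^j_i$ and passes to an $L^1_{loc}$-limit $E^j\subset K^j\in\mathcal{K}(C)$, or stops if $\liminf_i P_C(F^j_i)>I_H(w_j)$ (see the obstacle discussion below). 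The geometric decay $w_{j+1}\le(1-m_0)w_j$ guarantees $\sum_{j\ge 0}|E^j|=v$ in the limit, and iterating the perimeter inequality from Lemma~\ref{lemma:splitseq} gives $\sum_{j\ge 0}P_{K^j}(E^j)\le L$. To conclude, fix small $\epsilon'>0$, truncate the decomposition at an index $q$ with $\eta_q:=v-\sum_{j\le q}|E^j|<\epsilon'$, and truncate each $E^j$ to a sufficiently large ball via a Fubini argument on $t\mapsto \h^n(E^j\cap\ptl B(y^j,t))$, keeping the perimeter change below $\epsilon'/q$. Proposition~\ref{cor:niceapproxK} then supplies bounded approximants $F^j_k\subset C$ of the truncated $E^j$ for $j\ge 1$, sitting in a ball $B(x^j_k,R^j)$ with $x^j_k\to\infty$ and $P_C(F^j_k)\to P_{K^j}(E^j)$; choosing indices $k_1,\ldots,k_q$ successively large enough makes the pieces pairwise disjoint and disjoint from the truncation of $E^0$. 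Their union $F\subset C$ has volume $v-\eta_q$ and perimeter at most $L+C\epsilon'$, and adding an intrinsic half-ball of volume $\eta_q$ in $C$ disjoint from $F$ (possible since $C$ is unbounded and $F$ bounded) costs at most $I_H(\eta_q)\le c\,\eta_q^{n/(n+1)}$. Hence $I_C(v)\le L+C\epsilon'+c\,\eta_q^{n/(n+1)}$, and letting $\epsilon',\eta_q\to 0$ contradicts $L<I_C(v)$.

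The main obstacle is executing the concentration-compactness scheme on a sequence that is only approximately minimizing, as opposed to the strictly minimizing sequences treated in Theorem~\ref{thm:genexist}. In particular, the hypothesis $\liminf_i P_C(F^j_i)\le I_H(w_j)$ needed by Corollary~\ref{cor:lemme2-b} is not automatic here; however, if it fails at some step, the approximants $F^0,\ldots,F^{j-1}_{k_{j-1}}$ already constructed via Proposition~\ref{cor:niceapproxK}, together with a disjoint intrinsic half-ball of volume $w_j$ in $C$, assemble into a competitor of volume $v$ whose perimeter is strictly less than $L<I_C(v)$, providing the desired contradiction at once. A secondary technical point is that the $L^1_{loc}$-limits $E^j$ need not be bounded, so the Fubini-based radial truncation is essential before applying Proposition~\ref{cor:niceapproxK}; the perimeter increase introduced by this truncation must be distributed across the finitely many pieces kept after truncating the decomposition.
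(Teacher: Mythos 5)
Your proposal is correct in outline, but it takes a substantially different and much heavier route than the paper's own argument, which follows Gallot's direct method and is essentially self-contained.

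The paper does not split into upper/lower semicontinuity at all. On an interval $J=[a,b]$ with $a>0$ it establishes a two-sided estimate $|I_C(w)-I_C(v)|\le\mu\,|w-v|^{n/(n+1)}$ directly. The easy direction is your upper-semicontinuity step in disguise, done by gluing a disjoint intrinsic ball of volume $w-v$ onto a near-optimal set of volume $v$. For the other direction (the one you handle via concentration--compactness), the paper simply observes that a near-optimal set $E$ of volume $w$ must, by the concentration estimate (Lemma~\ref{lem:lrlemme31}/Corollary~\ref{cor:lemme1}), have a ball $B_C(x,r)$ with $|E\cap B_C(x,r)|\ge\tfrac{1}{2}|B_C(x,r)|\ge\tfrac{b(1)}{2}r^{n+1}$; one then removes $E\cap B_C(x,s)$ for the first $s$ with $|E\cap B_C(x,s)|=w-v$, paying at most $P_C(B_C(x,s))$. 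This one cut replaces the entire decomposition--reassembly scheme you propose, and it yields a H\"older modulus of continuity rather than mere continuity.

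Your route is workable, and you correctly isolate the genuine difficulty: Corollary~\ref{cor:lemme2-b} requires $\liminf_i P_C(F^j_i)\le I_H(w_j)$, which does not come for free on a non-minimizing sequence with varying volumes, and your fallback --- assemble the pieces already extracted plus a half-ball and contradict $L<I_C(v)$ directly --- is the right fix. But the cost is high: you are essentially re-proving Theorem~\ref{thm:genexist} for sequences $E_i$ with $|E_i|=v_i\to v$ (rather than $|E_i|=v_0$ fixed), which in particular forces you to re-verify the contradiction lower bounds against $I_C(v_i)\to L$ rather than $I_C(v_0)$ at each step, and to handle a possibly infinite decomposition via the geometric decay of $w_j$ (you cannot appeal to the finiteness argument of Step four of Theorem~\ref{thm:genexist}, which uses the generalized isoperimetric property and the volume-fixing constant of the first piece). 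Two small simplifications you could make: the limit sets $E^j\subset K^j$ obtained along the way are isoperimetric in $K^j$ (shown exactly as in Step one of Theorem~\ref{thm:genexist}, with $I_C(v_i)$ in place of $I_C(v_0)$), hence bounded by Proposition~\ref{prp:isopbound}, so the Fubini radial truncation of each $E^j$ is unnecessary --- Proposition~\ref{cor:niceapproxK} applies directly; and there is no circularity, since Theorem~\ref{thm:contprof} enters the paper's development only afterwards, in Lemma~\ref{lem:imp}. In short, your argument buys generality (a decomposition statement for almost-minimizing sequences) at the cost of length and quantitative content, whereas the paper's single concentration estimate gives the sharper H\"older-type conclusion with a fraction of the effort.
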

\begin{proof}
We only need to consider the case of uniform geometry, that is, we can assume \eqref{eq:mainhyp} since otherwise $I_C\equiv 0$ (see Proposition \ref{prop:maincond}). We closely follow Gallot's proof \cite[Lemme~6.2]{gallot}. 

We choose two volumes $v<w$ in an open interval $J$ whose closure does not contain $0$. 
For any $\eps>0$ we consider a bounded set $E\subset C$ of volume $v$ (depending also on $\eps$) such that $P_C(E)\le I_C(v)+\eps$. Let $B\subset C$ be a closed intrinsic ball at positive distance from $E$ such that $|B|=w-v$. Then
\[
I_C(w)\le P_C(E\cup B)=P_C(E)+P_C(B)\le I_C(v)+\mu\,(w-v)^{n/(n+1)}+\eps,
\]
where $\mu>0$ is a constant depending on the geometry of $C$. Since $\eps>0$ is arbitrary, we get
\begin{equation}
\label{eq:gallot1}
I_C(w)\le I_C(v)+\mu\,(w-v)^{n/(n+1)}.
\end{equation}

Assume now that the closure of $J$ is the interval $[a,b]$, with $a>0$, and take $r>0$ so that
\[
r<\min\bigg\{\Lambda\,\frac{a}{I_H(b)+1},1\bigg\},
\]
where $H$ denotes a generic half-space in $\rr^{n+1}$ and $\Lambda$ is defined as in \eqref{eq:defLambda}.
With this choice it follows that, for any set $E$ of volume $|E|\in [a,b]$ such that $P_C(E)\le I_C(|E|)+\eps$, and $0<\eps\le 1$, we have
\[
r<\Lambda\,\frac{a}{I_H(b)+1}\le \Lambda\,\frac{|E|}{P_C(E)}\quad\text{and}\quad r<1\,.
\]
Hence we can apply Lemma~\ref{lem:lrlemme31} to conclude that, for every $E\subset C$ with $|E|\in [a,b]$ and $P_C(E)\le I_C(|E|)+\eps$, for $0<\eps\le 1$, there exists a point $x\in C$, depending on $E$, such that
\[
|E\cap B_C(x,r)|\ge \frac{b(1)}{2}\,r^{n+1}.
\]
If necessary, we reduce the size of the interval $J$ so that $|J|\le \tfrac{b(1)}{2}\,r^{n+1}$. Consider now a set $E\subset C$ with $|E|=w$ and $P_C(E)\le I_C(w)+\eps$ with $0<\eps<1$. Pick a point $x\in C$ so that $|E\cap B_C(x,r)|\ge |J|\ge w-v$. By the continuity of the function $\rho\mapsto |E\cap B_C(x,\rho)|$, we find some $s\in (0,r)$ such that $|E\cap B_C(x,s)|=w-v$. Then we have
\begin{align*}
I_C(v)&\le P_C(E\setminus (E\cap B_C(x,s)))\le P_C(E)+P_C(B_C(x,s))
\\
&\le I_C(w)+\mu\,(w-v)^{n/(n+1)}+\eps.
\end{align*}
As $\eps>0$ is arbitrary we get
\begin{equation}
\label{eq:gallot2}
I_C(v)\le I_C(w)+\mu\,(w-v)^{n/(n+1)}.
\end{equation}

Finally, from \eqref{eq:gallot1} and \eqref{eq:gallot2}, the continuity of $I_C$ in the interval $J$ follows. As $J$ is arbitrary, we conclude that $I_C$ is a continuous function.
\end{proof}

\section{Approximation by smooth sets}

Let $\rho:\rr^{n+1}\to\rr$ be the standard symmetric mollifier: the function $\rho$ is radial, has compact support in $\overline{B}(0,1)$, its integral over $\rr^{n+1}$ equals $1$, and the functions
\[
\rho_\eps(x):=\frac{1}{\eps^{n+1}}\,\rho\big(\frac{x}{\eps}\big)
\]
converge to Dirac's delta when $\eps\to 0$ in the sense of distributions.

Let $C\subset\rr^{n+1}$ be an unbounded convex body and let $d_C(x)$ denote the distance from $x\in\rr^{n+1}$ to $C$. It is well-known (cf. \cite[\S~1.2]{sch}) that $d_C$ is a 1-Lipschitz convex function, differentiable at any $x\in \rr^{n+1}\setminus C$, and such that $\nabla d_{C}(x)$ is a unit vector in $\rr^{n+1}\setminus C$. For every $\eps>0$ we define the smooth, convex, non-negative function
\begin{equation*}
g_{C,\eps}(x):=\int_{\rr^{n+1}}\rho_{\eps}(x-y)\,d_C(y)dy,
\end{equation*}
then we set
\begin{equation*}
C^{\eps}:=g_{C,\eps}^{-1}([0,\eps]). 
\end{equation*}

The following lemma allows us to approximate in local Hausdorff distance an unbounded convex body $C$ by unbounded convex bodies with $C^\infty$ boundaries. The approximation is strong in the sense that any asymptotic cylinder of $C$ is also approximated by asymptotic cylinders of the approaching convex bodies.
\begin{lemma}
\label{lem:fund}
 Let $C\subset \rr^{n+1}$ be an unbounded convex body, then
\mbox{}
\begin{enum}
\item $C\subset C^\eps \subset C+2\eps B$, where $B=\clb(0,1)$.
\item $C^{\eps}$ is an unbounded convex body with $C^\infty$ boundary.
\item For each $w\in\rr^{n+1}$, we get $w+C^{\eps}=(w+C)^{\eps}$.
\item Let $\{C_i\}_{i\in\nn}$ be a sequence of unbounded convex bodies that converges to an unbounded convex body $C$ locally in Hausdorff distance. Then also $(C_i)^{\eps}$ converges to $C^{\eps}$ locally in Hausdorff distance, as $i\to\infty$.
\item $\mathcal{K}(C^{\eps}) = (\mathcal{K}(C))^{\eps}:=\{K^\eps:K\in\mathcal{K}(C)\}$. In particular, any cylinder in $\mathcal{K}(C^{\eps})$ has $C^\infty$ boundary.
\end{enum}
\end{lemma}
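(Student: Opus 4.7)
The plan is to study the smooth convex function $g_{C,\eps}=\rho_\eps*d_C$ and derive the properties of the sublevel set $C^\eps$ from those of $g_{C,\eps}$. The 1-Lipschitz property of $d_C$ combined with $\mathrm{supp}\,\rho_\eps\subset \clb(0,\eps)$ gives the pointwise sandwich
\[
d_C(x)-\eps\le g_{C,\eps}(x)\le d_C(x)+\eps\,,
\]
from which (i) follows by applying the upper bound on $C$ (where $d_C=0$) and the lower bound on $C^\eps$ (where $g_{C,\eps}\le\eps$). Item (iii) is immediate via a translation substitution $z=y-w$ in the convolution, together with $d_{w+C}(x)=d_C(x-w)$. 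For (ii), $g_{C,\eps}$ is $C^\infty$ (convolution of an $L^1_{loc}$ function with a smooth compactly supported kernel) and convex (convex $d_C$ convolved against a non-negative unit-mass kernel), so $C^\eps$ is convex and, thanks to (i), unbounded. The smoothness of $\ptl C^\eps$ reduces via the implicit function theorem to showing $\nabla g_{C,\eps}\ne 0$ on $\{g_{C,\eps}=\eps\}$; by convexity, a critical point there would make $g_{C,\eps}\ge\eps$ everywhere, but for any $x\in\intt C$ the nonnegative integrand $\rho_\eps(x-y)(\eps-d_C(y))$ is strictly positive on the nonempty open set $B(x,\eps)\cap\intt C$, forcing $g_{C,\eps}(x)<\eps$ and yielding the desired contradiction.

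For (iv) I will first establish that $d_{C_i}\to d_C$ uniformly on compact sets: fixing $R>0$ and choosing $R'>R$ large enough that metric projections onto $C$ of points in $\clb(0,R)$ lie in $\clb(0,R')$, the local Hausdorff convergence $C_i\cap\clb(0,R')\to C\cap\clb(0,R')$ forces $|d_{C_i}(x)-d_C(x)|\to 0$ uniformly on $\clb(0,R)$. Convolving with the fixed compactly supported kernel $\rho_\eps$ then transports this to uniform convergence $g_{C_i,\eps}\to g_{C,\eps}$ on compacts. Since $\eps$ is a regular value of the smooth convex limit $g_{C,\eps}$ by (ii), a standard argument using a uniform lower bound on $|\nabla g_{C,\eps}|$ in a neighborhood of the $\eps$-level set yields $(C_i)^\eps\cap\clb(0,R)\to C^\eps\cap\clb(0,R)$ in Hausdorff distance, and hence local Hausdorff convergence $(C_i)^\eps\to C^\eps$ via Lemma~\ref{lem:pointedconv}.

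For (v), the inclusion $(\mathcal{K}(C))^\eps\subset\mathcal{K}(C^\eps)$ is direct: if $-x_i+C\to K$ with $x_i\in C\subset C^\eps$ diverging, then by (iii) $-x_i+C^\eps=(-x_i+C)^\eps$, and (iv) yields $(-x_i+C)^\eps\to K^\eps$. For the reverse inclusion, given $L\in\mathcal{K}(C^\eps)$ realized by diverging $y_i\in C^\eps$, I set $x_i:=\pi_C(y_i)\in C$, which by (i) satisfies $|y_i-x_i|\le 2\eps$ and so is divergent; up to a subsequence $-x_i+C\to K\in\mathcal{K}(C)$ and $w_i:=y_i-x_i\to w$ with $|w|\le 2\eps$. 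Combining (iii)--(iv) with a shift by $-w_i$ gives $-y_i+C^\eps\to -w+K^\eps=(-w+K)^\eps$, whence $L=(-w+K)^\eps$. The main obstacle is to upgrade this to $L=M^\eps$ with $M\in\mathcal{K}(C)$: by Lemma~\ref{lem:zK} this reduces to proving $w\in K$, which is delicate because a priori $w$ only lies in $K^\eps$ and may fall in $K^\eps\setminus K$ when $y_i$ accumulates on $\ptl C^\eps\setminus C$; overcoming this will require either a finer choice of the sequence $x_i\in C$ or an argument tied to the cylinder structure of $K$ at the origin. In any case, the ``in particular'' part of (v)---that every $L\in\mathcal{K}(C^\eps)$ has $C^\infty$ boundary---already follows at once from the identification $L=(-w+K)^\eps$ by applying the smoothing argument in (ii) to the convex body $-w+K$.
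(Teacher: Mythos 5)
Your treatment of (i)--(iv) is correct. Items (i)--(iii) match the paper's arguments; for (iv) you exploit the uniform convergence $g_{C_i,\eps}\to g_{C,\eps}$ on compacts together with a gradient lower bound for $g_{C,\eps}$ near its regular level set $\{g_{C,\eps}=\eps\}$, whereas the paper deduces the local Hausdorff convergence directly from the Kuratowski criterion using only the uniform convergence of the mollified distance functions. Both routes are valid, the paper's being slightly more economical.

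Your hesitation about the inclusion $\K(C^\eps)\subset(\K(C))^\eps$ is well founded: that inclusion is in fact \emph{false} as stated, and the paper's own proof contains exactly the gap you isolated (with its notation, it asserts $(-y_i+C)^\eps\to K$, when the limit is a translate of $K$ by a vector of length at most $2\eps$ that need not vanish). For a concrete counterexample take $C=\{x_{n+1}\ge 0\}$. Since $C\subset\intt(C^\eps)$, one has $C^\eps=\{x_{n+1}\ge -c\}$ for some $c>0$. Translating $C^\eps$ by a divergent sequence on $\ptl C^\eps$ gives $\{x_{n+1}\ge 0\}\in\K(C^\eps)$; on the other hand $\K(C)=\{\{x_{n+1}\ge a\}:a\le 0\}\cup\{\rr^{n+1}\}$, and by (iii) $(\{x_{n+1}\ge a\})^\eps=\{x_{n+1}\ge a-c\}$, so every half-space in $(\K(C))^\eps$ has the form $\{x_{n+1}\ge b\}$ with $b\le -c<0$. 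Hence $\{x_{n+1}\ge 0\}\in\K(C^\eps)\setminus(\K(C))^\eps$, and neither a finer choice of $x_i$ nor the cylinder structure of $K$ at the origin can rescue the stated equality — as you observe, $w$ sits in the normal cone $N(K,0)$, which is transverse to the lineality space, so $w\notin K$ whenever $w\neq 0$. What \emph{is} true, and what your argument actually proves, is the inclusion $(\K(C))^\eps\subset\K(C^\eps)$ together with the representation $L=(-w+K)^\eps$, $K\in\K(C)$, $|w|\le 2\eps$, for every $L\in\K(C^\eps)$. This weaker statement is all the paper ever uses: it yields the ``in particular'' clause (smoothness of $\ptl L$, by applying (ii) to $-w+K$), and it also shows that every $L\in\K(C^\eps)$ is of uniform geometry whenever $C$ is, since $L\supset -w+K$ — precisely the two facts invoked in the proof of Theorem~\ref{thm:conY}. (A small typo: ``$K^\eps\setminus C$'' near the end of your argument should read ``$K^\eps\setminus K$''.)
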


\begin{proof}
Let $x\in C$, then $d_C(x)=0$ and since $d_C$ is 1-Lipschitz we get 
\begin{equation*}
\begin{split}
g_{C,\eps}(x)&=\int_{\rr^{n+1}}\rho_{\eps}(x-y)\,d_C(y)\,dy \le\int_{\rr^{n+1}}\rho_{\eps}(x-y)\,|d_C(y)-d_C(x)|\,dy 
\\
&\le\int_{B(x,\eps)}\rho_{\eps}(x-y)\,|y-x|\,dy\le\eps\int_{B(x,\eps)}\rho_{\eps}(x-y)\,dy=\eps.
\end{split}
\end{equation*}
Consequently, $x\in C^\eps$ and so $C\subset C^\eps$. Assume now that $x\in C^\eps$. Then $g_{C,\eps}(x)\le \eps$ and we get
\begin{equation*}
d_C(x)-g_{C,\eps}(x)\le\int_{B(x,\eps)}\rho_{\eps}(x-y)\,|d_C(x)-d_C(y)|\,dy\le\eps.
\end{equation*}
Thus, $d_C(x)\le2\eps$. Consequently, $x\in C+2\eps B$ and so $C^\eps \subset C+2\eps B$. This proves (i).

We now prove (ii). By (i) we get that $C^\eps$ is convex since it is the sublevel set of a convex function. As it contains $C$, it is necessarily unbounded. If $x\in C$, then
\[
g_{C,\eps}(x)=\int_{B(x,\eps)}\rho_\eps(x-y)\,d_C(y)\,dy\le \int_{B(x,\eps)}\rho_\eps(x-y)\,|x-y|\,dy<\eps,
\]
thus $\eps$ is not the minimum value of $g_{C,\eps}$. Consequently, $\nabla g_{C,\eps}(z)\neq 0$ for every $z\in\ptl C^\eps$, hence $\ptl C^\eps$ is smooth.

Item (iii) follows easily from the equalities $d_{w+C}(x)=d_C(x-w)$ and $g_{C,\eps}(x-w)=g_{w+C,\eps}(x)$.

We now prove (iv). We fix $R>0$ and check that $(C_i)^\eps\cap\clb(0,R)$ converges in Hausdorff distance to $C^\eps\cap\clb(0,R)$. To this aim, we exploit Kuratowski criterion \cite[Thm.~1.8.7]{sch}. First, let $x\in C^\eps\cap\clb(0,R)$. We need to check that $x$ is the limit of a sequence of points in $(C_i)^\eps\cap\clb(0,R)$. If $x\in\intt(C^\eps)\cap\clb(0,R)$ then $g_{C,\eps}(x)<\eps$ and thus $g_{C_i,\eps}(x)<\eps$ (henceforth $x\in (C_i)^\eps$) for $i$ large enough. Otherwise we approximate $x$ by a sequence $\{y_{j}\}_{j\in \nn}\subset \intt(C^\eps)\cap\clb(0,R)$, then, arguing as above, for any $j\in \nn$ we select $i_{j}\in \nn$ with the property that $i_{j+1}>i_{j}$ for all $j$ and $y_{j}\in \intt(C_{i}^{\eps})$ for all $i\ge i_{j}$. In order to build a sequence of points $x_{i}\in C_{i}$ that converge to $x$, we proceed as follows. First we arbitrarily choose $x_{i}\in C_{i}$ for $i=1,\dots,i_{1}-1$. Then we set $x_{i} = y_{j}$ for all $i_{j}\le i<i_{j+1}$ (notice that the definition is well-posed, thanks to the fact that $\{i_{j}\}_{j\in \nn}$ is strictly increasing). It is then easy to check that the sequence $\{x_{i}\}_{i\in \nn}$ has the required properties.

Second, let $x_{i_k}\in C_{i_k}^\eps\cap\clb(0,R)$ converge to some point $x$. Since $g_{C_{i_k},\eps}(x_{i_k})\le\eps$ and $g_{C_{i_k},\eps}$ uniformly converges in compact sets to $g_{C,\eps}$, we have $g_{C,\eps}(x)\le\eps$ and so $x\in C^\eps$.

Now we prove (v). Let $K\in\mathcal{K}(C^{\eps})$. Then there exists a divergent sequence $\{x_i\}_{i\in\nn}\subset C^\eps$ so that, by (iii), $-x_i+C^{\eps}=(-x_i+C)^\eps\to K$ in local Hausdorff distance. Let $y_i$ be the metric projection of $x_i$ onto $C$. We have $|x_i-y_i|\le 2\eps$ and so the sequence $\{y_i\}_{i\in\nn}\subset C$ is divergent. Hence $(-y_i+C)^\eps\to K$ in local Hausdorff distance. Since $-y_i+C$ subconverges to some $K'\in \K(C)$, (iv) implies $(K')^\eps=K$ and so $K\in (\K(C))^\eps$. The proof of the reverse inclusion is similar.
\end{proof}

\section{Concavity of the isoperimetric profile}

Now we proceed to show in Lemma~\ref{lem:imp} that the function $I_C^{(n+1)/n}$ is concave when $C$ is a convex body of uniform geometry with $C^\infty$ boundary, such that all its asymptotic cylinders have also $C^\infty$ boundary. The general case of $C$ convex but not necessarily smooth will then follow by approximation. Lemma~\ref{lem:profilescomparison} is a technical result that will be needed in the proof of Lemma~\ref{lem:imp}.

\begin{lemma}
\label{lem:profilescomparison}
Let $C\subset\rr^{n+1}$ be a convex body of uniform geometry. Let $K^j\in\mathcal{K}(C)\cup\{C\}$ for $j=0,\ldots,m$. Then for all $v_{0},\ldots,v_{m}\ge 0$ we have
\[
I_C(v_0+\ldots+v_m)\le\sum_{j=0}^m I_{K^j}(v_j).
\]
\end{lemma}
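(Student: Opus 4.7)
The plan is to construct, for each $\eps>0$, a competitor set $F_\eps \subset C$ with $|F_\eps| = v_0 + \cdots + v_m$ and $P_C(F_\eps) \le \sum_{j=0}^m I_{K^j}(v_j) + \eps$. The set $F_\eps$ will be a disjoint union of translated copies of near-optimal sets for each $K^j$, placed far apart inside $C$ using the very definition of asymptotic cylinder.

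First, I would fix $\eps>0$ and, for each $j$ with $v_j>0$, invoke Proposition~\ref{prp:seqbound} to choose a \emph{bounded} set $E^j \subset K^j$ of finite perimeter with $|E^j| = v_j$ and $P_{K^j}(E^j) \le I_{K^j}(v_j) + \tfrac{\eps}{m+1}$. For indices with $v_j = 0$ take $E^j = \emptyset$. For those indices $j \ge 1$ with $K^j \in \mathcal{K}(C)$, Proposition~\ref{cor:niceapproxK} produces a sequence of bounded sets $E^j_i \subset C$ with $|E^j_i|=v_j$ and $P_C(E^j_i) \to P_{K^j}(E^j)$; concretely $E^j_i = x_i^j + F^j_i$ for some divergent $\{x_i^j\}_{i\in\nn}\subset C$ with $-x_i^j + C \to K^j$ locally in Hausdorff distance, and since $E^j$ is bounded (say contained in $\clb(0,R)$) we may assume $E^j_i \subset \clb(x_i^j, R+1)$ for all large $i$. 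If $K^{j_0} = C$ for some index, simply set $E^{j_0}_i = E^{j_0}$ for all $i$, which is bounded in $\clb(0,R)$.

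Next, the key point is a diagonal-type selection ensuring mutual disjointness. The translates $E^j_i$ associated with distinct asymptotic-cylinder indices $j \ge 1$ are contained in balls of fixed radius $R+1$ around diverging centers $x_i^j$, while any bounded $E^{j_0}$ with $K^{j_0}=C$ stays in $\clb(0,R)$. I can therefore extract, successively for $j=1,\ldots,m$, a subsequence in $i$ so that $|x_i^j| \to \infty$ and so that for sufficiently large $i$ the distances $|x_i^j - x_i^{j'}|$ for $j\neq j'$ (and $|x_i^j|$ when a $K^{j_0}=C$ copy is present) exceed $2(R+1)$. Then all the chosen $E^j_i$ are pairwise disjoint, and setting $F_\eps := \bigcup_{j=0}^m E^j_i$ we have $|F_\eps| = \sum_j v_j$. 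Because the sets are pairwise at positive distance, the relative perimeter splits additively:
\[
P_C(F_\eps) = \sum_{j=0}^m P_C(E^j_i).
\]

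Finally, passing to the limit in $i$ (after the diagonal extraction), $P_C(E^j_i) \to P_{K^j}(E^j)$ for $j\ge 1$, and $P_C(E^{j_0}_i) = P_C(E^{j_0})$ for the $C$-index. Hence for $i$ large enough
\[
I_C\Bigl(\textstyle\sum_j v_j\Bigr) \le P_C(F_\eps) \le \sum_{j=0}^m P_{K^j}(E^j) + \tfrac{\eps}{2} \le \sum_{j=0}^m I_{K^j}(v_j) + \eps,
\]
and letting $\eps \downarrow 0$ yields the claim. The main technical obstacle is the disjointness argument combined with the preservation of perimeter when passing from $K^j$ to $C$ via the translations; this is precisely what Proposition~\ref{cor:niceapproxK} is designed to handle, and the divergence of the translating sequences ensures that the sets can be placed arbitrarily far apart inside $C$.
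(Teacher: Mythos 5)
Your proposal is correct and follows essentially the same route as the paper: pick bounded near-minimizers $E^j\subset K^j$, push them into $C$ via Proposition~\ref{cor:niceapproxK} as far-apart translates, and sum perimeters of the disjoint union. The paper compresses the disjointness step to the single sentence ``the sets $F^j$ can be taken disjoint,'' which your diagonal-extraction argument just spells out in detail; the only cosmetic slip is the final $\eps$-bookkeeping (you get $\sum I_{K^j}(v_j)+\tfrac{3\eps}{2}$, not $+\eps$), which is immaterial.
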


\begin{proof}
For every $j$, consider a bounded set $E^j\subset K^j$ of volume $v_j$ such that $P_{K^j}(E^j)<I_{K^j}(v_j)+\eps$. Using Proposition~\ref{cor:niceapproxK}, we get a set $F^j\subset C$ of volume $v_j$ such that $P_C(F^j)<P_{K^j}(E^j)+\eps$. The sets $F^j$ can be taken disjoint. Then we have
\begin{align*}
I_C(v_0+\ldots +v_m)&\le P_C(F^0\cup\ldots\cup F^m)=P_C(F^0)+\ldots +P_C(F^m)
\\
&\le P_{K^0}(E^0)+\ldots +P_{K^m}(E^m)+(m+1)\,\eps
\\
&\le I_{K^0}(v_0)+\ldots +I_{K^m}(v_m)+2\,(m+1)\,\eps.
\end{align*}
Letting $\eps\to 0$ we obtain the result.
\end{proof}

\begin{lemma}
\label{lem:imp}
Let $C\subset\rr^{n+1}$ be a convex body of uniform geometry with $C^\infty$ boundary. Assume that all its asymptotic cylinders have also $C^\infty$ boundary. Then $I_C^{(n+1)/n}$ is a concave function, hence in particular $I_{C}$ is strictly concave.
\end{lemma}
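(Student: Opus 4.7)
The plan is to adapt the classical second-variation strategy of Sternberg--Zumbrun \cite{MR1674097} and its refinement to the $(n+1)/n$ power by Kuwert \cite{MR2008339} to the unbounded setting, exploiting the generalized existence result Theorem~\ref{thm:genexist} together with the global $C^\infty$ regularity hypothesis on $C$ and on all asymptotic cylinders.

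Fix any $v_0\in(0,|C|)$ at which concavity must be checked and invoke Theorem~\ref{thm:genexist} to produce a generalized isoperimetric region $(E^0,E^1,\ldots,E^\ell)$ with $E^0\subset C=K^0$, $E^j\subset K^j\in\mathcal{K}(C)$ for $j\ge 1$, such that $\sum_j|E^j|=v_0$ and $\sum_j P_{K^j}(E^j)=I_C(v_0)$. Since every $\partial K^j$ is $C^\infty$, Theorem~\ref{thm:n-7} applies on each piece: $\partial E^j\cap\intt(K^j)$ decomposes as a smooth embedded constant-mean-curvature hypersurface $S^j$ plus a singular set of Hausdorff dimension at most $n-7$, with $S^j$ meeting $\partial K^j$ orthogonally. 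A volume-exchange argument between any two pieces forces the scalar mean curvatures $H_j$ to coincide with a common value $H$ (the Lagrange multiplier for the joint volume constraint).

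For $w$ near $v_0$ I will then construct an explicit competitor for volume $w$ by evolving the smooth parts $S^j$ by a joint normal variation $\{\varphi_j\nu_j\}_{j=0}^\ell$, where each $\varphi_j$ is chosen constant on $S^j$ (and the generating vector field is extended to be tangent to $\partial K^j$ along the free boundary, which is possible thanks to the smoothness of $\partial K^j$) and with $\sum_j\int_{S^j}\varphi_j$ adjusted so that the total volume equals $w$. Writing $P(w)$ for the resulting total perimeter, the first variation gives $P'(v_0)=nH$, while the second variation of perimeter, after discarding the boundary integral of the second fundamental form of $\partial K^j$ (nonnegative by convexity, with the right sign to be dropped) and invoking the pointwise inequality $|A_{S^j}|^2\ge H^2/n$, produces after a short computation
\begin{equation*}
\frac{d^2}{dw^2}\Bigl[P(w)^{(n+1)/n}\Bigr]_{w=v_0}\le 0.
\end{equation*}
Since $P(w)\ge I_C(w)$ with equality at $w=v_0$, the smooth concave function $w\mapsto P(w)^{(n+1)/n}$ touches $Y_C$ from above at $v_0$. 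As $v_0$ is arbitrary and $I_C$ is continuous by Theorem~\ref{thm:contprof}, this touching-from-above property forces $Y_C$ to be concave on $(0,|C|)$; strict concavity of $I_C=Y_C^{n/(n+1)}$ is then immediate from the strict concavity of $t\mapsto t^{n/(n+1)}$ applied to the positive concave function $Y_C$.

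The main obstacle I foresee is the rigorous execution of the second-variation formula in the presence of the singular set $\bigcup_j S_0^j$, which is nonempty only if $n\ge 7$; this is handled, as in \cite{MR1674097} and \cite{MR2008339}, by multiplying the test functions $\varphi_j$ by Lipschitz cutoffs vanishing near the singular set and exploiting the Hausdorff-dimension bound to pass the cutoffs to the limit without affecting the bulk terms. A secondary technical point is that the vector fields generating the joint variation must be admissible (keeping each $E^j_t\subset K^j$), which requires tangency to $\partial K^j$ along the free boundary and is afforded precisely by the $C^\infty$ hypothesis on all the asymptotic cylinders; this is exactly the hypothesis that the general non-smooth case must then acquire by approximation, through Lemma~\ref{lem:fund}.
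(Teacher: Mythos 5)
Your proposal follows essentially the same route as the paper: both invoke the generalized existence Theorem~\ref{thm:genexist}, show the smooth parts of the pieces share a common constant mean curvature by a volume-exchange argument, run the second variation on a joint normal variation with singular-set cutoffs (the paper's $\varphi_\eps^j$ from Bayle--Rosales play the role of your Lipschitz cutoffs), discard the nonnegative boundary term from the second fundamental form of $\partial K^j$, apply $|A|^2\ge H^2/n$, and conclude via a touching-from-above criterion for the continuous function $Y_C$ (the paper appeals to Lemma~3.2 in \cite{MR1803220} for this last step). The only cosmetic difference is that you frame the construction after Sternberg--Zumbrun and Kuwert while the paper follows Bayle--Rosales, and the factor of $n$ in your first-variation formula is a mean-curvature convention, not a substantive discrepancy.
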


\begin{proof}
Fix some positive volume $v_0>0$. By Theorem \ref{thm:genexist}, there  exist $m\in\nn$, $K^j \in\mathcal{K}(C)$, $1\le j\le m$, and isoperimetric regions $E^j\subset K^j$, such that 
\begin{equation}
\label{eq:imp1}
\sum_{j=0}^m\vol{E^j}=v_0\quad\text{and}\quad  I_C(v_0)=\sum_{j=0}^m\,I_{K^j}(\vol{E^j}),
\end{equation}
where $K^0=C$. Denote by $S^j$ the regular part of $\ptl_{K^j} {E^j}$ and assume that $S^j$ is nonempty for all $j=0,\dots,m$ (otherwise we may restrict the summation to those indices $j$ such that this property holds true). Since $E^j\subset K^j$ are isoperimetric sets in $K^j$, a standard first variation argument implies that $S^j$ has constant mean curvature and that $S^j$ intersects orthogonally $\ptl {K^j}$.

Let us check that all $S^j$ have the same constant mean curvature. Otherwise, there exist $S^{j_1}, S^{j_2}$, $j_1$, $j_2\in\{0,\ldots, m\}$, with different mean curvatures. A standard first variation argument allows us to deform $E^{j_1}$ and $E^{j_2}$ to get $F^{j_1}\subset K^{j_1}$ and $F^{j_2}\subset K^{j_2}$ satisfying
\begin{equation}
\label{eq:imp1a}
\begin{split}
|F^{j_1}|+|F^{j_2}|&=|E^{j_1}|+|E^{j_2}|,
\\
P_{K^{j_1}}(F^{j_1})+P_{K^{j_2}}(F^{j_2})&<P_{K^{j_1}}(E^{j_1})+P_{K^{j_2}}(E^{j_2}).
\end{split}
\end{equation}
Moreover the sets $F^{j_1}$, $F^{j_2}$ are bounded since they are nice deformations of isoperimetric regions, which are bounded by Proposition \ref{prp:isopbound}. Letting $F_i=E_i$ when $i\neq j_1,j_2$, we get from \eqref{eq:imp1} and \eqref{eq:imp1a}
\begin{equation*}
\sum_{j=0}^m\vol{F^j}=v_0\quad\text{and}\quad  I_C(v_0)>\sum_{j=0}^m\,I_{K^j}(\vol{F^j}),
\end{equation*}
Using Proposition~\ref{cor:niceapproxK}, we can approximate the sets $F^j\subset K^j$ by sets in $C$ of volumes $|F^j|$ and relative perimeters in $C$ as close as we wish to $P_{K^j}(F^j)$. This way we get a finite perimeter set $\Om\subset C$ so that
\begin{equation*}
|\Om|=v_0\quad\text{and}\quad I_C(v_0)>P_C(\Om),
\end{equation*}
yielding a contradiction.

Let us now prove that $I_C^{(n+1)/n}$ is a concave function. We closely follow the proof of Theorem~3.2 in \cite{bay-rosal}. We choose a family of functions $\{\varphi_\eps^j\}_{\eps>0}$ defined on $S^j\subset \ptl_{K^j}E^j$ as in \cite[Lemma~3.1]{bay-rosal}. These functions satisfy $0\le\varphi_\eps^j\le 1$, and $\varphi_{\eps}^{j}$ converges to the constant function $1$ on $S^j$ both pointwise and in the Sobolev norm when $\eps\to 0$.

Fix $\eps>0$ and consider a  $C^\infty$ vector field $X_\eps^j$ in $K^j$ whose associated flow $\{\psi^j_{\eps,t}\}_{t\in\rr}$ preserves the boundary of $K^j$ and such that $X_\eps^j=\varphi_\eps^j N^j$ on $S^j$, where $N^j$ is the outer unit normal to $E^j$ on $S^j$. The vector field $X_\eps^j$ is obtained by extending the vector field $\varphi_\eps^jN^j$, defined on the regular part $S^j$ of $\ptl_{K^j}E^j$. The derivative of the volume for this variation is equal to
\[
\int_{S^j}\varphi_\eps^j\,d\h^n>0,
\]
and so there exists a function $P_\eps^j(v)$ assigning to $v$ close to $|E^j|$ the perimeter of the set $\psi_{\eps,t(v)}^j(E^j)$ of volume $v$. Trivially we have
\begin{equation}
\label{eq:ikjcomp}
I_{K^j}(v)\le P_\eps^j(v).
\end{equation}
For $v$ close to $v_0$, we define the function
\[
P_\eps(v)=\sum_{j=0}^m P_\eps^j(|E^j|+\la_j(v-v_0)),
\]
where

\[
\la_j=\frac{\h^n(S^j)}{\h^n(S^0)+\ldots +\h^n(S^m)}=\frac{\h^n(S^j)}{I_C(v_0)}.
\]
Observe that $\sum_{j=0}^m\la_j=1$. Using Lemma~\ref{lem:profilescomparison} and \eqref{eq:ikjcomp} we have
\begin{align*}
I_C(v)^{(n+1)/n}&\le \bigg(\sum_{j=0}^m I_{K^j}(|E^j|+\la_j(v-v_0))\bigg)^{(n+1)/n}
\\
&\le \bigg(\sum_{j=0}^m P_\eps^j(|E^j|+\la_j(v-v_0))\bigg)^{(n+1)/n}=P_\eps(v)^{(n+1)/n}.
\end{align*}
By the arguments in the proof of Theorem~3.2 in \cite{bay-rosal}, it is enough to show
\begin{equation*}
\limsup_{\eps\to 0}\bigg(\frac{d^2}{dv^2}\bigg|_{v=v_0} P_\eps(v)^{(n+1)/n}\bigg)\le 0
\end{equation*}
to prove the concavity of $I_C^{(n+1)/n}$. Observe that
\[
\frac{d^2}{dv^2}\bigg|_{v=v_0} P_\eps(v)^{(n+1)/n}=\bigg(\frac{n+1}{n}\bigg)\,P^{1/n}_\eps(v_0)\,\bigg\{\frac{1}{n}\,\frac{P_\eps'(v_0)^2}{P_\eps(v_0)}+P''_\eps(v_0)\bigg\}.
\] 
Note that
\[
P_\eps'(v_0)=\sum_{j=0}^m \la_j(P_\eps^j)'(|E^j|)=\bigg(\sum_{j=0}^m\la_j\bigg)\,H=H,
\]
where $H$ is the common constant mean curvature of $S^j$ for all $j=0,\dots,m$, and that
\begin{align*}
P''_\eps(v_0)&=\sum_{j=0}^m \la_j^2(P^j_\eps)''(|E^j|)
\\
&=\sum_{j=0}^m\la_j^2\,\bigg(\int_{S^j}\varphi_\eps^j\bigg)^{-2}\!\bigg\{\!
\int_{S^j} \!|\nabla_{S^j}\varphi_\eps^j|^2-|\sg^j|^2(\varphi_\eps^j)^2
-\int_{\ptl S^j}\text{II}^j(N^j,N^j)(\varphi_\eps^j)^2\bigg\},
\end{align*}
where $\nabla_{S^j}$ is the gradient in $S^j$, $|\sg^j|^2$ is the squared norm of the second fundamental form $\sg^j$ of $S^j$, and $\text{II}^j$ is the second fundamental form of $\ptl K^j$. Taking limits when $\eps\to 0$ we get as in \cite[(3.7)]{bay-rosal}
\begin{align*}
\limsup_{\eps\to 0} P_\eps''(v_0)&\le -\sum_{j=0}^m\frac{\la_j^2}{\h^n(S^j)^2}\,\bigg\{\int_{S^j}|\sg^j|^2+\int_{\ptl S^j}\text{II}^j(N^j,N^j)\bigg\}
\\
&\le -\sum_{j=0}^m\frac{\la_j^2}{\h^n(S^j)^2}\int_{S^j}|\sg^j|^2.
\end{align*}
Hence we have
\begin{align*}
\limsup_{\eps\to 0} \bigg\{\frac{1}{n}\,\frac{P_\eps'(v_0)^2}{P_\eps(v_0)}+P''_\eps(v_0)\bigg\}&\le \frac{1}{n}\frac{H^2}{I_C(v_0)}-\sum_{j=0}^m\frac{\la_j^2}{\h^n(S^j)^2}\int_{S^j}|\sg^j|^2
\\
&=\sum_{j=0}^m\frac{1}{I_C(v_0)^2}\int_{S^j} \frac{H^2}{n}-\sum_{j=0}^m\frac{\la_j^2}{\h^n(S^j)^2}\int_{S^j}|\sg^j|^2
\\
&=\frac{1}{I_C(v_0)^2}\sum_{j=0}^m\int_{S^j}\bigg(\frac{H^2}{n}-|\sg^j|^2\bigg)\le 0,
\end{align*}
from the definition of $\la_j$. Since the second lower derivative of $I_C^{(n+1)/n}$ is non-negative and $I_C^{(n+1)/n}$ is continuous according to Theorem \ref{thm:contprof}, then Lemma~3.2 in \cite{MR1803220} implies that $I_C^{(n+1)/n}$ is concave and hence non-decreasing. Then $I_C$ is strictly concave, being the composition of $I_C^{(n+1)/n}$ with the strictly concave non-increasing function $x\mapsto x^{n/(n+1)}$.
\end{proof}

\begin{lemma}
\label{lem:one-diam}
Let $C\subset\rr^{n+1}$ be an unbounded convex body of bounded geometry such that $I_K^{(n+1)/n}$ is concave for any $K\in\{C\}\cup\mathcal{K}(C)$. Then, for any prescribed volume $v>0$, any generalized isoperimetric region in $C$ of volume $v$ consists of a single, connected set $E$ contained in $K\in\{C\}\cup\mathcal{K}(C)$. 
Moreover, the diameter of $E$ is bounded above by a constant only depending on $v$ and on the constants $n,r_{0},b(r_{0})$ appearing in \eqref{eq:mainhyp}.
\end{lemma}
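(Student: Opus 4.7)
First I would invoke Theorem~\ref{thm:genexist} to produce, for any generalized isoperimetric region of volume $v$, a family $(E^{0},\ldots,E^{\ell})$ with $E^{j}\subset K^{j}\in\{C\}\cup\K(C)$ (where $K^{0}=C$), satisfying $\sum_{j}\vol{E^{j}}=v$ and $\sum_{j}P_{K^{j}}(E^{j})=I_{C}(v)$. A non-negative concave function on $[0,\infty)$ vanishing at $0$ is automatically non-decreasing (else it would eventually become negative by concavity), so $I_{C}^{(n+1)/n}$ is non-decreasing; then $I_{C}$ itself is concave as the composition of the concave non-decreasing map $t\mapsto t^{n/(n+1)}$ with a concave function, and thus subadditive on $[0,v]$. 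Combining this with $I_{C}\le I_{K^{j}}$ from Proposition~\ref{cor:niceapproxK}, I get
\[
I_{C}(v)=\sum_{j=0}^{\ell} I_{K^{j}}(\vol{E^{j}})\ge \sum_{j=0}^{\ell} I_{C}(\vol{E^{j}})\ge I_{C}(v),
\]
so equality holds throughout. If two distinct $\vol{E^{j}}$'s were strictly positive, the equality case of subadditivity for a concave function vanishing at $0$ would force $I_{C}$ to be affine on $[0,v]$, say $I_{C}(w)=\la w$ with $\la>0$ (by Proposition~\ref{prop:maincond}); but then $I_{C}^{(n+1)/n}(w)=\la^{(n+1)/n}w^{(n+1)/n}$ would be strictly convex on $(0,v]$, contradicting the concavity hypothesis. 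Hence exactly one $\vol{E^{j}}$ is positive, and the generalized isoperimetric region reduces to a single set $E\subset K$ with $\vol{E}=v$ and $P_{K}(E)=I_{K}(v)=I_{C}(v)$.

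The same reasoning applied inside $K$ yields connectedness. If $E=A\sqcup B$ with $\vol{A},\vol{B}>0$ and $P_{K}(E)=P_{K}(A)+P_{K}(B)$, then $P_{K}(A)+P_{K}(B)\ge I_{K}(\vol{A})+I_{K}(\vol{B})\ge I_{K}(v)=P_{K}(E)$ would force $I_{K}$ to be affine on $[0,v]$, again contradicting concavity of $I_{K}^{(n+1)/n}$. So $E$ is indecomposable as a finite-perimeter set in $K$; combined with the boundedness granted by Proposition~\ref{prp:isopbound} and the boundary regularity of Theorem~\ref{thm:n-7}, this yields topological connectedness.

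For the diameter bound I would exploit that by Remark~\ref{rmk:convinradius} the constants $M,\ell_{1},\ell_{2}$ in Lemma~\ref{lem:inrad} depend only on $n,r_{0},b(r_{0})$ and transfer uniformly to every $K\in\{C\}\cup\K(C)$. Since $P_{K}(E)\le I_{H}(v)$ by Remark~\ref{rem:half-plane} (with $H$ a half-space), the constant mean curvature of the regular part of $\ptl E$ is controlled in terms of $v$, so $E$ is a uniform almost-minimizer of perimeter; a standard monotonicity formula then provides a density lower bound $\vol{E\cap B_{K}(x,\rho)}\ge c\,\rho^{n+1}$ for every $x$ in the measure-theoretic closure of $E$ and every $\rho\in(0,\rho_{1}]$, with $c,\rho_{1}>0$ depending only on $v,n,r_{0},b(r_{0})$. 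A Vitali covering with disjoint balls $B_{K}(x_{i},\rho_{1}/5)$ centered in $E$ then has at most $N\le v/(c(\rho_{1}/5)^{n+1})$ members, and connectedness lets me chain these balls so that $\diam(E)\le 2N\rho_{1}$, the desired uniform bound.

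The main obstacle is this last step: the reduction to a single indecomposable piece is a rigid consequence of the concavity hypothesis together with Theorem~\ref{thm:genexist}, whereas the uniform density lower bound and the ensuing diameter estimate require careful bookkeeping to ensure that every quantitative constant ultimately depends only on $v$ and $(n,r_{0},b(r_{0}))$, independently of which $K\in\{C\}\cup\K(C)$ contains $E$.
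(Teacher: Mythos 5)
Your reduction to a single indecomposable set is correct and equivalent in substance to the paper's step one: you derive the needed strict subadditivity of $I_C$ not by invoking strict concavity as a black box but by observing that equality in subadditivity forces $I_C$ to be affine on $[0,v]$ with positive slope, so that $I_C^{(n+1)/n}$ would be strictly convex there -- a contradiction with the hypothesis. The paper reaches the same conclusion by asserting that strict concavity of $I_C$ (which it derives at the end of Lemma~\ref{lem:imp}) implies strict subadditivity; your version is the same idea unpacked, and arguably cleaner since it only uses the stated concavity of $I_K^{(n+1)/n}$. Your connectedness argument, applying the same subadditivity contradiction to a decomposition $E=A\sqcup B$ inside $K$, is also correct and is the right way to make precise the connectedness claim.

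Your diameter argument, however, has a genuine gap. The inequality $P_K(E)\le I_H(v)$ bounds the perimeter, not the constant mean curvature of $\ptl^* E$: one can have bounded perimeter and arbitrarily large mean curvature (thin protrusions). What actually controls the Lagrange multiplier of the volume-constrained minimizer is the concavity of $I_K$ together with $I_K(0)=0$, which gives the one-sided bound $|H|\lesssim I_K(v)/v$. You cite the wrong ingredient at this step, and even with the correct curvature bound, the route through uniform almost-minimality and a monotonicity formula requires extra care near $\ptl K$ (which need not be smooth) and would demand additional work to ensure the density constant is uniform in $K\in\{C\}\cup\K(C)$. The paper sidesteps mean curvature entirely: setting $m(r)=|E\cap B_K(x,r)|$, concavity of $I_K$ gives $P_K(E)=I_K(v)\le I_K(v-m(r))+\frac{I_K(v)}{v}m(r)\le P_K(E\setminus B_K(x,r))+\frac{I_K(v)}{v}m(r)$, which combined with minimality of $E$ and the uniform relative isoperimetric inequality \eqref{eq:isnqgdbl1} yields the differential inequality $M\,m(r)^{n/(n+1)}-\frac{I_K(v)}{v}m(r)\le 2m'(r)$. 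Integrating over a small interval gives $m(r_1)\gtrsim r_1^{n+1}$ with constants depending only on $n,r_0,b(r_0),v$. Your final Vitali covering and chaining step is fine; it is the density estimate that should be reconsidered so that it rests on concavity rather than on a perimeter bound.
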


\begin{proof}%
\textit{Step one}. We assume by contradiction that for some positive prescribed volume we can find a generalized isoperimetric region $E^0,E^1,\ldots,E^q$ such that $|E^{j}|>0$ at least for two distinct indices $j=j_{1},j_{2}$, $j_{1}\neq j_{2}\in \{0,\dots,q\}$. 
Let $v^{j}=\vol{E^j}$ and set $v = v^{j_{1}}+v^{j_{2}}$. Then combining Proposition \ref{cor:niceapproxK} with Theorem \ref{thm:genexist} (iv) we get
\begin{equation}
\label{eq:one-diam2}
I_C(v) = I_{K^{j_{1}}}(v^{j_{1}}) + I_{K^{j_{2}}}(v^{j_{2}}) \ge I_{C}(v^{j_{1}}) + I_{C}(v^{j_{2}}).
\end{equation}
On the other hand, the strict concavity of $I_{C}(v)$ implies strict subadditivity, hence we find
\[
I_C(v)< I_{C}(v^{j_{1}}) + I_{C}(v^{j_{2}})\,,
\]
which is in contradiction with \eqref{eq:one-diam2}.

\textit{Step two}. In order to prove the last part of the statement, we observe that any minimizer $E\subset C$ (or, respectively, $E\subset K$ for some $K\in \K(C)$) for a prescribed volume $v>0$ must satisfy a uniform density estimate depending only on the dimension $n$ and on the ratio $I_{C}(v)/v$. Owing to Remark \ref{rmk:convinradius}, we can consider without loss of generality the case $E\subset C$, fix $x\in E$ and, for any $r>0$, consider the set $E_{r} = E\setminus B_{C}(x,r)$. Set $m(r) = |E\cap B_{C}(x,r)|$ and notice that, by concavity of $I_{C}$, one has
\begin{equation}\label{stimetta1}
P_{C}(E) = I_{C}(v) \le I_{C}(v-m(r)) +\frac{I_{C}(v)}{v} m(r) \le P_{C}(E_{r}) +\frac{I_{C}(v)}{v} m(r)\,. 
\end{equation}
On the other hand, for almost all $0<r<r_{0}$ such that $m(r) \le |B_{C}(x,r)|/2$, owing to the relative isoperimetric inequality \eqref{eq:isnqgdbl1} one has
\begin{equation}
\begin{split}
\label{stimetta2}
P_{C}(E_{r}) &= P_{C}(E) - P_{C}(E\cap B_{C}(x,r)) + 2m'(r) 
\\
&\le P_{C}(E) - M\, m(r)^{\frac{n}{n+1}} + 2m'(r).
\end{split}
\end{equation}
Hence combining \eqref{stimetta1} and \eqref{stimetta2} we get
\begin{equation}\label{diffineqbis}
M\, m(r)^{\frac{n}{n+1}} - \frac{I_{C}(v)}{v} m(r) \le 2m'(r)\,.
\end{equation}
Now, assume that for some $0<r_{1}<r_{0}$ we have $m(r_{1}/2)>0$ and $m(r_{1}) \le |B_{C}(x,r_{1}/2)|/2$, then of course we have $m(r)\le m(r_{1})\le |B_{C}(x,r)|/2$ for all $r_{1}/2<r<r_{1}$, hence \eqref{diffineqbis} holds true for almost all $r\in (r_{1}/2,r_{1})$. Moreover, up to choosing $r_{1}$ small enough depending on the ratio $\frac{I_{C}(v)}{v}$ and on the isoperimetric constant $M>0$ appearing in \eqref{diffineqbis} (we recall that, by Lemma \ref{lem:inrad}, $M$ depends only on $n,r_{0},b(r_{0})$), we can entail that
\begin{equation}\label{diffineqter}
\frac{M}{4}\le \frac{m'(r)}{m(r)^{\frac{n}{n+1}}}\qquad\text{for almost all }r\in (r_{1}/2,r_{1})\,.
\end{equation}
Arguing as in the proof of Proposition \ref{prp:isopbound}, i.e. integrating \eqref{diffineqter} between $r_{1}/2$ and $r_{1}$, we conclude that 
\begin{equation}\label{uniflowerbound}
m(r_{1}) \ge \left(\frac{M}{8(n+1)}\right)^{n+1} r_{1}^{n+1}\,,
\end{equation}
i.e., that a uniform lower bound for the volume of $E$ holds in balls of radius smaller than $r_{0}$ centered at Lebesgue points of the characteristic function of $E$. Note that this lower bound is uniform, while the one given by Proposition \ref{prp:isopbound} is a-priori dependent on the set $E$.

Finally, by combining \eqref{uniflowerbound} with the connectedness of $E$ proved in step one, we eventually obtain a uniform lower bound on the diameter of $E$. To prove this we fix a maximal family $\mathcal B$ of disjoint balls of radius $r = r_{0}/2$ centered at Lebesgue points of the characteristic function of $E$, so that the union of the concentric balls with radius $2r$ covers $E$. By \eqref{uniflowerbound} we obtain $|E\cap B| > c_0$ for any $B\in \mathcal B$ and for a constant $c_{0}$ only depending on $n,r_{0},b(r_{0})$ and $v$. Consequently the cardinality of $\{\mathcal B\}$ cannot exceed $v/c_0$. On the other hand the union of the concentric balls with radius $2r$ must be connected (otherwise $E$ would be disconnected) and thus the diameter of $E$ is necessarily bounded by the sum of the diameters of these balls, i.e., by $4r_{0} v/c_0$.
\end{proof}

\begin{theorem}
\label{thm:conY}
Let $C\subset \rr^{n+1}$ be an unbounded convex body of uniform geometry. Then $I_{C}^{(n+1)/n}$ is concave. Moreover, any generalized isoperimetric region for $I_{C}(v)$ is associated with a connected set $E\subset K$ with $|E|=v$, for $K\in \{C\}\cup \K(C)$ suitably chosen, so that it holds $P_{K}(E) = I_{K}(v) = I_{C}(v)$.
\end{theorem}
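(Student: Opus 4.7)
The strategy is to approximate $C$ by the smooth convex bodies $C^\eps$ provided by Lemma~\ref{lem:fund}, exploit Lemma~\ref{lem:imp} in the smooth case, pass to the limit to obtain concavity of $Y_C$, and then deduce the connectedness assertion via Lemma~\ref{lem:one-diam}. By Lemma~\ref{lem:fund}, each $C^\eps$ has $C^\infty$ boundary, every element of $\K(C^\eps)$ is also $C^\infty$, and $C^\eps \to C$ locally in Hausdorff distance as $\eps\to 0$; by Remark~\ref{rmk:convinradius} the uniform-geometry constants of $C^\eps$ are controlled uniformly for $\eps$ small. Hence Lemma~\ref{lem:imp} yields concavity of $Y_{C^\eps}=I_{C^\eps}^{(n+1)/n}$ for every small $\eps>0$.

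The crux is the pointwise convergence $I_{C^\eps}(v)\to I_C(v)$ for each fixed $v>0$, since pointwise limits of concave functions are concave. The upper bound $\limsup_{\eps} I_{C^\eps}(v)\le I_C(v)$ is Proposition~\ref{prp:niceapproxI}. For the lower bound, pick $\eps_j\to 0$ realizing $\liminf_\eps I_{C^\eps}(v)$. Concavity of $Y_{C^{\eps_j}}$ allows me to invoke Lemma~\ref{lem:one-diam} for $C^{\eps_j}$, which supplies a single connected set $E^j\subset K^j$ realizing $I_{C^{\eps_j}}(v)$, with $K^j\in\{C^{\eps_j}\}\cup\K(C^{\eps_j})$ and $\diam(E^j)$ bounded uniformly in $j$. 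Choose $x^j\in E^j$ and translate by setting $\tilde E^j:=-x^j+E^j$ and $\tilde K^j:=-x^j+K^j$, so $\tilde E^j$ stays in a fixed Euclidean ball. A case analysis on whether $K^j=C^{\eps_j}$ or $K^j\in\K(C^{\eps_j})$, and on whether $\{x^j\}$ is bounded or divergent, combined with Lemma~\ref{lem:ciave}, Lemma~\ref{lem:zK}, and Lemma~\ref{lem:fund}(iii)--(v), lets me subextract $\tilde K^j\to K$ locally in Hausdorff distance with $K\in\{C\}\cup\K(C)$, and $\tilde E^j\to E\subset K$ in $L^1$ with $|E|=v$. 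Then Lemma~\ref{lem:antier} yields $P_K(E)\le\liminf_j P_{\tilde K^j}(\tilde E^j)=\liminf_\eps I_{C^\eps}(v)$, while Proposition~\ref{cor:niceapproxK} gives $I_C(v)\le I_K(v)\le P_K(E)$; this closes the inequality, proves $I_{C^\eps}(v)\to I_C(v)$, and hence delivers concavity of $Y_C$.

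The same argument, applied to any $L\in\K(C)$ (an unbounded convex body of uniform geometry by Proposition~\ref{prop:maincond}(iii) and its cylinder structure, whose smooth approximations are again supplied by Lemma~\ref{lem:fund}), yields concavity of $Y_L$. The hypotheses of Lemma~\ref{lem:one-diam} are therefore satisfied for $C$, and that lemma produces the second assertion: every generalized isoperimetric region for $I_C(v)$ collapses to a single connected set $E\subset K$ with $|E|=v$, for some $K\in\{C\}\cup\K(C)$, satisfying $P_K(E)=I_K(v)=I_C(v)$. The main obstacle is the translation-and-subextraction step in the previous paragraph: ensuring the limit ambient $K$ lies in $\{C\}\cup\K(C)$, rather than drifting to a more degenerate limit object, requires the careful bookkeeping of translations within the family of asymptotic cylinders afforded by Lemmas~\ref{lem:ciave} and \ref{lem:zK}.
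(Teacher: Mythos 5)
Your proposal is correct and follows essentially the same route as the paper: approximate $C$ by the smooth bodies $C^\eps$ from Lemma~\ref{lem:fund}, get concavity of $Y_{C^\eps}$ from Lemma~\ref{lem:imp}, combine Proposition~\ref{prp:niceapproxI} (upper semicontinuity) with the diameter bound from Lemma~\ref{lem:one-diam} and compactness (Lemmas~\ref{lem:ciave}, \ref{lem:zK}, \ref{lem:antier}, and Proposition~\ref{cor:niceapproxK}) to obtain lower semicontinuity and hence $I_{C^\eps}(v)\to I_C(v)$, and finally invoke Lemma~\ref{lem:one-diam} for the connectedness. Your closing paragraph makes explicit a point the paper only notes in passing (that the same argument must be run for every $L\in\K(C)$ in order to verify the hypotheses of Lemma~\ref{lem:one-diam} for $C$), which is a welcome clarification rather than a deviation.
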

\begin{proof}
We notice that, by the assumption on $C$, any $K\in {\mathcal K}(C)$ is an unbounded convex cylinder of uniform geometry. According to Lemma \ref{lem:fund} we approximate $C$ by a sequence of smooth unbounded convex bodies $C^{i}$ converging to $C$ in global Hausdorff distance as $i\to \infty$. It is then immediate to prove that $C^{i}$ is of uniform geometry. Moreover, thanks to Lemma \ref{lem:fund}, any $K\in \K(C^{i})$ has smooth boundary and is of uniform geometry. Therefore we deduce by Lemma \ref{lem:imp} that $I_{K}^{\frac{n+1}{n}}$ is concave for all $K\in \{C^{i}\}\cup \K(C^{i})$ and for all $i$. 

To deduce the concavity of $I_{C}^{(n+1)/n}$, it is enough to show that $\lim_{i\to\infty} I_{C^{i}}(v)=I_{C}(v)$ for all $v>0$. Owing to  Proposition \ref{prp:niceapproxI}, it remains to prove the lower semicontinuity of the isoperimetric profile, i.e. that
\begin{equation}\label{lscprofile}
\liminf_{i\to\infty} I_{C^{i}}(v) \ge I_{C}(v)\,.
\end{equation}
To this aim, by Lemma \ref{lem:one-diam} we find $K^{i}\in C^{i}\cup \K(C^{i})$ and $E^{i}\subset K^{i}$ with $|E^{i}|=v$, such that $P_{K^{i}}(E^{i}) = I_{C^{i}}(v)$ for all $i$. By Remark \ref{rmk:convinradius} and Lemma \ref{lem:one-diam}, the diameter of $E^{i}$ is uniformly bounded by some uniform constant $d>0$, hence we can assume that, up to translations, $E^{i}\subset B(0,d)$ for all $i$. Up to subsequences, the corresponding translates of $K^{i}$ converge in local Hausdorff sense to a limit convex set $K$ that necessarily belongs to $\{C\}\cup \K(C)$ up to a translation, see Lemmata \ref{lem:ciave} and \ref{lem:zK}. By the uniform boundedness of the perimeter of $E^{i}$ (see Remark \ref{rem:half-plane}), up to a further extraction of a subsequence we can assume that $E^{i}$ converges to a limit set $E\subset K\cap B(0,d)$ in $L^{1}$, whence $|E| = v$. By Lemma \ref{lem:antier}, we deduce that $P_{K}(E)\le \liminf_{i\to\infty}P_{K^{i}}(E^{i})$, hence by the inequality $I_{C}(v)\le I_{K}(v)$ (see Proposition \ref{cor:niceapproxK}) we finally deduce \eqref{lscprofile}. 
This concludes the proof of the theorem.
\end{proof}

\begin{corollary}
\label{cor:novo}
Let $C\subset\rr^{n+1}$ be a convex body and $\la<1$. Then $I_C(v) \ge I_{ \la C}(v)$
\end{corollary}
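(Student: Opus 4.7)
The plan is to reduce the inequality $I_{C}(v)\ge I_{\lambda C}(v)$ to the monotonicity of $v\mapsto Y_C(v)/v$, where $Y_C=I_C^{(n+1)/n}$. Assume $0<\lambda<1$ (the only interesting range). I would first invoke Theorem~\ref{thm:conY} to conclude that the renormalized profile $Y_C$ is concave on $[0,|C|)$, and observe that $Y_C(0)=0$ (since the empty set has zero perimeter and zero volume). From concavity together with $Y_C(0)=0$ one obtains in the standard way that $v\mapsto Y_C(v)/v$ is non-increasing: writing $v=\mu\cdot(v/\mu)+(1-\mu)\cdot 0$ with $\mu\in(0,1]$ gives $Y_C(v)\ge\mu\, Y_C(v/\mu)$, i.e.\ $Y_C(v)/v\ge Y_C(v/\mu)/(v/\mu)$.

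Next I would apply the scaling identity of Lemma~\ref{lem:link I laC I C}. Setting $w=\lambda^{n+1}v$ in $I_{\lambda C}(\lambda^{n+1}v)=\lambda^{n}I_C(v)$ yields
\[
I_{\lambda C}(w)=\lambda^{n}\,I_C(w/\lambda^{n+1}),\qquad
Y_{\lambda C}(w)=\lambda^{n+1}\,Y_C(w/\lambda^{n+1}).
\]
Since $\lambda<1$ we have $w/\lambda^{n+1}\ge w$, so the monotonicity of $Y_C(\cdot)/(\cdot)$ established above gives
\[
\frac{Y_C(w/\lambda^{n+1})}{w/\lambda^{n+1}}\le\frac{Y_C(w)}{w},
\]
which, after multiplying by $w$, is exactly $\lambda^{n+1}Y_C(w/\lambda^{n+1})\le Y_C(w)$, i.e.\ $Y_{\lambda C}(w)\le Y_C(w)$. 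Taking the $n/(n+1)$ power yields $I_{\lambda C}(w)\le I_C(w)$, as required.

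There is no real obstacle: the result is a direct corollary of the concavity of $Y_C$ combined with the scaling law, and the only thing to verify carefully is the condition $Y_C(0)=0$ (clear from the definition of $I_C$) so that concavity upgrades to the monotonicity of $Y_C(v)/v$.
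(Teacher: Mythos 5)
Your proof is correct and follows exactly the paper's approach: combine the scaling identity from Lemma~\ref{lem:link I laC I C} with the concavity of $Y_C$ (Theorem~\ref{thm:conY}) and $Y_C(0)=0$. You merely make explicit the standard step that concavity through the origin implies $v\mapsto Y_C(v)/v$ is non-increasing, which the paper leaves implicit.
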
 

\begin{proof}
According to \eqref{eq:proflac}  we get 
\[
Y_{\lambda C}(v)=\la^{n+1}Y_{C}\bigg(\frac{v}{\la^{n+1}}\bigg).
\] 
Then, as $Y_C$ is concave with $Y_C(0)=0$ and $\la<1$, the proof follows.
\end{proof}

\chapter{Sharp isoperimetric inequalities and isoperimetric rigidity}
\label{sec:rigid}

In section \ref{sec:unbounded} isoperimetric inequalities like $I_C\le I_{C_{\min}}$, where $C_{\min}$ is the tangent cone with the smallest aperture in $\{C\}\cup\mathcal{K}(C)$, and its immediate consequence $I_C\le I_H$, where $H$ is a closed half-space, have been mentioned (see Proposition \ref{prp:ICleICp} and Remark \ref{rem:half-plane}). In this section, we shall prove some new isoperimetric inequalities, like $I_C\ge I_{C_\infty}$(see Theorem~\ref{thm:optnondeg}) for a convex body $C$ with non-degenerate asymptotic cone $C_\infty$, recall known ones, and prove rigidity results for the equality cases.

\section{Convex bodies with non-degenerate asymptotic cone}

We begin by studying the relative isoperimetric problem in convex bodies with non-degenerate asymptotic cone (see Chapter \ref{sec:preliminaries} for the corresponding definition).  We shall first need some notation.

We denote by $\mathcal{C}_0^{n+1}$ the set of convex bodies in $\rr^{n+1}$ that contain the origin and have non-degenerate asymptotic cone. Of course, convex bodies in $\mathcal{C}_0^{n+1}$ are unbounded. We regard  this space equipped with the topology of local convergence in Hausdorff distance. 
Let $\Gamma_{k}^{n+1}, 0\le k \le n+1$, be the set of convex bodies $C\in\mathcal{C}_0^{n+1}$ so that $C=\tilde{C}\times \rr^{n+1-k}$ up to an isometry, where $\tilde{C}\subset \rr^{k}$ is a line-free convex body, or just the origin in the case $k=0$. Observe that $\Gamma_{0}^{n+1} = \{ \rr^{n+1}\}$ and that $\Gamma_{1}^{n+1} = \{H:\ \text{$H$ is a half-space}\}$. The latter identity is easy to prove since $\tilde{C}$ in the above decomposition is $1$-dimensional and hence a line, a half-line or a segment. However the case of the segment is excluded, as slabs in $\rr^{n+1}$ have degenerate asymptotic cones. We also notice that $\Gamma_{n+1}^{n+1}$ is the set of line-free convex bodies of $\rr^{n+1}$ with non-degenerate asymptotic cone.

Let $C\subset\rr^{n+1}$ be an unbounded convex body. Note that if $C$ contains a line $L$ through the origin, then $C = \tilde C \oplus L$. Therefore a sequence of translations of $C$ by a divergent sequence of points belonging to $L$ converges to $C$, hence $C$ belongs to the space $\mathcal{K}(C)$ of its asymptotic cylinders. This implies that for all $k=0,\dots,n$ and $C\in \Gamma_{k}^{n+1}$ one has $C\in \mathcal{K}(C)$.

Given $C\in \mathcal{C}_{0}^{n+1}$ there exist two orthogonal projections $\pi_{C}$, $\pi^{\perp}_{C}$ such that $V_{C} = \pi_{C}(C)$ is an $(n+1-k)$-dimensional linear space, $\pi_{C}^{\perp}(C)$ is either $\{0\}$ when $k=0$, or a line-free $k$-dimensional convex body contained in the orthogonal complement $V^{\perp}$ of $V_{C}$, and $C = V_{C} \oplus \pi_{C}^{\perp}(C)$. If $C$ is line-free then there holds $V_{C}=\{0\}$, thus $\pi_{C}(x)=0$ and $\pi_{C}^{\perp}(x) = x$ for all $x\in C$.

Now we set for $0\le m\le n+1$
\begin{equation}
\label{eq:bonita}
\mathcal{C}_{0,m}^{n+1}=\bigcup_{k=0}^{m}\Gamma^{n+1}_{k}
\end{equation}
and note that $\mathcal{C}_{0,n+1}^{n+1} = \mathcal{C}_{0}^{n+1}$. The family $\mathcal{C}_{0,m}^{n+1}$ consists of those sets in $C_0^{n+1}$ possessing a Euclidean factor of dimension larger than or equal to $(n+1-m)$. We shall prove in Theorem~\ref{thm:optnondeg} that the isoperimetric profile $I_C$ of a convex body $C$ with non-degenerate asymptotic cone $C_\infty$ is bounded below by $I_{C_\infty}$. Moreover, in Theorem \ref{thm:novo} we show a rigidity result for the equality case in $I_{C}\ge I_{C_{\infty}}$, which states that if $I_{C}(v_{0}) = I_{C_{\infty}}(v_{0})$ for some $v_{0}>0$, then $C$ and $C_{\infty}$ are isometric. We also prove that $I_{C}(v)$ and $I_{C_{\infty}}(v)$ are asymptotic for $v\to+\infty$, and that isoperimetric regions exist for large volumes. First we need the following two lemmas.

\begin{lemma}
\label{lem:r1}
\mbox{}
\begin{enum}
\item
The set $\mathcal{C}_{0,m}^{n+1}$ is closed under local convergence in Hausdorff distance for every $0\le m \le n+1$.
\item
For every $k=0,\dots,n+1$, if $C\in \Gamma^{n+1}_{k}$ then $C_\infty\in \Gamma^{n+1}_{k}$, where $C_\infty$ is the asymptotic cone of $C$.
\item
Let $C\in\Gamma^{n+1}_{m}$ for some $m\in \{1,\dots,n+1\}$, and let $x_i\in C$, $i\in\nn$. Suppose that $-x_i+C\to K$ locally in Hausdorff distance. If $\pi_{C}^{\perp}(x_i)\to x\in C$ as $i\to\infty$ then $K=-x+C$. If $\pi_{C}^{\perp}(x_i)$ diverges, then $K\in\mathcal{C}^{n+1}_{0,m-1}$.
\end{enum} 
\end{lemma}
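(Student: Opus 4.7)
The plan is to exploit the product decomposition $C = V_C\oplus \tilde C$ (with $\tilde C=\pi_C^\perp(C)$ line-free) defining membership in $\Gamma_k^{n+1}$, reducing each claim to a statement about the line-free factor.

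For (i), given $C_i\in \mathcal{C}_{0,m}^{n+1}$ locally converging in Hausdorff distance to $C$, I would use compactness of the Grassmannian (after passing to a subsequence of constant dimension) to ensure the Euclidean factors $V_{C_i}$, each of dimension $\ge n+1-m$, converge to a linear subspace $V$ of dimension $\ge n+1-m$. By the Kuratowski criterion, $V\subset C$; and for any $c\in C$ and $v\in V$, picking $c_i\in C_i$, $v_i\in V_{C_i}$ with $c_i\to c$ and $v_i\to v$ forces $c_i+v_i\in C_i$, hence $c+v\in C$. So $V$ lies in the lineality space of $C$, and by \cite[Lemma~1.4.2]{sch} $C$ splits as a linear subspace of dimension $\ge n+1-m$ plus a line-free factor, placing $C$ in some $\Gamma_k^{n+1}$ with $k\le m$.

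For (ii), writing $C = \tilde C\times \rr^{n+1-k}$ the product structure yields $C_\infty = \tilde C_\infty\times \rr^{n+1-k}$. A line in $\tilde C_\infty$ would be a line in the recession cone of $\tilde C$ and hence in its lineality space, contradicting line-freeness; so $\tilde C_\infty$ is line-free. Non-degeneracy of $C_\infty$ forces $\dim \tilde C_\infty = k$, exhibiting $C_\infty\in \Gamma_k^{n+1}$.

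For (iii), decompose $x_i = a_i+b_i$ with $a_i=\pi_C^\perp(x_i)\in \tilde C$ and $b_i\in V_C$, so that $-x_i+C = (-a_i+\tilde C)\oplus V_C$. In the convergent case $a_i\to x\in \tilde C$, the translates $-a_i+\tilde C$ converge locally in Hausdorff distance to $-x+\tilde C$, yielding $K = -x+C$. In the divergent case, first observe that $\tilde C$ is of uniform geometry: since $p+\tilde C_\infty\subset \tilde C$ for every $p\in \tilde C$, one has $|B_{\tilde C}(p,r)|\ge |B_{\tilde C_\infty}(0,r)|>0$ uniformly, so Proposition~\ref{prop:maincond} implies every asymptotic cylinder of $\tilde C$ is an $m$-dimensional convex body in $\rr^m$. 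By Lemma~\ref{lem:cyl}, passing to a subsequence, $-a_i+\tilde C\to \tilde K$ locally in Hausdorff, where $\tilde K\subset \rr^m$ is a convex cylinder containing some nontrivial linear subspace $L$. Hence $K = \tilde K\oplus V_C$ has Euclidean factor of dimension $\ge n+2-m$; moreover the inclusion $\tilde C_\infty\subset -a_i+\tilde C$ (from $a_i+\tilde C_\infty\subset \tilde C$) passes to the limit, giving $\tilde C_\infty\subset \tilde K$ and hence $K_\infty\supset C_\infty$, so $K$ has non-degenerate asymptotic cone. Combining, $K\in \mathcal{C}_{0,m-1}^{n+1}$.

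The main obstacle is the divergent case of (iii): to place $K$ in $\mathcal{C}_{0,m-1}^{n+1}$ one must verify that $K$ is genuinely an $(n+1)$-dimensional convex body with non-degenerate asymptotic cone. This is handled by upgrading non-degeneracy of $\tilde C_\infty$ to uniform geometry of $\tilde C$, so that Proposition~\ref{prop:maincond} forces $\tilde K$ to have nonempty interior, and by tracking the inclusion $\tilde C_\infty\subset \tilde K$ through the local Hausdorff limit to secure non-degeneracy of $K_\infty$.
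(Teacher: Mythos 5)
Your proof is correct and follows the paper's strategy closely: the product decomposition $C=V_C\oplus\pi_C^\perp(C)$, the Grassmannian compactness argument for (i), the inclusion $\tilde{C}_\infty\subset\tilde{C}$ for (ii), and the observation that translating by $\pi_C(x_i)$ fixes $C$ together with the Lemma~\ref{lem:cyl}-type argument producing a line in the limit for (iii). The one place you genuinely diverge is the divergent subcase of (iii): the paper observes that the translates $-\pi_C^\perp(x_i)+C$ remain in $\Gamma_m^{n+1}$ and invokes part (i) to place $K$ in $\mathcal{C}_{0,m}^{n+1}$, then uses the line $L(v)\subset K\cap V_C^\perp$ to drop to $\mathcal{C}_{0,m-1}^{n+1}$; you instead work directly in the line-free factor $\tilde{C}=\pi_C^\perp(C)$, establish its uniform geometry from non-degeneracy of $\tilde{C}_\infty$ so that Proposition~\ref{prop:maincond} forces $\tilde{K}=\lim(-a_i+\tilde{C})$ to have nonempty interior, and reassemble $K=\tilde{K}\oplus V_C$. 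Your variant has the merit of explicitly tracking both why $K$ is a genuine convex body and why $K_\infty$ is non-degenerate (by passing the inclusion $\tilde{C}_\infty\subset\tilde{K}$ to the limit), points the paper's appeal to (i) leaves implicit, since the proof of (i) as written only verifies the Euclidean-factor condition for membership in $\mathcal{C}_{0,m}^{n+1}$. The paper's route is shorter by leaning on (i); yours is more self-contained. Both arguments are sound.
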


\begin{proof}
To show (i), let $K_i\in \mathcal{C}^{n+1}_{0,m}$ and assume that $K_i\to K$ locally in Hausdorff distance. Since the family of linear spaces $V_{i}=V_{K_{i}}$ is relatively compact in the local Hausdorff topology, and since $\dim(V_{i})\ge n+1-m$ for all $i$, we conclude that $K$ must contain a local Hausdorff limit of (a subsequence of) $\{V_{i}\}_{i\in \nn}$, which is necessarily a linear space of dimension at least $n+1-m$. This shows that $K\in \mathcal{C}^{n+1}_{0,m}$, as wanted.

We now prove (ii). Up to a rigid motion of $\rr^{n+1}$, we may assume that $C=\tilde{C}\times \rr^{n+1-k}$, where $\tilde{C}$ contains no lines. It is easy to check that $C_\infty=\tilde{C}_\infty\times \rr^{n+1-k}$, where $\tilde{C}_\infty$ is the asymptotic cone of $\tilde{C}$. Since $\tilde{C}$ is line-free and $\tilde{C}_\infty\subset \tilde{C}$, then $\tilde{C}_\infty$ is line-free. Thus $C_\infty\in \Gamma^{n+1}_{k}$.

Finally we prove (iii).  Since $-\pi_C(x_i) + C=C$ for all $i\in\nn$, we get that $-\pi_C^{\perp}(x_i) + C\to K$. Thus, if $\{\pi_C^{\perp}(x_i)\}_{i\in\nn}$ converges to $x\in C$, then $K=-x+C$. Assume now that  $\{\pi_C^{\perp}(x_i)\}_{i\in\nn}$ diverges and that $\pi_C^{\perp}(x_i)/|\pi_C^{\perp}(x_i)|$ converges to some vector $v\in\esf^n$. Since $\{\pi_C^{\perp}(x_i)\}_{i\in\nn}\subset C \cap V_{C}^{\perp}$,  the argument in the proof of Lemma \ref{lem:cyl} yields that $\{tv:t\in\rr\}\subset K\cap V_{C}^{\perp}$. Since $C\in\Gamma^{n+1}_m $ we get $-\pi_C^{\perp}(x_i)+ C \in \Gamma^{n+1}_m $. Thus by (ii) we get $K\in \mathcal{C}^{n+1}_{0,m}$. Since $K\cap V_{C}^{\perp}$ contains a line we finally get $K\in \mathcal{C}^{n+1}_{0,m-1}$.
\end{proof}

\begin{lemma}
\label{lem:r2}
Let $C\in\Gamma_m^{n+1}$, $m\in\{1,\ldots,n+1\}$, with asymptotic cone $C_\infty$. Let $\{\la_i\}_{i\in\nn}$ be a sequence of positive numbers such that $\lambda_i\downarrow 0$ and take $y_i\in\la_i C$ for all $i\in\nn$.
\mbox{}
\begin{enum}
\item If $-y_i+\la_i C\to M$ locally in Hausdorff distance, then $C_\infty\subset M _\infty$ and, furthermore, $-\pi_C^\perp(y_i)+\la_i C\to M$ locally in Hausdorff distance. In case $\{\pi_C^\perp(y_i)\}_{i\in\nn}$ subconverges to $y\in C_\infty$, then $M=-y+C_\infty$. If $\{\pi_C^\perp(y_i)\}_{i\in\nn}$ diverges, then $M\in\mathcal{C}_{0,m-1}^{n+1}$ and one has the strict inequality
\begin{equation}
\label{eq:r21}
I_{M _\infty}(v) >I_{C _\infty}(v), \quad\text{ for all } v>0. 
\end{equation}
\item
Assume that $K_i\in\mathcal{K}(\la_i C)$, that $K_i\in\mathcal{C}_{0,m-1}^{n+1}$, and that
$K_i \to K$ locally in Hausdorff distance. Then $K\in\mathcal{C}_{0,m-1}^{n+1}$, $C_\infty\subset K_\infty$, and
\begin{equation}
\label{eq:r22}
I_{K _\infty}(v) >I_{C _\infty}(v),\quad\text{ for all } v>0. 
\end{equation}
\end{enum} 
\end{lemma}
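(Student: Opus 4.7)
I would begin by exploiting the product decomposition $C = \pi_C^\perp(C)\oplus V_C$, where $V_C$ has dimension $n+1-m$ and $\pi_C^\perp(C) = \tilde C$ is the line-free factor. Since $V_C$ is a linear subspace contained in $\lambda_i C$, the set $\lambda_i C$ is translation-invariant along $V_C$, and the identity $-y_i + \lambda_i C = -\pi_C^\perp(y_i) + \lambda_i C$ follows; this already proves the second claim in (i). For the inclusion $C_\infty \subset M_\infty$ the key tool is the recession-cone identity $(\lambda_i C)_\infty = C_\infty$: for every $z \in C_\infty$ and $c \in \lambda_i C$ one has $c + z \in \lambda_i C$, so taking $c = y_i$ yields $z \in -y_i + \lambda_i C$ for every $i$, and passing to the local Hausdorff limit gives $z \in M$. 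Since $0 \in M$ and $C_\infty$ is a cone with vertex $0$, the maximality of $M_\infty$ among such cones contained in $M$ yields $C_\infty \subset M_\infty$. The same recession argument, applied to a divergent sequence $\{x_j^i\}$ realizing each asymptotic cylinder $K_i = \lim_j (-x_j^i + \lambda_i C)$, gives $C_\infty \subset K_i$ for every $i$, and then $C_\infty \subset K \subset K_\infty$ after passing to the limit $K_i \to K$; this is the corresponding inclusion in (ii).

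Next I would analyse the shape of the limit in (i). In the convergent sub-case, note that $\pi_C^\perp(y_i) \in \lambda_i \tilde C$ and that $\lambda_i \tilde C \to \tilde C_\infty \subset C_\infty$ locally in Hausdorff as $\lambda_i \downarrow 0$; Kuratowski's criterion then forces the limit $y$ to lie in $C_\infty$, and combining the convergence $-\pi_C^\perp(y_i) \to -y$ with $\lambda_i C \to C_\infty$ gives $M = -y + C_\infty$. In the divergent sub-case I would extract a subsequence so that $\pi_C^\perp(y_i)/|\pi_C^\perp(y_i)| \to v \in V_C^\perp \cap \esf^n$ and apply Lemma~\ref{lem:gencyl} to the unbounded sequence $x_i := \pi_C^\perp(y_i) \in \lambda_i \tilde C \subset \lambda_i C$ to conclude that $L(v) \subset M$. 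Independently, the recession property $V_C \subset C_\infty = (\lambda_i C)_\infty$ gives $V_C + \pi_C^\perp(y_i) \subset \lambda_i C$, hence $V_C \subset -\pi_C^\perp(y_i) + \lambda_i C$ for every $i$, so $V_C \subset M$ in the limit. Any closed convex set containing a linear subspace through the origin is translation-invariant along it, so the lineality subspace $V_M$ contains the direct sum $V_C \oplus L(v)$, which has dimension $n+2-m$. This places $M$ in $\mathcal C^{n+1}_{0,m-1}$. In (ii), the analogous membership $K \in \mathcal C^{n+1}_{0,m-1}$ is immediate from Lemma~\ref{lem:r1}(i), since that class is closed under local Hausdorff convergence.

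It remains to prove the strict inequalities \eqref{eq:r21} and \eqref{eq:r22}. Let $N$ denote either $M$ (in the divergent sub-case of (i)) or $K$ (in (ii)); in both situations $N \in \mathcal C^{n+1}_{0,m-1}$, so $V_N \supsetneq V_C$ and we can pick a nonzero vector $v \in V_N \cap V_C^\perp$. Then both $\pm v$ lie in $V_N \subset N_\infty$. Since $C_\infty \cap V_C^\perp = \tilde C_\infty$ is line-free by Lemma~\ref{lem:r1}(ii), at least one of $\pm v$ fails to lie in $C_\infty$, yielding the proper inclusion $C_\infty \subsetneq N_\infty$. Both $N_\infty$ and $C_\infty$ are $(n+1)$-dimensional closed convex cones with vertex $0$ (by non-degeneracy of the asymptotic cones), so $N_\infty$ has nonempty interior; if $N_\infty \setminus C_\infty$ contained no interior point of $N_\infty$, then $\intt N_\infty \subset C_\infty$ would force $N_\infty = \overline{\intt N_\infty} \subset C_\infty$, contradicting the proper inclusion. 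Therefore $N_\infty \setminus C_\infty$ has positive Lebesgue measure, so $\alpha(N_\infty) > \alpha(C_\infty)$, and formula~\eqref{eq:isopsolang} yields $I_{N_\infty}(v) > I_{C_\infty}(v)$ for all $v > 0$. The main obstacle is precisely this last step: converting the dimensional gain $\dim V_N > \dim V_C$ into a genuine volume gap between the two cones, which relies crucially on the line-freeness of $\tilde C_\infty$ and on the non-degeneracy of every asymptotic cone in sight. The remainder of the argument is a relatively routine assembly of recession-cone identities, the translation-continuity of local Hausdorff convergence, and the earlier Lemmas~\ref{lem:gencyl} and~\ref{lem:r1}.
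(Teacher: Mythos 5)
Your proposal follows essentially the same route as the paper and is correct in substance. You fill in two points the paper states rather tersely: first, the dimension arithmetic on the lineality space of $M$ — you derive $V_C\subset M$ directly from the recession property (so $V_M\supset V_C\oplus L(v)$ has dimension $n+2-m$), whereas the paper invokes Lemma~\ref{lem:r1}(i) applied to the constant sequence $V_i=V_C$ and leaves the reader to extract the same fact from that lemma's proof. Second, you make explicit the step from the proper inclusion $C_\infty\subsetneq N_\infty$ of full-dimensional cones to the strict solid-angle inequality $\alpha(N_\infty)>\alpha(C_\infty)$; the paper writes ``consequently'' and moves on. Both refinements are valid and welcome. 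You also correctly state the convergent sub-case as $M=-y+C_\infty$, matching the lemma statement (the paper's proof contains a harmless typo writing $-y+C$ there).

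One slip to fix: in part (ii) you write ``$C_\infty\subset K\subset K_\infty$'', which cannot be right since $K_\infty$ is the asymptotic cone of $K$ and therefore $K_\infty\subset K$ in general, not the reverse. The correct chain is $C_\infty\subset K_i$ for each $i$ (from the recession argument), hence $C_\infty\subset K$ in the limit; then, noting $0\in K$ (each $K_i$ is a local limit of sets $-x^i_j+\lambda_i C$ containing the origin, so $0\in K_i$ and hence $0\in K$) and that $C_\infty$ is a cone with vertex $0$ contained in $K$, the maximality of $K_\infty$ among such cones gives $C_\infty\subset K_\infty$. This is exactly the argument you already gave for $M$ in part (i), so it is a typo rather than a conceptual gap, but as written it would not compile into a proof.
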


\begin{proof}
Let  $C=\tilde{C}\times V$, where $V\subset\rr^{n+1}$ is a linear subspace of dimension $(n+1-m)$ and $\tilde{C}\subset V^\perp$ is a line-free convex set.

We first  prove (i). Since $C_\infty$ is the asymptotic cone of $\la_i C $ for all $i$, and $y_i+C_\infty\subset \la_i C$ for all $i\in\nn$, we have $C_\infty\subset -y_i+ \la_i C$, and so $C_\infty\subset M $. As $M_\infty$ is the largest convex cone with vertex $0$ included in $M$, we have $C_\infty\subset M_\infty$.

Setting $y_i=\pi_C(y_i)+\pi_C^\perp(y_i)$, and noticing that $-\pi_C(y_i)+\la_i C=\la_i C$ for all $i\in\nn$, we have that $-\pi_C^\perp(y_i)+\la_iC$ converges to $M$ locally in Hausdorff distance, as $i\to\infty$. If $\pi_C^\perp(y_i)$ subconverges to $y$ then $M=-y+C$ and, as $\pi_C^\perp(y_i)\in\la_i C\cap V^\perp$, we have $y\in C_\infty\cap  V^{\perp}\subset C_\infty$. Assume now that $\pi_C^\perp(y_i)$ diverges. Eventually passing to a subsequence, we can assume that $\frac{\pi_C^\perp(y_i)}{|\pi_C^\perp(y_i)|}\to v$. By Lemma~\ref{lem:gencyl} the line $L(v)$ is contained in $M\cap V^\perp$. Since $C\in\Gamma_m^{n+1}$ then $-\pi_C^\perp(y_i)+\la_i C\in\Gamma_m^{n+1}$. Thus by Lemma~\ref{lem:r1}(i) we get $M\in\mathcal{C}_{0,m}^{n+1}$ and, since $M\cap V^{\perp}$ contains a line, we obtain that $M\in\mathcal{C}_{0,m-1}^{n+1}$. Lemma \ref{lem:r1}(ii) then implies that $M_\infty\in\mathcal{C}_{0,m-1}^{n+1}$. Since $C\in\Gamma_{m}^{n+1}$ by hypothesis, Lemma~\ref{lem:r1}(ii) implies that $C_\infty\in\Gamma_m^{n+1}$. Hence the inclusion $C_\infty\subset M _\infty$ is strict, consequently $\alpha(M)>\alpha(C_\infty)$. Thus by \eqref{eq:isopsolang} we get $I_{M_\infty}(v)>I_{C_\infty}(v)$ for every $v>0$.

We now prove (ii).  Note that each $K_i$ is a local limit in Hausdorff distance of translations of $\la_i C$ and  $C_\infty\subset\la_i C$  for all $i\in\nn$. Then  $C_\infty\subset K_i$ for all $i\in\nn$. Since $K_i \to K$ locally in Hausdorff distance, we get  that $C_\infty\subset K$. As $K _\infty$ is the largest convex cone included in $K$ we have $C_\infty\subset K_\infty$. Since $K_i\in\mathcal{C}_{0,m-1}^{n+1}$ and $K_i \to K$ locally in Hausdorff distance, Lemma~\ref{lem:r1}(i) implies that $K\in\mathcal{C}_{0,m-1}^{n+1}$ and, by Lemma~\ref{lem:r1}(ii), we have $K\in\mathcal{C}_{0,m-1}^{n+1}$. Arguing as in the proof of (i), we get $C_\infty\in\Gamma_m^{n+1}$ and thus obtain the strict inclusion $C_\infty\subset K _\infty$, which gives \eqref{eq:r22} at once.
\end{proof}

\begin{theorem}
\label{thm:optnondeg}
Let $C$ be a convex body with non-degenerate asymptotic cone $C_\infty$. Then
\begin{equation}
\label{eq:icgeicinfty}
I_C(v)\ge I_{C_{\infty}}(v)\qquad \text{for all }v>0
\end{equation}
and
\begin{equation}
\label{eq:optimanondegen}
\lim_{v\to\infty}\frac{I_C(v)}{I_{C_{\infty}}(v)}=1.
\end{equation}
Moreover, isoperimetric regions exist in $C$ for sufficiently large volumes, and any sequence of isoperimetric regions with volumes tending to infinity converges up to a rescaling to a geodesic ball centered at a vertex in the asymptotic cone $C_{\infty}$. 
\end{theorem}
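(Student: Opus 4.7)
The overall strategy is to identify the limit $L := \lim_{v\to\infty} I_C(v)/v^{n/(n+1)}$ with $I_{C_\infty}(1)$; by the cone homogeneity $I_{C_\infty}(v) = I_{C_\infty}(1)\,v^{n/(n+1)}$ from \eqref{eq:isopsolang}, this single identity yields both \eqref{eq:icgeicinfty} and \eqref{eq:optimanondegen} at once. To see that $L$ exists, combine Corollary~\ref{cor:novo} with the scaling identity \eqref{eq:proflac}: for $\lambda<1$, $I_C(v) \ge I_{\lambda C}(v) = \lambda^n I_C(\lambda^{-(n+1)}v)$, which, setting $w=\lambda^{-(n+1)}v$, rearranges to $I_C(w)/w^{n/(n+1)} \le I_C(v)/v^{n/(n+1)}$ for $w \ge v$. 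The upper bound $L \le I_{C_\infty}(1)$ then follows from the scaling-based identity $I_C(v)/v^{n/(n+1)} = I_{v^{-1/(n+1)}C}(1)$, the local Hausdorff convergence $\lambda C \to C_\infty$ as $\lambda \to 0^+$, and the upper semicontinuity of the profile from Proposition~\ref{prp:niceapproxI}.

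The lower bound $L \ge I_{C_\infty}(1)$ is the main obstacle, which I would treat by induction on the integer $m$ such that $C \in \Gamma_m^{n+1}$. The base case $m \le 1$ is immediate since then $C = C_\infty$ (a half-space or the whole space). For the inductive step, fix any sequence $\lambda_j \to 0^+$; by Theorem~\ref{thm:conY} we may write $I_{\lambda_j C}(1) = P_{K_j}(E_j)$ with $K_j \in \{\lambda_j C\} \cup \mathcal{K}(\lambda_j C)$, $|E_j| = 1$, and $E_j \subset K_j$ connected. The local Hausdorff convergence $\lambda_j C \to C_\infty$ combined with the uniform geometry of $C_\infty$ (Example~\ref{ex:convexcones}) gives uniform density and isoperimetric constants along $\{\lambda_j C\}$ via Remark~\ref{rmk:convinradius}, so the diameter bound of Lemma~\ref{lem:one-diam} is uniform in $j$. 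After translating so that $E_j \subset B(0,R)$ with $R$ independent of $j$, passing to subsequences yields $K_j \to K$ locally in Hausdorff and $E_j \to E \subset K$ in $L^1(\rr^{n+1})$ with $|E| = 1$ and $P_K(E) \le \liminf_{j\to\infty} P_{K_j}(E_j) = L$, by Lemma~\ref{lem:antier}.

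Lemmas~\ref{lem:r1} and~\ref{lem:r2} classify $K$ into two disjoint cases: either (a) $K$ is a translate of $C_\infty$, whence $I_K(1) = I_{C_\infty}(1)$; or (b) $K \in \mathcal{C}_{0,m-1}^{n+1}$ with $K_\infty \supsetneq C_\infty$ and $I_{K_\infty}(1) > I_{C_\infty}(1)$. In case (b), $K$ lies in $\Gamma_j^{n+1}$ for some $j < m$, and the inductive hypothesis applied to $K$ (which has non-degenerate asymptotic cone since $K \in \mathcal{C}_0^{n+1}$) yields $I_K(1) \ge I_{K_\infty}(1)$. In either case, $L \ge P_K(E) \ge I_K(1) \ge I_{C_\infty}(1)$, closing the induction.

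For the final assertions, let $E_v \subset K_v$ be a generalized isoperimetric region for $I_C(v)$ with $K_v \in \{C\} \cup \mathcal{K}(C)$; this is a single connected set, by Theorem~\ref{thm:conY}. If $K_v$ were a proper asymptotic cylinder along a subsequence $v \to \infty$, rescaling by $v^{-1/(n+1)}$ and repeating the compactness argument with $v^{-1/(n+1)} K_v \in \mathcal{K}(v^{-1/(n+1)} C)$ would, by Lemma~\ref{lem:r2}(ii), force the limit $\tilde K$ into case (b) above, leading to $L \ge P_{\tilde K}(\tilde E) \ge I_{\tilde K_\infty}(1) > I_{C_\infty}(1) = L$, a contradiction. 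Hence $K_v = C$ for all $v$ sufficiently large, proving existence of isoperimetric regions in $C$ itself. The same rescaling-and-compactness argument applied to these isoperimetric regions produces, as the $L^1$ limit, a unit-volume isoperimetric region in (a translate of) $C_\infty$, which by the classification in \cite{FI} is an intrinsic ball centered at a vertex of $C_\infty$.
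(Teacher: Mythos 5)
Your proof is correct and uses the same core machinery as the paper's argument (the induction on $m$ with $C\in\Gamma_m^{n+1}$, Lemmas~\ref{lem:r1} and~\ref{lem:r2}, the uniform diameter bound from Lemma~\ref{lem:one-diam}, Theorem~\ref{thm:conY} for connectedness of minimizers, and the classification from \cite{FI}), but you streamline the presentation in a way worth noting. You first establish monotonicity of $v\mapsto I_C(v)/v^{n/(n+1)}$ from Corollary~\ref{cor:novo} plus scaling, so that the limit $L$ exists and both \eqref{eq:icgeicinfty} and \eqref{eq:optimanondegen} reduce to the single identity $L=I_{C_\infty}(1)$; the paper instead proves \eqref{eq:icgeicinfty} directly in Step two and extracts \eqref{eq:optimanondegen} afterwards. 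You also collapse the paper's three-way case analysis (bounded $E_i$ in $\lambda_i C$, diverging $E_i$ in $\lambda_i C$, $E_i$ in an asymptotic cylinder) into a two-way dichotomy by translating every $K_j$ from the start so that the isoperimetric region sits in a fixed ball; this is cleaner and loses nothing, since translating by a point of $K_j$ keeps you in $\{\lambda_j C\}\cup\mathcal{K}(\lambda_j C)$ up to translation (Lemma~\ref{lem:zK}) and Lemmas~\ref{lem:r1}(iii), \ref{lem:r2}(i)--(ii) are exactly the dichotomy you state.

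One small point on the uniform diameter bound: citing Remark~\ref{rmk:convinradius} together with Example~\ref{ex:convexcones} is not quite the right justification, since the convergence $\lambda_j C\to C_\infty$ is only local and Remark~\ref{rmk:convinradius} presupposes, rather than yields, the uniform lower bound \eqref{eq:mainhyp} for the sequence. What actually delivers uniformity is the elementary inclusion argument: for every $L\in\{\la C\}\cup\mathcal{K}(\la C)$ with $\la\le 1$ and every $x\in L$, one has $x+C_\infty\subset L$, hence $|\clb_L(x,r_0)|\ge|\clb_{C_\infty}(0,r_0)|>0$, a bound independent of $\la$, $L$, and $x$. This is the content of the paper's display \eqref{eq:br0} in Step one, and it feeds directly into Lemma~\ref{lem:inrad} and then Lemma~\ref{lem:one-diam}. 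With that correction the proof is complete.
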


\begin{proof}
\textit{Step one}. We first show some inequalities involving the isoperimetric profiles of $C$, $C_{\infty}$, and of rescalings of $C$, as well as a uniform bound on the diameter of generalized isoperimetric regions.
We fix $v>0$ and consider a sequence $v_i\uparrow\infty$. Then we define the sequence $\la_i\downarrow 0$ by $\la_i^{n+1}v_i=v$. 
Since $\la_i C\to C_\infty$ locally in Hausdorff distance, Proposition \ref{prp:niceapproxI} implies
\begin{equation}
\label{eq:upnden}
\limsup_{i\to\infty} I_{\la_i C}(v)\le I_{C_\infty}(v)
\end{equation}
for any $v>0$. Owing to Corollary \ref{cor:novo}, inequalities $I_{\la_i C}\le I_C$ hold for large $i$, and taking limits we obtain
\begin{equation}
\label{eq:lC}
\limsup_{i\to\infty}I_{\la_i C}(v)\le I_C(v)
\end{equation}
for all $v>0$.

Let $E_i$ be generalized isoperimetric regions in $\la_i C$ of volume $v$.  Recall that the sets $E_i$ are connected by Theorem~\ref{thm:conY}.  We shall prove that $E_i$ have uniformly bounded diameter.  Fix $r_0>0$ and set
\begin{equation*}
\mathcal{L}=\bigcup_{0<\la\le1}\{\la C\} \cup \mathcal{K}(\la C).
\end{equation*}
By Lemma \ref{lem:one-diam} and Theorem \ref{thm:conY}, it suffices to show that 
\begin{equation}
\label{eq:br0}
\inf_{L\in\mathcal{L}, x\in L}|B_L(x,r_0)|>0.
\end{equation}
Fix $\la>0$ and let $L\in \{\la C\} \cup \mathcal{K}(\la C)$. Recall that $C_\infty$ is the asymptotic cone of $\la C$, and so $C_\infty\subset \la C$. The set $L$ is a local Hausdorff limit of translations of $\la C$. Hence there is a (possibly diverging) sequence $\{z_i\}_{i\in\nn}\subset \la C$ so that $-z_i+\la C\to L$ locally in Hausdorff distance. Since $C_\infty\subset-z_i+ \la_i C$ we get $C_\infty\subset L$. Now let $x\in L$.  As $x+C_\infty\subset L$, we get $\clb_{x+C_\infty}(x,r) \subset \clb_L(x,r)$. Since $C_{\infty}$ is non-degenerate,  we can  pick $\de>0$ and $y\in C_\infty$ so that $B(y,\delta)\subset \clb_{C_\infty}(0,r_0)$. Hence $B(x+y,\delta)\subset \clb_{x+C_\infty}(x,r_0)$, which gives \eqref{eq:br0} at once.

\textit{Step two.} We prove \eqref{eq:icgeicinfty} by an induction argument. Note that \eqref{eq:icgeicinfty} holds trivially in $\mathcal{C}_{0,1}^{n+1}$. Fix $2\le m\le n+1$. Assuming the validity of \eqref{eq:icgeicinfty} for every set in $\mathcal{C}_{0,m-1}^{n+1}$ we shall prove that it holds for any set in $\Gamma_{m}^{n+1}$. Then the proof would follow from \eqref{eq:bonita}. So we assume that
\begin{equation}
\label{eq:induc}
I_{K}\ge I_{K_\infty}\quad\text{for every }K\in \mathcal{C}_{0,m-1}^{n+1}.
\end{equation}
Take $C\in\Gamma_m^{n+1}$ and consider a sequence of generalized isoperimetric regions $E_i$ in $\la_iC$ of volume $v>0$, as in Step one. The sets $E_i$ are connected with uniformly bounded diameter. We shall distinguish three cases.

Case 1: Assume that $E_i\subset\la_i C$ and that $\{E_i\}_{i\in\nn}$ is uniformly bounded.  By Lemma \ref{lem:antier}, there exists a finite perimeter set $E\subset C_\infty$ of volume $v$ so that
\begin{equation}
\label{eq:liminfc1}
 I_{C_\infty}(v)\le\pp_{C_\infty}(E)\le\liminf_{i\to\infty}\pp_{\la_iC}(E_i)=\liminf_{i\to\infty}I_{\la_i C}(v).
\end{equation}
Owing to \eqref{eq:lC} and \eqref{eq:liminfc1} we get $I_{C_\infty}(v)\le I_C(v)$ and this concludes the proof of \eqref{eq:icgeicinfty} in this case.

Case 2: Assume that $E_i\subset\la_iC$ and that $\{E_i\}_{i\in\nn}$ diverges. Let $x_i\in E_i$. If $\pi_C^\perp(x_i)$ converges, then  we get that $-x_i+\la_i C\to -x+C_\infty$ for some $x\in C_\infty$ and this sub-case is reduced to the previous one. So we assume that $\pi_C^\perp(x_i)$ diverges. Passing to a subsequence we may assume that $-x_i+\la_iC\to M$ locally in Hausdorff distance. By Lemma~\ref{lem:r2}(ii) we have $M\in\mathcal{C}_{0,m-1}^{n+1}$. Then
\eqref{eq:induc}  implies
\begin{equation}
\label{eq:c21}
I_M\ge I_{M_\infty}.
\end{equation}
Now by Lemma \ref{lem:antier} there exists a finite perimeter set $E\subset M$ so that
\begin{equation*}
\begin{split}
\label{eq:liminfc2}
 I_M(v)\le\pp_M(E)&\le\liminf_{i\to\infty}\pp_{(-x_i+\la_iC)}(-x_i+E_i)
 \\
 &=\liminf_{i\to\infty}\pp_{\la_iC}(E_i)=\liminf_{i\to\infty}I_{\la_i C}(v).
\end{split}
\end{equation*}
Then \eqref{eq:c21}, \eqref{eq:liminfc2}  and \eqref{eq:r21}  imply                    
\begin{equation*}
  I_{C_\infty}(v)<\liminf_{i\to\infty}I_{\la_i C}(v)
\end{equation*}
which gives a contradiction with \eqref{eq:upnden}. So Case 2 cannot hold.

Case 3. We assume that $E_i\subset K_i$, where $K_i$ is an asymptotic cylinder of $\la_i C$. From Lemma~\ref{lem:r1}(iii) we have the following alternative for each $i$: either $K_i$ is a translation of $\la_i C$, or $K_i\in\mathcal{C}_{0,m-1}^{n+1}$.  If the first possibility holds for infinite $i$, we can treat this case as in the previous ones. If the second holds for infinite $i$, passing to a subsequence we may assume that $K$ is the local limit in Hausdorff distance of the sequence $K_i$. Then Lemma~\ref{lem:r1}(ii) implies that $K\in\mathcal{C}_{0,m-1}^{n+1}$ and \eqref{eq:induc} implies
\begin{equation}
\label{eq:c222}
I_{K_\infty}\le I_{K}.
\end{equation}
Since the sets $\{E_i\}_{i\in\nn}$ have uniformly bounded diameter, Lemma \ref{lem:antier} implies the existence of a finite perimeter set $E\subset K$ with volume $v$ so that
\begin{equation}
\label{eq:liminfc3}
I_K(v)\le \pp_K(E)\le \liminf_{i\to\infty} \pp_{K_i}(E_i)=\liminf_{i\to\infty} I_{K_i}(v)= \liminf_{i\to\infty} I_{\la_i C}(v).
\end{equation}
Then,  \eqref{eq:c222}, \eqref{eq:liminfc3} and  \eqref{eq:r22} imply %
\begin{equation*}
  I_{C_\infty}(v)<\liminf_{i\to\infty}I_{\la_i C}(v),
\end{equation*}
which gives a contradiction with \eqref{eq:upnden} showing that Case 3 cannot hold.

We now prove \eqref{eq:optimanondegen}. Since $C_{\infty}$ is the asymptotic cone of each $\la_i C$ then \eqref{eq:icgeicinfty} holds for every  $\la_i C, i\in\nn$. Taking limits we conclude
\[
I_{C_\infty}(v)\le\liminf_{i\to\infty}I_{\la_i C}(v).
\]
Thus, by \eqref{eq:upnden}, we get
\begin{equation}
\label{eq:optimanondegen7}
I_{C_\infty}(v)=\lim_{i\to\infty}I_{\la_i C}(v).
\end{equation}
From \eqref{eq:optimanondegen7}, Lemma \ref{lem:link I laC I C} and the fact that $C_{\infty}$ is a cone we deduce
\[
1=\lim_{\la\to 0}\frac{I_{\la C}(1)}{I_{C_{\infty}}(1)}=
\lim_{{\la}\to
0}\frac{{\la}^nI_C(1/{\la}^{n+1})}{{\la}^n I_{C_{\infty}}(1/{\la}^{n+1})}=\lim_{v\to\infty}\frac{I_C(v)}{I_{C_{\infty}}(v)},
\]
which shows \eqref{eq:optimanondegen}.

\textit{Step three.} We prove the existence of isoperimetric regions for large volumes. We argue by contradiction assuming that there exists a sequence $v_i\uparrow \infty$ such that no generalized isoperimetric region of volume $v_i$ is realized in $C$. If $v>0$ is fixed and $\la_i\downarrow 0$ is as in Step one, then no generalized isoperimetric regions for volume $v$ are realized in $\la_iC$. Arguing exactly as in Case 3 of Step two, we get a contradiction.

\textit{Step four.} We show the last part of the statement, i.e., that suitable rescalings of sequences of isoperimetric regions with diverging volumes must converge to isoperimetric regions in $C_{\infty}$. To this end we argue again by contradiction. We assume there exists $v>0$ so that a sequence $E_i\subset \la_i C$ of  isoperimetric regions of volume  $v$  diverges. Then arguing  exactly as in Case 2 of Step two we get a contradiction. As a consequence, only Case 1 in Step two holds. Then the sets $E_i$ are uniformly bounded and thus every subsequence of $E_i$ converges to some $E$, where $E\subset C_\infty$ is an isoperimetric region by \eqref{eq:upnden} and \eqref{eq:lC}, hence it must be a geodesic ball centered at some vertex of $C_\infty$, owing to the results in \cite{FI}.%
\end{proof}

From \eqref{eq:optimanondegen} and \eqref{eq:isopsolang} we easily get

\begin{corollary}
\label{cor:nondegen}
Let $C, C'\subset \rr^{n+1}$ be unbounded convex bodies with non-degenerate asymptotic cone  satisfying $\alpha(C_\infty)>\alpha({C'}_\infty)$. Then for $v>0$ sufficiently large we have $I_C(v)>I_{C'}(v)$.
\end{corollary}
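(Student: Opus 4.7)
The plan is to combine the two facts cited just before the corollary: the asymptotic identity \eqref{eq:optimanondegen} from Theorem~\ref{thm:optnondeg}, which says $I_C(v)/I_{C_\infty}(v) \to 1$ and analogously $I_{C'}(v)/I_{C'_\infty}(v) \to 1$ as $v\to\infty$, and the explicit formula \eqref{eq:isopsolang}, which gives
\[
I_{C_\infty}(v) = \alpha(C_\infty)^{1/(n+1)}\,(n+1)^{n/(n+1)}\,v^{n/(n+1)}
\]
and the analogous identity for $C'_\infty$. The ratio $I_{C_\infty}(v)/I_{C'_\infty}(v)$ is therefore a constant equal to $\bigl(\alpha(C_\infty)/\alpha(C'_\infty)\bigr)^{1/(n+1)}$, which is strictly greater than $1$ by the hypothesis $\alpha(C_\infty) > \alpha(C'_\infty)$.

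Concretely, I would factor
\[
\frac{I_C(v)}{I_{C'}(v)} \;=\; \frac{I_C(v)}{I_{C_\infty}(v)} \cdot \frac{I_{C_\infty}(v)}{I_{C'_\infty}(v)} \cdot \frac{I_{C'_\infty}(v)}{I_{C'}(v)}
\]
and pass to the limit as $v\to\infty$. The first and third factors converge to $1$ by Theorem~\ref{thm:optnondeg}, while the middle factor equals the constant $\bigl(\alpha(C_\infty)/\alpha(C'_\infty)\bigr)^{1/(n+1)} > 1$. Therefore
\[
\lim_{v\to\infty}\frac{I_C(v)}{I_{C'}(v)} \;=\; \Bigl(\frac{\alpha(C_\infty)}{\alpha(C'_\infty)}\Bigr)^{1/(n+1)} \;>\; 1,
\]
from which the existence of a threshold $v_0>0$ such that $I_C(v) > I_{C'}(v)$ for every $v \ge v_0$ is immediate.

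There is essentially no obstacle here: all the heavy lifting has been done in Theorem~\ref{thm:optnondeg}. The only minor care needed is to note that, since both asymptotic cones are non-degenerate, $\alpha(C_\infty)$ and $\alpha(C'_\infty)$ are strictly positive so that all the ratios above are well-defined and finite. One could equivalently phrase the argument without ratios: multiplying $I_{C_\infty}(v) > I_{C'_\infty}(v)$ by a factor strictly between $1$ and $\bigl(\alpha(C_\infty)/\alpha(C'_\infty)\bigr)^{1/(n+1)}$ and invoking the two asymptotic equivalences absorbs the $o(v^{n/(n+1)})$ errors for $v$ large.
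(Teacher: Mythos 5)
Your argument is correct and matches the paper's intent exactly: the paper states that the corollary follows immediately from \eqref{eq:optimanondegen} and \eqref{eq:isopsolang}, which are precisely the two ingredients you combine.
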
 

\begin{remark}
\label{rem:nondeg}
Let $C\subset \rr^{n+1}$ be an unbounded convex body. By \eqref{eq:icgeicinfty},  \eqref{eq:solangle}, and the fact that $C_\infty$ is the largest cone included in $C$, we get that if $C$ contains a solid convex cone $K$, then $I_C\ge I_K$.
\end{remark}

\begin{remark}
\label{rem:optnondeg}
Let $C\subset \rr^{n+1}$ be an unbounded convex body with non-degenerate asymptotic cone $C_\infty$. Theorem~\ref{thm:optnondeg} and  \eqref{eq:isopsolang} imply that the isoperimetric dimension of $C$ is $n$. Furthermore 
\[
I_{C_\infty}(1)=\sup\{a>0:\ I_C(v) \ge av^{n/(n+1)} \text{ for every}\,\, v>0 \}
\]
or, equivalently,
\[
I_{C_\infty}(1)=\inf_{v>0}\frac{I_C(v)}{v^{n/(n+1)}}.
\]
\end{remark}

\section{The isoperimetric profile for small volumes}

Theorem \ref{thm:limI} below states that the isoperimetric profile of an  unbounded convex body of uniform geometry is asymptotic to $I_{C_{\min}}$ for small volumes. In the case of a bounded convex body this result was stated and proved in \cite[Thm.~6.6]{MR3335407}. To accomplish this we shall need the following Lemma.
\begin{lemma}
\label{lem:C0<K}
Let $\{L_i\}_{i\in\nn}$ be a sequence of convex bodies converging locally in Hausdorff distance to a  convex body $L$. Assume that $0\in L_i$ for all $i\in\nn$. Let $\la_i$ be a sequence of positive real numbers converging to $+\infty$ and assume that $\la_iL_i$ converges locally in Hausdorff distance to an unbounded convex body $M$. Then $L_0\subset M$.
\end{lemma}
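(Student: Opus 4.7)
By the definition of the tangent cone, $L_0 = \cl{\bigcup_{\la>0}\la L}$, and since $M$ is closed it will suffice to show $\la L \subset M$ for every $\la>0$. The strategy is to approximate each point $\la x \in \la L$ by a sequence of points in $\la_i L_i$ converging to $\la x$, exploiting the convexity of $L_i$ together with the fact that $0 \in L_i$; the ``outer'' half of the Kuratowski criterion applied to the local Hausdorff convergence $\la_i L_i \to M$ will then force $\la x \in M$.

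First, observe that $0 \in L$: indeed, $0 \in L_i$ for every $i$, so the Kuratowski criterion applied to $L_i\to L$ (inside any ball containing the origin) yields $0 \in L$, and the same argument gives $0\in M$. For the main step, fix $\la>0$ and $x\in L$; by Kuratowski there exist $x_i \in L_i$ with $x_i\to x$. For $i$ large enough that $\la_i \ge \la$ we have $\la/\la_i \in [0,1]$, so convexity of $L_i$ together with $0, x_i \in L_i$ gives $(\la/\la_i)x_i \in L_i$, which is equivalent to $\la x_i \in \la_i L_i$. Since $\la x_i \to \la x$, this bounded sequence eventually lies in $\clb(0,R)$ for any fixed $R>|\la x|$.

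To conclude $\la x\in M$, choose $R$ large enough so that also $M\cap B(0,R)\neq\emptyset$ (which holds whenever $R>0$, as $0\in M$). By the assumed local Hausdorff convergence $\la_i L_i \to M$, the compact sets $(\la_i L_i)\cap\clb(0,R)$ converge in Hausdorff distance to $M\cap\clb(0,R)$, and the ``outer'' part of the Kuratowski criterion (exactly as used in the second paragraph of the proof of Lemma~\ref{lem:localconv}) then gives $\la x\in M\cap\clb(0,R)\subset M$. This shows $\bigcup_{\la>0}\la L \subset M$, and taking closures yields $L_0\subset M$, as desired.

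The argument is essentially bookkeeping; the only point demanding care is that $L_i \to L$ and $\la_i L_i \to M$ are only \emph{local} Hausdorff convergences, so Kuratowski must be applied after restricting to a ball large enough to meet $M$ (and hence $\la_i L_i$ for $i$ large). No further obstacle is foreseen, and in particular the hypothesis $\la_i\to+\infty$ is used only to guarantee eventually $\la/\la_i\le 1$ in the convexity step.
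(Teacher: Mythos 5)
Your proof is correct and follows essentially the same route as the paper's: reduce to showing $\bigcup_{\la>0}\la L\subset M$, pick $x_i\in L_i$ with $x_i\to x$, use $\la/\la_i\le 1$ together with $0\in L_i$ and convexity to get $\la x_i=\la_i(\tfrac{\la}{\la_i}x_i)\in\la_i L_i$, and conclude by Kuratowski. Your version simply spells out the Kuratowski bookkeeping for local Hausdorff convergence a bit more explicitly than the paper does.
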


\begin{proof}
As $L_0=\cl{\cup_{\la > 0} \la L}$ and $M$ is a closed set, it is enough to prove that $ \cup_{\la > 0} \la L\subset M$. Take a point $z\in \cup_{\la > 0} \la L$, and also $\la>0,\, z'\in L$ so that  $z=\la z'$. Since $L_i\to L$ locally in Hausdorff distance, there exists a sequence $z_i\in L_i$ so that $z_i\to z'$. Consequently
\begin{equation}
\label{eq:C0<K}
z=\la z'=\lim_{i\to\infty}\la z_i =\lim_{i\to\infty}\la_i \big(\frac{\la}{\la_i} z_i\big). 
\end{equation}
Since $\la_i$ is a diverging sequence, inequality $\frac{\la}{\la_i}<1$ holds for $i$ large enough. Since $z_i\in L_i$ and the sets $L_i$ are convex and contain the origin, we have $\frac{\la}{\la_i} z_i\in L_i$. By \eqref{eq:C0<K} and the local convergence in Hausdorff distance of $\la_iL_i$ to $M$ we conclude that $z\in M$.
\end{proof}

\begin{example}
In general the set $M$ is not a cone and can be different from $L_0$. Take $L:=[0,1]^3\subset\rr^3$, $x_i:=(i^{-1},0,0)$ and $L_i:=-x_i+L$. Then $0\in L_i$ for all $i\in\nn$ and $L_i\to L$ in Hausdorff distance. However, if we take $\la_i:=i$, then $\la_iL_i$ converges locally in Hausdorff distance to the set $[-1,+\infty)\times [0,+\infty)\times [0,+\infty)$, different from $L_0=[0,+\infty)^3$.
\end{example}

\begin{theorem}
\label{thm:limI}
Let $C$ be a convex body (if unbounded, we further assume that it is of uniform geometry). Then $I_{C_{\min}}(v)>0$ for all $v\in (0,|C|)$ and 
\begin{equation}\label{IcIcminvzero}
\lim_{v\to 0}\frac{I_{C}(v)}{I_{C_{\min}}(v)}=1.
\end{equation}
Moreover, any sequence of generalized isoperimetric regions with volumes tending to zero subconverges to a point either in $C$ or in some $K\in \mathcal{K}(C)$, where the minimum of the solid angle function is attained.
\end{theorem}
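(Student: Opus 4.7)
The plan has four ingredients. First, positivity of $I_{C_{\min}}(v)$ follows by combining Lemma \ref{lem:losemcontangcon} (which asserts that the infimum defining $\alpha(C_{\min})$ is attained at a tangent cone $K_p$ with $K\in\{C\}\cup\K(C)$) with the last clause of Proposition \ref{prop:maincond} applied to the uniform-geometry body $K$: this gives $\alpha(K_p)>0$, and \eqref{eq:isopsolang} then yields $I_{C_{\min}}(v)>0$ for every $v>0$. Second, the upper bound $I_C\le I_{C_{\min}}$ (hence $\limsup_{v\to 0} I_C/I_{C_{\min}}\le 1$) is immediate from Proposition \ref{cor:niceapproxK} combined with Proposition \ref{prp:ICleICp} and minimization over all admissible $(K,p)$.

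For the matching lower bound I would blow up at isoperimetric regions. Fix $v_j\to 0^+$; by Theorem \ref{thm:conY} there is a connected generalized isoperimetric region $E_j\subset K_j\in\{C\}\cup\K(C)$ with $|E_j|=v_j$ and $P_{K_j}(E_j)=I_C(v_j)$. Since $I_C(v_j)\le I_H(v_j)<I_{\rr^{n+1}}(v_j)$, the set $E_j$ cannot be an interior Euclidean ball and therefore must touch $\partial K_j$; pick $x_j\in E_j\cap\partial K_j$. Setting $\lambda_j=v_j^{-1/(n+1)}\to\infty$, $L_j=-x_j+K_j$ (so $0\in\partial L_j$), $\tilde K_j=\lambda_j L_j$ and $\tilde E_j=\lambda_j(E_j-x_j)$, one has $|\tilde E_j|=1$ and $P_{\tilde K_j}(\tilde E_j)=I_C(v_j)/v_j^{n/(n+1)}\le I_{C_{\min}}(1)$. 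The Poincar\'e constant of Lemma \ref{lem:inrad}(ii) is scale-invariant, so the density-estimate argument from the proof of Lemma \ref{lem:one-diam} applied to $\tilde E_j\subset\tilde K_j$ provides a uniform upper bound on $\diam(\tilde E_j)$. Blaschke selection and Lemma \ref{lem:antier} then yield, up to a subsequence, $\tilde K_j\to K^*$ locally in Hausdorff distance, $\tilde E_j\to E^*\subset K^*$ in $L^1$ with $|E^*|=1$, and
\[
P_{K^*}(E^*)\le\liminf_{j\to\infty} I_C(v_j)/v_j^{n/(n+1)}.
\]

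The heart of the argument is to prove $P_{K^*}(E^*)\ge I_{C_{\min}}(1)$. A further extraction via Lemmas \ref{lem:cyl}, \ref{lem:zK} and \ref{lem:ciave}(ii) gives $L_j\to L^\infty$ locally in Hausdorff distance, with $L^\infty$ a translate of an element of $\{C\}\cup\K(C)$ and $0\in\partial L^\infty$. Lemma \ref{lem:C0<K} then yields the inclusion $(L^\infty)_0\subset K^*$, while the elementary identity $\lambda L_j\subset (L_j)_0$ for every $\lambda>0$ (valid because $0\in L_j$ and $L_j$ is convex) combined with a further subsequential limit $(L_j)_0\to T^*$ yields $K^*\subset T^*$. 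The defining property of $C_{\min}$ gives $\alpha((L^\infty)_0)\ge\alpha(C_{\min})$. A direct analysis --- sandwiching $K^*$ between $(L^\infty)_0$ and $T^*$ and tracking the supporting hyperplanes of $L_j$ at $0$ through the dilation --- shows that $K^*$ is itself a closed convex cone (with vertex possibly not at the origin, since $\lambda_j|x_j|$ may tend to any value in $[0,\infty]$) whose solid angle at its vertex is at least $\alpha((L^\infty)_0)$. By \eqref{eq:isopsolang} this gives $I_{K^*}(1)\ge I_{C_{\min}}(1)$, and combining with $P_{K^*}(E^*)\ge I_{K^*}(1)$ closes the lower bound and proves \eqref{IcIcminvzero}. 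The concentration statement then follows because equality in the limit forces $\alpha((L^\infty)_0)=\alpha(C_{\min})$, so $(L^\infty)_0$ attains the minimum of the solid angle function and the concentration point is the vertex of this minimizing tangent cone inside $L^\infty\in\{C\}\cup\K(C)$.

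The main technical obstacle is the identification of $K^*$: classically, the blow-up of a single convex body at a boundary point is a cone with vertex at the origin, but here the joint limit of $L_j\to L^\infty$ together with the dilation $\lambda_j\to\infty$ can leave a residual displacement of the vertex. Controlling this shift via the supporting-hyperplane structure of $L_j$ at $0$, and verifying that the intrinsic solid angle does not decrease in the limit, is the step requiring the most care.
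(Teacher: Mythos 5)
Your plan follows the same blow-up strategy as the paper, but the lower bound is where a genuine gap appears. After establishing $(L^\infty)_0\subset K^*$ (via Lemma~\ref{lem:C0<K}) and $\alpha((L^\infty)_0)\ge\alpha(C_{\min})$ (via Lemma~\ref{lem:losemcontangcon}/Remark~\ref{rem:ICmin}), you invoke an unproven ``direct analysis'' to assert that $K^*$ is itself a closed convex cone with solid angle at least $\alpha((L^\infty)_0)$. That assertion is not only unjustified but may in fact be false: if the curvature of $\partial K_j$ at $x_j$ scales like $1/\lambda_j$, the blow-up $K^*=\lim\lambda_j L_j$ can be a smooth paraboloid-like convex body strictly between the tangent cone $(L^\infty)_0$ and the cone limit $T^*$, hence not a cone at all. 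Moreover the step is unnecessary. The paper closes the argument in a single move that you almost state yourself: once $(L^\infty)_0\subset K^*$, apply Remark~\ref{rem:nondeg} (which rests on Theorem~\ref{thm:optnondeg} and the fact that a solid cone in a convex body is contained, up to translation, in its asymptotic cone) to get $I_{K^*}\ge I_{(L^\infty)_0}$, and then \eqref{eq:isopsolang} plus $\alpha((L^\infty)_0)\ge\alpha(C_{\min})$ gives $I_{K^*}(1)\ge I_{C_{\min}}(1)$ directly, with no claim about the shape of $K^*$.

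Two smaller points. First, your diameter bound leans on the phrase ``the Poincar\'e constant of Lemma~\ref{lem:inrad}(ii) is scale-invariant,'' which needs unpacking: the constant $M$ there depends on $r_0/b(r_0)$, and $b(r_0)$ is a quantity attached to a specific convex body, not obviously stable under the rescalings $\lambda\ge 1$ and under passing to asymptotic cylinders. The paper handles this by proving the uniform estimate $\inf\{|B_M(x,r_0)| : M\in\{\lambda C\}\cup\mathcal{K}(\lambda C),\ \lambda\ge 1,\ x\in M\}>0$, using the homothety inclusion $h_{x,\lambda^{-1}}(M)\subset M$ to compare $b_{\lambda C}(r_0)$ with $b_C(r_0)$. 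Your claim is correct in spirit but should be made precise along these lines. Second, choosing $x_j$ on $\partial K_j$ is a reasonable normalization not used by the paper (which translates so that $0\in E_i$ and lets Lemma~\ref{lem:C0<K} do the work regardless of whether $0$ is a boundary point); your choice is fine provided you verify, via the inner-parallel-body argument, that $0\in\partial L^\infty$ persists in the limit so that $(L^\infty)_0$ is a genuine tangent cone and not all of $\rr^{n+1}$ — although even if it were $\rr^{n+1}$, the sandwich argument with the upper bound $I_C\le I_{C_{\min}}$ still forces the conclusion, as in the paper's Case 1.
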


\begin{proof}
First observe that $I_{C_{\min}}(v)>0$ for all $v\in (0,|C|)$. This is trivial in the bounded case and, in the unbounded uniform geometry case, it follows from Proposition~\ref{prop:maincond} and \eqref{eq:isopsolang}.

Let $v_i\downarrow 0$ and $E_i$ be generalized isoperimetric regions of volumes $v_i$ in $C$, for $i\in\nn$.
 Let $\la_i\uparrow\infty$ so that, for all $i\in\nn$, $\vol{\la_i E_i}=1$.  %
Then $\la_iE_i$ are generalized isoperimetric regions of volume $1$ in $\la_iC$.  Recall that the sets $E_i$ are connected by Theorem \ref{thm:conY}.

First we prove that the sets $\la_i E_i$ have uniformly bounded diameter.  Fix $r_0>$ and set
\begin{equation*}
\mathcal{M}=\bigcup_{\la\ge 1}\{\la C\} \cup \mathcal{K}(\la C).
\end{equation*}
By Lemma \ref{lem:one-diam} and Theorem \ref{thm:conY}, it suffices to prove that 
\begin{equation}
\label{eq:bra}
\inf_{M\in\mathcal{M}, x\in M}|B_M(x,r_0)|>0.
\end{equation}
Fix  $\la>1$ and let $M\in \{\la C\} \cup \mathcal{K}(\la C)$. Since $C$ is of bounded geometry, Proposition \ref{prop:maincond} yields 
\begin{equation*}
b(r_0)=\inf_{x\in C}|B_C(x,r_0)|>0.
\end{equation*}
If $K\in\mathcal{K}( C)$  then it is a local limit in Hausdorff distance of translations of $ C$. Hence 
\begin{equation}
\label{eq:brc}
\inf_{x\in K}|B_K(x,r_0)|\ge b(r_0).
\end{equation}
If $M\in\mathcal{K}(\la C)$, with $\la>1$, then $M=\la K$ for some $K\in\mathcal{K}( C)$.  Since $M$ is convex then $h_{x,\la^{-1}}(M)\subset M$, for every $x\in M$.  Consequently 
\begin{equation}
\label{eq:brcb}
|B_M(x,r_0)|\ge |B(x,r_0)\cap h_{x,\la^{-1}}(M)|
\end{equation}
As  $h_{x,\la^{-1}}(M)$ is isometric to $K$ then \eqref{eq:brc} and \eqref{eq:brcb}
imply
\begin{equation*}
\inf_{x\in M}|B_M(x,r_0)|\ge b(r_0).
\end{equation*}
This concludes the proof of $\eqref{eq:bra}$ 

Since $\{\la_i E_i\}_{i\in\nn}$ has uniformly bounded diameter we shall distinguish two cases.

Case 1. Assume that $E_i$ is contained in $K_i\in \mathcal{K}(C)$ for infinitely many indices $i$. Possibly passing to a subsequence we may assume that $K_i\to K$  and $\la_i K_i\to K'$ in local Hausdorff distance. Applying Lemma \ref{lem:ciave}(ii) for the particular case $C_i=C$ we get that $ K\in \mathcal{K}(C)$. %
By Lemma~\ref{lem:zK} we may assume that $0\in E_i$ for all $i$. As $\{\diam{\la_i E_i}\}_{i\in\nn}$ is uniformly bounded, Lemma \ref{lem:antier} yields  a finite perimeter set $E\subset K'$, with $\vol{E}=1$,  such that
\begin{equation}
\label{eq:clca13a}
I_{K'}(1)\le\pp_{K'}(E)\le\liminf_{i\to\infty}\pp_{\la_i K_i}(\la_i E_i).
\end{equation}
Now by Lemma \ref{lem:C0<K} and Remark \ref{rem:nondeg}  we get 
\begin{equation}
\label{eq:clca13b}
I_{K_0}\le I_{K'},
\end{equation}
where $K_0$ is the tangent cone of $K$ at $0$. Since $\la_i E_i\subset \la_i K_i$ are isoperimetric regions of volume 1, we get by  \eqref{eq:clca13a} and \eqref{eq:clca13b},
\begin{equation}
\label{eq:clca13c}
I_{K_0}(1)\le \liminf_{i\to\infty}I_{\la_i K_i}(1).
\end{equation}
Owing to \eqref{eq:clca13c}, \eqref{eq:isopsolang}, Lemma \ref{lem:link I laC I C}, the fact that $I_{K_i}(v_i)=I_C(v_i)$, and that $\la K_{0}=K_0$ we obtain
\begin{align*}
\liminf_{i\to\infty}\frac{I_C(v_i)}{ I_{ K_0}(v_i)}&=\liminf_{i\to\infty}\frac{{\la}_i^nI_C(1/{\la}_i^{n+1})}{{\la}_i^n I_{K_{0}}(1/{\la}_i^{n+1})}
\\
&=\liminf_{i\to\infty}\frac{I_{\la_i C}(1)}{I_{K_{0}}(1)}=\liminf_{i\to\infty}\frac{I_{\la_i K_i}(1)}{I_{K_{0}}(1)}\ge 1.
\end{align*}
Owing to Remark \ref{rem:ICmin} we have
\begin{equation*}
I_{C_{\min}}\le I_{K_0}.
\end{equation*}
Thus
\begin{equation*}
\limsup_{i\to\infty}\frac{I_C(v_i)}{ I_{ K_0}(v_i)}\le
\limsup_{i\to\infty}\frac{I_C(v_i)}{I_{C_{\min}}(v_i)}\le 
1\le \liminf_{i\to\infty}\frac{I_C(v_i)}{ I_{ K_0}(v_i)},
\end{equation*}
consequently
\begin{equation*}
\lim_{i\to\infty}\frac{I_C(v_i)}{ I_{C_{\min}}(v_i)}=1.
\end{equation*}

Case 2. Assume that $E_i$ is contained in $C$ for infinitely many indices $i$. Let $x_i\in E_i$ be such that $(-x_i+ C)\to K$ locally in Hausdorff distance, up to a subsequence. If $\{x_i\}_{i\in\nn}$ subconverges to $x\in C$, then $K=-x+C$. Otherwise $\{x_i\}_{i\in\nn}$ is unbounded and, by the definition of asymptotic cylinder, we get that $ K\in \mathcal{K}(C)$. Possibly passing to a subsequence, $\la_i(-x_i+ C)\to K'$ locally in Hausdorff distance. Now by Lemma \ref{lem:C0<K} and  Remark \ref{rem:nondeg} we get 
\begin{equation*}
I_{K_0}\le I_{K'}.
\end{equation*}
 Arguing as in the previous case we get a finite perimeter set $E\subset K'$ with $\vol{E}=1$, such that 
\begin{equation*}
\begin{split}
\label{eq:clca12a}
I_{K_0}(1)&\le I_{K'}(1)\le\pp_{K'}(E)
\\
&\le \lim_{i\to\infty}\pp_{\la_i(-x_i+ C)}(\la_i(-x_i+  E_i))=\lim_{i\to\infty}I_{\la_i C}(1).
\end{split}
\end{equation*}
Now we continue as in the final part of the proof of step one to conclude the proof of \eqref{IcIcminvzero}. The proof of the last part of the statement is a direct consequence of the previous arguments and from the fact that that $\diam(E_i)\to 0$ since $\{\diam(\la_i E_i)\}_{i\in\nn}$ is bounded. 
\end{proof}

From the Theorem~\ref{thm:limI} and \eqref{eq:isopsolang} we easily get

\begin{corollary}
\label{cor:smalvo}
Let $C, C'\subset \rr^{n+1}$ be unbounded convex bodies  satisfying
\[
\alpha(C_{\min})<\alpha(C'_{\min}). 
\]
Then for sufficient small volumes we have 
\[
I_C<I_{C'}.
\]
\end{corollary}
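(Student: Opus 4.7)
The plan is to argue directly from the two ingredients cited in the statement, namely Theorem~\ref{thm:limI} and the explicit formula \eqref{eq:isopsolang} for the isoperimetric profile of a convex cone, by comparing the small-volume asymptotics of $I_C$ and $I_{C'}$ through their respective minimal tangent cones.

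First, I would invoke \eqref{eq:isopsolang} to express the profiles of the minimal tangent cones explicitly:
\[
I_{C_{\min}}(v) = \alpha(C_{\min})^{1/(n+1)}(n+1)^{n/(n+1)}\,v^{n/(n+1)},
\]
and analogously for $I_{C'_{\min}}$. Taking the ratio gives
\[
\frac{I_{C_{\min}}(v)}{I_{C'_{\min}}(v)} = \left(\frac{\alpha(C_{\min})}{\alpha(C'_{\min})}\right)^{1/(n+1)} =: \rho,
\]
which is a constant strictly less than $1$ by the hypothesis $\alpha(C_{\min})<\alpha(C'_{\min})$.

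Next, I would apply Theorem~\ref{thm:limI} separately to $C$ and to $C'$ to obtain
\[
\lim_{v\to 0}\frac{I_C(v)}{I_{C_{\min}}(v)} = 1 = \lim_{v\to 0}\frac{I_{C'}(v)}{I_{C'_{\min}}(v)}.
\]
Combining this with the previous display yields
\[
\lim_{v\to 0}\frac{I_C(v)}{I_{C'}(v)} = \lim_{v\to 0}\frac{I_C(v)}{I_{C_{\min}}(v)}\cdot\frac{I_{C_{\min}}(v)}{I_{C'_{\min}}(v)}\cdot\frac{I_{C'_{\min}}(v)}{I_{C'}(v)} = \rho < 1.
\]
Hence there exists $v_1>0$ such that $I_C(v)/I_{C'}(v)<1$ for all $v\in(0,v_1)$, which is the desired conclusion.

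I do not foresee any significant obstacle here: once Theorem~\ref{thm:limI} is in hand the statement reduces to the observation that the solid-angle-determined profiles of two cones are ordered pointwise according to the ordering of their solid angles, together with an elementary limit computation. The only minor point to verify is that both $I_{C_{\min}}(v)$ and $I_{C'_{\min}}(v)$ are strictly positive for small $v$ so that the ratios are well defined, which is guaranteed by the first assertion of Theorem~\ref{thm:limI} (or equivalently by $\alpha(C_{\min}),\alpha(C'_{\min})>0$, a consequence of Proposition~\ref{prop:maincond} in the uniform geometry case and trivial in the bounded case).
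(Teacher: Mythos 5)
Your proof is correct and is exactly the argument the paper intends: it combines the explicit formula \eqref{eq:isopsolang} for the profile of a convex cone with Theorem~\ref{thm:limI} applied to $C$ and $C'$ separately, yielding $\lim_{v\to 0}I_C(v)/I_{C'}(v)=(\alpha(C_{\min})/\alpha(C'_{\min}))^{1/(n+1)}<1$. Your closing remark about positivity of the cone profiles is a reasonable sanity check, though note that the corollary, following Theorem~\ref{thm:limI}, implicitly assumes uniform geometry, which makes that positivity automatic.
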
 

Note that in a polytope or a prism, i.e the product of a polytope with a Euclidean space, isoperimetric regions exist, for all volumes, by compactness and Corollary \ref{cor:antepol} respectively. %
\begin{corollary}[{\cite[Theorem 6.8]{MR3335407}, \cite[Theorem 3.8]{MR3385175}}]
\label{cor:pol}
Let $C\subset\rr^{n+1}$ be a polytope or a prism. Then for sufficient small volumes isoperimetric regions are geodesic balls centered at vertices of the tangent cone with minimum solid angle.
\end{corollary}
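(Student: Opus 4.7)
The plan is to combine Theorem~\ref{thm:limI} with the locally-conical structure of polytopes and prisms, and to reduce the problem to the known classification of isoperimetric regions in convex cones.

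First, I would note that both polytopes and prisms admit isoperimetric regions for every positive volume: for a bounded polytope $C$ this follows from lower semicontinuity of the relative perimeter plus compactness, while for a prism $C=P\times\rr^k$ it is given by Corollary~\ref{cor:antepol}. A prism is cylindrically bounded, hence of uniform geometry by Example~\ref{ex:cylindrically}, and every asymptotic cylinder of $C$ is a translate of $C$ itself; thus Theorem~\ref{thm:conY} guarantees that every generalized isoperimetric region is, up to a rigid motion, a connected set $E\subset C$. The structural fact I will exploit is that both $C$'s are \emph{locally conical} at every boundary point: for each $x\in\partial C$ there is $\rho(x)>0$ with $C\cap B(x,\rho(x))=(x+C_x)\cap B(x,\rho(x))$, since $C$ is cut out by finitely many half-spaces and only those through $x$ are active near $x$. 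Consequently the solid angle $\alpha(C_x)$ takes only finitely many values, its minimum is attained at some vertex $v^\ast$ of (the polytope factor of)~$C$, and $C_{\min}=C_{v^\ast}$ up to isometry.

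Next, I would argue by contradiction: suppose there exists a sequence $v_i\downarrow 0$ and isoperimetric regions $E_i\subset C$ of volume $v_i$ that are not geodesic balls centered at a vertex of minimum solid angle. By Theorem~\ref{thm:limI}, along a subsequence $E_i$ converges to a point $p\in C$ (the polytope case is automatic; the prism case is handled by translating back from an asymptotic cylinder) where $\alpha(C_{\min})$ is attained. By the finiteness observation above, $p$ must be a vertex of minimum solid angle, which after a rigid motion we identify with $v^\ast$. Moreover, the rescalings $\lambda_iE_i$ appearing in the proof of Theorem~\ref{thm:limI} have uniformly bounded diameter (by Lemma~\ref{lem:one-diam} combined with Theorem~\ref{thm:conY}), so $\diam(E_i)=O(\lambda_i^{-1})\to 0$; for $i$ large we therefore have $E_i\subset B(v^\ast,\rho(v^\ast))$. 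Inside this ball $C$ agrees with the tangent cone $K:=v^\ast+C_{v^\ast}$, so
\[
P_K(E_i)=P_C(E_i)=I_C(v_i).
\]
Proposition~\ref{prp:ICleICp} gives $I_C(v_i)\le I_{C_{v^\ast}}(v_i)=I_K(v_i)$, and since $E_i\subset K$ the reverse inequality $P_K(E_i)\ge I_K(v_i)$ is trivial. Hence $P_K(E_i)=I_K(v_i)$, i.e., $E_i$ is an isoperimetric region of volume $v_i$ in the convex cone $K$. By the classification of isoperimetric regions in convex cones (Lions--Pacella \cite{lions-pacella} for existence, Figalli--Indrei \cite{FI} for uniqueness), $E_i$ must coincide with the geodesic ball $\clb_K(v^\ast,r_i)$ for an appropriate $r_i>0$, contradicting the assumption.

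The main obstacle is the transfer of the rescaled subconvergence provided by Theorem~\ref{thm:limI} into genuine spatial confinement of the sets $E_i$ inside a locally-conical neighborhood of a vertex realizing $\alpha(C_{\min})$. Once one exploits the diameter bound from Lemma~\ref{lem:one-diam} to get $\diam(E_i)\to 0$, and uses the finite stratification of tangent cones in a polytope/prism to pin down $p$ as a minimum-angle vertex, the rest is the cone rigidity of \cite{FI}.
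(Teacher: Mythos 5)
Your proposal matches the paper's proof essentially step for step: collapse to a minimum-angle vertex via Theorem~\ref{thm:limI}, confinement of $E_i$ in the tangent cone $C_p$ for large $i$ using the locally conical structure of polytopes and prisms (and the vanishing diameter from the rescaled compactness), and then the Figalli--Indrei uniqueness of isoperimetric regions in convex cones. You spell out some intermediate facts (existence, connectedness, the diameter estimate from Lemma~\ref{lem:one-diam}) that the paper leaves implicit, but the strategy and the key ingredients are the same.
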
 
\begin{proof}
According to Theorem \ref{thm:limI}, a sequence $\{E_i\}_{i\in\nn}$ of isoperimetric regions of volumes going to zero collapses to $p$, where $C_p$ attains the minimum of the solid angle function. Since $C$ is a polytope or a prism  then, for sufficient  large $i\in\nn$, $E_i\subset C_p$. Then the proof follows by the fact that the only isoperimetric regions in $C_p$ are geodesic balls centered at $p$, see \cite{FI}.
 \end{proof}

\begin{corollary}[{\cite{pedri}}]
\label{cor:slab}
Let $C\subset\rr^{n+1}$ be a slab. Then for sufficiently small volumes isoperimetric regions are half-balls.
\end{corollary}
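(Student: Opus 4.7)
The plan is to reduce to the classical isoperimetric inequality in a half-space via Theorem~\ref{thm:limI}, exactly parallel to the argument used for Corollary~\ref{cor:pol}. Writing $C=\rr^n\times[0,h]$ up to isometry, I would first collect three structural observations about the slab: existence of isoperimetric regions for every volume follows from Corollary~\ref{cor:antepol} since a slab is a prism; every asymptotic cylinder of $C$ is a translate (hence an isometric copy) of $C$, because a divergent sequence $\{x_i\}_{i\in\nn}\subset C$ necessarily diverges only in the $\rr^n$-direction, its last coordinate remaining in $[0,h]$; and the tangent cone $C_p$ at any boundary point $p$ of $C$ (or of any asymptotic cylinder) is a closed half-space $H$, which is therefore exactly $C_{\min}$.

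Given $v_i\downarrow 0$ and isoperimetric regions $E_i$ of volume $v_i$, Theorem~\ref{thm:conY} together with the isometric identification of asymptotic cylinders with $C$ lets me assume $E_i\subset C$. Theorem~\ref{thm:limI} then produces a boundary point $p\in\ptl C$ to which $E_i$ subconverges, and Lemma~\ref{lem:one-diam} applied to the rescalings $\la_i E_i$ with $\la_i^{n+1}v_i=1$ forces $\diam(E_i)\to 0$. In particular, for $i$ large enough, $E_i\subset B(p,r)$ for some $r<h$, and on such a ball $C$ coincides with $H=C_p$. Consequently $P_C(E_i)=P_H(E_i)$.

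The conclusion then follows from the chain
\begin{equation*}
I_H(v_i)\le P_H(E_i)=P_C(E_i)=I_C(v_i)\le I_H(v_i),
\end{equation*}
where the last inequality is Remark~\ref{rem:half-plane}: all inequalities are equalities, so $E_i$ is isoperimetric in $H$, and hence a half-ball by the classical characterization of isoperimetric regions in a half-space (Schwarz reflection across $\ptl H$ combined with the Euclidean isoperimetric inequality). The step requiring the most care is the uniform diameter decay $\diam(E_i)\to 0$: it relies on the fact that Lemma~\ref{lem:one-diam} applies uniformly along the family $\{\la_i C\}_{i\in\nn}$, which in turn rests on the local Hausdorff convergence $\la_i C\to H$ and on the stability of the uniform geometry constants recorded in Remark~\ref{rmk:convinradius}; once this uniform diameter bound is in hand, the localization and the half-space reduction run automatically.
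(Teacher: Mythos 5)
Your proposal is correct and follows essentially the same path as the paper's own proof: existence for all volumes via Corollary~\ref{cor:antepol}, the diameter decay $\diam(E_i)\to 0$ extracted from the proof of Theorem~\ref{thm:limI} (via Lemma~\ref{lem:one-diam} applied to the rescalings), containment of $E_i$ in a half-space for $i$ large, and the half-space characterization of isoperimetric regions as half-balls. The only inessential difference is that you route through Theorem~\ref{thm:conY} and the classification of asymptotic cylinders to place $E_i$ in $C$, whereas the paper gets this directly from Corollary~\ref{cor:antepol}; and you spell out the chain $I_H(v_i)\le P_H(E_i)=P_C(E_i)=I_C(v_i)\le I_H(v_i)$ which the paper leaves implicit.
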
 

\begin{proof}
Since $C$ is the product of a segment with a Euclidean space then Corollary \ref{cor:antepol} implies that isoperimetric regions exist for all volumes. Since $C$ is a slab then all points on the boundary of $C$ attain the minimum of the solid angle function. As shown in the proof of Theorem \ref{thm:limI}, the diameter of a sequence $\{E_i\}_{i\in\nn} $ of isoperimetric regions of volumes going to zero, also goes to zero. Consequently,  for sufficient  large $i\in\nn$, $E_i$ belong to a half-space and since in a half-space the only isoperimetric regions are half-balls, the proof follows.
 \end{proof}

\begin{remark}
\label{rem:slab}
In \cite{pedri} Pedrosa and Ritoré completely solved the isoperimetric problem in a slab of $\rr^{n+1}$ by means of Alexandrov reflection and the characterization of stable free boundary hypersurfaces of revolution connecting two parallel hyperplanes. They showed that up to dimension $n+1=8$ the only isoperimetric regions are half-balls and tubes. The case $n+1=9$ is still undecided, while for $n+1\ge 10$ isoperimetric regions of undouloid type may appear. See the remarks after Proposition~5.3 in \cite{pedri}.
\end{remark} 

\section{Isoperimetric rigidity}
\label{sec:isoprigid}

We consider the following, general question: assuming that a relative isoperimetric inequality holds for a convex body $C$, does equality for some prescribed volume imply some geometric characterization of $C$? Whenever this happens, we will say that the isoperimetric inequality is \emph{rigid}. In the following we will provide answers to this question in various cases of interest, see Theorems \ref{thm:novo}, \ref{thm:rigid1-a}, and \ref{thm:rigid1-c}, and Corollary~\ref{cor:rigid1-b}. A key tool for proving these rigidity results is Theorem \ref{thm:limI}. The first rigidity result we present is Theorem \ref{thm:novo}, which can be seen as a refinement of Theorem \ref{thm:optnondeg}.

\begin{theorem}
\label{thm:novo}
Let $C$ be a convex body  with non-degenerate asymptotic cone $C_\infty$. If equality holds in the isoperimetric inequality \eqref{eq:icgeicinfty} for some volume $v_{0}>0$, that is $I_{C_{\infty}}(v_{0})=I_{C}(v_{0}) $, then $C$ is isometric to $C_{\infty}$.
\end{theorem}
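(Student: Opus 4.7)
The plan is to first upgrade the single-volume equality $I_C(v_0) = I_{C_\infty}(v_0)$ to the global identity $I_C \equiv I_{C_\infty}$ on $(0,+\infty)$, and then exploit Theorem~\ref{thm:limI} to locate a point whose tangent cone is a translate of $C_\infty$. Since $C_\infty$ is a cone, \eqref{eq:isopsolang} makes $Y_{C_\infty} := I_{C_\infty}^{(n+1)/n}$ the linear function $v \mapsto I_{C_\infty}(1)^{(n+1)/n}\, v$. By Theorem~\ref{thm:conY}, $Y_C$ is concave on $[0,+\infty)$ with $Y_C(0)=0$, and by Theorem~\ref{thm:optnondeg}, $Y_C \geq Y_{C_\infty}$. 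Concavity together with $Y_C(0)=0$ forces $v \mapsto Y_C(v)/v$ to be non-increasing; combined with the pointwise inequality and the hypothesis $Y_C(v_0)=Y_{C_\infty}(v_0)$, this pins $Y_C(v)/v$ to the slope of $Y_{C_\infty}$ for $v \geq v_0$, and a standard chord argument using $Y_C \geq Y_{C_\infty}$ propagates the equality to $(0,v_0]$. Hence $I_C \equiv I_{C_\infty}$.

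Theorem~\ref{thm:limI} then gives $I_C(v)/I_{C_{\min}}(v)\to 1$ as $v\to 0^+$; combined with $I_C = I_{C_\infty}$ and the common $v^{n/(n+1)}$-homogeneity of both profiles, this yields $\alpha(C_\infty) = \alpha(C_{\min})$. By Lemma~\ref{lem:losemcontangcon} there exist $K \in \{C\} \cup \mathcal{K}(C)$ and $p \in K$ realizing the minimum: $\alpha(K_p) = \alpha(C_\infty)$. One always has $p + C_\infty \subset K$: trivially if $K=C$, and for $K = \lim(-x_i + C)$ an asymptotic cylinder one uses $x_i + z \in C$ for every $z \in C_\infty$ (so that $z$ lies in each $-x_i + C$, hence in $K$), then translates this via a Kuratowski-limit argument on points $q_i \to q$ with $x_i + q_i \in C$ to obtain $q + C_\infty \subset K$. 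Hence $C_\infty \subset -p + K_p$; since both sides are full-dimensional closed convex cones with the same solid angle, the standard cone rigidity (strict inclusion would produce a relatively open subset of the unit sphere of positive measure) forces $-p + K_p = C_\infty$. Thus $K_p = p + C_\infty$, and the sandwich $p + C_\infty \subset K \subset K_p$ gives $K = p + C_\infty$.

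If $K = C$, the chain $C \subset C_p = p + C_\infty \subset C$ yields $C = p + C_\infty$, producing the desired isometry. The main obstacle is the remaining case in which $K$ is a proper asymptotic cylinder, since then the identity $K = p + C_\infty$ only describes an object at infinity. Here $K$ is a convex cylinder (Lemma~\ref{lem:cyl}), so $C_\infty$ contains a non-trivial linear subspace $V$, forcing the decomposition $C = V \oplus \tilde C$ with $\tilde C$ line-free and $\tilde C_\infty$ non-degenerate. The plan is then to reduce the rigidity problem to $\tilde C$ and $\tilde C_\infty$: since every asymptotic cylinder of a convex body contains a line while $\tilde C_\infty$ is line-free, the asymptotic-cylinder subcase cannot recur for $\tilde C$, so the first subcase applies and gives $\tilde C = q + \tilde C_\infty$. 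Consequently $C = V \oplus \tilde C$ is isometric to $V \oplus \tilde C_\infty = C_\infty$.
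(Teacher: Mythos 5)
Your first half matches the paper's: propagate $I_C(v_0)=I_{C_\infty}(v_0)$ to $I_C=I_{C_\infty}$ on $(0,v_0]$ via concavity of $Y_C$ and linearity of $Y_{C_\infty}$, invoke Theorem~\ref{thm:limI} to get $\alpha(C_{\min})=\alpha(C_\infty)$, pick $(K,p)$ from Lemma~\ref{lem:losemcontangcon}, and note $p+C_\infty\subset K\subset K_p$. Your derivation of $K_p=p+C_\infty$ (hence $K=p+C_\infty$) from cone rigidity under equal solid angle is correct and a clean observation. The case $K=C$ is then handled exactly as in the paper.

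The gap is in the case where $K\in\mathcal K(C)$ is not a translate of $C$. You decompose $C=V\oplus\tilde C$ and assert that ``the first subcase applies and gives $\tilde C=q+\tilde C_\infty$.'' But re-running your argument on $\tilde C$ requires as input the equality $\alpha(\tilde C_{\min})=\alpha(\tilde C_\infty)$ (equivalently $I_{\tilde C_{\min}}=I_{\tilde C_\infty}$), and this is not a restatement of $\alpha(C_{\min})=\alpha(C_\infty)$: the two live in different dimensions, and there is no direct relation between $I_C$ and $I_{\tilde C}$. What you implicitly need is the product formula $\alpha(V\oplus L)=c_{n,\dim V}\,\alpha(L)$ for cones $L\subset V^\perp$ (which follows from slicing $\esf^n$ by $|w|$, $w\in V^\perp$), together with the facts that every element of $\{C\}\cup\mathcal K(C)$ is, up to translation, $V\oplus L'$ for some $L'\in\{\tilde C\}\cup\mathcal K(\tilde C)$, and that $(V\oplus L')_{(q_V,q')}=V\oplus L'_{q'}$. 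None of this is supplied, so as written the reduction step is not justified.

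The irony is that you do not need the reduction at all: once you have $K=p+C_\infty$, the cylinder case is immediately self-contradictory. If $K$ arises from a sequence $x_i\in C=V_0\oplus\tilde C$ whose line-free component $\pi_C^\perp(x_i)$ diverges, then Lemma~\ref{lem:gencyl} forces the lineality space of $K$ to contain $V_0\oplus L(v)$ for a unit vector $v\in V_0^\perp$, i.e.\ a subspace of dimension strictly larger than $\dim V_0$; but $K=p+C_\infty$ has lineality space exactly $V_0$. (If instead $\pi_C^\perp(x_i)$ converges, $K$ is a translate of $C$ and the ``$K=C$'' argument applies verbatim.) This is also precisely what the paper extracts from Lemma~\ref{lem:r2}(ii), namely the strict inequality $\alpha(K_\infty)>\alpha(C_\infty)$ for a proper asymptotic cylinder; your $K=p+C_\infty$ identity makes the contradiction even more direct and avoids the recursion entirely.
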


\begin{proof}
Assume that $I_C(v_0)=I_{C_{\infty}}(v_0)$ for some $v_0>0$. Since $Y_C$ is concave by Theorem \ref{thm:conY}, $Y_{C_{\infty}}$ is linear by \eqref{eq:isopsolang}, and $Y_C\ge Y_{C_{\infty}}$ by \eqref{eq:icgeicinfty}, the function  $Y_C- Y_{C_{\infty}}:\rr^+\to\rr^+$  is concave and non-negative, thus non-decreasing. Hence $Y_C(v_0)=Y_{C_{\infty}}(v_0)$ implies $Y_C(v)-Y_{C_\infty}(v)=0$ for all $v\le v_0$. Thus
\begin{equation}
\label{Iciinf}
I_C(v)=I_{C_{\infty}}(v),\qquad\text{for all}\ v\le v_0.
\end{equation}
By Lemma \ref{lem:losemcontangcon}  there exist $K\in\{C\}\cup\mathcal{K}(C)$ and $p\in K$ so that $I_{K_p}=I_{C_{\min}}$. 
Assume first that $K$ is an asymptotic cylinder of $C$ not isometric to $C$. As $ K_\infty\subset K\subset K_p$  we obtain $I_{K_\infty}\le I_{K_p}=I_{C_{\min}}$ by \eqref{eq:isopsolang}. By Lemma~\ref{lem:r2}(ii) we get $I_{C_\infty}<I_{K_\infty}$.
Combining the two last inequalities we obtain ${I_{C_{\infty}}}<{I_{C_{\min}}}$, and since $I_{C_{\infty}}^{(n+1)/n}$ and $I_{C_{\min}}^{(n+1)/n}$  are linear functions we get%
\begin{equation}
\label{eq:limcincmin}
\lim_{v\to 0}\frac{I_{C_{\infty}}(v)}{I_{C_{\min}}(v)}< 1.
\end{equation}
Now by \eqref{Iciinf}, \eqref{eq:limcincmin} and  Theorem \ref{thm:limI}
\begin{equation*}
1=\lim_{v\to 0}\frac{I_C(v)}{I_{C_{\min}}(v)}=\lim_{v\to 0}\frac{I_{C_{\infty}}(v)}{I_{C_{\min}}(v)}<1,
\end{equation*}
yielding a contradiction. Consequently $K=C$. Then by \eqref{Iciinf}, Theorem \ref{thm:limI}  and  \eqref{eq:isopsolang} we get
\begin{equation*}
1=\lim_{v\to 0}\frac{I_C(v)}{I_{C_{\min}}(v)}=\lim_{v\to 0}\frac{I_{C_{\infty}}(v)}{I_{C_p}(v)}=\frac{\alpha({C_{\infty}})}{\alpha({C_p})}.
\end{equation*}
and since 
\[
p+C_\infty\subset C\subset C_p,
\]
we conclude that $C=p+C_\infty$.
\end{proof}

As a consequence of the previous results, we are able to show the following asymptotic property of isoperimetric regions of small volume.

\begin{corollary}
Let $C\subset\rr^{n+1}$ be a convex body. Take any sequence of isoperimetric regions with volumes converging to zero and rescale the isoperimetric sets to have volume one. Then a subsequence converges to a geodesic ball centered at a vertex in a tangent cone of minimum solid angle.
\end{corollary}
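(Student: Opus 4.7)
The plan is to follow the rescaling scheme used inside the proof of Theorem~\ref{thm:limI}, this time tracking the precise shape of the limit set. First I would apply Theorem~\ref{thm:limI} to the given sequence $\{E_i\}_{i\in\nn}$ with $|E_i|=v_i\to 0$: up to a subsequence the $E_i$ collapse to a point $p\in K$, where $K\in\{C\}\cup\mathcal{K}(C)$ is chosen so that the tangent cone $K_p$ realizes the minimum solid angle, so that $\alpha(K_p)=\alpha(C_{\min})$, $I_{K_p}=I_{C_{\min}}$, and in particular $p\in\ptl K$. The equality chain that closes the proof of Theorem~\ref{thm:limI} also lets me assume $E_i\subset K$ with $P_K(E_i)=I_K(v_i)=I_C(v_i)$, and that the rescalings $\la_i E_i$ with $\la_i:=v_i^{-1/(n+1)}\uparrow+\infty$ (so that $|\la_i E_i|=1$) have uniformly bounded diameter.

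Next I would pick $x_i\in E_i$ arbitrarily (so $x_i\to p$) and set $F_i:=\la_i(E_i-x_i)\subset D_i:=\la_i(K-x_i)$, which satisfy $|F_i|=1$, $0\in F_i$, and $\diam(F_i)=O(1)$. The bound $\la_i|p-x_i|\le\diam(F_i)$ is then uniform, so after a further extraction $\la_i(p-x_i)\to w$ for some $w\in\rr^{n+1}$. The key geometric step is that $K-p$ is convex and contains $0$, so the family $\{\la_i(K-p)\}_{i\in\nn}$ is monotone increasing and its local Hausdorff limit is $\overline{\bigcup_{\mu>0}\mu(K-p)}=K_p-p$, the tangent cone at $p$ shifted to the origin. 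Writing $D_i=\la_i(K-p)+\la_i(p-x_i)$ one obtains $D_i\to\widetilde K:=(K_p-p)+w$ locally in Hausdorff distance, and $\widetilde K$ is a convex cone of solid angle $\alpha(C_{\min})$ with vertex at $w$.

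At this point, by compactness for sets of finite perimeter in converging ambient convex bodies (Lemma~\ref{lem:antier}, applied on a ball large enough to contain every $F_i$), a further subsequence satisfies $F_i\to F$ in $L^1_{loc}(\rr^{n+1})$, so that $F\subset\widetilde K$, $|F|=1$, and
\[
P_{\widetilde K}(F)\le\liminf_{i\to\infty}P_{D_i}(F_i)=\liminf_{i\to\infty}\la_i^n I_C(v_i)=I_{C_{\min}}(1)=I_{\widetilde K}(1),
\]
where the second to last equality uses $\la_i^n v_i^{n/(n+1)}=1$, the cone scaling $I_{C_{\min}}(v)=I_{C_{\min}}(1)v^{n/(n+1)}$, and $\lim_{i\to\infty}I_C(v_i)/I_{C_{\min}}(v_i)=1$ from Theorem~\ref{thm:limI}. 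Therefore $F$ is an isoperimetric region of volume $1$ in the convex cone $\widetilde K$, and the Figalli--Indrei classification~\cite{FI} will conclude that $F$ is a geodesic ball centered at the vertex $w$ of $\widetilde K$, i.e., at a vertex of a (translate of a) tangent cone of minimum solid angle. The hard part will be the clean identification of the ambient limit $\widetilde K$ as a convex cone, which rests on the monotonicity of $\{\la_i(K-p)\}$ from the convexity of $K-p$ together with $0\in K-p$, and on the bound $\la_i|p-x_i|=O(1)$ inherited from the diameter estimate for $\la_i E_i$ inside Theorem~\ref{thm:limI}.
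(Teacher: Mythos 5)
Your rescaling scheme parallels the paper's in spirit, but the step where you assert $\la_i|p-x_i|\le\diam(F_i)$ is a genuine gap, not a consequence of what precedes it. The collapse point $p$ need not lie in $\overline{E_i}$, so there is no a priori relation between $|p-x_i|$ for $x_i\in E_i$ and $\diam(E_i)$: nothing excludes that $|p-x_i|\gg\diam E_i$ while the sets still converge to $\{p\}$. Without a subconvergent $\la_i(p-x_i)$, the splitting $\la_i(C-x_i)=\la_i(C-p)+\la_i(p-x_i)$ does not identify the blown-up ambient, and your monotonicity argument for $\la_i(K-p)\uparrow K_p-p$ (which is correct, but concerns a \emph{fixed} convex body) is left hanging. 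The boundedness of $\la_i(p-x_i)$ — i.e.\ that the isoperimetric regions approach the vertex at the scale of their own diameter — is essentially what the corollary is asserting, so it cannot be assumed at the outset. There is a second, related issue: when $K\in\mathcal{K}(C)$ (the divergent case), the $E_i$ of the statement are isoperimetric regions of $C$ and hence subsets of $C$, not of $K$; the premises ``$E_i\subset K$, $P_K(E_i)=I_K(v_i)$'' are unavailable, $x_i$ does not converge to any $p\in K$, and $D_i=\la_i(K-x_i)$ is not the ambient of $F_i$ at all.

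The paper's proof avoids both problems by a different route. It translates by $x_i$ directly (so $0\in -x_i+E_i$, with no need to track a fixed collapse point), sets $K=\lim(-x_i+C)$ and $K'=\lim\la_i(-x_i+C)$ in local Hausdorff distance, and obtains from Lemma~\ref{lem:C0<K} only the \emph{inclusion} $K_0\subset K'$, where $K_0$ is the tangent cone of $K$ at $0$. The example following Lemma~\ref{lem:C0<K} ($L=[0,1]^3$) is exactly the cautionary tale: the inclusion can be strict, so the blown-up ambient is not automatically the tangent cone. What forces equality (up to translation) is the profile chain $I_{K_0}(1)\le I_{K'_\infty}(1)\le I_{K'}(1)=\lim I_{\la_iC}(1)\le I_{C_{\min}}(1)\le I_{K_0}(1)$, using Theorem~\ref{thm:optnondeg}, combined with the rigidity Theorem~\ref{thm:novo}; it is only a posteriori, after this rigidity step, that one learns the blown-up ambient is a cone and hence that the translation your argument tries to control is in fact bounded. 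If you want to salvage a more geometric version of your proposal, you would need to supply that rigidity input in place of the unjustified diameter bound.
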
 

\begin{proof}
Take a sequence of isoperimetric regions $E_i$ in $C$ with $|E_i|\to 0$, and let $\la_i>0$ such that $|\la_iE_i|=1$. Consider a sequence $x_i\in E_i$ and 
assume that, up to passing to subsequences, both sequences $-x_{i}+C$ and $\la_i(-x_i+C)$ converge in local Hausdorff sense to $K\in \{C\}\cup {\mathcal K}(C)$ and $K'$, respectively. Let $K_0$ be the tangent cone at $0$ of $K$.\begin{equation}
\label{eq:comparison}
I_{K_0}(1)\le I_{K'}(1)=\liminf_{i\to\infty} I_{\la_iC}(1).
\end{equation}
On the other hand, by \eqref{eq:ICleICp} we have 
\[
\frac{I_{\la_iC}(1)}{I_{K_0}(1)}=\frac{I_C(|E_i|)}{I_{K_0}(|E_i|)}\le \frac{I_C(|E_i|)}{I_{C_{\min}}(|E_i|)}\le 1,
\]
so that
\[
\limsup_{i\to\infty} I_{\la_iC}(1)\le I_{C_{\min}}(1)\le I_{K_0}(1).
\]
Comparing this equation with \eqref{eq:comparison} we get
\[
I_{K_0}(1)=I_{C_{\min}}(1)=I_{K'}(1)=\lim_{i\to\infty}I_{\la_iC}(1).
\]
In particular, $K_0$ is a tangent cone of $-x+C$ with the smallest possible solid angle. Let $K'_\infty$ be the asymptotic cone of $K'$. Since it is the largest cone included in $K'$ we get $K_0\subset K'_\infty$ and so $I_{K_0}\le I_{K'_\infty}$. Due to Theorem \ref{thm:optnondeg} there holds $I_{K'}\ge I_{K'_\infty}$, and consequently $I_{K'}(1)= I_{K'_\infty}(1)$. Then Theorem~\ref{thm:novo} implies that $K'=K_0$ and this concludes the proof.
\end{proof}

We now prove three rigidity results describing the equality cases in the isoperimetric inequalities \eqref{eq:3isop}.

\begin{theorem}
\label{thm:rigid1-a}
Let $C\subset \rr^{n+1}$ be a  convex body. Then, for every $v\in (0,|C|)$, $\la<1$ and $w\in [v,|C|)$ we have%
\begin{equation}
\label{eq:3isop}
I_C(v) \le{I_{C_{\min}}(v)},
\quad
I_C(v) \ge I_{ \la C}(v),
\quad
I_C(v) \ge  \frac{I_C(w)}{w^{n/(n+1)}}\,v^{n/(n+1)}.
\end{equation}
If equality holds in any of the inequalities in \eqref{eq:3isop} for some $v_0>0$, then $I_C(v)=I_{C_{\min}}(v)$ for every $v\le v_0$. Moreover, for every $p\in K$, $K\in \{ C\} \cup \mathcal{K}(C)$, where the infimum of the solid angle function is attained, there holds $K_p\cap B(p,r_0)=K\cap B(p,r_0)$, where $r_0$ is defined by $\vol{B_K(p,r_0)}=v_0$ and geodesic  balls $B_K(p,r)$, with $r\le r_0$, realize $I_C$ for $v\le v_0$.
\end{theorem}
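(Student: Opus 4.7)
The proof splits into three parts: verifying the three inequalities in \eqref{eq:3isop}, proving $I_C=I_{C_{\min}}$ on $(0,v_0]$ in each equality case, and then deducing the local geometric coincidence together with the isoperimetric character of the balls.

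First, I would dispatch the inequalities. The bound $I_C\le I_{C_{\min}}$ follows by chaining Proposition \ref{prp:ICleICp} with Proposition \ref{cor:niceapproxK}: for every $K\in\{C\}\cup\K(C)$ and $p\in K$ one has $I_C\le I_K\le I_{K_p}$, and taking the infimum gives $I_{C_{\min}}$ on the right as in Remark \ref{rem:ICmin}. The bound $I_C\ge I_{\la C}$ for $\la<1$ is Corollary \ref{cor:novo}. The third inequality is equivalent to the non-increasingness of $v\mapsto Y_C(v)/v$, which is immediate from the concavity of $Y_C=I_C^{(n+1)/n}$ (Theorem \ref{thm:conY}) together with $Y_C(0)=0$.

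Next I would handle the rigidity on $(0,v_0]$. Setting $c_{\min}:=I_{C_{\min}}(1)^{(n+1)/n}$, inequality one gives $Y_C(v)/v\le c_{\min}$ while Theorem \ref{thm:limI} gives $\lim_{v\to 0^+}Y_C(v)/v=c_{\min}$. The second equality case reduces to the third via the scaling identity \eqref{eq:proflac}, which rewrites $I_C(v_0)=I_{\la C}(v_0)$ as $Y_C(v_0)/v_0=Y_C(w)/w$ with $w=v_0/\la^{n+1}>v_0$. In the first case, $Y_C(v_0)/v_0=c_{\min}$ combined with the non-increasingness of $Y_C/v$ and its limit $c_{\min}$ at $0$ pins $Y_C(v)/v\equiv c_{\min}$ on $(0,v_0]$. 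In the third case, $Y_C(v_0)/v_0=Y_C(w)/w=:c'$, and concavity of $Y_C$ with $Y_C(0)=0$ applied to the triples $0<u<w$ and $0<u<v_0<w$ produces a sandwich pinning $Y_C$ to be linear of slope $c'$ on $[0,w]$; Theorem \ref{thm:limI} then forces $c'=c_{\min}$, and $I_C=I_{C_{\min}}$ on $(0,v_0]$ follows.

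Finally, for the geometric rigidity, I would fix $K\in\{C\}\cup\K(C)$ and $p\in K$ realizing $\alpha(K_p)=\alpha(C_{\min})$, let $r_0$ satisfy $|B_K(p,r_0)|=v_0$, and prepend $I_C(v_0)\le I_K(v_0)$ to the chain of inequalities from the proof of Proposition \ref{prp:ICleICp} (applied to $K$ in place of $C$):
\begin{equation*}
I_C(v_0)\le I_K(v_0)\le P_K(B_K(p,r_0))=P_{L_p}(B_{L_p}(p,r_0))\le I_{K_p}(|B_{L_p}(p,r_0)|)\le I_{K_p}(v_0).
\end{equation*}
Since both endpoints equal $I_{C_{\min}}(v_0)=I_C(v_0)$ by the rigidity established above, every step is an equality, producing $\alpha(L_p)=\alpha(K_p)$ and $|B_{L_p}(p,r_0)|=|B_K(p,r_0)|$. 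The first forces $L_p=K_p$ (two nested closed convex cones of equal solid angle must coincide, as otherwise the open set $\intt K_p\setminus L_p$ would have positive measure); the second, combined with $L_p\cap B(p,r_0)\subset K\cap B(p,r_0)$ and closedness, forces $L_p\cap B(p,r_0)=K\cap B(p,r_0)$, hence $K\cap B(p,r_0)=K_p\cap B(p,r_0)$. Intersecting with $B(p,r)$ extends this identity to every $r\le r_0$, so $B_K(p,r)=B_{K_p}(p,r)$ and their relative perimeters agree; using that balls centered at the vertex of a convex cone are isoperimetric together with $\alpha(K_p)=\alpha(C_{\min})$, one obtains $P_K(B_K(p,r))=I_{K_p}(|B_K(p,r)|)=I_{C_{\min}}(|B_K(p,r)|)=I_C(|B_K(p,r)|)$, so these balls realize $I_C$ for every volume in $(0,v_0]$. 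The main obstacle I anticipate is carrying out the concavity sandwich in the third equality case all the way down to $v=0$, so that Theorem \ref{thm:limI} can identify the slope $c'$ with $c_{\min}$.
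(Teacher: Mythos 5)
Your proof is correct and follows the paper's approach closely: the same three inequalities are dispatched by the same citations, the rigidity on $(0,v_0]$ is obtained from concavity of $Y_C$ together with Theorem~\ref{thm:limI} (your non-increasingness of $Y_C(v)/v$ is the paper's non-decreasingness of the convex function $Y_{C_{\min}}-Y_C$, stated equivalently), and the geometric rigidity is extracted from the same equality chain yielding $\alpha(L_p)=\alpha(K_p)$. The obstacle you flag in the third equality case is not one: Jensen equality for the concave $Y_C$ at the interior point $v_0\in(0,w)$, together with $Y_C(0)=0$, pins $Y_C$ linear on all of $[0,w]$, so Theorem~\ref{thm:limI} identifies the slope with $c_{\min}$ exactly as you intended.
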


\begin{proof}
We start with the first inequality in \eqref{eq:3isop}. By Lemma \ref{lem:losemcontangcon} and \eqref{eq:isopsolang} there exists $K\in\{C\}\cup\mathcal{K}(C)$ and $p\in C$ so that $I_{K_p}=I_{C_{\min}}$. By Propositions~\ref{cor:niceapproxK} and \ref{prp:ICleICp} we have
\begin{equation}
\label{eq:asymI3}
I_C\le I_K\le I_{K_p}= I_{C_{\min}}.
\end{equation}
Assume now there exists $v_0>0$ such that $I_C(v_0)=I_{C_{\min}}(v_0)$. Recall that $Y_C=I_C^{(n+1)/n}$. From \eqref{eq:asymI3} we get $Y_C\le Y_{C_{\min}}$ and, since $Y_C$ is concave by Theorem \ref{thm:conY} and $Y_{C_{\min}}$ is linear by \eqref{eq:isopsolang}, the non-negative function $Y_{C_{\min}}-Y_C$ is convex and so it is non-decreasing. Hence $Y_C(v_0)=Y_{C_{\min}}(v_0)$ implies $Y_C(v)=Y_{C_{\min}}(v)$  for all $v\le v_0$. Consequently $I_C(v)=I_{C_{\min}}(v)$ for all $v\le v_0$.

Choose $r_{0}>0$ such that $\vol{ B(p,r_{0})\cap K}=v_0$ and let $L_p$ be the closed cone centered at $p$ subtended by $\ptl B(p,r_{0})\cap K$. By Proposition \ref{cor:niceapproxK}, the fact that $I_C(v_0)=I_{C_{\min}}(v_0)=I_{K_p}(v_0)$, and the inequality \eqref{eq:ICleICp} in the proof of Proposition~\ref{prp:ICleICp} applied to $K_p$, we get $\alpha(L_p)=\alpha(K_p)$. Since $L_p\subset K_p$, we have
\[
 B(p,r_{0})\cap L_p= B(p,r_{0})\cap K_p,
\]
and since
\[
 B(p,r_{0})\cap L_p\subset  B(p,r_{0})\cap K\subset B(p,r_{0})\cap K_p,
\]
we deduce 
\[
B(p,r_{0})\cap K=B(p,r_{0})\cap K_p.
\]
Moreover, since $I_C(v)=I_{K_p}(v)$ for all $v\le v_0$ then by \eqref{eq:isopsolang} we get 
\[
I_C(\vol{B_{K_p}(p,r)})=\pp_{K_p}(B_{K_p}(p,r)),\quad\text{for every}\quad r \le r_0.
\]
This concludes the proof of the equality case in the first inequality of \eqref{eq:3isop}.

Note that the second inequality in \eqref{eq:3isop} has already been proved in Corollary \ref{cor:novo}. We shall caracterize the equality case. If there exists $v_0>0$ such that $I_C(v_0)=I_{\lambda C}(v_0)$, then  $Y_C(v_0)=Y_{\lambda C}(v_0)$. Hence $Y_C$ is linear for $v\le v_0$. Since $Y_{C_{\min}}$  is linear, then by Theorem \ref{thm:limI} we have $Y_{C_{\min}}=Y_C$ for every $v\le v_0$ and we proceed as above to conclude the proof.

We now prove the third inequality in \eqref{eq:3isop}. Since $Y_C$ is concave we have
\[
Y_C(v) \ge  \frac{Y_C(w)}{w}\,v
\] 
for every $0<v\le w$. Raising to the power $n/(n+1)$ we get the desired inequality. Now if equality holds for some $0<v_0<w$ then $Y_C$ is linear for $0<v< v_0$ %
and we proceed as before to conclude the proof.
\end{proof}

\begin{corollary}
\label{cor:rigid1-b}
Let $C\subset \rr^{n+1}$ be a  convex body. Then, for every $v\ge 0$,
\begin{equation}
\label{eq:half-plane2}
I_C(v)\le I_H(v),%
\end{equation}
where $H\subset\rr^{n+1}$ is a closed half-space. If equality holds in the above inequality for some $v_0>0$ then $C$ is a closed half-space or a slab and isoperimetric regions for volumes $v\le v_0$ are half-balls.
\end{corollary}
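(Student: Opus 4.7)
The inequality \eqref{eq:half-plane2} is precisely Remark \ref{rem:half-plane}, so I focus on the rigidity. My first step is to reduce the hypothesis $I_C(v_0)=I_H(v_0)$ to the setting of Theorem \ref{thm:rigid1-a}. Combining the chain $I_C\le I_{C_{\min}}\le I_H$ from \eqref{eq:asymI3} (where $I_{C_{\min}}\le I_H$ is immediate from the maximality of $\alpha(H)$) with the equality hypothesis yields $I_{C_{\min}}(v_0)=I_H(v_0)$, and then \eqref{eq:isopsolang} converts this into $\alpha(C_{\min})=\alpha(H)$. Since $\alpha(K_p)\le\alpha(H)$ always holds, the minimum coinciding with the maximum forces $\alpha(K_p)=\alpha(H)$ for every $K\in\{C\}\cup\K(C)$ and $p\in\ptl K$; equivalently, every such tangent cone $K_p$ is itself a closed half-space.

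Second, I would invoke Theorem \ref{thm:rigid1-a} with this $v_0$. Because every admissible pair $(p,K)$ attains the infimum of the solid angle function, the rigidity conclusion applies universally, giving $K\cap B(p,r_0)=K_p\cap B(p,r_0)$ at every such pair, with a uniform radius $r_0>0$ determined by $|B_K(p,r_0)|=v_0$. Specializing to $K=C$, every boundary point of $C$ has a neighborhood on which $C$ coincides with a closed half-space, and the same theorem tells us that half-balls of radius $\le r_0$ realize $I_C$ for $v\le v_0$.

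The last, and main, step is to upgrade this uniform local flatness to the global half-space or slab description. On any connected component $\Sigma$ of $\ptl C$, local flatness forces the tangent hyperplane $T_p$ to be locally constant, and the continuity of $p\mapsto T_p$ (again by local flatness) promotes this to global constancy, so $T_p\equiv T$ on $\Sigma$; then $\Sigma$ is simultaneously open and closed in $T$, hence $\Sigma=T$. So each connected component of $\ptl C$ is a full affine hyperplane. Two distinct such hyperplane components cannot meet, for any common point would have as tangent cone the wedge formed by the two supporting half-spaces, whose solid angle is strictly smaller than $\alpha(H)$, contradicting the first step. Thus any two components of $\ptl C$ are parallel, and by convexity $C$ is bounded either by one hyperplane (making it a half-space) or by two parallel hyperplanes (making it a slab); three or more parallel bounding hyperplanes would render one redundant.

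The main obstacle is this final global rigidity step: extracting a half-space/slab characterization from the purely local coincidence of $C$ with a hyperplane on a neighborhood of each boundary point. What makes the argument succeed here is the \emph{uniform} radius $r_0$ of local coincidence at every $p$ combined with convexity, which together preclude the smooth bending allowed by ordinary $C^1$ convex bodies such as paraboloids.
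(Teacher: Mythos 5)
Your proof is correct and follows essentially the same route as the paper: reduce to equality in $I_C\le I_{C_{\min}}$, apply Theorem~\ref{thm:rigid1-a} to obtain local coincidence of $C$ with its tangent half-spaces at every boundary point, and conclude that each component of $\ptl C$ is a full hyperplane. You supply slightly more detail than the paper on the final globalization step (locally constant tangent hyperplane plus open-closed argument), which the paper compresses into ``it turns out that each connected component of $\ptl C$ is a hyperplane''; the only superfluous move is the wedge argument for disjointness of boundary components, which is automatic.
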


\begin{proof}
Note that inequality \eqref{eq:half-plane2} has already been proved in Remark \ref{rem:half-plane}. Therefore it only remains to prove the rigidity property. Assume
\[
I_C(v_0)=I_H(v_0),\quad\text{for some}\ v_0>0.
\]
If $I_H(v_0)>0$ then, owing to Proposition \ref{prop:maincond}, $C$ is of uniform geometry in case it is unbounded. Owing to the first inequality in \eqref{eq:3isop} and the previous equality we get
\[
I_H(v_0)=I_C(v_0)\le I_{C_{\min}}(v_0)\le I_{C_p}(v_0)\le I_H(v_0),
\]
for any $p\in\ptl C$ since $C_p$ is a convex cone. Hence $I_{C}(v_0)=I_{C_p}(v_0)=I_{C_{\min}}(v_0)$ for all $p\in\ptl C$. Theorem~\ref{thm:rigid1-a} then implies that every point $p\in\ptl C$ has a neighborhood in $\ptl C$ which is a part of a hyperplane. It turns out that each connected component of $\ptl C$ is a hyperplane of $\rr^{n+1}$ and so  $C$ is a closed half-space or a slab.
\end{proof}

In \cite{MR2329803} Choe, Ghomi and Ritor\'e investigated the isoperimetric profile outside a convex body $L$ with smooth boundary showing that
\begin{equation*}
I_{\rr^{n+1}\setminus L}(v)\ge I_H(v),\quad\text{for all}\quad v>0.
\end{equation*}
In the following theorem we first show that the above inequality holds for a convex body $C$ without any regularity assumption, and then characterize the case of equality holding for some $v_{0}>0$.
\begin{theorem}
\label{thm:rigid1-c}
Let $C\subsetneq\rr^{n+1}$ be a convex body, and let $I_{\rr^{n+1}\setminus C}$ denote the isoperimetric profile of $\rr^{n+1}\setminus C$. Then we have
\begin{equation}
\label{eq:outin}
I_{\rr^{n+1}\setminus C}(v) \ge I_C(v),%
\end{equation}
for every $0<v<|C|$.  If equality holds in \eqref{eq:outin} for some $v_0>0$ then $C$ is a closed half-space or a slab.
\end{theorem}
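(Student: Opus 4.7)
The plan is to factor \eqref{eq:outin} through the half-space profile,
\[
I_{\rr^{n+1}\setminus C}(v)\ge I_H(v)\ge I_C(v),
\]
where the right-hand inequality is Remark~\ref{rem:half-plane} and $H$ denotes a closed half-space. Once the left-hand inequality is established, rigidity is immediate: equality in \eqref{eq:outin} at some $v_0>0$ forces $I_C(v_0)=I_H(v_0)$, and Corollary~\ref{cor:rigid1-b} then yields that $C$ is a closed half-space or a slab, with no further analysis of the complement required.

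To prove $I_{\rr^{n+1}\setminus C}(v)\ge I_H(v)$, I would start from the sharp inequality of Choe--Ghomi--Ritor\'e \cite{MR2329803}, which is valid whenever $\ptl C$ is of class $C^{\infty}$, and extend it to an arbitrary convex body $C$ via the smoothings $C^\eps$ provided by Lemma~\ref{lem:fund}, which satisfy $C\subset C^\eps\subset C+2\eps\,\clb(0,1)$ and have $C^{\infty}$ boundary. Given any competitor $E\subset\rr^{n+1}\setminus C$ of finite perimeter and volume $v$, I set $E^\eps:=E\setminus C^\eps$, which lies in $\rr^{n+1}\setminus C^\eps$. Since $\intt(\rr^{n+1}\setminus C^\eps)\subset \intt(\rr^{n+1}\setminus C)$ and $E^\eps$ coincides with $E$ on this smaller open set,
\[
P_{\rr^{n+1}\setminus C^\eps}(E^\eps)=P(E,\intt(\rr^{n+1}\setminus C^\eps))\le P_{\rr^{n+1}\setminus C}(E).
\]
Moreover $|E\cap C^\eps|\le |E\cap((C+2\eps\,\clb(0,1))\setminus C)|\to 0$ as $\eps\to 0$ by dominated convergence (using $|E|<\infty$), so $|E^\eps|\to v$. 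Applying the smooth-case inequality to $C^\eps$ together with the explicit continuity of $I_H(w)=c_nw^{n/(n+1)}$ one obtains $P_{\rr^{n+1}\setminus C}(E)\ge I_H(v)$; taking the infimum over admissible $E$ concludes the proof of \eqref{eq:outin}.

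The main obstacle is conceptual rather than technical: one must recognize that the non-smooth case can be bypassed by a simple outward smoothing combined with the monotonicity of the relative perimeter under the truncation $E\mapsto E\setminus C^\eps$; no reflection, Schwarz symmetrization, or surgery along $\ptl C$ is required, because the open set on which we measure the relative perimeter only shrinks as we pass from $C$ to $C^\eps$. The equality case reduces so cleanly to Corollary~\ref{cor:rigid1-b} that, once the extension of the Choe--Ghomi--Ritor\'e inequality to non-smooth boundaries is in hand, the rest is bookkeeping.
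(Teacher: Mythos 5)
Your proof is correct and follows essentially the same strategy as the paper: extend the Choe--Ghomi--Ritor\'e inequality to a general convex body $C$ by applying it to the outer smoothings $C^\eps$ of Lemma~\ref{lem:fund}, obtain $I_{\rr^{n+1}\setminus C}\ge I_H$, and reduce the rigidity statement to Corollary~\ref{cor:rigid1-b}. The only variation is technical: you absorb the small volume loss $|E|-|E^\eps|\to 0$ into the explicit continuity of $I_H(w)=c_n w^{n/(n+1)}$, whereas the paper first passes to a bounded competitor via Proposition~\ref{prp:seqbound} and then restores the exact volume by adjoining far-away balls of vanishing size; both lead to the same conclusion.
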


\begin{proof}
By the proof of Lemma \ref{lem:fund} we can find a sequence $\{C_i\}_{i\in\nn}$ of convex bodies so that $C_{i+1}\subset C_i$, for all $i\in\nn$, and $C_i\to C$ in  Hausdorff distance.

Note that Proposition~\ref{prp:seqbound}, which is proved in \cite[Remark 3.2]{MR3385175}, holds also if the ambient space is the exterior of a convex body. Consequently for a given $\eps>0$ we can find $r=r(\eps)>0$ and a finite perimeter set $E\subset(\rr^{n+1}\setminus  C)\cap\clb(0,r)$, of  volume $v$, such that
\begin{equation}
\label{eq:q1}
\pp_{\rr^{n+1}\setminus  C}(E)\le I_{\rr^{n+1}\setminus  C}(v)+\eps.
\end{equation}
Let $\Om_i=E\cap (\rr^{n+1}\setminus  C_i)$ and let $B_i$ be Euclidean geodesic balls outside $\clb(0,r)$  having volumes $|E|-|\Om_i|$. Set $E_i=\Om_i\cup B_i$. Then
\begin{equation}
\label{eq:q2}
\vol{E_i}=v\quad\text{and}\quad\lim_{i\to\infty}\pp_{\rr^{n+1}\setminus  C_i}(E_i)=\pp_{\rr^{n+1}\setminus  C}(E)
\end{equation}
Combining \eqref{eq:q1} and \eqref{eq:q2} we get
\begin{equation}
\label{eq:q3}
\limsup_{i\to\infty}I_{\rr^{n+1}\setminus  C_i}(v)\le \lim_{i\to\infty}\pp_{\rr^{n+1}\setminus  C_i}(E_i)\le I_{(\rr^{n+1}\setminus  C)}(v)+\eps.
\end{equation}
As the set $C_i$ have smooth boundary for all $i$, inequality \eqref{eq:outin} holds for all $C_i$ by the result of \cite{MR2329803} and, since $\eps>0$ is arbitrary, we get by \eqref{eq:q3}
\begin{equation*}
I_{\rr^{n+1}\setminus C}(v) \ge I_H(v).
\end{equation*}
Combining this with \eqref{eq:half-plane2} we get 
\begin{equation*}
I_{\rr^{n+1}\setminus C}(v) \ge I_C(v),\quad\text{for all}\quad 0< v<\vol{C}.
\end{equation*}
Suppose now that  equality holds for some $v_0>0$ in the above inequality then 
\begin{equation*}
I_{\rr^{n+1}\setminus C}(v_0)=I_H(v_0)=I_{C}(v_0).
\end{equation*}
Consequently Corollary~\ref{cor:rigid1-b} implies that $C$ is a closed half-space or a slab.
\end{proof}

\begin{remark}
\label{rem:smalvol}
Let $C\subset \rr^{n+1}$ be  a convex body. Then the first inequality in \eqref{eq:3isop} combined with Theorem \ref{thm:limI} and  \eqref{eq:isopsolang} imply
\[
I_{C_{\min}}(1)=\inf\{a>0:\ I_C(v) \le a\,v^{n/(n+1)},\text{ for every } v>0 \}.
\]
Equivalently
\[
I_{C_{\min}}(1)=\sup_{v>0}\frac{I_C(v)}{v^{n/(n+1)}}.
\]
\end{remark}

We recall that an unbounded convex body $C$ is \emph{cylindrically bounded} if it is contained in a right circular cylinder (the tubular neighborhood of a straight line, the axis, in $\rr^{n+1}$). Up to rigid motions, we may assume that the axis is the vertical coordinate axis. Let us denote by $\pi$ the orthogonal projection onto the hyperplane $\{x_{n+1}=0\}$. The closure of the projection $\pi(C)$ is a convex body $K\subset \{x_{n+1}=0\}$. By Example~\ref{ex:cylindrically}, the cylinder $C_\infty = K\times \rr$ is, up to horizontal translations, the only asymptotic cylinder of $C$. In case $C$ contains a line, it is a cylinder and coincides with $C_\infty$. Otherwise, we may assume, eventually composing with a reflection with respect to $\{x_{n+1}=0\}$ and a vertical translation, that $C$ is contained in the half-space $x_{n+1}\ge 0$. 
Before going on we introduce some further notation. First, we set $C_{\infty}^{+} = K\times [0,+\infty)$. Then for any $v>0$ we let $\tau_{C}(v)$ be the unique real number such that the Lebesgue measure of the set $\{x\in C:\ x_{n+1}\le \tau_{C}(v)\}$ is equal to $v$, and we denote such a set by $\Omega(v)$.

Theorem \ref{thm:exiscyl} (i) has been proved in \cite{rv2} under the additional hypotheses that the boundaries of both the convex body and its asymptotic cylinder are $C^{2,\alpha}$. The rigidity in Theorem  \ref{thm:exiscyl} (iii) is a new result.

\begin{theorem}
\label{thm:exiscyl}
Let $C\subset \rr^{n+1}$ be a cylindrically bounded convex body, and assume that $C$ is not a cylinder. Then 
\begin{enum}
\item
Isoperimetric regions exist in $C$ for sufficiently large volumes. 
\item 
There exists $v_0>0$ so that $I_C(v)\le I_{C_\infty^+}(v)$ for every $v\ge v_0$.
\item 
If equality $I_C(v_1)=I_{C_\infty^+}(v_1)$ holds for some $v_1\ge v_0$, then $I_C(v)=I_{C_\infty^+}(v)$ for every $v\ge v_1$. Moreover $C\setminus\Om(v_1)=C_\infty^+\setminus\Om(v_1)$ and so $\Om(v)$ are isoperimetric regions in $C$ for $v\ge v_1$.
\end{enum}
\end{theorem}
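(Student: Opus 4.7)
The plan hinges on two slicing estimates for the cylinders, which I would prove first: there exists $v_*>0$ such that, for every $v\ge v_*$,
\[
I_{C_\infty^+}(v)=\h^n(K)\qquad\text{and}\qquad I_{C_\infty}(v)\ge \tfrac{3}{2}\h^n(K),
\]
the first being attained only by the slab $K\times[0,v/\h^n(K)]$. Given a competitor $E\subset C_\infty^+$ with vertical slicing function $f(t)=\h^n(E\cap K\times\{t\})$, decomposing the unit normal to $\partial^*E$ into horizontal and vertical components yields
\[
P_{C_\infty^+}(E)^2\ge |Df|((0,\infty))^2+\Bigl(\int_0^\infty P_K(E_t)\,dt\Bigr)^2.
\]
If the essential supremum of $f$ equals $\h^n(K)$, the first summand already produces at least $\h^n(K)$, with equality forcing $E$ to be a slab. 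Otherwise the slices $E_t$ are uniformly strictly contained in $K$ with a positive defect, and the relative isoperimetric inequality in the bounded convex body $K$ combined with $\int f\,dt=v$ forces $\int P_K(E_t)\,dt$ to diverge as $v\to\infty$. The same scheme in $C_\infty$ doubles the vertical contribution since $f$ must both rise from and return to zero.

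Given these estimates, Part~(i) follows from Theorem~\ref{thm:conY}: any generalized isoperimetric region for $I_C(v)$ is a single connected set $E$ contained in $C$ or in one of its asymptotic cylinders. By Example~\ref{ex:cylindrically} the asymptotic cylinders of $C$ are horizontal translates of $C_\infty$, so in the escape alternative $I_C(v)=P(E)\ge I_{C_\infty}(v)$ by translation invariance of the profile; combined with $I_C\le I_{C_\infty}$ from Proposition~\ref{cor:niceapproxK} we get $I_C(v)=I_{C_\infty}(v)$. But $\Omega(v)$ is a competitor in $C$, giving $I_C(v)\le P_C(\Omega(v))=\h^n(C\cap\{x_{n+1}=\tau_C(v)\})\le\h^n(K)$, which for $v\ge v_*$ contradicts $I_{C_\infty}(v)\ge\tfrac{3}{2}\h^n(K)$. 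Hence $E\subset C$, proving (i). Part~(ii) is then immediate: for $v\ge v_*$ one has $I_C(v)\le P_C(\Omega(v))\le\h^n(K)=I_{C_\infty^+}(v)$.

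For Part~(iii), the assumption $I_C(v_1)=I_{C_\infty^+}(v_1)=\h^n(K)$ combined with $I_C(v_1)\le P_C(\Omega(v_1))\le\h^n(K)$ forces $P_C(\Omega(v_1))=\h^n(K)$. Hence the cross-section $C\cap\{x_{n+1}=\tau_C(v_1)\}$ is a closed convex subset of $K$ with the same $\h^n$-measure, and must equal $K$; since $C$ is cylindrically bounded, its horizontal cross-sections are non-decreasing in $x_{n+1}$ and converge to $K$, so every cross-section at heights above $\tau_C(v_1)$ also equals $K$. This gives $C\setminus\Omega(v_1)=K\times(\tau_C(v_1),+\infty)=C_\infty^+\setminus\Omega(v_1)$. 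For any $v\ge v_1$, the set $\Omega(v)$ extends $\Omega(v_1)$ vertically into this cylindrical region, so $P_C(\Omega(v))=\h^n(K)=I_{C_\infty^+}(v)$, and combined with (ii) we obtain $I_C(v)=I_{C_\infty^+}(v)$, with $\Omega(v)$ realizing the profile.

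The hard part is the lower bound $I_{C_\infty^+}(v)\ge\h^n(K)$ for large $v$: when the essential supremum of $f$ is just below $\h^n(K)$ one must carefully balance the total variation of $f$ against $\int P_K(E_t)\,dt$ to rule out ``thin-spread'' configurations where $E_t$ has very small area but is supported on an arbitrarily long vertical range; this is where the relative isoperimetric inequality in $K$ threatens to degenerate and must be invoked quantitatively in terms of $v$.
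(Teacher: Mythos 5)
Your parts (i)--(iii) have the same architecture as the paper's proof: the inputs are that for large $v$ one has $I_{C_\infty^+}(v)=\h^n(K)$ with only the half-slab attaining it, and $I_{C_\infty}(v)>\h^n(K)$; from these, (i) follows from Theorem~\ref{thm:conY} together with $P_C(\Om(v))\le\h^n(K)$, and (ii)--(iii) are then essentially direct. The difference is that the paper obtains those two inputs simply by citing Theorem~3.9 and Corollary~3.10 of~\cite{rv2}, whereas you attempt a self-contained slicing proof --- and that is exactly where the gap lies.

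The inequality $P_{C_\infty^+}(E)^2\ge|Df|^2 + \bigl(\int_0^\infty P_K(E_t)\,dt\bigr)^2$ is correct (it is $\|\int F\|\le\int\|F\|$ for $F=(\nu_h,\nu_v)$) but genuinely too weak to close the case $\esssup f<\h^n(K)$. Take $E_\delta=K_\delta\times[0,T]$ with $K_\delta\subset K$, $\h^n(K\setminus K_\delta)=\delta$ realized by a small spherical cap near $\ptl K$ so that $P_K(K_\delta)\approx c\,\delta^{(n-1)/n}$, and $T=v/(A-\delta)$ with $A=\h^n(K)$. Its true relative perimeter is the \emph{sum} $(A-\delta)+T\,P_K(K_\delta)\ge A$, because the top face and the lateral indentation are disjoint pieces of $\ptl^*E_\delta$; your slicing bound delivers only the Pythagorean combination $\sqrt{(A-\delta)^2+\bigl(vc\,\delta^{(n-1)/n}/(A-\delta)\bigr)^2}$, and for $n\ge3$ this drops strictly below $A$ once $\delta$ is small, since $\delta^{2(n-1)/n}=o(\delta)$. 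No threshold $v_0$ cures this: the ``thin-spread'' configurations you flag in your last paragraph are not a side concern but the decisive one, and Cauchy--Schwarz plus the relative Poincar\'e inequality in $K$ cannot recover the missing additive structure. One needs a device that forces the horizontal and vertical parts of the boundary into orthogonal position --- Steiner/Schwarz symmetrization of the slices, which is the route taken in~\cite{rv2} --- or a fundamentally sharper decomposition. A further minor point: in (iii), passing from $I_C(v_1)=\h^n(K)$ to $I_C(v)=\h^n(K)$ for all $v\ge v_1$ requires the monotonicity of $I_C$, which the paper extracts from Theorem~\ref{thm:conY}; your appeal to part (ii) only yields the upper bound $I_C(v)\le\h^n(K)$.
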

\begin{proof}
Let us prove (i) first. By \cite[Theorem 3.9]{rv2} there exists $v_0>0$ so that the slabs $K\times I$, where $I\subset\rr$ is a compact interval, are the only isoperimetric regions of volume larger than or equal to $v_0$ in $C_\infty$. So, for $v\ge v_0$, we get
\begin{equation}
\label{eq:cyl1}
I_C(v)\le \pp_C(\Om (v))\le \h^n(K)<2\h^{n}(K)=I_{C_\infty}(v).
\end{equation}
Thus, by Theorem~\ref{thm:conY}, for every $v\ge v_0$ there exists an isoperimetric region of volume $v$ in $C$. 

We now prove (ii). By \cite[Corollary 3.10]{rv2} there exists $v_0>0$ so that the half-slabs $K\times [0,b]$ are the only isoperimetric regions in $C_{\infty}^{+}$ of volume larger than or equal to $v_0$. Then we obtain
\begin{equation}
\label{eq:cyl4}
I_C(v)\le \pp_C(\Om (v))\le \h^n(K)=I_{C_\infty^+}(v),\quad\text{for every}\,\,v\ge v_0.
\end{equation}

We now prove (iii). We know that $I_C$ is non-decreasing by Theorem \ref{thm:conY}, and that $I_{C_\infty^+}(v)=\h^n(K)$  for $v\ge v_0$. Then we get $I_C(v)=I_{C_\infty^+}(v)$ for every $v\ge v_1$. Furthermore \eqref{eq:cyl4} provides $\pp_C(\Om (v))= \h^n(K)$ for every $v\ge v_1$, yielding \[
C\cap (\rr^n\times \{t(v)\})=C_\infty^+\cap (\rr^n\times \{t(v)\}
\]
for every $v\ge v_1$. Hence $C\setminus\Om(v_1)=C_\infty^+\setminus\Om(v_1)$.
\end{proof}

We now conclude the section with two applications of the rigidity results shown before.

\begin{theorem}
\label{thm:existence}
Let $C\subset\rr^{n+1}$ be an unbounded convex body, different from a half-space, such that any asymptotic cylinder of $C$ is either a half-space or $\rr^{n+1}$. Then $C$ is of uniform geometry and any generalized isoperimetric region must lie in $C$ for any given volume.
\end{theorem}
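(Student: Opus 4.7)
The plan has three short steps. First, I would observe that $C$ is of uniform geometry: by hypothesis every asymptotic cylinder of $C$ is either a half-space or $\rr^{n+1}$, and in both cases it is a convex body with nonempty interior; so the equivalence (ii)$\Leftrightarrow$(iii) of Proposition~\ref{prop:maincond} immediately gives uniform geometry.

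Next, I would fix $v>0$ and apply Theorem~\ref{thm:conY} to produce a connected set $E\subset K$ with $|E|=v$ and $P_K(E)=I_K(v)=I_C(v)$, for some $K\in\{C\}\cup\mathcal{K}(C)$. The remaining task is to show $K=C$, which I would do by contradiction, handling separately the two possibilities $K=\rr^{n+1}$ and $K=H$ a half-space.

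If $K=\rr^{n+1}$, then $I_C(v)=I_{\rr^{n+1}}(v)$. But by Remark~\ref{rem:half-plane} we have $I_C(v)\le I_H(v)$, and the explicit formula \eqref{eq:isopsolang} applied to the solid angles $\alpha(\rr^{n+1})=(n+1)\omega_{n+1}$ and $\alpha(H)=\tfrac{1}{2}(n+1)\omega_{n+1}$ yields
\[
I_H(v)=2^{-1/(n+1)}\,I_{\rr^{n+1}}(v)<I_{\rr^{n+1}}(v),
\]
a contradiction. If instead $K=H$, then equality $I_C(v)=I_H(v)$ holds in \eqref{eq:half-plane2}, so the rigidity statement in Corollary~\ref{cor:rigid1-b} forces $C$ to be a closed half-space or a slab. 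The half-space case is excluded by assumption; the slab case is excluded because the asymptotic cylinders of a slab $[0,a]\times\rr^n$ are (as one checks directly by applying the definition to any divergent sequence) translates of the slab itself, hence neither half-spaces nor $\rr^{n+1}$, contradicting the hypothesis on $C$.

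I do not expect a serious obstacle: the result is essentially a combination of the refined generalized existence theorem (Theorem~\ref{thm:conY}) with the rigidity characterization of the inequality $I_C\le I_H$ (Corollary~\ref{cor:rigid1-b}), plus the elementary observation that the strict inequality $I_H<I_{\rr^{n+1}}$ always holds because half-spaces have strictly smaller solid angle than Euclidean space.
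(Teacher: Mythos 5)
Your proof is correct and follows essentially the same route as the paper: uniform geometry from Proposition~\ref{prop:maincond}(ii)$\Leftrightarrow$(iii), a connected minimizer in some $K\in\{C\}\cup\mathcal K(C)$ via Theorem~\ref{thm:conY}, and then the rigidity of $I_C\le I_H$ (Corollary~\ref{cor:rigid1-b}) to exclude half-space and slab. The only cosmetic difference is that you treat $K=\rr^{n+1}$ as a separate, immediately contradictory sub-case, whereas the paper folds it into the single observation $I_C(v_0)\ge I_H(v_0)$ before invoking rigidity; both are fine, and your explicit citation of Corollary~\ref{cor:rigid1-b} for the inequality $I_C\le I_H$ and its rigidity is in fact the correct reference.
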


\begin{proof}
The unbounded convex body $C$ is of uniform geometry by Proposition~\ref{prop:maincond}(iii) since any asymptotic cylinder has non-empty interior. Now Theorem~\ref{thm:conY} implies that, for any given $v_0>0$, there exists a generalized isoperimetric region either in $C$ or in an asymptotic cylinder $K$. Assume the latter case holds. Since any asymptotic cylinder of $C$ is either a half-space $H$ or $\rr^{n+1}$ we have $I_C(v_0)\ge I_H(v_0)$. By Theorem~\ref{thm:rigid1-c}, $I_C(v_0)\le I_H(v_0)$ and so equality $I_C(v_0)=I_H(v_0)$ holds. By the rigidity result of Theorem~\ref{thm:rigid1-c}, $C$ is a half-space or a slab. The second case cannot hold since any asymptotic cylinder of a slab is again a slab. This contradiction shows that any isoperimetric region of volume $v_0$ must be contained in $C$.
\end{proof}

\begin{corollary}
\label{cor:existforNDACsmooth}
Let $C\subset \rr^{n+1}$ be an unbounded convex body satisfying at least one of the following properties:
\begin{itemize}
\item $C$ is a non-cylindrically bounded convex body of revolution;
\item $C$ has a non-degenerate asymptotic cone $C_{\infty}$ such that $\ptl C_{\infty}$ is of class $C^{1}$ with the only exception of a vertex. 
\end{itemize}
Then any generalized isoperimetric region must lie in $C$ for any given volume.
\end{corollary}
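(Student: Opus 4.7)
The plan is to reduce everything to Theorem~\ref{thm:existence}: once we verify that every asymptotic cylinder of $C$ is either a half-space or the whole space $\rr^{n+1}$, the conclusion follows immediately, since $C$ itself is clearly not a half-space in either of the two cases (a non-cylindrically bounded body of revolution is not flat, and a convex body with non-degenerate asymptotic cone and a singular vertex on $\partial C_\infty$ is not a half-space either).

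For the first case (non-cylindrically bounded convex body of revolution), I would simply invoke Example~\ref{ex:revolution}: there it is shown that for $C_\psi$ with $\psi$ unbounded, any asymptotic cylinder obtained from a divergent sequence $\{x_i\}_{i\in\nn}\subset C$ is either $\rr^{n+1}$ (when the radii $r_i=\psi(t_i)-|z_i|$ are unbounded) or a half-space (when $\{r_i\}_{i\in\nn}$ is bounded). So the hypothesis of Theorem~\ref{thm:existence} is met verbatim.

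For the second case (non-degenerate asymptotic cone $C_\infty$ with $\partial C_\infty \setminus\{0\}$ of class $C^1$), I would directly appeal to Proposition~\ref{prop:NDACsmooth}, which asserts exactly that every $K\in\mathcal{K}(C)$ is either $\rr^{n+1}$ or a half-space under these hypotheses.

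In both cases, applying Theorem~\ref{thm:existence} yields that $C$ is of uniform geometry and that for every prescribed volume $v>0$ any generalized isoperimetric region must sit inside $C$ itself (rather than in some asymptotic cylinder). Since the two preparatory results (Example~\ref{ex:revolution} and Proposition~\ref{prop:NDACsmooth}) do all the work, there is really no technical obstacle at this stage: the corollary is a direct combination of these results with Theorem~\ref{thm:existence}, and the proof is essentially a one-line argument in each case.
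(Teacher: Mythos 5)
Your proof is correct and follows exactly the same route as the paper: invoke Example~\ref{ex:revolution} for the first case, Proposition~\ref{prop:NDACsmooth} for the second, and then conclude via Theorem~\ref{thm:existence}. The extra remark verifying that $C$ is not a half-space (which Theorem~\ref{thm:existence} requires) is a welcome precision that the paper leaves implicit.
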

\begin{proof}
We observe that in both cases any asymptotic cylinder of $C$ is either a half-space or $\rr^{n+1}$, see Example~\ref{ex:revolution} and Proposition~\ref{prop:NDACsmooth}. The conclusion is thus achieved by applying Theorem~\ref{thm:existence}. 
\end{proof}

\chapter{The isoperimetric dimension of an unbounded convex body}
\label{sec:isopdim}

\section{An asymptotic isoperimetric inequality}

Given a convex body $C\subset\rr^{n+1}$ of uniform geometry, we shall prove in  Theorem~\ref{thm:isopphi} a relative isoperimetric inequality on $C$ depending on the growth rate of the volume of geodesics balls in $C$. We shall follow the arguments by Coulhon and Saloff-Coste \cite{MR1232845}, who established similar inequalities for graphs, groups and manifolds. Their approach makes use of a non-decreasing function $V:\rr^+\to \rr^+$ satisfying
\begin{enumerate}
\item[(i)] $\vol{B_C(x,r)}\ge V(r)$ for all $x\in C$ and $r>0$, and
\item[(ii)] $\lim_{r\to \infty} V(r)=\infty$.
\end{enumerate}
The \emph{reciprocal function} of $V$, $\phi_V:\rr^+\to\rr^+$, is defined by
\[
\phi_V(v):=\inf\{r\in\rr^+:V(r)\ge v\}.
\]
It is immediate to check that $\phi$ is a non-decreasing function. Moreover, if $V_1\ge V_2$ then, for any $v>0$, $\{r\in\rr^+:V_2(r)\ge v\}\subset\{r\in\rr^+:V_1(r)\ge v\}$, and so $\phi_{V_1}\le\phi_{V_2}$.

When $C$ is a convex body of uniform geometry, we know that the quantity $b(r)=\inf_{x\in C}\vol{B_C(x,r)}$ is positive for all $r>0$ by Proposition~\ref{eq:mainhyp}. Let us check that the function $b(r)$ is non-decreasing and satisfies (i) and (ii).

When $0<r<s$, it follows that $b(r)\le\vol{B_C(x,r)}\le\vol{B_C(x,s)}$ for all $x\in C$. This implies $b(r)\le\inf_{x\in C}\vol{B_C(x,s)}=b(s)$ and so the function $b(s)$ is non-decreasing. Property (i) is immediate from the definition of $b(r)$. It remains to show that $\lim_{r\to \infty} b(r)=\infty$. To prove this, consider a vector $v$ with $|v|=1$ so that the half-line $L_{x,v}:=\{x+\la v:\la\ge 0\}$ is contained in $C$ for all $x\in C$. Fix now $x_0\in C$ and $r_0>0$. Then the family  $2kr_0v+B_C(x_0,r_0)$, $k\in \nn\cup\{0\}$, is composed of disjoint sets. Moreover
\[
\bigcup_{k=0}^m\big(2kr_0v+B_C(x_0,r_0)\big)\subset B_C(x_0,(2m+1)r_0),
\]
and so
\[
\vol{B_C(x_0,(2m+1)r_0)}\ge \sum_{k=0}^m\vol{\big(2kr_0v+B_C(x_0,r_0)\big)}\ge (m+1)b(r_0).
\]
Taking infimum over $x_0\in C$ we have 
\[
b((2m+1)r_0)\ge (m+1)b(r_0).
\]
Since $b(r)$ is increasing, this inequality implies $\lim_{r\to \infty} b(r)=\infty$.

\begin{remark}
It is worth noting that, when $C$ is an arbitrary convex body, the asymptotic behaviour of the volume of balls centered at a given point is independent of the point. More precisely we have
\begin{equation*}
\lim_{r\to\infty}\frac{\vol{B_C(x,r)}}{\vol{B_C(y,r)}}=1,
\end{equation*}
for any pair of points $x,y\in C$. To prove this, fix two points $x,y\in C$ and let $d$ be the Euclidean distance between $x$ and $y$. Observe first that Lemma~\ref{lem:doubling} implies, for any $z\in C$,
\[
1\le \frac{\vol{B_C(z,r+d)}}{\vol{B_C(z,r)}}\le\frac{(r+d)^{n+1}}{r^{n+1}}
\]
Taking limits when $r\to\infty$ we get
\[
\lim_{r\to\infty} \frac{\vol{B_C(z,r+d)}}{\vol{B_C(z,r)}}=1.
\]
So we have
\[
\frac{\vol{B_C(x,r)}}{\vol{B_C(y,r)}}=\frac{\vol{B_C(x,r)}}{\vol{B_C(y,r+d)}}
\frac{\vol{B_C(y,r+d)}}{\vol{B_C(y,r)}}\le \frac{\vol{B_C(y,r+d)}}{\vol{B_C(y,r)}},
\]
where the inequality holds since $B_C(x,r)\subset B_C(y,r+d)$. Reversing the roles of $x$ and $y$ we get
\[
\frac{\vol{B_C(x,r)}}{\vol{B_C(x,r+d)}}\le\frac{\vol{B_C(x,r)}}{\vol{B_C(y,r)}}.
\]
Taking limits when $r\to\infty$ we obtain
\[
1\le\liminf_{r\to\infty}\frac{\vol{B_C(x,r)}}{\vol{B_C(y,r)}} \le\limsup_{r\to\infty} \frac{\vol{B_C(x,r)}}{\vol{B_C(y,r)}}\le 1.
\]
\end{remark}

In addition to the existence of the lower bound $V(r)$ for the volume of metric balls in $C$, essential ingredients in the proof of the isoperimetric inequality are the existence of a doubling constant, given by Lemma~\ref{lem:doubling}, and the following uniform Poincar\'e inequality for convex sets, proven by Acosta and Dur\'an \cite{MR2021262} using the ``reduction to one-dimensional problem technique'' introduced by Payne and Weinberger \cite{MR0117419}.

\begin{theorem}[{\cite[Thm.~3.2]{MR2021262}}]
Let $\Om\subset\rr^n$ be a convex domain with diameter $d$ and let $u\in W^{1,1}(\Om)$ with $\int_\Om u=0$. Then
\[
\|u\|_{L^1(\Om)}\le \frac{d}{2}\,\|\nabla u\|_{L^1(\Om)}.
\]
Moreover the constant $1/2$ is optimal.
\end{theorem}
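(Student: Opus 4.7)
My plan is to follow the classical \emph{reduction to one dimension} strategy introduced by Payne and Weinberger, exploiting the convexity of $\Om$ through a repeated hyperplane bisection.

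\textbf{Step 1: one-dimensional sharp estimate.} I first prove that, for every $L>0$ and every $v\in W^{1,1}(0,L)$ with $\int_0^L v\,dx=0$,
\[
\int_0^L |v|\,dx \le \frac{L}{2}\int_0^L |v'|\,dx.
\]
By density one may assume $v\in C^1$. The zero-mean condition and continuity give a point $c\in[0,L]$ with $v(c)=0$; writing $v(x)=\int_c^x v'(t)\,dt$ and applying Fubini yields
\[
\int_0^L |v|\,dx \le \int_0^c t\,|v'(t)|\,dt + \int_c^L (L-t)\,|v'(t)|\,dt.
\]
The crude bound $\max(c,L-c)\|v'\|_{L^1}$ is sharpened to $\tfrac{L}{2}\|v'\|_{L^1}$ by exploiting the cancellation $\int_0^c v=-\int_c^L v$: splitting the analysis around the zero and comparing with the extremal monotone configuration (in which $v$ is piecewise affine vanishing at $c$) shows that the effective coefficient collapses to $L/2$. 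Sharpness is witnessed by $v=\chi_{[0,L/2]}-\chi_{[L/2,L]}$ and its smooth approximations.

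\textbf{Step 2: bisection into needles.} Next I carry out a Payne--Weinberger continuous bisection on $\Om$: at each stage every current convex piece is cut by a hyperplane chosen so that $u$ has zero mean on each half, whose existence follows from an intermediate-value argument on the parameter of parallel cutting hyperplanes, and convexity of the pieces is preserved. Cutting directions are rotated at every step so that, after sufficiently many iterations, the pieces become arbitrarily thin in $n-1$ independent directions. Passing to the limit yields a measurable disintegration of $\Om$ into a one-parameter family of convex ``needles,'' each being a segment $I_\alpha$ of length $L_\alpha\le d$ equipped with a positive weight $w_\alpha(t)$ coming from the $(n-1)$-dimensional cross-sectional measure. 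By Brunn--Minkowski, $w_\alpha^{1/(n-1)}$ is concave along $I_\alpha$, and by construction $\int_{I_\alpha} u\,w_\alpha\,dt=0$ for a.e.\ $\alpha$.

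\textbf{Step 3: weighted one-dimensional inequality and conclusion.} I then establish on each needle the weighted analogue of Step 1,
\[
\int_{I_\alpha}|u|\,w_\alpha\,dt \le \frac{L_\alpha}{2}\int_{I_\alpha}|u'|\,w_\alpha\,dt,
\]
whenever $\int_{I_\alpha} u\,w_\alpha\,dt=0$. This extension is obtained either by approximating $w_\alpha$ by step functions and telescoping Step 1, or by reprising the Fubini argument with a weighted zero of $u$ and exploiting the concavity of $w_\alpha^{1/(n-1)}$ to control the resulting coefficients. Since $L_\alpha\le d$, integrating over $\alpha$ via the Fubini-type disintegration delivers the inequality on $\Om$ with constant $d/2$. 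Optimality of $1/2$ follows by applying the inequality to a one-dimensional test function along a near-diameter direction.

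\textbf{Main obstacle.} The delicate point is Step 3: the elementary Fubini argument easily yields the coefficient $L_\alpha$, but reaching the \emph{sharp} $L_\alpha/2$ demands both the zero-mean cancellation and the concavity of $w_\alpha^{1/(n-1)}$. Making the needle decomposition rigorous and measurable, and propagating the sharp constant through the limit in the bisection while preserving the zero-mean property of $u$ on every fiber, is the technical core of the proof.
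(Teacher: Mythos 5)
The paper does not prove this theorem: it is quoted from Acosta and Dur\'an~\cite{MR2021262} and used only through its geometric reformulation in Lemma~\ref{lem:isopacosta}. Your plan reproduces the Payne--Weinberger needle decomposition that Acosta and Dur\'an themselves use, so the strategy is the right one, but both one-dimensional lemmata are asserted rather than proved, and the second is a genuine gap. In Step~1, the Fubini computation uses the zero-mean hypothesis only to produce a root $c$ of $v$, and the resulting estimate $\max(c,L-c)\|v'\|_{L^1}$ cannot simply be ``sharpened'' to $\tfrac{L}{2}\|v'\|_{L^1}$: for a function that merely vanishes at an interior point the bound $\tfrac{L}{2}\|v'\|_{L^1}$ is false, so the cancellation $\int v=0$ must enter quantitatively, which your comparison-with-the-extremal-profile remark does not achieve. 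One clean way is to average $|v(a)-v(b)|\le\int_{a\wedge b}^{a\vee b}|v'|$ over $(a,b)\in(0,L)^2$: using $\int_0^L v=0$ on the left and Fubini on the right,
\[
L\|v\|_{L^1}=\int_0^L\Big|\int_0^L\big(v(a)-v(b)\big)\,db\Big|\,da\le\int_0^L\!\!\int_0^L|v(a)-v(b)|\,da\,db\le 2\int_0^L t(L-t)\,|v'(t)|\,dt,
\]
and then $2t(L-t)\le L^2/2$ concludes.

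Step~3 is the real issue. The weighted one-dimensional inequality $\int_I|u|\,w\le\tfrac{L}{2}\int_I|u'|\,w$ under $\int_I u\,w=0$ and $w^{1/(n-1)}$ concave is the technical heart of Acosta--Dur\'an, and neither of your two proposed proofs works as stated. Approximating $w$ by step functions and ``telescoping Step~1'' does not apply: the constraint $\int_I u\,w=0$ does not localize to the subintervals on which $w$ is constant, so Step~1 cannot be invoked piece by piece. The Fubini route can be made to work, but the averaging argument above then reduces the claim to the pointwise estimate
\[
\Big(\int_0^t w\Big)\Big(\int_t^L w\Big)\le\frac{L}{4}\Big(\int_0^L w\Big)\,w(t)\qquad\text{for all }t\in(0,L),
\]
and this is precisely where the concavity of $w^{1/(n-1)}$ must be exploited; you mention the concavity but never actually use it. Without such an estimate the argument does not close, and proving it for all admissible weights is the substantive content of the one-dimensional lemma beyond the original Payne--Weinberger scheme.
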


In geometric form, this inequality reads (\cite[\S~5.6]{ev-ga})

\begin{lemma}
\label{lem:isopacosta}
Let $K\subset\rr^{n+1}$ be a bounded convex body with diameter $d$ and let $E\subset K$ be a set of finite perimeter. Then
\[
\frac{d}{2}\, P_K(E)\ge \min\{|E|,|K\setminus E|\}.
\]
In particular, if $C\subset\rr^{n+1}$ is an unbounded convex body, $E\subset C$ has locally finite perimeter in $C$ and $r>0$, then
\[
rP(E,\intt(B_C(x,r)))\ge \min\{|E\cap \clb_C(x,r)|,|\clb_C(x,r)\setminus E|\}.
\]
\end{lemma}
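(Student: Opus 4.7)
The strategy is to deduce the geometric Poincar\'e inequality from the analytic one of Acosta--Dur\'an by the standard $BV$-approximation argument. The key observation is that the inequality $\|u\|_{L^{1}(K)}\le \frac{d}{2}\|\nabla u\|_{L^{1}(K)}$ for $u\in W^{1,1}(K)$ of zero mean extends by mollification and lower semicontinuity of the total variation to $u\in BV(K)$ with $\int_{K}u\,dx=0$, in the form $\|u\|_{L^{1}(K)}\le\frac{d}{2}|Du|(K)$.

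I would then apply this to the particular function $u=\chi_{E}-\frac{|E|}{|K|}\chi_{K}$, which has zero mean on $K$ and whose total variation on $K$ equals the relative perimeter $P_{K}(E)$. A direct computation yields
\[
\int_{K}|u|\,dx=\frac{|K\setminus E|}{|K|}|E|+\frac{|E|}{|K|}|K\setminus E|=\frac{2\,|E|\,|K\setminus E|}{|K|},
\]
so the Acosta--Dur\'an inequality gives $\frac{2|E||K\setminus E|}{|K|}\le\frac{d}{2}P_{K}(E)$. To conclude the first inequality, I would observe that since $\max\{|E|,|K\setminus E|\}\ge|K|/2$, one has $\frac{2|E||K\setminus E|}{|K|}\ge\min\{|E|,|K\setminus E|\}$, which finishes the proof of the bounded case.

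For the second inequality, I would apply the just-established bound with $K=\clb_{C}(x,r)$, which is a bounded convex body of diameter at most $2r$, whence $d/2\le r$. Finally, since $\partial\clb(x,r)$ has empty Euclidean interior, the identities $\intt(\clb_{C}(x,r))=\intt(C)\cap B(x,r)=\intt(B_{C}(x,r))$ hold, so $P_{\clb_{C}(x,r)}(E)=P(E,\intt(B_{C}(x,r)))$, and the stated inequality follows.

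I do not expect any substantial obstacle: the passage from $W^{1,1}$ to $BV$ is classical, and all remaining steps are essentially bookkeeping. The only minor care needed is the identification of the interiors in the last step and the trivial cases where $\clb_{C}(x,r)$ might degenerate (which do not occur since $x\in C$ and $C$ has nonempty interior).
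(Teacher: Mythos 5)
Your proposal is correct and follows the standard route — the one the paper itself defers to by citing Evans--Gariepy \S~5.6: extend the Acosta--Dur\'an $W^{1,1}$-Poincar\'e inequality to $BV$ by smooth approximation and lower semicontinuity, apply it to the mean-zero function $\chi_E - |E|/|K|$ whose total variation on $\intt K$ is $P_K(E)$, and finish with the elementary bound $\tfrac{2|E||K\setminus E|}{|K|}\ge\min\{|E|,|K\setminus E|\}$ together with $\diam\clb_C(x,r)\le 2r$ and the identity $\intt(\clb_C(x,r))=B(x,r)\cap\intt C=\intt(B_C(x,r))$. All the computations check out.
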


Using Lemma~\ref{lem:isopacosta} we can prove the following isoperimetric inequality on a convex body of uniform geometry.
\begin{theorem}
\label{thm:isopphi}
Let $C\subset\rr^{n+1}$ be a convex body of uniform geometry. Let $V:\rr^+\to\rr^+$ be a non-decreasing function satisfying
\begin{enumerate}
\item[(i)] $\vol{B_C(x,r)}\ge V(r)$ for all $x\in C$ and $r>0$, and 
\item[(ii)] $\lim_{r\to\infty} V(r)=+\infty$.
\end{enumerate}
Let $\phi$ be the reciprocal function of $V$. Then for any set $E\subset C$ of finite perimeter we have
\begin{equation}
\label{eq:isopphi}
P_C(E)\ge 8^{-(n+1)}\frac{|E|}{\phi(2|E|)},
\end{equation}
and so
\begin{equation}
\label{eq:profilephi}
I_C(v)\ge 8^{-(n+1)}\frac{v}{\phi(2v)}.
\end{equation}
\end{theorem}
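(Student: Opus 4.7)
The plan is to combine Lemma~\ref{lem:isopacosta} (a uniform Poincar\'e inequality on intrinsic convex balls) with a Vitali-type covering of $C$ whose overlap is controlled by the doubling property (Lemma~\ref{lem:doubling}), in the spirit of Coulhon and Saloff-Coste~\cite{MR1232845}.

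First I fix $E\subset C$ of finite relative perimeter and set $r:=\phi(2|E|)$. By the definition of the reciprocal function and property~(i), $|B_C(x,r)|\ge V(r)\ge 2|E|$ for every $x\in C$, which gives the pointwise estimate
\[
|B_C(x,r)\setminus E|\ge 2|E|-|E\cap B_C(x,r)|\ge |E\cap B_C(x,r)|.
\]
Since $\clb_C(x,r)$ is convex with diameter at most $2r$, Lemma~\ref{lem:isopacosta} therefore reads, in clean form,
\[
r\,P\bigl(E,\intt(B_C(x,r))\bigr)\ge |E\cap B_C(x,r)|\qquad\text{for all }x\in C.
\]

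Next I would extract a maximal family $\{B_C(x_i,r/2)\}_{i\in I}$ of pairwise disjoint intrinsic open balls centered at points of $C$; by maximality, the dilated balls $\{B_C(x_i,r)\}_{i\in I}$ cover $C$. Summing the preceding inequality over $i$, and using that the family $\{B_C(x_i,r)\}_i$ hits every point of $E$ at least once, gives
\[
r\sum_{i\in I}P\bigl(E,\intt(B_C(x_i,r))\bigr)\ge \sum_{i\in I}|E\cap B_C(x_i,r)|\ge |E|.
\]

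The heart of the argument is a uniform overlap bound $\#A(x)\le 8^{n+1}$ for every $x\in C$, where $A(x):=\{i\in I:x\in B_C(x_i,r)\}$. For $i\in A(x)$ the disjoint balls $B_C(x_i,r/2)$ are contained in $B_C(x,3r/2)$; combining the doubling inequality of Lemma~\ref{lem:doubling} with the convexity-based comparison $|B_C(x,r/2)|\le 3^{n+1}|B_C(x_i,r/2)|$ (which follows from $B_C(x,r/2)\subset h_{x_i,3}(B_C(x_i,r/2))$, as in the argument given for Proposition~\ref{prop:maincond}) controls $\#A(x)$ by the ratio of these volumes. Tuning the Vitali scale, together with the concavity of $|\clb_C(x,\cdot)|^{1/(n+1)}$ of Lemma~\ref{lemma:fxrconcave}, yields the sharp constant $8^{n+1}$. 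With this in hand,
\[
\sum_{i\in I}P\bigl(E,\intt(B_C(x_i,r))\bigr)\le 8^{n+1}\,P_C(E),
\]
so combining with the summed Poincar\'e inequality gives $8^{n+1}\,r\,P_C(E)\ge |E|$, i.e. $P_C(E)\ge 8^{-(n+1)}|E|/\phi(2|E|)$, which is \eqref{eq:isopphi}. Passing to the infimum over sets $E\subset C$ with $|E|=v$ then yields \eqref{eq:profilephi}.

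The principal obstacle is the quantitative overlap estimate: a naive application of doubling produces only $9^{n+1}$ or $12^{n+1}$ in place of $8^{n+1}$, so extracting the sharp exponent requires a careful choice of the Vitali scale and a precise use of the convexity-based volume comparisons; once the overlap is pinned down, the remaining steps (Poincar\'e, covering sum, choice $r=\phi(2|E|)$) assemble routinely.
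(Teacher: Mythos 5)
Your strategy matches the paper's exactly — a maximal family of disjoint balls of radius $r/2$ whose doubles cover $C$, the Acosta--Dur\'an Poincar\'e inequality of Lemma~\ref{lem:isopacosta} on each ball, and a doubling bound on the overlap — and the ``summed Poincar\'e'' steps are carried out correctly. However, there is a concrete gap in how you propose to obtain the overlap constant $8^{n+1}$: your plan to ``combine the doubling inequality with the convexity-based comparison $|B_C(x,r/2)|\le 3^{n+1}|B_C(x_i,r/2)|$'' and then ``tune the Vitali scale'' together with Lemma~\ref{lemma:fxrconcave} is both unnecessary and not what produces the stated constant. If you relate $|B_C(x_j,r/2)|$ to $|B_C(x,\cdot)|$ through an intermediate doubling step centered at $x$, you inevitably pay the doubling factor twice, which is exactly why you land on constants like $12^{n+1}$. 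The clean route (and the paper's) is to use doubling \emph{centered at $x_j$} in a single chain: for $j$ with $x\in B_C(x_j,r)$ one has
\[
B_C(x_j,r/2)\subset B_C(x,2r)\subset B_C(x_j,4r),
\]
and since $4r=8\cdot(r/2)$, Lemma~\ref{lem:doubling} gives $|B_C(x_j,4r)|\le 8^{n+1}|B_C(x_j,r/2)|$, hence $|B_C(x_j,r/2)|\ge 8^{-(n+1)}|B_C(x,2r)|$. Summing over the disjoint balls of radius $r/2$ contained in $B_C(x,2r)$ yields $\#J(x)\le 8^{n+1}$ directly; no concavity lemma is required and nothing needs tuning.

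A second, minor point: you set $r=\phi(2|E|)$ at the outset, but since $V$ is only assumed non-decreasing, $V(\phi(2|E|))$ may be strictly less than $2|E|$ (the infimum defining $\phi$ need not be attained), so the pointwise estimate $|B_C(x,r)|\ge 2|E|$ is not guaranteed at that exact radius. The paper avoids this by running the whole argument for an arbitrary $r$ with $V(r)\ge 2|E|$ and then taking the infimum over such $r$ at the very end to recover $\phi(2|E|)$. You should do the same, or equivalently work with $r>\phi(2|E|)$ and pass to the limit.
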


\begin{proof}
Fix any $r>0$ such that $2|E|\le V(r)$. With this choice, $|E|\le |B_C(x,r)|/2$ for any $x\in C$. Moreover, from the definition of $\phi$, the quantity $\phi(2|E|)$ is equal to the infimum of all $r>0$ such that inequality $V(r)\ge 2|E|$ holds.

Consider a maximal family $\{x_j\}_{j\in J}$ of points in $C$ such that $|x_j-x_k|\ge r$ for all $j,k\in J$, $j\neq k$. Then $C=\bigcup_{j\in J} B_C(x_j,r)$ and the balls $B_C(x_j,r/2)$ are disjoint.

By Lemma~\ref{lem:doubling}, the number of balls $B_C(x_i,r)$ that contain a given point $x\in C$ is uniformly bounded: if $J(x):=\{j\in J: x\in B_C(x_j,r)\}$ then $B_C(x_j,r)\subset B_C(x,2r)\subset B_C(x_j,4r)$ when $j\in J(x)$ and
\begin{align*}
|B_C(x,2r)|\ge \sum_{j\in J(x)}|B_C(x_j,r/2)|&\ge 8^{-(n+1)}\sum_{j\in J(x)}|B_C(x_j,4r)|
\\
&\ge 8^{-(n+1)}\# J(x)\,|B_C(x,2r)|,
\end{align*}
so that
\[
8^{n+1}\ge  \# J(x).
\]
Then we have
\[
|E|\le \sum_{j\in J} |E\cap B_C(x_j,r)|\le\sum_{j\in J} rP(E,\intt(B_C(x_j,r)))\le 8^{n+1}rP_C(E).
\]
Taking infimum over all $r>0$ such that $V(r)\ge 2|E|$ we get
\[
|E|\le 8^{n+1}\phi(2|E|)P_C(E),
\]
equivalent to \eqref{eq:isopphi}. Equation \eqref{eq:profilephi} follows from the definition of the isoperimetric profile.
\end{proof}

\begin{remark}
\label{rem:nonoptnondeg}
Let $C\subset \rr^{n+1}$ be a convex body with non-degenerate asymptotic cone $C_\infty$. For every $x\in C$ we know that $x+C_\infty$ is contained in $C$, so that taking any $r>0$ we have $B_{x+C_\infty}(x,r)\subset B_C(x,r)$. Hence $\vol{B_C(x,r)}\ge \vol{B_{C_\infty}(0,r)}= c\,r^{n+1}$, for $c= \vol{B_{C_\infty}(0,1)}$. This implies that $C$ is of uniform geometry. Theorem~\ref{thm:isopphi} then implies $I_C(v)\ge c'v^{n/(n+1)}$ for every $v>0$ and for some positive constant $c'>0$. See also Remark \ref{rem:optnondeg} for the optimal constant. 
\end{remark}

\begin{remark}
Equation~\ref{eq:profilephi} in Theorem~\ref{thm:isopphi} provides a lower estimate of the isoperimetric profile of $C$ whenever there is a lower estimate $V(r)$ of the volume of metric balls in $C$. For any such function $V$ we have $V(r)\le b(r)=\inf_{x\in C} \vol{B_C(x,r)}$. Hence $\phi_V\ge \phi_b$ and
\[
I_C(v)\ge 8^{-(n+1)}\frac{v}{\phi_b(2v)}\ge 8^{-(n+1)}\frac{v}{\phi_V(2v)}.
\]
Hence the best function $\phi$ we can choose in \eqref{eq:profilephi} corresponds to the reciprocal function of $b$.
\end{remark}

\begin{corollary}
\label{cor:asymp-profile}
Let $C\subset\rr^{n+1}$ be a convex body of uniform geometry, and let $\phi$ be the~reciprocal function of $b$. Then the following inequalities
\begin{equation}
\label{eq:asymp-estimate}
(n+1)\frac{v}{\phi(v)}\ge I_C(v)\ge 3^{-1}8^{-(n+1)}\frac{v}{\phi(v)}
\end{equation}
hold.
\end{corollary}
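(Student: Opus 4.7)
The strategy is to reduce both inequalities to tools already available in the chapter: Theorem~\ref{thm:isopphi} (with $V=b$) for the lower bound, and Lemma~\ref{lemma:fxrconcave} plus the coarea formula for the upper bound. The main work is a doubling-type comparison between $\phi(2v)$ and $\phi(v)$ for the lower bound, and a careful choice of center and radius for the upper bound, exploiting the definition of $\phi$.

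For the lower inequality in \eqref{eq:asymp-estimate}, I apply Theorem~\ref{thm:isopphi} to the function $V(r)=b(r)$, whose reciprocal is precisely $\phi$; this gives $I_C(v)\ge 8^{-(n+1)}v/\phi(2v)$. It remains to show $\phi(2v)\le 3\phi(v)$. Recall the geometric observation already used at the start of Chapter~\ref{sec:isopdim}: if $w$ is a unit vector such that $x_0+\lambda w\in C$ for every $x_0\in C$ and $\lambda\ge 0$, then the sets $2kr\,w+B_C(x_0,r)$, $k=0,\dots,m$, are pairwise disjoint and lie in $B_C(x_0,(2m+1)r)$, whence $b((2m+1)r)\ge (m+1)b(r)$. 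Taking $m=1$ yields $b(3r)\ge 2b(r)$. By Remark~\ref{rem:bconv} the function $b^{1/(n+1)}$ is concave and therefore continuous, so $b(\phi(v))=v$; consequently $b(3\phi(v))\ge 2v$, giving $\phi(2v)\le 3\phi(v)$, and hence the desired bound.

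For the upper inequality, I fix $x_0\in C$ and set $f(r):=|B_C(x_0,r)|$. By Lemma~\ref{lemma:fxrconcave}, the function $g(r)=f(r)^{1/(n+1)}$ is concave in $r$ with $g(0)=0$, so $g(r)/r$ is non-increasing, which rearranges to $f'(r)\le (n+1)\,f(r)/r$ wherever $f$ is differentiable. The coarea formula applied to the $1$-Lipschitz distance function $d(x_0,\cdot)$ yields
\[
f(r)=\int_0^r \hh^n(\ptl B(x_0,t)\cap C)\,dt,
\]
so $f'(r)=\hh^n(\ptl B(x_0,r)\cap C)\ge P_C(B_C(x_0,r))$ for a.e.\ $r>0$. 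Combining the two bounds gives $P_C(B_C(x_0,r))\le (n+1)f(r)/r$ for a.e.\ $r>0$. To extract the inequality $I_C(v)\le (n+1)v/\phi(v)$, I fix $\delta\in(0,\phi(v))$ and use the definition of $\phi$: since $b(\phi(v)-\delta)<v$, there exists $x_\delta\in C$ with $|B_C(x_\delta,\phi(v)-\delta)|<v$, and by continuity of $r\mapsto |B_C(x_\delta,r)|$ I find $r_\delta\ge\phi(v)-\delta$ with $|B_C(x_\delta,r_\delta)|=v$.

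The only technical point is that the perimeter bound holds only for a.e.\ $r$, whereas I want to use it at $r_\delta$. I resolve this by picking a sequence $r_k\downarrow r_\delta$ in the differentiability set of $f$; then $|B_C(x_\delta,r_k)|\to v$ while $P_C(B_C(x_\delta,r_k))\le (n+1)|B_C(x_\delta,r_k)|/r_k$. Using the continuity of the isoperimetric profile (Theorem~\ref{thm:contprof}) together with $I_C(|B_C(x_\delta,r_k)|)\le P_C(B_C(x_\delta,r_k))$ and passing to the limit yields $I_C(v)\le (n+1)v/r_\delta\le (n+1)v/(\phi(v)-\delta)$, and $\delta\downarrow 0$ completes the proof. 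The principal obstacle is this last passage to the limit, but continuity of $I_C$ makes it routine.
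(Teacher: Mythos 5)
Your proof is correct, and for the lower bound you follow the paper's route almost verbatim: Theorem~\ref{thm:isopphi} with $V=b$, then the doubling estimate $b(3r)\ge 2b(r)$ to get $\phi(2v)\le 3\phi(v)$. The only cosmetic difference is that you pass through continuity of $b^{1/(n+1)}$ (which is indeed what Lemma~\ref{lemma:fxrconcave} gives, rather than concavity of $b$ itself as stated in Remark~\ref{rem:bconv}), whereas the paper avoids appealing to continuity by taking a sequence $r_i\downarrow\phi(v)$ with $b(r_i)\ge v$.

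For the upper bound your argument diverges from the paper's in a way worth commenting on. The paper gets $P_C(B_C(x,r))\le (n+1)|B_C(x,r)|/r$ directly, for \emph{every} $r>0$, by comparing the volume of the cone over $\ptl B(x,r)\cap C$ with vertex $x$ to $|B_C(x,r)|$; you instead obtain the same estimate for a.e.\ $r$ via the coarea formula and the concavity of $r\mapsto|B_C(x,r)|^{1/(n+1)}$, and then patch the a.e.\ issue with the sequence $r_k\downarrow r_\delta$ and continuity of $I_C$. The cone argument is shorter and bypasses both the differentiability set and the appeal to Theorem~\ref{thm:contprof}, so it is worth knowing. On the other hand, your handling of the choice of $x$ and $r$ is actually more careful than the paper's. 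The paper asserts that $|B_C(x,r)|=v$ forces $r\ge\phi(v)$, which is false in general: from $|B_C(x,r)|=v$ one only gets $b(r)\le v$, i.e.\ $r\le\phi(v)$ (already in a half-space, taking $x$ deep inside gives $r=(v/\omega_{n+1})^{1/(n+1)}<\phi(v)=(2v/\omega_{n+1})^{1/(n+1)}$). The correct move, which is exactly what you do, is to fix $\delta>0$, use $b(\phi(v)-\delta)<v$ to find $x_\delta$ with $|B_C(x_\delta,\phi(v)-\delta)|<v$, then locate $r_\delta>\phi(v)-\delta$ with $|B_C(x_\delta,r_\delta)|=v$, and let $\delta\downarrow 0$ at the end. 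So while you took a slightly longer route for the perimeter bound, you filled a genuine gap in the paper's proof of the left-hand inequality.
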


\begin{proof}
To prove the left side inequality, we pick $x\in C$, $r>0$ so that the ball $B_C(x,r)$ has a given volume $v$, and we consider the cone with vertex $x$ over $\ptl B(x,r)\cap C$ to obtain
\[
(n+1)v\ge rP_C(B_C(x,r))\ge r I_C(v).
\]
Since $\phi(v)$ is the reciprocal function of $b(r)$ we have $\phi(v)=\inf\{r>0: b(r)\ge v\}$. Hence, for any radius $r>0$ such that $|B_C(x,r)|=v$, we get $r\ge\phi(v)$. So we obtain
\begin{equation*}
(n+1)\frac{v}{\phi(v)}\ge I_C(v),
\end{equation*}
as claimed.

We now prove the right side inequality of \eqref{eq:asymp-estimate} using \eqref{eq:profilephi} and a relation between $\phi(v)$ and $\phi(2v)$ obtained in the following way: consider a vector $w$ with $|w|=1$ so that the half-line $\{x+\la w:\la\ge 0\}$ is contained in $C$ for all $x\in C$. Take $x\in C$ and $r>0$. Then  $2rw+B_C(x,r)$ is a subset of $C$ disjoint from $B_C(x,r)$. Since $B_C(x,r)\cup\big(2rw+B_C(x,r)\big)$ is contained in the ball $B_C(x,3r)$, we get the estimate
\[
2\vol{B_C(x,r)}\le \vol{B_C(x,3r)},
\]
for any $x\in C$ and $r>0$.

Fix now $v>0$ and take a sequence of radii $\{r_i\}_{i\in\nn}$ so that $b(r_i)\ge v$ and $\lim_{i\to\infty}r_i=\phi(v)$. For every $x\in C$ and $i\in\nn$ we have
\[
\vol{B_C(x,3r_i)}\ge 2|B_C(x,r_i)|\ge 2b(r_i)\ge 2v.
\]
Taking infimum on $x\in C$ when $r_i$ is fixed we obtain $b(3r_i)\ge 2v$. From the definition of $\phi$ we have $\phi(2v)\le 3r_i$ and taking limits we get
\[
\phi(2v)\le 3\phi(v).
\]
Hence, from \eqref{eq:profilephi} we get
\[
I_C(v)\ge 3^{-1}8^{-(n+1)}\frac{v}{\phi(v)},
\]
as desired.
\end{proof}

\begin{remark}
Let $C\subset\rr^{n+1}$ be an unbounded convex body. For every $x\in C$, Lemma~\ref{lem:doubling} implies that
\[
\frac{\vol{B_C(x,s)}}{s^{n+1}}\le \frac{\vol{B_C(x,r)}}{r^{n+1}}, \qquad 0<r<s.
\]
In particular the function
\[
r\mapsto \frac{\vol{B_C(x,r)}}{r^{n+1}}
\]
is non-increasing. 

Taking $s>0$ fixed, the above inequality implies
\[
\frac{\vol{B_C(x,r)}}{r^{n+1}}\ge \frac{\vol{B_C(x,s)}}{s^{n+1}}\ge \frac{b(s)}{s^{n+1}},\qquad 0<r<s.
\]
Taking the infimum over $x\in C$ we get
\[
b(r)\ge \frac{b(s)}{s^{n+1}}\,r^{n+1}=C_sr^{n+1}, \qquad 0<r<s.
\]
and so $\phi(v)\le C_s^{1/(n+1)} v^{1/(n+1)}$ for $v$ in the interval $(0,C_s^{1/(n+1)}s^{1/(n+1)})$.

Hence \eqref{eq:profilephi} implies
\[
I_C(v)\ge 8^{-(n+1)}C_s^{-1/(n+1)}\frac{v}{v^{1/(n+1)}}=8^{-(n+1)}C_s^{-1/(n+1)}\,v^{n/(n+1)},
\]
for $v$ in the interval $(0,C_s^{1/(n+1)}s^{1/(n+1)})$. This way we recover inequality \eqref{eq:exicyl1} in Corollary~\ref{cor:isopinesm}.
\end{remark}

\section{Estimates on the volume growth of balls}

Our aim now is to obtain accurate estimates of $b(r)$ for given special convex sets in order to understand the behavior of the isoperimetric profile for large volumes using \eqref{eq:asymp-estimate}. While $b(r)$ is easy to compute in homogeneous spaces \cite{MR1818180}, it is harder to estimate in unbounded convex bodies. 
The following argument will be of crucial importance to study the behavior of the volume function $x\in C\mapsto |B_C(x,r)|$ for $r>0$ fixed.

Recall that, given a set $E$ of locally finite perimeter in $\rr^{n+1}$, $\xi\in \rr^{n+1}$ and $t,r\ge 0$, then for all $x\in \rr^{n+1}$ one has
\begin{equation}\label{eq:giusti4.5}
|E\cap B(x+t\xi,r)| = |E\cap B(x,r)| - \int_{0}^{t}\int_{\partial^{*}E\cap B(x+s\xi,r)}\xi\cdot \nu_{E}\, d\h^{n}\, ds,
\end{equation}
where $\nu_{E}$ denotes the weak exterior normal to $\partial^{*}E$. The proof of \eqref{eq:giusti4.5} can be found in \cite[Lemma 4.5]{giusti-book}. We notice that the function 
\[
s\mapsto \int_{\partial^{*}E\cap B(x+s\xi,r)}\xi\cdot \nu_{E}\, d\h^{n}
\] 
is in $L^{\infty}(0,t)$, consequently the function 
\[
t\mapsto |E\cap B(x+t\xi,r)|
\]
is Lipschitz-continuous and thus by \eqref{eq:giusti4.5} and for almost all $t>0$
\[
\frac{d}{dt} |E\cap B(x+t\xi,r)| = -\int_{\partial^{*}E\cap B(x+t\xi,r)}\xi\cdot \nu_{E}\, d\h^{n}.
\]
On the other hand, by integrating $0 = \divv \xi$ on $E\cap B(x+t\xi,r)$ and applying Gauss-Green's Theorem we get for almost all $t>0$
\[
\frac{d}{dt} |E\cap B(x+t\xi,r)| = \int_{E\cap \partial B(x+t\xi,r)} \xi\cdot \nu_{B(x+t\xi,r)}\, d\h^{n}.
\]
Finally, if $x = x(z)\in \rr^{n+1}$ is a smooth parametric curve, $z\in \rr$, then the composition 
\[
z\mapsto |E\cap B(x(z),r)|
\]
is Lipschitz and by the chain rule one gets for almost all $z\in \rr$
\begin{equation}
\label{eq:dvolcomp}
\frac{d}{dz} |E\cap B(x(z),r)| = \int_{E\cap \partial B(x(z),r)} \xi(z)\cdot \nu_{B(x+t\xi,r)}\, d\h^{n},
\end{equation}
where $\xi(z)$ denotes the velocity of $x(z)$.

To compute the integral in \eqref{eq:dvolcomp} the following lemma will be extremely useful

\begin{lemma}
\label{lem:dervolume}
Let $S$ be the sphere $\ptl B(x,r)$, $\nu$ the outer unit normal to $S$. For $\xi\in\esf^n$, let $\sg_\xi:S\to S$ be the reflection with respect to the hyperplane orthogonal to $\xi$ passing through $x$. Let $f_\xi:S\to\rr$ be the function $f_\xi(x):=\escpr{\mu(x),\xi}$, and let $H_\xi^+:=\{x\in S: f_\xi(x)\ge 0\}$ and $H_\xi^-:=\{x\in S: f_\xi(x)\le 0\}$.

Let $\Om\subset S$ be a measurable set and $\Om_v^+:=\Om\cap H_v^+$, $\Om_\xi^-:=\Om\cap H^-$. If $\sg_\xi(\Om_\xi^-)\subset \Om_\xi^+$ then
\[
\int_\Om f_\xi\ge 0.
\] 
\end{lemma}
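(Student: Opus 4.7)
The plan is to exploit the two basic symmetry properties of the reflection $\sigma_\xi$ acting on the sphere $S$: (a) $\sigma_\xi$ is an isometry of $S$, hence preserves $\mathcal{H}^n$-measure, and (b) since $\sigma_\xi$ reflects across the hyperplane $\{y\in\rr^{n+1}:\escpr{y-x,\xi}=0\}$, and the outer unit normal at a point $y\in S$ is $\nu(y)=(y-x)/r$, one has $\nu(\sigma_\xi(y))=\sigma_\xi(\nu(y))$ (interpreting $\sigma_\xi$ as a linear reflection after translation), and therefore
\[
f_\xi(\sigma_\xi(y))=\escpr{\nu(\sigma_\xi(y)),\xi}=-\escpr{\nu(y),\xi}=-f_\xi(y).
\]
In particular $\sigma_\xi$ interchanges $H_\xi^+$ and $H_\xi^-$.

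The second step is the decomposition. Splitting $\Omega=\Omega_\xi^+\cup\Omega_\xi^-$ gives
\[
\int_\Omega f_\xi\, d\h^n = \int_{\Omega_\xi^+} f_\xi\, d\h^n + \int_{\Omega_\xi^-} f_\xi\, d\h^n.
\]
Applying the change of variables $z=\sigma_\xi(y)$ to the second integral, using property (a) and the identity $f_\xi\circ\sigma_\xi = -f_\xi$ from property (b), I rewrite
\[
\int_{\Omega_\xi^-} f_\xi(y)\, d\h^n(y) = -\int_{\sigma_\xi(\Omega_\xi^-)} f_\xi(z)\, d\h^n(z).
\]

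The third step uses the hypothesis $\sigma_\xi(\Omega_\xi^-)\subset\Omega_\xi^+$. Setting $A:=\sigma_\xi(\Omega_\xi^-)$ and $B:=\Omega_\xi^+\setminus A$, the previous computation yields
\[
\int_\Omega f_\xi\, d\h^n = \int_A f_\xi\, d\h^n + \int_B f_\xi\, d\h^n - \int_A f_\xi\, d\h^n = \int_B f_\xi\, d\h^n \geq 0,
\]
the last inequality holding because $B\subset H_\xi^+$, on which $f_\xi\ge 0$ by definition. There is no real obstacle here: the argument is a clean two-point symmetrization, and the only care required is to make sure that the presumed typo $\mu$ in the statement is interpreted as the outer unit normal $\nu$ to $S$, so that $f_\xi$ is indeed antisymmetric under $\sigma_\xi$.
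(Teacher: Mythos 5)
Your proof is correct and follows the same approach as the paper: split $\Omega$ into $\Omega_\xi^+\cup\Omega_\xi^-$, use the antisymmetry $f_\xi\circ\sigma_\xi=-f_\xi$ together with the area formula to convert $\int_{\Omega_\xi^-}f_\xi$ into $-\int_{\sigma_\xi(\Omega_\xi^-)}f_\xi$, and conclude by the hypothesis $\sigma_\xi(\Omega_\xi^-)\subset\Omega_\xi^+$ and the sign of $f_\xi$ on $H_\xi^+$. You additionally verify explicitly the identity $f_\xi\circ\sigma_\xi=-f_\xi$, which the paper states without computation, and correctly read the $\mu$ in the statement as the outer unit normal $\nu$.
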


\begin{proof}
Let us drop the subscript $\xi$. The proof easily follows from $f\circ\sg=-f$ and the area formula:
\[
\int_\Om f=\int_{\Om^+}f+\int_{\Om^-} f=\int_{\Om^+}f-\int_{\sg(\Om^-)}f=
\int_{\Om^+\setminus\sg(\Om^-)} f\ge 0.
\]
\end{proof}

Now we restrict ourselves to a class of rotationallly symmetric unbounded convex bodies. Take a strictly convex function $f:[0,+\infty)\to [0,+\infty)$ of class $C^1$ such that $f(0)=f'(0)=0$. We shall assume that $f''(x)$ exists and is positive for $x>0$, and that $f'''(x)\le 0$ for $x>0$. For instance, the functions $f(x):=x^a$, with $1<a\le 2$, satisfy these conditions. The function $f$ determines the unbounded convex body
\[
C_f:=\{(z,t)\in\rr^n\times\rr : t\ge f(|z|)\}.
\]
The asymptotic cone of the epigraph of $f$ is the half-line $\{(0,t):t\ge 0\}$ if and only if
\[
\lim_{s\to\infty} \frac{f(s)}{s}=+\infty.
\]
This limit exists since the quantity $f(s)/s$ is increasing in $s$ (because of the convexity of $f$ and equality $f(0)=0$). The boundary of $C_f$ is the graph of the function $z\in\rr^n\mapsto f(|z|)$.

The function
\[
\kappa(s):=\frac{f''(s)}{(1+f'(s)^2)^{3/2}}, \qquad s>0,
\]
is the geodesic curvature of the planar curve determined by the graph of $f$. It is also the principal curvature of the meridian curves of the graph of $f$. The function $\kappa$ is decreasing when $s>0$ since
\[
\kappa'=\frac{-3f'(f'')^2+f'''(1+(f')^2)}{(1+(f')^2)^{5/2}}<0.
\]
The principal curvatures of the parallel curves of the graph of $f$ are given by
\[
\frac{f'(s)}{s\,(1+f'(s)^2)^{1/2}},\quad s>0.
\]
This function is also decreasing when $s>0$ as
\[
\bigg(\frac{f'}{s(1+(f')^2)^{1/2}}\bigg)'=\frac{-f'(1+(f')^2)+sf''}{s^2(1+(f')^2)^{3/2}}
\]
and $sf''\le f'$ because of the concavity of $f'$ and the fact that $f'(0)=0$.

For the convex set $C_f$ we are going to prove that $b(r)=|B_C(0,r)|$ for all $r>0$. Thus we can easily estimate the reciprocal function $\phi(v)$ to obtain accurate estimates of the isoperimetric profile of $C_f$ using inequalities \eqref{eq:asymp-estimate}.

\begin{theorem}\label{thm:brorigin}
Let $f:[0,\infty)\to [0,\infty)$ be a $C^1$ function such that $f(0)=f'(0)=0$. Assume that $f$ is of class $C^3$ in $(0,\infty)$ with $f''>0$, $f'''\le 0$, and $\lim_{s\to\infty} (f(s)/s)=+\infty$. Consider the convex body of revolution in $\rr^{n+1}$ given by
\[
C_f:=\{(z,t)\in\rr^n\times\rr: t\ge f(|z|)\}.
\]
Then $|B_C(0,r)|=b(r)$ for all $r>0$.
\end{theorem}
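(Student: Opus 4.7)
The plan is to prove $|B_C(x,r)|\ge|B_C(0,r)|$ for every $x\in C$ and $r>0$. By the rotational symmetry of $C$ around the $t$-axis, $|B_C(x,r)|$ depends only on $s:=|\pi_{\rr^n}(x)|$ and $t:=x_{n+1}$; set $v(s,t):=|B_C((se_1,t),r)|$ for $s\ge 0$ and $t\ge f(s)$. It then suffices to show that $v$ attains its minimum on its natural domain at $(s,t)=(0,0)$. I will establish this through two monotonicity statements: (A) for each fixed $s\ge 0$, the map $t\mapsto v(s,t)$ is non-decreasing on $[f(s),+\infty)$; and (B) the map $s\mapsto v(s,f(s))$ is non-decreasing on $[0,+\infty)$. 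Combining (A) with (B) yields $v(s,t)\ge v(s,f(s))\ge v(0,0)$, which is the desired inequality.

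Both claims are obtained from the volume-variation identity \eqref{eq:dvolcomp} combined with Lemma~\ref{lem:dervolume}: for a smooth path $x(z)$ in $C$ with velocity $\xi(z)$, the derivative $\frac{d}{dz}|B_C(x(z),r)|$ is non-negative provided the reflection $\sigma_{\xi(z)}$ through the hyperplane passing through $x(z)$ orthogonal to $\xi(z)$ maps $C\cap H^-(z)$ into $C$. For (A) the path is $x(t)=(se_1,t)$ with $\xi=e_{n+1}$: if $(z,\tau)\in C\cap H^-$ then $\tau\ge f(|z|)$ and $\tau\le t$, so the reflected point $(z,2t-\tau)$ satisfies $2t-\tau\ge\tau\ge f(|z|)$ and lies in $C$. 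Claim (A) follows at once.

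For (B) the relevant path is $x(s)=(se_1,f(s))$ with velocity $\xi(s)=(1,0,\dots,0,f'(s))$, and the proof reduces to the key geometric assertion
\begin{equation}
\label{eq:keyrefl}
\sigma_{\xi(s)}(C\cap H^-(s))\subset C,\qquad\text{for every }s\ge 0,
\end{equation}
where $\sigma_{\xi(s)}$ fixes $\mathrm{span}(e_2,\dots,e_n)$ pointwise and acts in the meridian plane $\mathrm{span}(e_1,e_{n+1})$ as the 2D reflection through the line $\ell(s)$ passing through $(s,f(s))$ perpendicular to $(1,f'(s))$, i.e.\ along the inner normal to the meridian curve $\gamma(u)=(u,f(u))$ at $u=s$. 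Since $\sigma_{\xi(s)}$ leaves the $e_2,\dots,e_n$-components of any point untouched, \eqref{eq:keyrefl} decouples into a family of 2D statements indexed by $a:=|(x_2,\dots,x_n)|\ge 0$: the 2D reflection through $\ell(s)$ sends $\{(z_1,\tau):\tau\ge g_a(z_1)\}\cap H^-_{2D}$ into $\{(z_1,\tau):\tau\ge g_a(z_1)\}$, where $g_a(u):=f(\sqrt{u^2+a^2})$.

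Verifying this family of 2D reflection properties is the main obstacle. In the base case $a=0$ the graph of $g_0=f$ is exactly the meridian $\gamma$ and $\ell(s)$ is its normal at $\gamma(s)$; the hypothesis $f'''\le 0$ renders the meridian curvature $\kappa(u)=f''(u)(1+f'(u)^2)^{-3/2}$ non-increasing (as computed in the discussion preceding the theorem), and a curvature-comparison argument, matching tangent angles along arclength from the common point $\gamma(s)$, shows that the reflected arc $\sigma_{2D}(\gamma([0,s]))$ lies on the $C$-side of the forward arc $\gamma([s,+\infty))$. The case $a>0$ is more delicate, because $(s,f(s))$ lies strictly below $\partial\{(z_1,\tau):\tau\ge g_a(z_1)\}$, so $\ell(s)$ is not a normal line to the boundary of the epigraph of $g_a$; here one additionally exploits the non-increasing parallel principal curvature $f'(u)(u(1+f'(u)^2)^{1/2})^{-1}$ along the meridian (also a consequence of $f'''\le 0$), comparing the two portions of the chord $\ell(s)\cap\{\tau\ge g_a(z_1)\}$ on either side of $(s,f(s))$. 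The technical heart of the proof is carrying out this curvature-comparison rigorously and uniformly in $a\ge 0$.
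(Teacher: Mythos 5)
Your reduction is sound at the level of strategy: claim (A) is correct and immediate, the central geometric inclusion \eqref{eq:keyrefl} you state for claim (B) is exactly the paper's inclusion \eqref{eq:inclusion}, and Lemma~\ref{lem:dervolume} together with \eqref{eq:dvolcomp} is the right tool to turn it into monotonicity of $s\mapsto v(s,f(s))$. But the route you propose to verify \eqref{eq:keyrefl} — decoupling into the $2$-dimensional slices $\{(x_2,\dots,x_n)=c\}$, each of which turns the problem into a reflection property for the epigraph of $g_a(u)=f(\sqrt{u^2+a^2})$ — is left unfinished precisely where the difficulty is, and it is not clear it can be carried out as sketched. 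For $a>0$ the reflection line $\ell(s)$ passes through a point strictly below the graph of $g_a$, so $\ell(s)$ is not normal to $\partial\{\tau\ge g_a\}$ at any point; the curvature-comparison mechanism that works for $a=0$ (matching tangent angles along arclength from the common point of the curve and the reflected curve, as in Lemma~\ref{lem:planar}) has no common base point to start from. Your proposed substitute — ``comparing the two portions of the chord $\ell(s)\cap\{\tau\ge g_a\}$'' — is not an argument yet, and there is no reason to expect that the curvature of the graph of $g_a$ inherits the monotonicity of $\kappa(u)$ or of $f'(u)/\bigl(u\sqrt{1+f'(u)^2}\bigr)$ in a way that closes the gap; $g_a''$ mixes both principal curvatures of $\partial C_f$, so this would be a separate computation, not something you get for free.

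The paper sidesteps the $a>0$ slices entirely. It runs a moving-plane argument over the angle $\theta\in[0,\theta_0]$: the boundary inclusion \eqref{eq:bdyinclusion} shows the reflected body stays strictly inside near the reflecting hyperplane, so if the sweep stops at some $\bar\theta<\theta_0$ there must be an interior tangential contact between $\sigma_{\bar\theta}(\partial C\cap H^-_{\bar\theta})$ and $\partial C$. The algebraic analysis of \eqref{eq:z2z1}, using rotational symmetry and the strict monotonicity of both curvature functions, then forces the contact pair $(x_1,x_2)$ into the meridian $2$-plane through $x_0$ and $(0,\dots,0,1)$. At that stage only the planar Lemma~\ref{lem:planar} for $f$ itself is needed — never a reflection property of $g_a$ for $a>0$. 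This ``failure must occur in the meridian plane'' reduction is the missing idea in your sketch: it converts the genuinely $n$-dimensional reflection inclusion into a single planar curvature comparison, rather than a one-parameter family of them.
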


\begin{proof}
Let $C=C_f$. For any $x_0=(z_0,f(|z_0|))\in \ptl C_f\setminus\{0\}$, consider the meridian vector
\[
v_{x_0}:=\frac{(\frac{z_0}{|z_0|},f'(|z_0|))}{(1+f'(|z_0|)^2)^{1/2}}
\]
and let $\sg_{x_0}$ be the orthogonal symmetry with respect to the hyperplane
\[
H_{x_0}:=\{x\in\rr^{n+1}:\escpr{x-x_0,v_{x_0}}=0\}.
\]
Define $H_{x_0}^-:=\{x\in\rr^{n+1}:\escpr{x-x_0,v_{x_0}}\le 0\}$, $H_{x_0}^+:=\{x\in\rr^{n+1}:\escpr{x-x_0,v_{x_0}}\ge 0\}$. By \eqref{eq:dvolcomp} and Lemma~\ref{lem:dervolume}, it is enough to prove
\begin{equation}
\label{eq:inclusion}
\sg_{x_0}(C\cap H_{x_0}^-)\subset C\cap H_{x_0}^+
\end{equation}
for any $x_0\in\ptl C\setminus\{0\}$.

To prove \eqref{eq:inclusion} we shall use a deformation argument similar to Alexandrov Reflection. Let
\[
w_\theta:=\big(\sin\theta\frac{z_0}{|z_0|},\cos\theta\big),\qquad  \theta\in [0,\theta_0]
\]
where
\[
\theta_0:=\arccos\big(\frac{f'(|z_0|)}{(1+f'(|z_0|)^2)^{1/2}}\big)<\frac{\pi}{2}.
\]
When $\theta$ moves along $[0,\theta_0]$, the vector $w_\theta$ varies from $(0,1)$ to $v_{x_0}$. Let us consider the~hyperplanes
\[
H_\theta:=\{x\in\rr^{n+1}:\escpr{x-x_0,w_\theta}=0\},
\]
and $H_\theta^-:=\{x\in\rr^{n+1}:\escpr{x-x_0,w_\theta}\le 0\}$, $H_\theta^+:=\{x\in\rr^{n+1}:\escpr{x-x_0,w_\theta}\ge 0\}$.

Let us check first that the set  $C\cap H_\theta^-$ is bounded. The use of hypothesis $\lim_{s\to\infty} (f(s)/s)=+\infty$ is essential here. Any point $(z,t)\in C\cap H_\theta^-$ satisfies the inequalities
\begin{equation}
\label{eq:htheta-}
\escpr{z-z_0,\frac{z_0}{|z_0|}}\sin\theta+(t-t_0)\cos\theta\le 0, \qquad t\ge f(|z|).
\end{equation}
We reason by contradiction, assuming there is a sequence of points $x_i=(z_i,t_i)$ ($i\in\nn$) in $C\cap H_\theta^-$ with $\lim_{i\to\infty} |x_i|=+\infty$. The sequence $|z_i|$ converges to $+\infty$ since, from \eqref{eq:htheta-} and Schwarz inequality
\[
0\le t_i\cos\theta\le t_0\cos\theta+|z_0|\sin\theta+|z_i|\cos\theta.
\]
Hence boundedness of a subsequence of $|z_i|$ would imply boundedness of the corresponding subsequence of $|t_i|$, contradicting that $\lim_{i\to\infty}|x_i|=+\infty$. On the other hand, inequalities \eqref{eq:htheta-}, together with Schwarz inequality, imply
\[
\frac{f(|z_i|)}{|z_i|}\cos\theta\le \frac{t_0\cos\theta+|z_0|\sin\theta}{|z_i|}+\sin\theta.
\]
Taking limits when $i\to\infty$ we get a contradiction since $|z_i|$ and $f(|z_i|)/|z_i|$ converge to $\infty$.

\begin{figure}[h]
\begin{tikzpicture}[y=0.80pt, x=0.8pt,yscale=-1, inner sep=0pt, outer sep=0pt]
  \path[draw=black,line join=miter,line cap=butt,even odd rule,line width=0.800pt]
    (175.4617,98.4533) .. controls (183.5829,151.7639) and (189.0867,279.1075) ..
    (253.8945,279.1075) .. controls (318.8854,279.1075) and (321.2410,161.9141) ..
    (326.8053,114.8312);
  \path[draw=black,line join=miter,line cap=butt,miter limit=4.00,even odd
    rule,line width=0.400pt] (164.7059,138.6367) -- (132.3529,195.0093) --
    (391.6667,228.3426) -- (404.4118,199.9112);
  \path[draw=black,fill=black,miter limit=4.00,line width=0.240pt]
    (320.6933,188.0893)arc(36.631:126.292:2.446)arc(126.291:215.952:2.446)arc(215.952:305.613:2.446)arc(-54.387:35.274:2.446)
        -- (318.7308,186.6301) -- cycle;
  \path[fill=black,line join=miter,line cap=butt,line width=0.800pt]
    (331.85184,193.10297) node[above right] (text4379) {$x_0$};
  \path[draw=black,->,line join=miter,line cap=butt,even odd rule,line width=0.800pt]
   (255.4215,178.5245) -- (262.0882,136.8774);
  \path[fill=black,line join=miter,line cap=butt,line width=0.800pt]
    (249.36162,125.45498) node[above right] (text4651) {$w_\theta$};
  \path[cm={{0.99619,0.08716,-0.08716,0.99619,(0.0,0.0)}},draw=black,miter
    limit=4.00,line width=0.240pt] (319.5003,167.9065)arc(36.631:126.117:66.157356
    and 17.407)arc(126.117:215.603:66.157356 and
    17.407)arc(215.603:305.089:66.157356 and 17.407)arc(-54.911:34.575:66.157356
    and 17.407);
  \path[draw=black,line join=miter,line cap=butt,even odd rule,line width=0.800pt]
    (185.1852,174.2141) .. controls (195.9259,142.7326) and (220.6584,62.4346) ..
    (271.4815,75.3252) .. controls (321.6369,88.0465) and (318.1482,185.6956) ..
    (318.1482,185.6956);
  \path[fill=black,line join=miter,line cap=butt,line width=0.800pt]
    (413.89679,203.29608) node[above right] (text4659) {$H_\theta$};
  \path[fill=black,line join=miter,line cap=butt,line width=0.800pt]
    (313.53372,262.92908) node[above right] (text4682) {$C\cap
    H_\theta^-$};
  \path[fill=black,line join=miter,line cap=butt,line width=0.800pt]
    (315.13339,94.566795) node[above right] (text4686)
    {$\sigma_\theta(C\cap H_\theta^-)$};

\end{tikzpicture}
\caption{Sketch of the reflection procedure}
\end{figure}
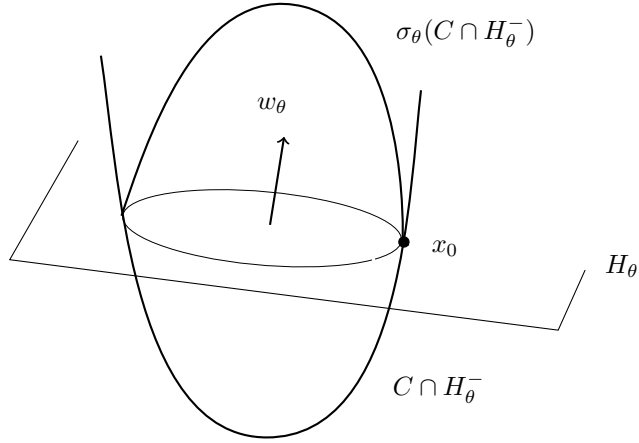

Now we start with a deformation procedure. For $\theta=0$, we have the inclusion $\sg_\theta(C\cap H_\theta^-)\subset C\cap H_\theta^+$ since $H_\theta$ is a horizontal hyperplane and $C$ is the epigraph of a function defined onto this hyperplane. Let $\bar{\theta}$ be the supremum of the closed set
\[
\{\theta\in [0,\theta_0]: \sg_\theta(C\cap H_\theta^-)\subset C\cap H_\theta^+\}.
\]
If $\bar{\theta}=\theta_0$ we are done. Otherwise let us assume that $\bar{\theta}<\theta_0$.

Let us check that, for any $\theta\in [0,\theta_0)$ and $x\in\ptl C\cap H_\theta$, we have
\begin{equation}
\label{eq:bdyinclusion}
\escpr{\bar{\sg}_\theta(v_x),n_x}>0,
\end{equation}
where $\bar{\sg}_\theta$ is the orthogonal symmetry with respect to the linear hyperplane of vectors orthogonal to $w_\theta$ and $n_x$ is the outer unit normal to $\ptl C$ at $x$ given by
\[
n_x=\frac{(f'(|z|)\frac{z}{|z|},-1)}{(1+f'(|z|)^2)^{1/2}}.
\]
Since $\bar{\sg}_x(v_x)=v_x-2\escpr{v_x,w_\theta}w_\theta$ we have $\escpr{\bar{\sg}_\theta(v_x),n_x}=-2\escpr{v_x,w_\theta}\escpr{w_\theta,n_x}$, so that
\begin{multline}
\label{eq:bdysign}
\escpr{\bar{\sg}_\theta(v_x),n_x}=\frac{-2}{1+f'(|z|)^2}\bigg(\frac{\escpr{z,z_0}}{|z||z_0|}\sin\theta+f'(|z|)\cos\theta\bigg)
\\
\times\bigg(f'(|z|)\frac{\escpr{z,z_0}}{|z||z_0|}\sin\theta-\cos\theta\bigg)
\end{multline}

Observe that, if $x\in\ptl C\cap H_\theta$, then $|z|>|z_0|$ unless $z=z_0$. This is easy to prove since
\begin{align*}
0&=\escpr{z-z_0,\frac{z_0}{|z_0|}}\sin\theta+(f(|z|-f(|z_0|))\cos\theta
\\
&\le \big(|z|-|z_0|\big)\sin\theta+(f(|z|-f(|z_0|))\cos\theta.
\end{align*}
In case $|z|<|z_0|$ then $f(|z|)<f(|z_0|)$ and we get a contradiction. If $|z|=|z_0|$ then equality holds in the first inequality and so $z=\la z_0$ for some positive $\la$ which must be equal to one.

To prove \eqref{eq:bdyinclusion}, let us analyze the sign of the factors between parentheses in \eqref{eq:bdysign}. For the first factor, when $x\in\ptl C\cap H_\theta$ we get
\begin{align*}
\frac{\escpr{z,z_0}}{|z||z_0|}\sin\theta&=\frac{|z_0|}{|z|}\sin\theta-\frac{f(|z|)-f(|z_0|)}{|z|}\cos\theta
\\
&\ge \frac{|z_0|}{|z|}\sin\theta-\frac{f(|z|)}{|z|}\cos\theta
\\
&\ge \frac{|z_0|}{|z|}\sin\theta-f'(|z|)\cos\theta
\end{align*}
where for the last inequality we have used $f(x)/x\le f'(x)$, a consequence of the convexity of $f$. Hence
\[
\frac{\escpr{z,z_0}}{|z||z_0|}\sin\theta+f'(|z|)\cos\theta\ge \frac{|z_0|}{|z|}\sin\theta
\]
We thus infer that the quantity in the left-hand side is positive when $\sin\theta>0$ and, when $\sin\theta=0$, it is equal to $f'(|z|)$, which is also positive as $|z|\ge |z_0|>0$.

For the second factor in \eqref{eq:bdysign} we have, for $x\in\ptl C\cap H_\theta$, that the quantity
\begin{equation*}
f'(|z_0|)\frac{\escpr{z,z_0}}{|z||z_0|}\sin\theta-\cos\theta,
\end{equation*}
equal to
\begin{equation*}
\frac{|z_0|}{|z|}f'(|z|)\sin\theta-\frac{f(|z|)-f(|z_0|)}{|z|}f'(|z|)\cos\theta-\cos\theta
\end{equation*}
is strictly smaller than
\begin{equation*}
\frac{1}{(1+f'(|z_0|)^2)^{1/2}}\bigg(\frac{|z_0|}{|z|}f'(|z|)-\frac{f(|z|)-f(|z_0|)}{|z|}f'(|z|)f'(|z_0|)-f'(|z_0|)\bigg).
\end{equation*}
This quantity is negative since $f'(|z|)/|z|\le f'(|z_0|)/|z_0|$ by the concavity of $f'$. In case $\theta=\theta_0$, it is also negative when $z\neq z_0$ since, in this case, $|z|>|z_0|$ and so $f(|z|)>f(|z_0|)$.

So we have proved that the sign of the first factor between parentheses in \eqref{eq:bdysign} is positive and the sign of the second factor is negative. This proves \eqref{eq:bdyinclusion}.

Inequality \eqref{eq:bdyinclusion} guarantees that $\sg_\theta(C\cap H_\theta^-)$ is strictly contained in $C\cap H_\theta^+$ near $H_\theta$ when $\theta<\theta_0$. As in the proof of Alexandrov Reflection principle, it shows that $\sg_\btheta(\ptl C\cap H^-_{\bar{\theta}})$ and $\ptl C\cap H_\btheta^+$ have a tangential contact at some point $x_2\in \ptl C\cap H_\btheta^+$. The point $x_2$ is the image by $\sg_\btheta$ of a point $x_1\in\ptl C\cap H^-_\btheta$ and must lie in the interior of $H_\btheta^+$. Since
\begin{equation*}
x_2=x_1-2\escpr{x_1-x_0,w_\btheta}\,w_\btheta,\qquad n_{x_2}=n_{x_1}-2\escpr{n_{x_1},w_\btheta}\,w_\btheta,
\end{equation*}
we have
\begin{equation}
\label{eq:z2z1}
\begin{split}
z_2&=z_1-2\escpr{x_1-x_0,w_\btheta}\sin\theta\frac{z_0}{|z_0|},
\\
\frac{f'(|z_2|)\,z_2}{|z_2|(1+f'(|z_2|)^2)^{1/2}}&=\frac{f'(|z_1|)\,z_1}{|z_1|(1+f'(|z_1|)^2)^{1/2}}-2\escpr{n_{x_1},w_\btheta}\sin\theta\frac{z_0}{|z_0|}
\\
\frac{-1}{(1+f'(|z_2|)^2)^{1/2}}&=\frac{-1}{(1+f'(|z_1|)^2)^{1/2}}-2\escpr{n_{x_1},w_\btheta}\cos\theta.
\end{split}
\end{equation}
Replacing the value of $z_1$ in the second equation using the first one we get
\begin{multline*}
\bigg(\frac{f'(|z_2|)}{|z_2|(1+f'(|z_2|)^2)^{1/2}}-\frac{f'(|z_1|)}{|z_1|(1+f'(|z_1|)^2)^{1/2}}\bigg)\,z_2
\\
=2\sin\theta\bigg(\frac{f'(|z_1|)}{|z_1|(1+f'(|z_1|)^2)^{1/2}}\,\escpr{x_1-x_0,w_\btheta}-\escpr{n_{x_1},w_\btheta}\bigg)\,\frac{z_0}{|z_0|}.
\end{multline*}
As the function $s\mapsto f'(s)/(s(1+f'(s)^2)^{1/2})$ is strictly decreasing, the constant multiplying $z_2$ is different from zero if and only if $|z_1|\neq|z_2|$. In this case $z_2$ is proportional to $z_0$ and hence $x_2$ (and so $x_1$) belongs to the place generated by $x_0$ and $(0,1)$. Thus we only need to prove that this situation cannot happen in the planar case. Let us check that the case $|z_1|=|z_2|$ is not possible. From the first equation in \eqref{eq:z2z1} we get
\[
|z_2|^2=|z_1|^2+4\escpr{x_1-x_0,w_\btheta}\sin\btheta\bigg(\escpr{x_1-x_0,w_\btheta}\sin\btheta-\frac{\escpr{z_1,z_0}}{|z_1||z_0|}\bigg).
\]
If $|z_1|=|z_2|$ then
\[
\escpr{x_1-x_0,w_\btheta}\sin\btheta-\frac{\escpr{z_1,z_0}}{|z_1||z_0|}=0
\]
and, in particular, $\escpr{z_1,z_0}<0$. Hence
\[
\escpr{n_{x_1},w_\btheta}=\frac{1}{(1+f'(|z_1|)^2)^{1/2}}\bigg(f'(|z_1|)\frac{\escpr{z_1,z_0}}{|z_1||z_0|}\sin\btheta-\cos\btheta\bigg)<0.
\]
From the third equation in \eqref{eq:z2z1} we get
\[
\frac{-1}{(1+f'(|z_2|)^2)^{1/2}}>\frac{-1}{(1+f'(|z_1|)^2)^{1/2}},
\]
and, as the function $s\mapsto -s/(1+f'(s)^2)^{1/2}$ is strictly increasing, we conclude that $|z_2|>|z_1|$, a contradiction.

So we only need to consider the planar case to achieve a contradiction. But in this case Lemma~\ref{lem:planar} gives us a contradiction.
\end{proof}

\begin{lemma}
\label{lem:planar}
Let $f:\rr\to\rr$ be a $C^1$ function satisfying $f(0)=0$ and $f(x)=f(-x)$ for all $x\in\rr$. Assume that $f$ is of class $C^3$ in $\rr\setminus\{0\}$ with $f''>0$ and that the geodesic curvature $\kappa(x)=f''(x)/(1+f'(x)^2)^{3/2}$ is strictly decreasing for $x>0$. Let $C\subset\rr^2$ be the convex epigraph of $f$. Choose $z_0>0$ and any $\theta\in [0,\theta_0]$, where
\[
\theta_0=\arccos\bigg(\frac{f'(z_0)}{(1+f'(z)^2)^{1/2}}\bigg).
\]
Let $x_0=(z_0,f(z_0))$ and $w_\theta=(\sin\theta,\cos\theta)$. Take the line $L_\theta=\{x: \escpr{x-x_0,w_\theta}=0\}$ and the closed half-spaces $H_\theta^-:=\{x: \escpr{x-x_0,w_\theta}\le 0\}$, $H_\theta^+:=\{x: \escpr{x-x_0,w_\theta}\ge 0\}$. Let $\sg_\theta$ be the orthogonal symmetry with respect to $L_\theta$.

Then $\sg_\theta(\ptl C\cap\intt(H_\theta^-))\subset \intt(C)$.
\end{lemma}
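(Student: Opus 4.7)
My plan is to apply the Alexandrov moving-plane method in the plane. Set
\[
\mathcal{I}:=\{\theta\in[0,\theta_0] : \sigma_{\theta'}(\partial C\cap\intt(H_{\theta'}^-))\subset\intt(C)\text{ for all }\theta'\in[0,\theta]\},
\]
and $\bar\theta:=\sup\mathcal{I}$. The base case $\theta=0$ is immediate: $L_0$ is the horizontal line $\{y=f(z_0)\}$, and for every $z\in(-z_0,z_0)$ the reflection $(z,2f(z_0)-f(z))$ of $(z,f(z))$ has $y$-coordinate strictly greater than $f(z_0)>f(|z|)$, so it lies in $\intt(C)$; thus $0\in\mathcal{I}$. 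The goal is to prove $\bar\theta=\theta_0$, and I would argue by contradiction.

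Assume $\bar\theta<\theta_0$. Closedness of the inclusion under limits of $\theta$, together with the strict transversality inequality \eqref{eq:bdyinclusion} (which remains strict at every point of $\partial C\cap L_{\bar\theta}$ when $\bar\theta<\theta_0$, as established in the proof of Theorem~\ref{thm:brorigin}), rule out first failure at the endpoints of the arc on $L_{\bar\theta}$ and force an internal tangential contact: there exist $x_i=(z_i,f(z_i))$ for $i=1,2$ with $x_1\in\partial C\cap\intt(H_{\bar\theta}^-)$ bounded away from $L_{\bar\theta}$, $x_2=\sigma_{\bar\theta}(x_1)\in\partial C\cap\intt(H_{\bar\theta}^+)$, and the reflected convex region $\sigma_{\bar\theta}(C\cap H_{\bar\theta}^-)$ locally contained in $C$ near $x_2$ with boundary internally tangent to $\partial C$ there. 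Three relations on $(z_1,z_2,\bar\theta)$ then follow. Chord orthogonality gives $(f(z_2)-f(z_1))/(z_2-z_1)=\cot\bar\theta$, so by the mean-value theorem the unique positive solution $\eta^*$ of $f'(\eta^*)=\cot\bar\theta$ lies in $(z_1,z_2)$; since $\bar\theta<\theta_0$ forces $\cot\bar\theta>f'(z_0)$ and $f'$ is strictly increasing on $[0,\infty)$, $\eta^*>z_0$. Tangent alignment at the tangency, written with $\alpha(z):=\arctan f'(z)$, reads $\alpha(z_1)+\alpha(z_2)=\pi-2\bar\theta=2\alpha(\eta^*)$. Finally, the midpoint $M:=\tfrac12(x_1+x_2)=:(m_z,m_y)$ lies on $L_{\bar\theta}$, so $m_y-f(z_0)=-\tan\bar\theta(m_z-z_0)$. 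The secant condition together with the evenness of $f$ excludes $z_1,z_2$ both nonpositive, so either $0<z_1<z_2$, or $z_1<0<z_2$; in the second case the curvature comparison at the internal tangency (reflection preserves curvature, and the reflected arc sitting inside $\partial C$ forces the reflected curvature at $x_2$ to dominate $\kappa(\partial C,x_2)$) combined with the strict monotonicity of $\kappa(|\cdot|)$ yields $|z_1|<z_2$ (the equality $|z_1|=z_2$ would give $z_1=-z_2$, forcing the chord $x_1x_2$ to be horizontal against its slope $\cot\bar\theta>0$).

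A direct computation from $f''>0$ and $f'''\le 0$ yields $\alpha''<0$ on $(0,\infty)$, so $\alpha$ is strictly concave on $[0,\infty)$ with $\alpha(0)=0$, hence strictly subadditive on $(0,\infty)$. If $0<z_1<z_2$, strict concavity of $\alpha$ applied to $\alpha(\eta^*)=\tfrac12(\alpha(z_1)+\alpha(z_2))$ immediately gives $m_z>\eta^*$. If $z_1<0<z_2$, using $\alpha(z_1)=-\alpha(|z_1|)$ the alignment becomes $\alpha(z_2)-\alpha(|z_1|)=2\alpha(\eta^*)$; strict subadditivity yields $\alpha(z_2-|z_1|)>\alpha(z_2)-\alpha(|z_1|)=2\alpha(\eta^*)$, while strict concavity through the origin gives $\alpha(2m_z)<2\alpha(m_z)$, and since $z_2-|z_1|=z_1+z_2=2m_z$, chaining yields $m_z>\eta^*$. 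In either case $m_z>\eta^*>z_0$, whence strict convexity and strict monotonicity of $f$ on $[0,\infty)$ give $m_y=\tfrac12(f(z_1)+f(z_2))>f(m_z)>f(z_0)$, contradicting the midpoint equation, which with $m_z>z_0$ and $\tan\bar\theta>0$ forces $m_y<f(z_0)$. The step I expect to require the most care is the curvature comparison initiating the second case: one must carefully identify which of the two internally tangent convex arcs bounds the smaller convex region near $x_2$, in order to fix the direction of the curvature inequality before invoking strict monotonicity of $\kappa$.
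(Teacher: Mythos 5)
Your argument takes a genuinely different route from the paper: the paper fixes $\theta$ and argues directly, splitting $\ptl C\cap\intt(H_\theta^-)$ at the preimage $z_1$ of the reflected curve's vertical-tangent point and comparing curvatures of the two arcs $\ptl C\cap\{z_1\le z<z_0\}$ and $\ptl C\cap\{z>z_0\}$. You instead run an Alexandrov moving-plane argument \emph{inside} the lemma and, at the critical angle $\bar\theta$, extract a contradiction from the three tangential-contact identities together with concavity properties of the angle function $\alpha=\arctan f'$. This is a legitimate and in places cleaner line of attack, but it rests on a step that goes beyond the hypotheses of the lemma.

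The gap is your assertion that $\alpha''<0$ on $(0,\infty)$, which you derive from $f'''\le 0$. However, $f'''\le 0$ is an assumption of Theorem~\ref{thm:brorigin}, \emph{not} of Lemma~\ref{lem:planar}: the lemma only assumes that the geodesic curvature $\kappa=f''/(1+(f')^2)^{3/2}$ is strictly decreasing. These conditions are not equivalent. Differentiating, $\kappa'<0$ is the inequality $f'''(1+(f')^2)<3f'(f'')^2$, while $\alpha''<0$ is the strictly stronger inequality $f'''(1+(f')^2)<2f'(f'')^2$; equivalently, $\alpha''=\kappa'\sqrt{1+(f')^2}+f'(f'')^2/(1+(f')^2)^2$, so a merely decreasing $\kappa$ does not force $\alpha$ to be concave. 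Since the strict concavity (and subadditivity) of $\alpha$ carries the entire contradiction in both of your cases, the proof as written does not establish the lemma as stated; it establishes it only under the additional assumption $f'''\le 0$. To close the gap you would either have to reformulate the contradiction in terms of quantities monotone under $\kappa'<0$ alone (as the paper does, by comparing curvatures of the arcs directly rather than passing to $\alpha$ as a function of the horizontal coordinate), or explicitly strengthen the lemma's hypothesis. There are also two smaller points worth flagging: your moving-plane argument implicitly localizes the first tangential contact in a compact region, which under the lemma's hypotheses is not guaranteed since $\lim_{s\to\infty} f(s)/s=\infty$ is not assumed here (it is an assumption of Theorem~\ref{thm:brorigin} that the paper uses precisely to prove that $C\cap H_\theta^-$ is bounded); and the case $z_1=0$ falls between your two subcases, though it is easily handled by the same subadditivity estimate.
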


\begin{proof}
The curve $\sg_\theta(\ptl C\cap\intt(H_\theta^-))$ is strictly convex and so its tangent vector rotates monotonically. It this tangent vector is never vertical then the curve $\sg_\theta(\ptl C\cap\intt(H_\theta^-))$ is the graph of a function over the $z$-axis lying in $C\cap H_\theta^+$, and so $\sg_\theta(\ptl C\cap\intt(H_\theta^-))$ is trivially contained in $\intt(C)$.

So assume there is a point $x_v$ in $\sg_\theta(\ptl C\cap\intt(H_\theta^-))$ with vertical tangent vector. A straightforward computation shows that $x_v$ is the image $\sg_\theta(x_1)$ of a point $x_1\in \ptl C\cap \intt(H_\theta^-)$ with $x_1=(z_1,f(z_1)$ and
\[
\frac{(1,f'(z_1))}{(1+f'(z_1)^2)^{1/2}}=(\sin(2\theta),\cos(2\theta)).
\]
This implies that $z_1>0$. Define the curves $\Ga_2:=\ptl C\cap\{z_1\le z<z_0\}$, $\Ga_2:= \ptl C\cap\{z>z_0\}$ and let $\kappa_1$, $\kappa_2$ be their geodesic curvatures. Then $\kappa_1(y_1)>\kappa_2(y_2)$ for every pair of points $y_1\in\Ga_1$, $y_2\in\Ga_2$. This implies that $\sg_\theta(\Ga_1)$ and $\Ga_2$ are graphs over a line orthogonal to $L_\theta$ and $\sg_\theta(\Ga_1)$ lies above $\Ga_2$. Since both curves are contained in the half-space $\{z\ge 0\}$ we conclude that $\sg_\theta(\Ga_1)$ is contained in $\intt(C)$. The curve $\sg_\theta ((C\cap H_\theta^-)\setminus\Ga_1)$ has no vertical tangent vector and so it is a graph over the $z$-axis lying over the line $L_\theta$. So it is also contained in $\intt(C)$.
\end{proof}

\section{Examples}

\begin{example}\label{ex:isopdimrev}
We consider the convex body of revolution $C=\{(x,y):y\ge f(|x|)\}\subset\rr^3$ determined by a convex function $f:\rr\to\rr$ such that
\begin{equation*}
\lim_{s\to\infty} \frac{f(s)}{s}=\infty.
\end{equation*}
This condition implies that the asymptotic cone of $C$ has empty interior. Observe that $C$ cannot be cylindrically bounded since $f$ is defined on the whole real line.

In what follows, with a slight abuse of notation, we shall identify the coordinate $x_{1}$ with the pair $(x_{1},0)$, and denote both by $x$. This should not create any confusion thanks to the symmetry of $C$. For every $r>0$, consider the unique point $(x(r),y(r))$, with $x(r)>0$, in the intersection of the graph of $f$ and the circle of center $0$ and radius $r$. Let $\alpha(r)$ be the angle between the vectors $(x(r),y(r))$ and $(0,1)$. Since $y(r)=f(x(r))$ we have
\begin{equation}
\label{eq:cos-sin}
\cos(\alpha(r))=\frac{f(x(r))}{r},\quad \sin(\alpha(r))=\frac{x(r)}{r}.
\end{equation}
An easy application of the coarea formula implies that the volume $V(r)$ of the ball $B_{C}(0,r)$ is given by
\begin{align*}
V(r)&=2\pi\int_0^rs^2(1-\cos(\alpha(s)))\,ds=2\pi\int_0^rs^2\bigg(1-\frac{1}{\sqrt{1+\tan^2(\alpha(s))}}\bigg)\,ds
\\
&=2\pi\int_0^r s^2\bigg(1-\frac{1}{(1+\big(\tfrac{x(s)}{f(x(s))})^2\big)^{1/2}}\bigg)\,ds.
\end{align*}

For $t>-1$, the function $t\mapsto 1-(1+t)^{-1/2}$ is analytic and satisfies
\[
1-\frac{1}{\sqrt{1+t}}=\frac{1}{2}\,tg(t),
\]
where $g:(-1,\infty)\to\rr$ is analytic with $g(0)=1$. Hence we can express $V(r)$ as
\begin{equation}
\label{eq:vrdef}
V(r)=\pi\int_0^r s^2\,\frac{x(s)^2}{f(x(s))^2}\,g\bigg(\frac{x(s)^2}{f(x(s))^2}\bigg)\,ds.
\end{equation}
Note that the estimate $1-(1+t)^{-1/2}<t$ holds for $t>0$ since the derivative of $h(t):=t+(1+t)^{-1/2}$ satisfies $h'(t)=1-\tfrac{1}{2}(1+t)^{-3/2}>\tfrac{1}{2}$ and so $h(t)>h(0)=1$. From this estimate we immediately obtain the inequality $g(t)<2$ for $t>0$.

From \eqref{eq:cos-sin} we get $f(x(s))^2+x(s)^2=s^2$ and so
\[
1+\frac{x(s)^2}{f(x(s))^2}=\frac{s^2}{f(x(s))^2}.
\]
Taking into account that $\lim_{s\to\infty} x(s)=\infty$ we get
\begin{equation}
\label{eq:limits}
\lim_{s\to\infty} \frac{x(s)}{f(x(s))}=0, \qquad
\lim_{s\to\infty}\frac{s}{f(x(s))}=1.
\end{equation}

Using the properties of the function $g$ and \eqref{eq:limits}, we choose $r_0>0$ large enough so that
\begin{equation}
\label{eq:estimatesgx}
1\le g\bigg(\frac{x(s)^2}{f(x(s))^2}\bigg)<2,\quad\frac{1}{2}<\frac{s}{f(x(s))}<\frac{3}{2},\quad\text{for}\ s>r_0.
\end{equation}
Let $h$ be the inverse function of $f$. Observe that the second inequality in \eqref{eq:estimatesgx} yields $h(2s/3)<x(s)<h(2s)$. The concavity of $h$ and equality $h(0)=0$ then imply $(2/3)h(s)<h(2s/3)$ and $h(2s)<2h(s)$. Hence
\begin{equation}
\label{eq:estimatexs}
\frac{2}{3}\,h(s)<x(s)<2h(s),\quad\text{for}\ s>r_0.
\end{equation}
Using the first equation in \eqref{eq:estimatesgx} and \eqref{eq:estimatexs} we get from \eqref{eq:vrdef} the inequalities
\[
\frac{\pi}{9}\int_{r_0}^r h(s)^2ds< V(r)-V(r_0)< 18\pi\int_{r_0}^r h(s)^2ds.
\]
Now let $W(r):=\int_{r_0}^r h(s)^2ds$ for $r>r_0$, and extend it to be equal to $0$ in the interval $[0,r_0]$. Taking $v_0:=V(r_0)$, $D:=\pi/9$ and $E:=18\pi$ we have
\[
v_0+D\,W(r)< V(r)<v_0+E\,W(r),\quad r>r_0,
\]
and so
\[
\phi_W\bigg(\frac{v-v_0}{D}\bigg)>\phi_V(v)>\phi_W\bigg(\frac{v-v_0}{E}\bigg),\qquad v>v_0.
\]

If we take $f(x)=x^a$, with $a\in (1,2]$, then $f''(x)>0$ and $f'''(x)\le 0$ for all $x>0$. In this case $h(x)=x^{1/a}$,
\[
W(r)=\int_{r_0}^r h(s)^2ds=\bigg(\frac{a}{a+2}\bigg)\big(r^{(a+2)/a}-r_0^{(a+2)/a}\big),
\]
and
\[
\phi_W(v)=\bigg(\bigg(\frac{a+2}{a}\bigg)\,v+r_0^{(a+2)/a}\bigg)^{a/(a+2)}.
\]
Hence there exist constants $0<\la<\Lambda>0$ such that
\[
\Lambda v^{a/(a+2)}>\phi_V(v)>\la v^{a/(a+2)}, \quad v>v_0
\]
and so there exists constants $0<\la_1<\la_2$ such that
\[
\la_2 v^{2/(a+2)}>I_C(v)\ge \la_1 v^{2/(a+2)},\quad v>v_0.
\]
This shows that the unbounded convex body
\[
C_{a}= \{(x,y)=(x_{1},x_{2},y)\in \rr^{3} :\ y\ge |x|^{a}\}
\]
has isoperimetric dimension equal to $\frac{a+2}{a}$ for all $a\in (1,2]$.
\end{example}

\backmatter

\bibliographystyle{amsplain}
\bibliography{unbounded}

\printindex

\end{document}